\def\R{\mathbb{R}}
\def\B{\mathbb{B}}
\def\N{\mathbb{N}}
\def\Z{\mathbb{Z}}
\def\C{\mathbb{C}}
\def\Q{\mathbb{Q}}
\def\M{\mathbb{M}}
\def\D{\mathbb{D}}
\newcommand{\ben}{\begin{enumerate}}
\newcommand{\bit}{\begin{itemize}}
\newcommand{\een}{\end{enumerate}}
\newcommand{\eit}{\end{itemize}}
\newcommand{\ed}{\end{document}}
\def\cA{\mathcal{A}}
\def\cP{\mathcal{P}}
\def\cC{\mathcal{C}}
\def\cR{\mathcal{R}}
\def\cB{\mathcal{B}}
\def\cW{\mathcal{W}}
\def\cN{\mathcal{N}}
\def\cL{\mathcal{L}}
\def\cF{\mathcal{F}}
\def\cN{\mathcal{N}}
\def\cO{\mathcal{O}}
\let\hat=\widehat
\let\tilde=\widetilde
\let\landa=\lambda
\let\alfa=\alpha
\let\parc=\partial
\def\ep{\varepsilon}
\def\landa{\lambda}
\def\flecha{\rightarrow}
\def\esiz{\langle}
\def\esde{\rangle}
\def\S{\Sigma}
\def\mh{\mathfrak{h}}
\def\cte.{\mathop{\rm cte.}\nolimits}
\def\cosh{\mathop{\rm cosh }\nolimits}
\def\N{\mathbb{N}}
\def\B{\mathbb{B}}
\def\Q{\mathbb{Q}}
\def\R{\mathbb{R}}
\def\Z{\mathbb{Z}}
\def\C{\mathbb{C}}
\def\A{\mathbb{A}}
\def\D{\mathbb{D}}
\def\H{\mathbb{H}}
\def\S{\mathbb{S}}
\newfont{\bb}{msbm10 at 12pt}
\titleformat{\subsection}[runin]
{\bfseries} {\thesubsection{.}}{0.15cm}{}[.]
\titleformat{\subsubsection}[runin]
{\em}{\thesubsubsection{.}}{0.15cm}{}[.]
\newtheorem{theorem}{Theorem}[section]
\newtheorem{lemma}[theorem]{Lemma}
\newtheorem{proposition}[theorem]{Proposition}
\newtheorem{remark}[theorem]{Remark}
\newtheorem{corollary}[theorem]{Corollary}
\newtheorem{definition}[theorem]{Definition}
\theoremstyle{definition}
\numberwithin{equation}{section}
\numberwithin{figure}{section}
\begin{document}
\fancyhead[LO]{Free boundary CMC annuli}
\fancyhead[RE]{Alberto Cerezo, Isabel Fernández, Pablo Mira}
\fancyhead[RO,LE]{\thepage}

\thispagestyle{empty}

\begin{center}
{\bf \LARGE Free boundary CMC annuli in spherical \\[0.3cm] and hyperbolic balls}

\vspace*{5mm}

%% Authors
\hspace{0.2cm} {\Large Alberto Cerezo, Isabel Fernández and Pablo Mira}
\end{center}

\vspace{0.5cm}

%%%%%%%%%%%%%%%%%%%%%%%%%%%%%%%%%%%%%%%%%%%%%%%%%%%%%%%%%%%%%%%%%%%%%%%%%%%%%%%%%%%%%%%%%%%%%%%%%%%%%%%%%%%%%%%%%%%%%%%%%%%%%%%%%%%%%%%%%%%%%%%%%%%%%%%%%%%%%%%%%%%%%%%%%%%%%%%%%%%%

\footnote[0]{%\vspace*{-0.4cm} 
\noindent \emph{Mathematics Subject Classification}: 53A10, 53C42. \\ \mbox{} \hspace{0.25cm} \emph{Keywords}: constant mean curvature, free boundary, capillary surfaces, spherical curvature lines, nodoid, critical catenoid}
%}

\vspace*{7mm}

\begin{quote}
{\small
\noindent {\bf Abstract}\hspace*{0.1cm}
We construct, for any $H\in \R$, infinitely many free boundary annuli in geodesic balls of $\S^3$ with constant mean curvature $H$ and a discrete, non-rotational, symmetry group. Some of these free boundary CMC annuli are actually embedded if $H\geq 1/\sqrt{3}$. We also construct embedded, non-rotational, free boundary CMC annuli in geodesic balls of $\H^3$, for all values $H>1$ of the mean curvature $H$.

\vspace*{0.1cm}

}
\end{quote}

\section{Introduction and statement of the results}\label{sec:intro}

\subsection{Historical introduction}

For a long time, it was believed that the only  \emph{closed}, i.e., compact without boundary, surfaces with constant mean curvature (CMC) in Euclidean space $\R^3$ were the round, totally umbilic spheres. Hopf \cite{Ho0} proved in 1951 the validity of this statement for the particular case of genus zero. However, the conjecture was unexpectedly solved in the negative by Wente \cite{W0} in 1986, by constructing CMC tori in $\R^3$ with self-intersections and a discrete symmetry group. Subsequently, Abresch \cite{Ab} and Walter \cite{Wa1,Wa2} gave a more explicit construction, by prescribing that the CMC tori were foliated by \emph{planar curvature lines}. 

A fundamental achievement of CMC theory springing from these works was the classification of all CMC tori in the space forms $\R^3$, $\S^3$ and $\H^3$ by Pinkall-Sterling \cite{PS} (in $\R^3$), Hitchin \cite{Hi} (for minimal tori in $\S^3$) and then by Bobenko \cite{Bob} (for CMC tori in $\S^3$ and $\H^3$), using algebro-geometric methods from integrable systems. In these theorems, CMC tori were described in connection with \emph{finite type} solutions of the sinh-Gordon equation. Roughly, for any natural $N\geq 1$, the space of type $N$ sinh-Gordon solutions is finite dimensional, all CMC tori are of finite type, and they can be detected within their associated finite dimensional space by explicit closing conditions. 

This classification did not detect the possible embeddedness of CMC tori in $\S^3$ (in $\R^3$ and $\H^3$ there are no closed embedded CMC surfaces, by Alexandrov's theorem). The fundamental achievement in this direction was obtained more recently by Brendle \cite{B}, who proved that the Clifford torus is the only embedded minimal torus in $\S^3$, thereby solving in the affirmative a long-standing conjecture by Lawson. The idea in \cite{B} was then adapted by Andrews and Li \cite{AL} to show that any embedded CMC torus in $\S^3$ is rotational; this solved a conjecture by Pinkall-Sterling.

There is a natural boundary version of the classification problem for CMC tori discussed above: to classify all free boundary CMC annuli in geodesic balls of $\M^3(\ep)=\R^3, \S^3$ or $\H^3$. Here, we say that a compact CMC surface $\Sigma$ has free boundary in a geodesic ball ${\bf B}\subset \M^3(\ep)$ if $\Sigma\subset {\bf B}$ and $\Sigma$ intersects $\parc {\bf B}$ orthogonally along $\parc \Sigma$. This problem was already considered by Nitsche \cite{Nit} in 1985. The solutions are critical points associated to a natural variational problem for the area funcional, see \cite{Nit,RS}.

Nitsche proved in \cite{Nit} that any free boundary CMC disk in ${\bf B}\subset \R^3$ is totally umbilic. Ros-Souam \cite{RS} extended this result to $\S^3$ and $\H^3$. In \cite{Nit}, Nitsche claimed without proof that any free boundary CMC annulus in a ball ${\bf B}\subset \R^3$ should be rotational. This claim was proved incorrect by Wente in 1995 \cite{W2}, through the construction of immersed free boundary CMC annuli with very large mean curvature in the unit ball of $\R^3$. However, two questions remained: the existence of non-rotational free boundary minimal annuli and of embedded free boundary CMC annuli in the unit ball.

The first question was recently answered by Fernández, Hauswirth and Mira in \cite{FHM}, where they constructed immersed free boundary minimal annuli in the unit ball. For that, they used the Weierstrass representation, and a study of minimal surfaces in $\R^3$ with spherical curvature lines; later on, Kapouleas-McGrath \cite{KM} presented an alternative, more analytical construction via doubling. The second question has also been recently answered by the authors in \cite{CFM}, by constructing embedded non-rotational free boundary CMC annuli in the unit ball of $\R^3$, through a study of an overdetermined system for the sinh-Gordon equation. This answered an open problem posed by Wente in 1995 \cite{W2}.

These results still leave unsolved the \emph{critical catenoid conjecture}, i.e., that any embedded free boundary minimal annulus in the unit ball of $\R^3$ should be the critical catenoid. See e.g. \cite{F,FL,L}. This conjecture is conceived as the free boundary version of the aforementioned Lawson conjecture for minimal tori in $\S^3$.

The problem of whether the only embedded free boundary minimal annulus in a geodesic ball of $\S^3$ is a (spherical) critical catenoid has been studied in two interesting recent works by Lima-Menezes \cite{LM2} and Medvedev \cite{Me}. In \cite{LM2}, the uniqueness of this critical catenoid is obtained among immersed annuli by imposing that its coordinate functions are first eigenfunctions of a suitable Steklov problem. In \cite{Me} it is shown that the Morse index of the critical catenoid is $4$, that its spectral index is $1$, and that any free boundary minimal annulus with spectral index $1$ is a (spherical) critical catenoid. 

\subsection{Main results}

Our aim in this paper is to show that there exist many non-rotational free boundary CMC annuli in geodesic balls of $\S^3$ and $\H^3$, which in many cases are actually embedded. In particular, this shows that the class of minimal annuli in $\S^3$ considered in \cite{LM2,Me} is non-trivial, and that the classification of all free boundary CMC annuli in geodesic balls of space forms is a very rich geometric problem, both in the immersed and embedded cases. Our main results are described in Theorems \ref{th:main1} and \ref{th:main2} below.  They indicate that:

\begin{enumerate}
    \item
In $\S^3$ there exist immersed, non-rotational free boundary $H$-annuli in geodesic balls ${\bf B}\subset \S^3$, for any $H\in \R$. In particular, there exist free boundary minimal annuli in $\S^3$ with a finite symmetry group. This answers a problem in \cite{Me}.
\item For $H\geq 1/\sqrt{3}$, some of these free boundary CMC annuli in $\S^3$ are embedded.
\item 
In $\H^3$, there exist embedded non-rotational free boundary $H$-annuli in geodesic balls ${\bf B}\subset \H^3$, for any $H>1$.
\item 
All these $H$-annuli come in $1$-parameter families, and are foliated by spherical curvature lines. They can be seen as free boundary bifurcations from finite covers of adequate rotational examples (nodoids in $\H^3$, catenoids or nodoids in $\S^3$).
\end{enumerate}

We will also show that there exist embedded, non-rotational, \emph{capillary} minimal annuli in geodesic balls of $\S^3$.

We should note that the analytic results by Kilian and Smith in \cite{KS} prove that any free boundary CMC annulus in a geodesic ball of $\R^3,\S^3$ or $\H^3$ is associated to a finite type solution of the sinh-Gordon equation. The spherical curvature lines condition of our examples is very natural in this context, since they correspond to solutions of type $N=2$, see \cite{PS,PS2,W}. The study of CMC surfaces in $\R^3$ with spherical curvature lines dates back to classical works by differential geometers of the 19th century, like Enneper, Dobriner or Darboux. See \cite{W,Wa1,Wa2} for more modern approaches, and \cite{BHS,CPS} for studies on isothermic surfaces with spherical curvature lines.

In the next theorems, we let $H\geq 0$ and $\ep\in \{-1,1\}$ so that $H^2+\ep>0$, and denote $\M^3(1)=\S^3\subset \R^4$ and $\M^3(-1)=\H^3\subset \mathbb{L}^4$.

\begin{theorem}\label{th:main1}
There exists an open interval $\mathcal{J}=\mathcal{J}(H,\ep)$ contained in $(0,1)$ such that, for any irreducible $q=m/n\in \mathcal{J}\cap \Q$, there exists a real analytic $1$-parameter family $\cF_q:=\{\A_q (\eta): \eta\in [0,\epsilon_0(q))\}$ of compact annuli in $\M^3(\ep)$ with the following properties:
\begin{enumerate}
\item
Each annulus $\A_q(\eta)$ has constant mean curvature $H$, and has free boundary in a geodesic ball ${\bf B}={\bf B}(q,\eta)\subset \M^3(\ep)$ centered at ${\bf e}_4=(0,0,0,1)\in \M^3(\ep)$.
\item 
$\A_q(\eta)$ is symmetric with respect to the totally geodesic surface ${\bf S}:= \M^3(\ep)\cap \{x_3 = 0\}$.
\item 
The closed geodesic ${\bf S}\cap \A_q(\eta)$ of $\A_q(\eta)$ has rotation index $-m$ in ${\bf S}$.
\item 
$\A_q(0)$ is a finite $m$-cover of a compact embedded piece of a Delaunay surface in $\M^3(\ep)$.
\item 
If $\eta>0$, then $\A_q(\eta)$ is not rotational, and its symmetry group is generated by the symmetry with respect to ${\bf S}$, and by the symmetries with respect to $n>1$ equiangular totally geodesic surfaces of $\M^3(\ep)$ orthogonal to ${\bf S}$. That is, the symmetry group of $\A_q(\eta)$ is prismatic of order $4n$.
\item 
Each annulus $\A_q(\eta)$ is foliated by spherical curvature lines, so that both of its boundary components are elements of this foliation.
\end{enumerate}
\end{theorem}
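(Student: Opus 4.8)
The plan is to obtain these annuli as a bifurcation --- inside the class of CMC surfaces possessing one family of spherical curvature lines --- from finite covers of rotational Delaunay pieces, via a Crandall--Rabinowitz argument. Following the classical theory of such surfaces (see \cite{W,Wa1,Wa2,PS2}; this is the $N=2$ finite-type reduction of the sinh-Gordon formalism), I would first record that a CMC-$H$ immersion in $\M^3(\ep)$ one of whose curvature-line families is spherical is the envelope of a real-analytic $1$-parameter family $\{\cS(s)\}$ of round spheres of $\M^3(\ep)$ (admitting great spheres as a degenerate case), where the motion of $\cS(s)$ is governed by an ODE system whose coefficients depend analytically on $H$ and $\ep$. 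Within this finite-dimensional solution space, the rotational Delaunay surfaces of $\M^3(\ep)$ form the locus where the centers of the spheres $\cS(s)$ run along a fixed geodesic through ${\bf e}_4$; for fixed $H$ they constitute a $1$-parameter family indexed by a shape (e.g.\ neck-size) parameter $t$, with the restricted ODE being the classical Delaunay ODE in $\M^3(\ep)$. The annuli we are after will branch off the $m$-fold cover of a compact half-period arc of such a Delaunay surface.

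\emph{Closing and free-boundary equations.} An annulus is recovered from a solution of the ODE by choosing suitable initial data, integrating over a fundamental domain of the foliation, and imposing a finite set of conditions: reflection symmetry with respect to ${\bf S}=\M^3(\ep)\cap\{x_3=0\}$ (so the annulus is ${\bf S}$-invariant, with ${\bf S}\cap\A$ a single closed geodesic), closure of each leaf (a spherical curvature line lying on some $\cS(s)$) as a curve, and orthogonality of the two boundary leaves to a common geodesic sphere centered at ${\bf e}_4$ --- equivalently, orthogonality of the corresponding spheres $\cS(s)$ to it, that geodesic sphere then being $\parc{\bf B}$. These conditions assemble into a real-analytic map $\Phi(x,t)$, from (deviation-from-rotational data $x$, shape parameter $t$) into a Euclidean space, with $\Phi(0,t)\equiv0$ along the Delaunay $m$-covers, whose local zero set is precisely the sought family of free boundary CMC-$H$ annuli.

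\emph{Bifurcation and verification.} Linearizing $\Phi$ in $x$ along the trivial branch reproduces the Jacobi operator $\De+|A|^2+\Ric(N,N)$ of the Delaunay $m$-cover, with the Robin-type boundary condition coming from the free-boundary constraint; separating variables in the rotational angle $\theta$, the angular mode $\cos(n\theta)$ enters the kernel of this boundary-value problem exactly when a suitable period/rotation function $\Pi(t)$ of the linearized Delaunay ODE equals $m/n$. One defines $\cJ=\cJ(H,\ep)$ as the range of $\Pi$ intersected with $(0,1)$; for irreducible $q=m/n\in\cJ\cap\Q$, choosing $t_0=t(q)$ with $\Pi(t_0)=q$ places $\cos(n\theta)$ in the kernel, working in the ${\bf S}$-symmetric class removes the companion $\sin(n\theta)$ mode so the kernel is one-dimensional, and the crossing is transversal provided $\Pi'(t_0)\neq0$. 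The Crandall--Rabinowitz theorem then produces a real-analytic family $\cF_q=\{\A_q(\eta):\eta\in[0,\epsilon_0(q))\}$ of zeros of $\Phi$ with $\A_q(0)$ the $m$-cover $\A(t_0)$. Now properties (1), (2), (4), (6) hold by construction (CMC-$H$ with free boundary in ${\bf B}(q,\eta)$; ${\bf S}$-symmetry; limit an $m$-cover of an embedded Delaunay piece; foliation by spherical curvature lines with both boundary components being leaves). For (3), the rotation index of the closed geodesic ${\bf S}\cap\A_q(\eta)$ in ${\bf S}$ is a homotopy invariant of the connected family $\cF_q$, hence equals its value $-m$ at $\eta=0$, where the curve is an $m$-fold traversal of the Delaunay profile. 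For (5): the bifurcating direction $\cos(n\theta)$ is not rotationally invariant, so $\A_q(\eta)$ is genuinely new and non-rotational for $0<\eta<\epsilon_0(q)$, and its isotropy is generated by the reflection in ${\bf S}$ together with the $n$ reflections (in equiangular totally geodesic surfaces through the rotation axis) realizing the dihedral symmetry $D_n$ of $\cos(n\theta)$ --- a prismatic group of order $4n$, with $n>1$ since $q\in(0,1)$.

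\emph{Main obstacle.} The crux is the spectral analysis underlying the bifurcation: writing the linearized Delaunay ODE in $\S^3$ and $\H^3$ explicitly enough to identify $\Pi(t)$, to prove that its range is a genuine non-empty open subinterval of $(0,1)$ for every admissible $H$ (this is where $H^2+\ep>0$ and, in $\H^3$, the restriction $H>1$ enter), and to verify both the simplicity of the kernel and the nondegeneracy $\Pi'(t_0)\neq0$ of the crossing, the latter typically amounting to a monotonicity statement for elliptic-integral expressions. A secondary technical point is checking that the envelope/ODE reduction and the free-boundary reformulation stay valid uniformly near the bifurcation locus, including the minimal case $H=0$ and regimes of near-degenerate neck.
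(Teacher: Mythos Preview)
Your high-level strategy---bifurcate from finite covers of rotational Delaunay pieces inside the spherical-curvature-line class---matches the paper's, but the implementation differs substantially, and your Crandall--Rabinowitz framework does not quite fit the actual structure. The paper works inside an explicit three-parameter family $(a,b,c)$ of sinh-Gordon solutions, with $a=1$ the rotational locus. Crucially, at $a=1$ the surface depends only on $c$ (the neck size), while $b$ selects among the infinitely many umbilic spheres containing each circular curvature line; this ``blown-up'' parameter has no analogue in your $\Phi(x,t)$ setup. On this parameter space the paper defines two scalar analytic functions: the period $\Theta(a,b,c)$ (controlling closure of the curvature lines) and a center-height $\mathfrak{h}(a,b,c)=m_3(\tau)$ (detecting whether both boundary components lie on the \emph{same} umbilic sphere; orthogonality along each component is handled separately by the choice $u_0=\tau$). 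The period $\Theta(1,b,c)$ is computed in closed form, a direct ODE comparison shows that $\mathfrak{h}$ changes sign along each rational level curve of $\Theta|_{\{a=1\}}$, and then real-analyticity plus this sign change pushes the zero set of $\mathfrak{h}$ into $\{a>1\}$. This is not bifurcation from a trivial branch: $\mathfrak{h}$ is \emph{not} identically zero on $\{a=1\}\cap\{\Theta=q\}$, it has an isolated sign-changing zero there, and the argument is closer to the implicit function theorem than to Crandall--Rabinowitz. No Jacobi spectral analysis, kernel-simplicity, or transversality verification is needed.

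The obstacle you flag as ``the crux'' is therefore genuine in your framework but sidestepped entirely by the paper's. Two further issues with your setup: first, the free-boundary condition is two constraints (orthogonal contact along each component, \emph{and} both components lying on one common sphere), not a single Robin condition; the paper decouples these via $\tau$ and $\mathfrak{h}$. Second, your assumption $\Phi(0,t)\equiv 0$ presumes every Delaunay $m$-cover over a ``half-period arc'' is already free boundary, but only specific ones---the critical nodoids and catenoids---are, and locating them is itself a separate nontrivial step in the paper.
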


\begin{theorem}\label{th:main2}
Assume in Theorem \ref{th:main1} that $\ep =-1$ or that $\ep=1$ and $H\geq 1/\sqrt{3}$. Then, there exist elements in $\mathcal{J}\cap \Q$ of the form $q=1/n$, and for any such $q$, the free boundary $H$-annuli $\A_q(\eta)$ are embedded, for $\eta$ sufficiently small. When $\ep=-1$, it actually holds $1/n\in \mathcal{J}$ for any $n\geq 2$.
\end{theorem}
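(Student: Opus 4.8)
\textbf{Proof proposal for Theorem \ref{th:main2}.}

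The plan is to deduce embeddedness of the free boundary $H$-annuli $\A_q(\eta)$, for $q=1/n$ and $\eta$ small, from the structure already established in Theorem \ref{th:main1}, by a perturbative/continuity argument anchored at the limit surface $\A_q(0)$. The starting point is item (4) of Theorem \ref{th:main1}: when $m=1$, the annulus $\A_{1/n}(0)$ is a \emph{simple} (not a nontrivial multiple cover) compact embedded piece of a Delaunay surface in $\M^3(\ep)$ — an unduloid-type piece for appropriate parameters. So the first step is to identify, for the relevant ranges of $H$ and $\ep$, which rational numbers $q=1/n$ actually lie in the interval $\mathcal J=\mathcal J(H,\ep)$; this is where the hypotheses $\ep=-1$, or $\ep=1$ with $H\ge 1/\sqrt 3$, enter. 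Concretely, I would compute (or recall from the construction behind Theorem \ref{th:main1}) the endpoints of $\mathcal J$ as functions of $(H,\ep)$ in terms of the geometry of the underlying rotational Delaunay surface — these endpoints are governed by the rotation index / period of the spherical curvature lines of the Delaunay neck, i.e.\ by a ratio of an ``angular period'' to $2\pi$. For $\ep=-1$ one shows this interval contains every $1/n$, $n\ge 2$ (so $\mathcal J\supset(0,1/2]$, say, for the nodoids in $\H^3$); for $\ep=1$ and $H\ge 1/\sqrt 3$ one shows it contains $1/n$ for all large enough $n$ — the threshold $1/\sqrt3$ being exactly the value at which the relevant monotone quantity crosses the needed bound. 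This is a one-variable analysis of explicit elliptic-type integrals (the Delaunay quadrature), so it is calculation but not conceptually hard.

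The second step is to check that $\A_{1/n}(0)$ itself is embedded as a free boundary annulus in its geodesic ball. By item (4) it is a compact embedded Delaunay piece; one must verify that the piece cut out by the free boundary condition is an embedded \emph{annulus} (two boundary circles, no self-contact), which for $m=1$ and for a sub-neck piece of an unduloid (or a short enough nodoid piece before the first self-intersection) is immediate from the monotonicity of the profile curve. Here the condition $H\ge 1/\sqrt3$ in $\S^3$, resp.\ $H>1$ in $\H^3$, guarantees we are in the unduloid/embedded-nodoid-piece regime rather than the self-intersecting nodoid regime. The key geometric point is that $\A_{1/n}(0)$ lies on one side of, and meets orthogonally, the boundary sphere $\partial{\bf B}(1/n,0)$, and that its boundary consists of two disjoint circles which are curvature lines (leaves of the spherical-curvature-line foliation, by item (6) in the limit).

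The third step — the heart of the argument — is to propagate embeddedness to $\eta>0$ small. Embeddedness is an open condition for the $C^1$-topology on \emph{compact} surfaces with boundary, \emph{provided} one controls what happens near the boundary $\partial\A_q(\eta)\subset\partial{\bf B}(q,\eta)$, since both the surface and the containing ball vary with $\eta$. I would argue as follows: by the real-analytic dependence on $\eta$ in Theorem \ref{th:main1}, the family $\A_{1/n}(\eta)$ converges to $\A_{1/n}(0)$ in $C^k$ (uniformly up to the boundary, after a fixed reparametrization), and the balls ${\bf B}(1/n,\eta)\to{\bf B}(1/n,0)$ smoothly. Interior embeddedness for small $\eta$ then follows from a standard compactness argument: a sequence of points of self-intersection on $\A_{1/n}(\eta_j)$, $\eta_j\to0$, would subconverge to a self-intersection or a tangential self-contact of $\A_{1/n}(0)$, contradicting Step 2 (one uses the one-sidedness at the boundary and the transversality $\A\pitchfork\partial{\bf B}$, which persist for small $\eta$, to rule out the boundary from producing spurious contact). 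Boundary embeddedness — that $\partial\A_{1/n}(\eta)$ remains two disjoint embedded circles on $\partial{\bf B}(1/n,\eta)$ — follows from item (6): the two boundary components are leaves of the curvature-line foliation, hence each is an embedded closed curve, and for $m=1$ they are ``once-around'' curves that stay disjoint by the same $C^1$-closeness to the round circles $\partial\A_{1/n}(0)$.

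The main obstacle I expect is genuinely Step 1 together with the \emph{global} (as opposed to local) part of Step 3: showing the interval $\mathcal J(H,\ep)$ really reaches down to $1/n$. Since $q=m/n$ with $m=1$ forces $q$ small, and $\mathcal J\subset(0,1)$ is only asserted to be a \emph{nonempty open} subinterval in Theorem \ref{th:main1}, one must quantitatively locate its left endpoint and prove it is $0$ (in the $\H^3$ case) or at least below some $1/n_0$ (in the $\S^3$, $H\ge1/\sqrt3$ case). This requires an asymptotic analysis of the Delaunay quadrature as the neck parameter degenerates — e.g.\ as the unduloid converges to a chain of spheres, or to a cylinder — and a monotonicity statement in $H$ for $\ep=1$ whose critical value is $1/\sqrt3$. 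Establishing that monotonicity and the limiting behavior of the period integral, carefully enough to pin down membership of $1/n$ in $\mathcal J$, is the delicate computational core; everything downstream (the actual embeddedness) is then a soft continuity/compactness argument built on Theorem \ref{th:main1}.
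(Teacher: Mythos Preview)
Your overall strategy matches the paper's: both reduce embeddedness to the $\eta=0$ limit (a compact piece of a rotational surface, which is embedded by construction) and then invoke continuity/analyticity of the family $\psi_\eta:[-\tau(\eta),\tau(\eta)]\times\S^1\to\M^3(\ep)$ to conclude that $\psi_\eta$ stays injective for small $\eta$. Your Step~3 is essentially the paper's argument, and Step~2 is the right idea (though see below).

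There are, however, two substantive inaccuracies. First, your Step~1 is badly miscalibrated. The interval $\mathcal J$ is not governed by ``elliptic-type integrals of the Delaunay quadrature''; in the proof of Theorem~\ref{th:main1} it comes out \emph{explicitly} from the closed formula for the period map at $a=1$ (Theorem~\ref{perota}), namely $\mathcal J=(-\tfrac{1}{\sqrt3},0)$ when $\ep=-1$ and $\mathcal J=(-\tfrac{1}{\sqrt3},-\sqrt{(\mu-H)/(2\mu)})$ when $\ep=1$ (in the regime $8H^2>\ep$). Membership of $-1/n$ then reduces to the elementary inequality $(\mu-H)/(2\mu)<1/n^2$, which holds for all $n\ge2$ if $\ep=-1$ and for at least one $n\ge2$ precisely when $H\ge 1/\sqrt3$; the threshold $1/\sqrt3$ is exactly where $n=2$ gives equality. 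No asymptotic or monotonicity analysis of any integral is needed. What you \emph{do} miss here is that at the borderline $H=1/\sqrt3$, $n=2$ lands on $\partial\mathcal J$, so Theorem~\ref{th:main1} does not apply directly; the paper handles this equality case by a separate limiting argument (Lemma~\ref{gammalimite}) that controls the behavior of $\tau$ along the level line emanating from the vertex $(r_1,r_3)=(0,1)$.

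Second, your Step~2 misidentifies the rotational limit. The surfaces $\A_{1/n}(0)$ are \emph{nodoids} (or spherical catenoids when $H=0$), never unduloids: by the construction (Proposition~\ref{pro:nod}) the profile-curve principal curvature $\kappa_1$ is always positive. The hypothesis $H\ge1/\sqrt3$ has nothing to do with ``avoiding the self-intersecting nodoid regime''; embeddedness of the free boundary piece is a general fact about critical nodoids/catenoids established independently in Proposition~\ref{pro:hatp}, valid for all $H\ge0$ and $\delta>0$. The role of $H\ge1/\sqrt3$ is purely the period-interval condition from Step~1. Once you correct this, your argument is the paper's.
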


\subsection{Organization of the paper and sketch of the proof}

The strategy to prove Theorems \ref{th:main1} and \ref{th:main2} is inspired by our previous works \cite{CFM,FHM} on free boundary CMC annuli in $\R^3$. There are however several sources of complication in the process when we consider $\S^3$ or $\H^3$ as our ambient space, and their resolution requires new ideas. For instance, we cannot use the Weierstrass representation of minimal surfaces as in \cite{FHM}. And, in contrast with the $\R^3$ case in \cite{CFM}, in our spherical or hyperbolic setting we do not have an explicit expresion for the \emph{period map} that controls the periodicity of the spherical curvature lines of our examples. Moreover, we cannot use CMC surfaces with \emph{planar} curvature lines as a limit in order to control the centers of the spherical curvature lines, as we did in \cite{CFM} for the Euclidean case, since such surfaces do not exist in $\S^3$ or $\H^3$.

We next explain the basic steps of the proof.

In Section \ref{sec:prelim} we present some preliminaries on CMC surfaces in space forms foliated by \emph{spherical} curvature lines, i.e. each curvature line of this foliation lies in some $2$-dimensional totally umbilic surface of the ambient space. 

In Section \ref{sec:solsg} we recall our construction in \cite{CFM} of some special solutions to an overdetermined problem for the sinh-Gordon equation. When this equation is viewed as the Gauss equation of a CMC surface in $\M^3(\ep)=\S^3$ or $\H^3$ parametrized by curvature lines (with $H>1$ in $\H^3$), these solutions yield complete CMC surfaces in $\M^3(\ep)$ foliated by spherical curvature lines. Along such curvature lines, the surface intersects the corresponding totally umbilic surface at a constant angle, by Joachimstal's theorem. In this way, we end up with a family of conformal CMC immersions $\psi(u,v):\R^2\flecha \M^3(\ep)$ paramerized by curvature lines, which depends on three parameters $(a,b,c)$, and so that the curves $v\mapsto \psi(u,v)$ are spherical.

In Section \ref{sec:romega} we study the geometry of the immersions $\psi(u,v)$. We prove that they are symmetric with respect to a \emph{horizontal} totally geodesic  surface ${\bf S}\subset \M^3(\ep)$ and with respect to a number of \emph{vertical} totally geodesic surfaces $\Omega_k\subset \M^3(\ep)$ that, in adequate conditions, intersect along a vertical geodesic of $\M^3(\ep)$ that contains all the \emph{centers} of the totally umbilic surfaces where the spherical curvature lines $v\mapsto \psi(u,v)$ of the immersion lie.

We also define a real analytic \emph{period map} $\Theta(a,b,c)$ with the property that, when $\Theta(a,b,c)=m/n\in \Q$, the spherical curvature lines $v\mapsto \psi(u,v)$ are periodic curves with rotation index $m$, while $n$ gives the number of vertical symmetry surfaces $\Omega_k$. In particular, when $\Theta(a,b,c)=m/n$, the restriction of $\psi$ to $[-u_0,u_0]\times \R$ covers a CMC annulus $\Sigma_0=\Sigma_0(a,b,c,u_0)$ in $\M^3(\ep)$ that intersects with a constant angle two (isometric) totally umbilic surfaces of $\M^3(\ep)$.

Thus, our objetive will be to show that along some real analytic curves in the parameter space $(a,b,c,u_0)$, the resulting CMC annuli $\Sigma_0$ are free boundary in some geodesic ball ${\bf B}\subset \M^3(\ep)$. That is, we will need to control simultaneously that both boundary curves of $\Sigma_0$ lie in the same totally umbilic surface of $\M^3(\ep)$, that this surface bounds a compact geodesic ball ${\bf B}\subset \M^3(\ep)$, that $\Sigma_0$ is totally contained in ${\bf B}$, and that the intersection angle along $\parc \Sigma_0$ with $\parc {\bf B}$ is $\pi/2$. All of this while controlling the possible embeddedness of $\Sigma_0$.

To achieve this, the main idea will be to bifurcate from some rotational free boundary CMC surface, so that the spherical curvature lines conditions is preserved. For this, we will need a quite detailed description, that will be carried out in Section \ref{sec:rotational}, on the existence of \emph{critical} (spherical) catenoids in $\S^3$, and \emph{critical nodoids} in geodesic balls of $\S^3$ and $\H^3$. Section \ref{sec:rotational} is essentially independent from the rest of the paper, and the proofs there are postergated to an appendix.

In Section \ref{sec:double} we will show that, when $a=1$, the immersion $\psi(u,v)$ parametrizes a compact piece of a rotational CMC surface. More specifically, of either a nodoid (in $\H^3$), or a nodoid, a (spherical) catenoid or a flat torus in $\S^3$. The parameter $c$ will control the necksize of this example.  The parameter $b$ is needed to account for the fact that, in the rotational case, each curve $v\mapsto \psi(u,v)$ is a circle, and so there is a priori an infinite number of totally umbilic surfaces in $\M^3(\ep)$ that contain it. The parameter $b$ determines a choice for such umbilic surfaces.

In Section \ref{sec:perrot} we will give an explicit expression for the period map $\Theta(1,b,c)$ in the case $a=1$ that allows for a good control on the periodicity of the parametrized curvature lines $v\mapsto \psi(u,v)$, which in this $a=1$ case are merely circles.

In Section \ref{detectcritical} we will restrict to a certain \emph{free boundary region} of our parameter space, and control there, also for the case 
$a=1$, the situation in which $\Sigma_0(1,b,c,u_0)$ covers a critical catenoid or nodoid in $\M^3(\ep)$. We will show that these rotational free boundary surfaces must appear along certain curves of the parameter domain in that $a=1$ case.

In Section \ref{sec:proofmain} we prove Theorems \ref{th:main1} and \ref{th:main2}. For that we use our study of the rotational $a=1$ case in the previous sections in what regards the free boundary annulus structure of the examples, and induce it to the non-rotational $a>1$ case. The embeddedness is obtained for values of the period $\Theta(a,b,c)$ of the  form $-1/n$, with $a>1$ close to $1$.

Finally, in Section \ref{sec:capillary} we use some of the analysis of the previous sections to construct embedded capillary CMC surfaces in geodesic balls of $\S^3$ and $\H^3$, for all values of $H$.

\subsection{Open problems}
It is very natural to conjecture that the (spherical) critical catenoids are the only embedded free boundary minimal annuli in geodesic balls of $\S^3$; see \cite{LM2,Me}. Our results imply that the embeddedness assumption cannot be removed from this conjecture.

More generally, aligned with the theorems in \cite{FHM,CFM} for the Euclidean case, one can conjecture that any embedded free boundary CMC annulus in a geodesic ball of $\R^3, \S^3$ or $\H^3$ should be foliated by spherical curvature lines. We note that the existence of an embedded free boundary minimal annulus with spherical curvature lines in a geodesic ball of $\S^3$ is also open (in $\R^3$, the results in \cite{FHM} show that these do not exist).

In the immersed case, it would be interesting to construct a free boundary CMC annulus in a geodesic ball that is not foliated by spherical curvature lines, or to show that such an example cannot exist. We also do not know if there exist continuous deformations of free boundary CMC annuli in a fixed geodesic ball of $\R^3$, $\S^3$ or $\H^3$, with a fixed mean curvature $H$.

\section{Preliminaries}\label{sec:prelim}

Let $\M^3(c_0)$ denote the space form of constant curvature $c_0\in \R$. In the case $c_0\neq 0$, we view $\M^3(c_0)$ in the usual way as a hyperquadric of $\R_{\ep}^4=(\R^4,\esiz,\esde)$, $$\esiz, \esde=dx_1^2+dx_2^2+dx_3^2+\ep dx_4^2,$$ where $\ep$ is the sign of $c_0$. That is, $\R_{\ep}^4$ is either the Euclidean $4$-space (if $\ep=1$) or the Lorentz-Minkowski space $\mathbb{L}^4$ (if $\ep=-1$) and $$\M^3(c_0)=\{x\in \R_{\ep}^4 : \esiz x,x\esde = 1/c_0\},$$ with $x_4>0$ if $\ep=-1$. 

Let $\Sigma$ denote an immersed oriented surface in $\M^3(c_0)$ with constant mean curvature $H\in \R$. Let $N$ denote the unit normal vector field of $\Sigma$ in $\M^3(c_0)$. Let $\zeta:=u+iv$ denote a conformal parameter for $\Sigma$, so that its first fundamental form is $I=e^{2\omega} (du^2+dv^2)$. Then, the Codazzi equation gives that the Hopf differential $Q:=\esiz \psi_{\zeta \zeta},N\esde$ is holomorphic, and the Gauss equation for $\Sigma$ in the $(u,v)$-parameters is 
\begin{equation}\label{gauss1}
\Delta \omega + (H^2+c_0) e^{2\omega}- 4|Q|^2 e^{-2\omega}=0.  
\end{equation}

Assume that $\Sigma$ is simply connected and does not have umbilical points. Then, after a change of conformal parameter, we can assume that $Q$ is constant. In that way, the Gauss equation \eqref{gauss1} is of the form
\begin{equation}\label{gauss2}
\Delta \omega + A e^{2\omega}- B e^{-2\omega}=0,   \hspace{0.5cm} A,B\in \R, \, B>0.
\end{equation}

Conversely, let $\omega(u,v):\R^2\flecha \R$ satisfy \eqref{gauss2} with respect to constants $A,B$, and let $H,c_0,Q\in \R$ be constants, with $Q\neq 0$, so that 
\begin{equation}\label{valorab}
A=H^2+c_0, \hspace{0.5cm} B=4|Q|^2
\end{equation} hold. Then, there exists an immersion $$\psi(u,v):\R^2\flecha \M^3(c_0)$$ with constant mean curvature $H$, Hopf differential $Q$, and whose first and second fundamental forms are 
\begin{equation}\label{12s}
I=e^{2\omega} (du^2+dv^2), \hspace{0.5cm} II= (H e^{2\omega} + 2Q)du^2 + (H e^{2\omega} -2Q)dv^2
\end{equation} 
The principal curvatures $\kappa_1> \kappa_2$ of $\psi$ are 
\begin{equation}\label{pricur1}
\kappa_1 =H + 2|Q| e^{-2\omega}, \hspace{0.5cm} \kappa_2 =H- 2|Q| e^{-2\omega}.
\end{equation}
The smallest principal curvature $\kappa_2$ corresponds to the principal $u$-curves (resp. $v$-curves) if $Q<0$ (resp. $Q>0$).

This surface $\Sigma$ is unique up to orientation preserving ambient isometries, or equivalently, up to prescribing the moving frame $\{\psi,\psi_u,\psi_v,N\}$ at some point $(u_0,v_0)$. Here, $N$ is the unit normal of $\Sigma$ associated to the parametrization $\psi$. The Gauss-Weingarten formulas of $\psi$ are 
\begin{equation}\label{gw}
\def\arraystretch{1.7}\begin{array}{l}
\psi_{uu}= \omega_u \psi_u - \omega_v \psi_v +(H e^{2\omega} +2Q) N - c_0 e^{2\omega} \psi \\
\psi_{uv}= \omega_v \psi_u + \omega_u \psi_v  \\
\psi_{vv}= -\omega_u \psi_u + \omega_v \psi_v + (H e^{2\omega} -2Q) N - c_0 e^{2\omega} \psi\\
N_{u}= - (H +2Qe^{-2\omega}) \psi_u  \\
N_{v}= - (H -2Qe^{-2\omega}) \psi_v  
\end{array}
\end{equation}

We next analyze the property that a curvature line $v\mapsto \psi(u_,v)$ is \emph{spherical}, i.e. it lies in a totally umbilical surface of $\M^3(c_0)$.

The totally umbilical surfaces of $\M^3(c_0)$ are given by the intersection of hyperplanes of $\R^4_{\ep}$ with $\M^3(c_0)$. They can be described by 
\begin{equation}\label{eq:esferas}
S[m,d]:=\{x\in \M^3(c_0) : \esiz x,m\esde =d\},    
\end{equation}
for some $m\in \R_{\ep}^4\setminus\{\bf 0\}$ and $d\in \R$ (here $m$ and $d$ are determined up to a common multiplicative factor). The condition for $S[m,d]$ being a (non-empty) surface is that 
$$\esiz m,m\esde - c_0  d^2 >0$$ and that, if $\ep=-1$ and $\esiz m,m\esde \geq 0$, $d$ and the $x_4$-coordinate of $m$ have opposite signs. When $\ep=-1$ ($c_0<0$), we have that $S[m,d]$ is a sphere (resp. horosphere, pseudosphere) in $\H^3(c_0)$ if $\esiz m,m\esde$ is negative (resp. zero, positive). If $d=0$, then $S[m,d]$ is totally geodesic in $\M^3(c_0)$.

\begin{lemma}\label{lem:spherical}
For each fixed $u\in\R$, the $v$-curvature line $\psi(u,v)$ of $\psi$ is spherical if and only if there exist $\alfa(u),\beta(u)\in\R$ such that 
\begin{equation}\label{overdet}
2\omega_u = \alfa(u) e^{\omega} + \beta(u) e^{-\omega}.
\end{equation}
In that situation, $\psi$ intersects $S[m(u),d(u)]$ at a constant angle $\theta(u)$ along $v\mapsto\psi(u,v)$ and we have
\begin{equation}\label{expab}
\alfa=2 \frac{|\hat{N}| H \cos \theta - c_0 d}{ |\hat{N}| \sin\theta} ,\hspace{0.5cm} \beta = -4Q \frac{\cos\theta}{\sin\theta},
\end{equation}
where $|\hat{N}|= \sqrt{\esiz m,m\esde -  c_0 d^2 }.$ In particular, $\beta=0$ if and only if $\cos\theta=0$, and $\alfa=\beta=0$ if and only if $\cos \theta=0$ and $S[m,d]$ is totally geodesic in $\M^3(c_0)$. 
\end{lemma}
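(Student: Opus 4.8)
\textbf{Proof proposal for Lemma \ref{lem:spherical}.}

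The plan is to characterize ``sphericalness'' of a curve in $\M^3(c_0)$ by the standard fact that a curve lies in a totally umbilical surface $S[m,d]$ precisely when there is a fixed vector $m\in\R^4_\ep$ with $\esiz \psi(u,\cdot),m\esde$ constant in $v$. So I fix $u$ and set $f(v):=\esiz\psi(u,v),m\esde$ for an unknown $m$; I want to find necessary and sufficient conditions on $\omega$ for such an $m$ (and $d$) to exist. Differentiating in $v$ and using the Gauss--Weingarten formulas \eqref{gw}, one has $f'=\esiz\psi_v,m\esde$ and $f''=\esiz\psi_{vv},m\esde = -\omega_u\esiz\psi_u,m\esde+\omega_v\esiz\psi_v,m\esde+(He^{2\omega}-2Q)\esiz N,m\esde - c_0 e^{2\omega}\esiz\psi,m\esde$. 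The idea is to expand $m$ in the moving frame $\{\psi,\psi_u,\psi_v,N\}$, write $m = \lambda\psi+\mu\psi_u+\nu\psi_v+\rho N$ with coefficient functions of $(u,v)$, and translate the requirement that $m$ be a constant vector into ODEs for $\lambda,\mu,\nu,\rho$ via \eqref{gw}. The sphericalness of the $v$-curve is the statement that such a constant $m$ exists with $f$ constant in $v$, i.e.\ that the $\psi_v$-component condition and the constancy conditions are compatible.

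Concretely, I would first reduce to the constant-angle picture: by Joachimsthal's theorem, if a curvature line of $\psi$ lies in a totally umbilical surface $S$, then $\psi$ meets $S$ at a constant angle along that line. This lets me write the normal $m/|\hat N|$ of $S$ (suitably normalized so that $\esiz m,m\esde - c_0 d^2 = |\hat N|^2$) along the curve as a fixed linear combination of $N$ and $\psi_u$ — the unit conormal — with coefficients $\cos\theta$ and $\sin\theta$ for a constant $\theta(u)$; this is because $\psi_v$ is tangent to both $\Sigma$ and $S$ along the curve, so the normal of $S$ lies in $\mathrm{span}\{N,\psi_u,\psi\}$, and the $\psi$-component is pinned down by $\esiz x,m\esde = d$ together with $\esiz x,x\esde = 1/c_0$. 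Writing this out, $m = |\hat N|(\cos\theta\, N + \frac{\sin\theta}{e^\omega}\psi_u) + c_0 d\,\psi$ along $v\mapsto\psi(u,v)$, and then $m$ being $v$-independent is, using \eqref{gw} and $N_v = -(H-2Qe^{-2\omega})\psi_v$, equivalent to a single scalar relation among $\omega_u$, $e^\omega$, $e^{-\omega}$ obtained by collecting the $\psi_v$-coefficient of $\partial_v m$ and setting it to zero. That relation is exactly \eqref{overdet}, and reading off the coefficients gives \eqref{expab}.

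For the converse direction, given \eqref{overdet} I define $m(u)$ and $d(u)$ by \eqref{expab} (equivalently by the frame formula above) and check, again via \eqref{gw}, that $\partial_v m \equiv 0$, so $m$ is genuinely constant along the curve and the curve lies in $S[m,d]$; one must also verify the non-degeneracy $\esiz m,m\esde - c_0 d^2 = |\hat N|^2 > 0$ so that $S[m,d]$ is an honest totally umbilical surface, which follows from the normalization. The constant-angle statement is then immediate from the construction, and the final ``in particular'' clauses are direct inspection of \eqref{expab}: $\beta = -4Q\cos\theta/\sin\theta$ vanishes iff $\cos\theta = 0$ (as $Q\neq 0$), and then $\alfa = -2c_0 d/(|\hat N|\sin\theta)$ vanishes iff $d = 0$, i.e.\ iff $S[m,d]$ is totally geodesic. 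The main obstacle I anticipate is bookkeeping: carefully handling the $\ep=\pm1$ signs and the normalization of $(m,d)$ (which are only defined up to a common scalar) so that the factor $|\hat N| = \sqrt{\esiz m,m\esde - c_0 d^2}$ comes out correctly, and making sure the ``angle'' $\theta(u)$ is well-defined (its cosine is $\esiz N,m\esde/|\hat N|$ along the curve, which one must check is $v$-independent before invoking Joachimsthal, or else prove Joachimsthal in this ambient setting). Everything else is a direct, if slightly lengthy, computation with the Gauss--Weingarten system \eqref{gw}.
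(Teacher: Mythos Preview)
Your proposal is correct and follows essentially the same approach as the paper: expand $m$ in the moving frame $\{\psi,\psi_u,\psi_v,N\}$ (the paper writes $\hat N := m - c_0 d\,\psi$ and then $m = e^{-\omega}|\hat N|\sin\theta\,\psi_u + |\hat N|\cos\theta\,N + c_0 d\,\psi$), use \eqref{gw} and $\esiz\psi_{vv},m\esde=0$ to extract \eqref{overdet}--\eqref{expab}, and for the converse define the candidate $\tilde m$ directly in terms of $\alpha,\beta,Q,H$ (equation \eqref{emetil}) and verify $\tilde m_v=0$ via \eqref{gw}. The only cosmetic difference is that the paper does not invoke Joachimsthal by name but simply defines $\theta$ through $\esiz N,\hat N\esde = |\hat N|\cos\theta$ and reads off constancy from the frame expression, which is exactly the direct verification you anticipated as an alternative.
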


\begin{proof}
If for a fixed $u=u_0$ the curve  $v\mapsto \psi(u_0,v)$ lies in $S[m,d]$, then $\esiz m,\psi_{v}\esde=0$. One sees from here and $\esiz m,\psi\esde=d$ that 
$$\hat{N}:= m - c_0 d \psi$$ 
lies in $T\mathbb{M}^3(c_0)$ and is normal to $S[m,d]$ along $\psi(u_0,v)$, since $\esiz \hat{N},{\bf t}\esde =0$ for every ${\bf t}\in \R_{\ep}^4$ orthogonal to both $\psi$ and $m$. Defining $\theta$ by $\esiz N,\hat{N}\esde =|\hat{N}| \cos\theta$, one easily obtains that, changing $\theta$ by $-\theta$ if necessary,
\begin{equation}\label{eq:m}
    m= e^{-\omega} |\hat{N}| \sin\theta\, \psi_u + |\hat{N}| \cos \theta\, N + c_0 d \psi.
\end{equation}

Using this expression together with \eqref{gw} and $\esiz \psi_{vv},m\esde=0$, we obtain \eqref{overdet} for  $\alfa=\alfa(u_0),\beta=\beta(u_0)$ given by \eqref{expab}.

Conversely, assume \eqref{overdet} holds along the line $v\mapsto (u_0,v)$. Then, \eqref{overdet} together with \eqref{gw}, imply that the expression
\begin{equation}\label{emetil}
\tilde{m}=4Q e^{-\omega} \psi_u - \beta N -(2Q \alpha + H\beta)\psi
\end{equation}
satisfies $\tilde{m}_v(u_0,v)=0$, i.e., $\tilde{m}$ is constant along $v\mapsto (u_0,v)$. Therefore, $\esiz \tilde{m},\psi\esde$ is also constant along $v\mapsto (u_0,v)$, i.e., the curvature line $\psi(u_0,v)$ is spherical.
\end{proof}

\section{Special solutions of the sinh-Gordon equation}\label{sec:solsg}

In this section we recall the construction in our previous paper \cite{CFM} of some special solutions of the elliptic sinh-Gordon equation, that will be used later on. We will make a more detailed discussion than in \cite{CFM}, in order to motivate their origin. We will also indicate some new additional properties of these solutions that will be important for our purposes here.

\subsection{Wente's overdetermined system}

We will seek solutions $\rho(u,v):\R^2\flecha \R$ to the overdetermined system
\begin{equation}\label{sinhg}
\Delta \rho + \cosh \rho \sinh \rho =0,
\end{equation}
\begin{equation}\label{oversin}
2\rho_u = \hat{\alfa}(u) e^{\rho} + \hat{\beta}(u) e^{-\rho}.
\end{equation}
for functions $\hat{\alfa},\hat{\beta}:\R\flecha \R$. Note that the system \eqref{sinhg}-\eqref{oversin} is precisely the system \eqref{gauss2}-\eqref{overdet} for the choices $A=B=1/4$. We will write \eqref{oversin} in the alternative form
\begin{equation}\label{omu}
\rho_u = y(u)\cosh\rho +z(u) \sinh\rho,
\end{equation}
where $y(u),z(u)$ are real functions. The next discussion is taken  from Wente \cite{W}.

To start, in order for \eqref{sinhg}-\eqref{omu} to hold, $y(u), z(u)$ should be a solution to the differential system 
\begin{equation}\label{system1}
\left\{\def\arraystretch{1.3} \begin{array}{lll} y'' & = & (\hat{a}-1) y - 2 y (y^2-z^2), \\ z'' & = & \hat{a}z - 2 z (y^2-z^2),\end{array} \right.
\end{equation}
with respect to some constant $\hat{a}$.

Moreover, if we denote $Z(u,v):=e^{\rho(u,v)}$, then
\begin{equation}\label{edop}
4Z_v^2 = p(u,Z),
\end{equation}
where 
\begin{equation}\label{def:pu}
p(u,x):= -(1+(y+z)^2) x^4 -4(y'+z') x^3+ 6\hat{\gamma} x^2+4(y'-z')x- (1+(y-z)^2).
\end{equation}
Here, we are denoting $y=y(u)$, $z=z(u)$, and $6\hat{\gamma}=6(y^2-z^2)-4(\hat{a}-1/2)$. Thus, for each fixed $u$ value, $p(u,x)$ is a polynomial of degree four.

This process can be reversed under some additional conditions, once we prescribe the initial values 
 \begin{equation}\label{ivasis2}
 (y(0),z(0),y'(0),z'(0),\hat{a},\rho(0,0)).
 \end{equation} 

The system \eqref{system1} has a Hamiltonian structure. The basic Hamiltonian constants of \eqref{system1} are 
\begin{equation}\label{def:h}
y'^2-z'^2 - (\hat{a}-1)y^2 + \hat{a} z^2 + (y^2-z^2)^2 = h \in \R
\end{equation}
and
\begin{equation}\label{def:k}
(z y'-y z')^2 + z'^2 + z^2 (y^2-z^2 -\hat{a}) = k\in \R.
\end{equation}

One can then follow the classical procedure to solve the Hamilton-Jacobi equations by separation of variables, see \cite{W}. First, we apply the change of variables
 \begin{equation}\label{change}
y^2 = -(1-s) (1-t), \hspace{1cm} z^2 = -s t.
\end{equation}
Using \eqref{change}, the system \eqref{system1} transforms into the autonomous first order system
\begin{equation}\label{system2}
\left\{\def\arraystretch{1.3} \begin{array}{lll} s'(\landa)^2 & = & s (s-1) q(s), \hspace{0.5cm} (s\geq 1), \\ t'(\landa)^2 & = & t (t-1) q(t), \hspace{0.6cm} (t\leq 0),\end{array} \right.
\end{equation}
where $q(x)$ is the third-degree polynomial
\begin{equation}\label{def:q}
q(x)=-x^3 + (\hat{a}+1) x^2 + (h-\hat{a})x + k,
\end{equation}
and the parameter $\landa$ of \eqref{system2} is linked to $u$ by 
\begin{equation}\label{changecor}
2 u'(\landa)= s(\landa)-t(\landa)>1.
\end{equation}

\subsection{Constructions of special solutions to the system}
 
We now explain our construction in \cite{CFM}. To start, we will fix 
\begin{equation}\label{yzz}
y(0)=z(0)=0
\end{equation} 
by geometrical reasons. More specifically, we intend to use \eqref{sinhg} as the Gauss equation for a CMC surface $\Sigma$ in some space $\M^3(c_0)$, see Section \ref{sec:romega} below. These initial conditions will determine that $\Sigma$ intersects orthogonally a totally geodesic surface of $\M^3(c_0)$. See also Lemma \ref{lem:spherical}. In particular $\Sigma$ will have a useful \emph{symmetry plane}.

We are interested in the possibility of obtaining embedded examples of free boundary CMC annuli in geodesic balls of $\M^3(c_0)$. After fixing the initial conditions \eqref{yzz}, a lengthy, detailed inspection of all the cases that we will not reproduce here seems to indicate that this embeddedness is only possible when $p(0,x)$ in \eqref{def:pu} has two positive roots and two negative roots. So, we should prescribe the values of $y'(0), z'(0)$ and $\hat{a}$ in a way that this property holds for $p(0,x)$ in \eqref{def:pu}. We will also be interested in the situation where the two positive roots of $p(0,x)$ collapse into one, since this situation will detect Delaunay examples in $\M^3(c_0)$.

Because of this, it seems convenient to seek initial conditions $y'(0),z'(0),\hat{a}$ so that $p(0,x)$ in \eqref{def:pu} can be written in the factor form below, which will force it to have the desired root structure: 
 \begin{equation}\label{def:pab}
p(0,x)= -\left(x-\frac{a}{c}\right)\left(x-\frac{1}{a c}\right)\left(x+b c\right)\left(x+\frac{c}{b}\right),
 \end{equation}
where $(a,b,c)$ lie in the parameter domain 
\begin{equation}\label{domain}
\cO :=\{(a,b,c)\in \R^3 : a\geq 1, b\geq 1, c\geq 1\}.
\end{equation}
Clearly, $a=1$ if and only if $p(0,x)$ has a  double positive root. Note that, from \eqref{yzz} and \eqref{def:pu}, we have
\begin{equation}\label{def:pu2}
p(0,x):= -x^4-4(y'(0)+z'(0))x^3 + (2-4\hat{a})x^2 +4(y'(0)-z'(0))x -1.
\end{equation}
By comparing \eqref{def:pab} with \eqref{def:pu2} we can express $\hat{a},y'(0),z'(0)$ in terms of $(a,b,c)$. Specifically, denoting
\begin{equation}\label{deABC}
\cA:=\frac{1}{2}\left(a+\frac{1}{a}\right), \hspace{0.5cm}\cB:=\frac{1}{2}\left(b+\frac{1}{b}\right), \hspace{0.5cm}\cC:=\frac{1}{2}\left(c-\frac{1}{c}\right),
\end{equation}
we find that
\begin{equation}\label{aabc}
\hat{a}=1-\cA \cB + \cC^2,
\end{equation}
and 
\begin{equation}\label{pqabc}
y'(0)=\frac{(\cA+\cB)\cC}{2},\hspace{0.5cm} z'(0)= \frac{(\cB-\cA) \sqrt{\cC^2+1}}{2}.
\end{equation}
We will also fix the initial condition $\rho(0,0)$ for $\rho(u,v)$ by
\begin{equation}\label{omini}
e^{\rho(0,0)}=\frac{1}{ac}.
\end{equation}

As a result, we have:

\begin{theorem}[\cite{CFM}]\label{th:otro}
For each $(a,b,c)\in \cO$ there exists a solution $\rho(u,v)$ to \eqref{sinhg} globally defined on $\R^2$ that satisfies the additional overdetermined condition \eqref{omu}. The solution $\rho(u,v)$ is unique after fixing the initial conditions in \eqref{ivasis2} by \eqref{yzz}, \eqref{aabc}, \eqref{pqabc} and \eqref{omini}.
\end{theorem}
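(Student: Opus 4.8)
The plan is to construct the solution $\rho(u,v)$ by reading the analysis of Section~3.1 backwards, starting from the prescribed data. First I would use the initial conditions $y(0)=z(0)=0$ together with the formulas \eqref{aabc} and \eqref{pqabc} to fix the constant $\hat a$ and the initial velocities $y'(0),z'(0)$ in terms of $(a,b,c)\in\cO$; one must check that these are well-defined real numbers, which is immediate since $\cA,\cB\geq 1$ and $\cC\in\R$, and that by construction $p(0,x)$ then admits the factorization \eqref{def:pab} — this is just matching coefficients between \eqref{def:pab} and \eqref{def:pu2}, and it is a finite computation that I would not reproduce in full. With $(\hat a, y'(0),z'(0))$ in hand, the autonomous Hamiltonian system \eqref{system1} has a unique solution $(y(u),z(u))$ defined on a maximal interval around $0$; the first task is to show this interval is all of $\R$. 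For this I would invoke the conserved quantities $h$ and $k$ of \eqref{def:h}–\eqref{def:k}: they bound $y^2-z^2$ and hence, via \eqref{change} and the polynomial $q$, confine $s(u)$ and $t(u)$ to bounded intervals $[1,s_{\max}]$ and $[t_{\min},0]$ determined by the roots of $q$, so $(y,z)$ stays in a compact set and the solution is global. The relation \eqref{changecor}, $2u'(\lambda)=s-t>1$, shows $u$ is a genuine reparametrization, so the $u$-domain is indeed all of $\R$.

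Next I would recover $\rho$ itself. Define $Z(u,v):=e^{\rho(u,v)}$ and prescribe $Z(u,0)$ for all $u$ by integrating the ODE \eqref{omu} in $u$ along $v=0$, with $Z(0,0)=1/(ac)$ from \eqref{omini}; globality of this integration again follows from the bounds on $y,z$. Then, for each fixed $u$, let $Z(u,\cdot)$ solve the first-order ODE $4Z_v^2=p(u,Z)$ with this initial value — here one needs that $Z(u,0)$ lies strictly between two consecutive roots of $p(u,\cdot)$ so the solution is a smooth periodic function of $v$ oscillating between those roots; at $u=0$ the value $1/(ac)$ lies between the roots $1/(ac)$... — more precisely one checks $1/(ac)$ is a root at $u=0$, so in fact one should take the initial condition at a generic $u$, or observe that the pair $(Z(u,0),Z_v(u,0))$ is propagated consistently from the $v$-ODE data. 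The delicate point is \textbf{compatibility}: one must verify that the function $\rho(u,v)=\log Z(u,v)$ so produced actually satisfies both the PDE \eqref{sinhg} and the overdetermined relation \eqref{omu} simultaneously. This is where the structure equations do the work: differentiating \eqref{omu} in $u$ and using \eqref{system1} produces exactly \eqref{sinhg}, while differentiating the defining ODE \eqref{edop} in $v$ and comparing with $\partial_v$ of \eqref{omu} gives an identity that holds precisely because of the Hamiltonian relations \eqref{def:h}–\eqref{def:k} and the definition of $\hat\gamma$. I would organize this as: (i) the two prescriptions for $Z$ along $v=0$ and along each $u={\rm const}$ line agree to second order at every point of the axis; (ii) the overdetermined system \eqref{sinhg}–\eqref{omu} is in involution (its cross-derivatives close), so by the Frobenius-type argument of Wente \cite{W} the local solution with the given Cauchy data on $\{v=0\}$ is unique and extends to all of $\R^2$.

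Uniqueness is then the easy half: any solution of \eqref{sinhg} satisfying \eqref{omu} forces $y,z$ to obey \eqref{system1} with the constant $\hat a$, and forces $Z=e^\rho$ to satisfy \eqref{edop}; hence fixing the Cauchy data \eqref{ivasis2} — which is exactly what \eqref{yzz}, \eqref{aabc}, \eqref{pqabc}, \eqref{omini} do — pins down $(y,z)$, then $\hat\gamma$, then $p(u,x)$, then $Z$, and finally $\rho$. The main obstacle I anticipate is \textbf{the global existence of $Z(u,v)$ in $v$ uniformly in $u$}: one must rule out that as $u$ varies the relevant pair of consecutive roots of the quartic $p(u,x)$ collide or that $Z(u,0)$ escapes the interval between them, since at such a $u$ the $v$-ODE would degenerate and $\rho$ would blow up. Controlling the discriminant and the ordering of the roots of $p(u,\cdot)$ along the whole $u$-line — using the fact that at $u=0$ the roots are $a/c,\,1/(ac),\,-bc,\,-c/b$ with the prescribed sign pattern, and that the two Hamiltonian constants prevent a degeneration — is the technical heart of the argument, and it is precisely the point where the careful choice of the parameter domain $\cO$ and of the factorization \eqref{def:pab} pays off. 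Since this theorem is quoted from \cite{CFM}, I would in practice cite that reference for the delicate root-tracking estimates and present here only the structural skeleton above.
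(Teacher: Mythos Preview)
The paper itself gives no proof of this theorem: it is stated with the attribution \cite{CFM} and the text simply moves on, so there is no ``paper's own proof'' to compare your proposal against. Your sketch is therefore being assessed on its own merits, and structurally it is sound --- reconstruct $(y,z)$ from the Hamiltonian system, then rebuild $\rho$ from the pair of ODEs \eqref{omu} and \eqref{edop}, checking involutivity.

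One organizational point is worth flagging. You propose to integrate \eqref{omu} in $u$ along $\{v=0\}$ first, obtaining $Z(u,0)$, and then to solve $4Z_v^2=p(u,Z)$ in $v$ for each fixed $u$. You yourself notice the snag: at $u=0$ the initial value $Z(0,0)=1/(ac)$ is a \emph{root} of $p(0,\cdot)$, so $Z_v(0,0)=0$ and the first-order equation $4Z_v^2=p(u,Z)$ does not by itself pick out the non-constant branch. The cleaner ordering --- and the one implicit in the paper's discussion just after the theorem, where it solves \eqref{eq:Xv0} for $X(v)=e^{\rho(0,v)}$ oscillating on $[1/(ac),a/c]$ and then invokes the Cauchy problem for \eqref{sinhg} along $\{u=0\}$ --- is the reverse: first determine $\rho(0,v)$ from the $v$-ODE at $u=0$ (where starting at a simple root, or the double root when $a=1$, is harmless once you pass to the second-order form $8Z_{vv}=p_x(0,Z)$), and then for each fixed $v$ integrate the $u$-ODE \eqref{omu} with initial datum $\rho(0,v)$. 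With this ordering the ``root-tracking'' obstacle you anticipate largely evaporates: global existence in $u$ of the scalar ODE $\rho_u=y\cosh\rho+z\sinh\rho$ follows once you know $y-z$ is bounded (so $e^{-\rho}$ cannot blow up) and $y+z$ is bounded (so $e^{\rho}$ cannot blow up), and these bounds are exactly what the Hamiltonian confinement of $(s,t)$ to $[1,r_3]\times[r_1,0]$ provides via \eqref{change}. The compatibility check --- that the $\rho$ so produced actually solves \eqref{sinhg} --- then reduces to differentiating \eqref{omu} in $u$ and using \eqref{system1}, as you say.
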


We note that, since $y(0)=z(0)=0$, we have from \eqref{omu} that $\rho_u(0,v)=0$ for all $v$. Thus, by uniqueness to the Cauchy problem for \eqref{sinhg} along $(0,v)$, we have
\begin{equation}\label{simro1}
\rho(-u,v)=\rho(u,v), \hspace{0.5cm} \forall \, (u,v)\in \R^2.
\end{equation}

Denote 
\begin{equation}\label{defx}
X(v):=e^{\rho(0,v)}.
\end{equation}
Then, it holds
\begin{equation}\label{eq:Xv0}
4X_v^2 = p(0,X),
\end{equation}
where $p(0,x)$ is given by \eqref{def:pab}. It follows from \eqref{edop} and \eqref{omini} that $X(v)$ takes its values in the interval $[1/(ac),a/c]$. Recall that when $a=1$, the two positive roots of $p(0,x)$ collapse into a double root at $x=1/c$, and in that case $X(v)=1/c$, constant.

Because of \eqref{edop} and \eqref{def:pab}, one can prove (see \cite{CFM}) that, when $a>1$, $X(v)$ varies monotonically from $[0,\sigma]$ to the interval $[1/(ac),a/c]$, where 
\begin{equation}\label{desi}
\sigma:= \int_{1/(ac)}^{a/c} \frac{2}{\sqrt{p(0,x)}} \, dx >0.
\end{equation} 
In this way, $X(v)$ is $2\sigma$-periodic, and $X'(v)=0$ only when $v=k\sigma$, with $k\in \Z$.

Note that $\sigma=\sigma(a,b,c)$ is only defined at first in $\cO\cap \{a>1\}$. However we have from \cite[Remark 5.2]{CFM}:

\begin{proposition}\label{prop:exsi}
The function $\sigma(a,b,c)$ extends analytically to the set $\{(a,b,c): a,b>0,c\geq 1\}$, and 
\begin{equation}\label{exsi}
\sigma(1,b,c)=\frac{2\pi c}{\sqrt{1+(b+1/b)c^2 + c^4}}.
\end{equation}
\end{proposition}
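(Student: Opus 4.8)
The plan is to transform the elliptic integral \eqref{desi}, whose integration interval $[1/(ac),\,a/c]$ collapses to the single point $1/c$ as $a\to 1$, into an equivalent integral over the \emph{fixed} interval $[-1,1]$ with an integrand depending analytically on $(a,b,c)$; the value \eqref{exsi} at $a=1$ then follows by simply substituting $a=1$ in the new formula.

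First I would factor the quartic \eqref{def:pab} as $p(0,x)=(r_2-x)(x-r_1)\,g(x)$, where $r_1=1/(ac)$, $r_2=a/c$ and $g(x):=(x+bc)(x+c/b)=x^2+c(b+1/b)x+c^2$. The role of $g$ is that its two roots $-bc,\,-c/b$ are negative, so $g>0$ on $[0,\infty)$, hence $g>0$ on the (positive) interval with endpoints $r_1,r_2$. For $a>1$ one has $r_1<r_2$ and $p(0,\cdot)>0$ on $(r_1,r_2)$, so in \eqref{desi} the affine change of variable $x=\tfrac12(r_1+r_2)+\tfrac12(r_2-r_1)w$, $w\in[-1,1]$, is legitimate; under it $(r_2-x)(x-r_1)=\tfrac14(r_2-r_1)^2(1-w^2)$ and $dx=\tfrac12(r_2-r_1)\,dw$, so the factors $r_2-r_1$ cancel and \eqref{desi} becomes
\[
\sigma(a,b,c)=\int_{-1}^{1}\frac{2\,dw}{\sqrt{(1-w^2)\,g(x(w))}},\qquad x(w)=\tfrac12(r_1+r_2)+\tfrac12(r_2-r_1)w .
\]

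Next I would argue that the right-hand side of this formula is defined, and real analytic, on all of $\{a,b>0,\ c\ge1\}$ (in fact on $\{a,b,c>0\}$), which provides the desired extension. Indeed $r_1,r_2$, and hence $x(w)$ and $g(x(w))$, are analytic in $(a,b,c)$, and $x(w)$ stays in $(0,\infty)$, so $g(x(w))>0$ there; thus the integrand is the fixed, parameter-independent integrable singularity $(1-w^2)^{-1/2}$ times the factor $2\,g(x(w))^{-1/2}$, which is analytic and locally bounded in $(a,b,c)$. Differentiating under the integral sign — or applying Morera's theorem and Fubini after complexifying $(a,b,c)$ on a small polydisc on which $g(x(w))$ stays off $(-\infty,0]$, so that the principal square root is holomorphic — then gives analyticity. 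For $a>1$ this integral coincides with \eqref{desi} and is positive, so it is the correct analytic continuation.

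Finally I would set $a=1$, so that $r_1=r_2=1/c$, hence $x(w)\equiv 1/c$ and the formula collapses to $\sigma(1,b,c)=2\,g(1/c)^{-1/2}\int_{-1}^{1}(1-w^2)^{-1/2}\,dw=2\pi\,g(1/c)^{-1/2}$; the routine computation $g(1/c)=(1/c+bc)(1/c+c/b)=(1+bc^2)(b+c^2)/(bc^2)=(1+(b+1/b)c^2+c^4)/c^2$ turns this into \eqref{exsi}. The one genuinely delicate point is the analyticity across $a=1$: the original integral is a bona fide elliptic integral only for $a>1$ (that is where $X(v)$ actually oscillates), so one must verify that the reformulated integral really is analytic through the collision of the two positive roots and is the branch with the correct sign. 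The substitution above is precisely what makes this transparent, since it absorbs the vanishing factor $r_2-r_1$ and leaves only the parameter-free singularities at $w=\pm1$; the remaining analyticity and boundedness of $g(x(w))^{-1/2}$ are routine because $g$ never vanishes on the relevant range.
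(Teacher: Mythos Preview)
Your argument is correct. The affine substitution $x=\tfrac12(r_1+r_2)+\tfrac12(r_2-r_1)w$ is exactly the right device: it absorbs the factor $(r_2-r_1)$ that vanishes at $a=1$, leaving an integral over the fixed interval $[-1,1]$ with the parameter-independent singularity $(1-w^2)^{-1/2}$ and the smooth, nonvanishing factor $g(x(w))^{-1/2}$. Your check that $g>0$ on the relevant range (both roots of $g$ are negative, while $x(w)$ stays in $[\min(r_1,r_2),\max(r_1,r_2)]\subset(0,\infty)$) is what makes the analyticity routine, and the evaluation at $a=1$ is then a direct computation.

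The paper does not give its own proof of this proposition; it cites it from the authors' earlier work \cite{CFM} (Remark 5.2 there). Your approach is the natural one and matches what one expects to find in that reference.
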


It was also proved in \cite{CFM} that the solution $\rho(u,v)$ has the symmetries 
\begin{equation}\label{simom}
\rho(u,k\sigma +v)= \rho(u,k\sigma-v),
\end{equation} 
with respect to $\sigma$, for any $k\in \Z$. 

Given $(a,b,c)\in \cO$, let $(y(u),z(u)):\R\flecha \R^2$ be the associated solution to system \eqref{system1} with our choice of initial conditions. The following result was proved in \cite[Prop. 4.1]{CFM}:

\begin{proposition}\label{prosis1}
If $c>1$, then there exists a unique $u_1>0$ such that:
\begin{enumerate}
\item
$y(u_1)=0$.
\item
$y(u)>0$ for every $u\in (0,u_1)$.
 \item
If $z(u)$ is not identically zero, then $z(u)\neq 0$ for every $u\in (0,u_1]$.
\end{enumerate}
Moreover, the map $u_1=u_1(a,b,c)$ is real analytic in $\cO\cap \{c>1\}$.
\end{proposition}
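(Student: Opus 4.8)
\emph{The plan} is to read assertions (1)--(3) off a direct study of the solution $(y(u),z(u))$ of \eqref{system1} with the prescribed data, using the change of variables \eqref{change}--\eqref{system2} only to produce the first zero of $y$, and then to obtain the analyticity of $u_1$ from the implicit function theorem. To start, since $c>1$ one has $\cC>0$, so \eqref{pqabc} gives $y'(0)=\tfrac12(\cA+\cB)\cC>0$; hence $y\not\equiv 0$ and $y>0$ on a right neighbourhood of $0$. In the variables $(s,t)$ of \eqref{change} one has $y^2=(s-1)(1-t)$, so $y$ vanishes exactly where $s=1$, and $s$ obeys $s'(\lambda)^2=s(s-1)q(s)$ with $q$ the cubic \eqref{def:q}. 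I would evaluate the conserved quantities \eqref{def:h}--\eqref{def:k} at the initial conditions and, using \eqref{aabc}--\eqref{pqabc}, check that $q(1)=y'(0)^2>0$ and that the three roots of $q$ are $\cC^2+1$ and $\tfrac12\bigl(1-\cA\cB\pm\sqrt{(\cA^2-1)(\cB^2-1)}\bigr)$, the latter two being $\le 0$. Then $s=1$ lies strictly between the unique positive root $r_3=\cC^2+1$ and the nonpositive ones; since $s(s-1)q(s)<0$ on $(0,1)\cup(r_3,\infty)$ and $>0$ on $(1,r_3)$, with $s=1$ and $s=r_3$ simple zeros of the right-hand side, the orbit of $s$ starting at the turning point $s(0)=1$ (pushed upward by $s''(0)=q(1)/2>0$) oscillates within $[1,r_3]$ with finite $\lambda$-period $\lambda_s$. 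Since $2u'(\lambda)=s-t>1$, the correspondence $\lambda\leftrightarrow u$ is an increasing analytic bijection of $\R$, so $y$ vanishes precisely at $u\in u(\lambda_s\Z)$; putting $u_1:=u(\lambda_s)>0$ proves (1), and $y>0$ on $(0,u_1)$ because $s>1$ there, which is (2).

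For assertion (3) the key observation is the Wronskian $W:=yz'-y'z$: a short computation from \eqref{system1} gives $W'=yz''-y''z=yz$, and $W(0)=0$ since $y(0)=z(0)=0$. If $z\not\equiv 0$, then by uniqueness for \eqref{system1} together with $z(0)=0$ we have $z'(0)\ne 0$, so after replacing $z$ by $-z$ (the system is odd in $z$) we may assume $z>0$ on a right neighbourhood of $0$. Suppose $z$ had a zero in $(0,u_1]$ and let $u_z$ be the first one; then $y>0$ and $z>0$ on $(0,u_z)$, so $W'=yz>0$ there and hence $W(u_z)>0$. But $W(u_z)=y(u_z)z'(u_z)$ because $z(u_z)=0$, and here $y(u_z)\ge 0$ (with equality exactly when $u_z=u_1$) while $z'(u_z)\le 0$ since $z$ decreases to $0$ at $u_z$; thus $W(u_z)\le 0$, a contradiction. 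Hence $z\ne 0$ on $(0,u_1]$.

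For the real analyticity of $u_1=u_1(a,b,c)$, I would note that $y$ satisfies the linear equation $y''=\bigl((\hat{a}-1)-2(y^2-z^2)\bigr)y$, so every zero of the nontrivial $y$ is simple and in particular $y'(u_1)\ne 0$. As \eqref{system1} has real analytic right-hand side and the initial data \eqref{aabc}, \eqref{pqabc}, \eqref{omini} depend analytically on $(a,b,c)$ for $a,b>0$, $c>1$, the solution map $(u,a,b,c)\mapsto y(u;a,b,c)$ is real analytic; the analytic implicit function theorem at a zero where $\partial_u y\ne 0$ then produces a real analytic local solution $u_1(a,b,c)$ of $y(u;a,b,c)=0$, which coincides with the first positive zero of $y$ (this zero varies continuously and, by the first step, stays bounded away from $0$ and $\infty$), and since $\cO\cap\{c>1\}$ is connected, $u_1$ is real analytic on it. I expect the main obstacle to be the existence step: one must be certain the $s$-orbit genuinely returns to $s=1$, which amounts to verifying that $1$ always lies in the open band of $q$ free of other roots and that the turning point $r_3=\cC^2+1$ is a simple root above $1$; the explicit roots of $q$ make this a case-free check. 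Assertion (3), by contrast, becomes essentially immediate once the identity $W'=yz$ is noticed.
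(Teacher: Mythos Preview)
The paper does not prove this proposition; it cites \cite{CFM} (Proposition 4.1 there). So there is no in-paper proof to compare against, and your proposal must stand on its own.

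Your argument is correct. For (1)--(2), your route via the $(s,t)$ system is exactly the mechanism the paper itself outlines later (Section \ref{sec:double}, items i)--iii)) in the special case $a=1$; your computation of the roots of $q$ as $\cC^2+1$ and $\tfrac12\bigl(1-\cA\cB\pm\sqrt{(\cA^2-1)(\cB^2-1)}\bigr)$ checks out (sum $=\hat a+1$, product $=k=z'(0)^2$, and $q(1)=h+k=y'(0)^2$), and since $\cA,\cB\ge 1$ the two latter roots are indeed $\le 0$ while $r_3=\cC^2+1>1$, so $s$ oscillates on $[1,r_3]$ with finite $\lambda$-period and $u_1=u(\lambda_s)$ is the first positive zero of $y$.

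For (3), your Wronskian argument is clean and, as far as I can tell without access to \cite{CFM}, possibly more elementary than the original: the identity $W'=yz$ with $W(0)=0$ forces $W(u_z)>0$ at any hypothetical first zero $u_z\in(0,u_1]$ of $z$, while $W(u_z)=y(u_z)z'(u_z)\le 0$ there, a contradiction. One small point worth making explicit: if $z'(u_z)=0$ you invoke uniqueness for the full first-order system in $(y,z,y',z')$ at time $u_z$ (not just ``linearity in $z$'') to conclude $z\equiv 0$; this works because $(y,0,y',0)$ evolves consistently with $z\equiv 0$.

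For the analyticity, your simplicity-of-zeros argument is correct (if $y(u_1)=y'(u_1)=0$, then $(0,\tilde z)$ with $\tilde z''=\hat a\tilde z+2\tilde z^3$ matching $z,z'$ at $u_1$ solves the full system, forcing $y\equiv 0$). The sentence ``since $\cO\cap\{c>1\}$ is connected, $u_1$ is real analytic on it'' is slightly off as stated: connectedness is not the point. What you need is that the local analytic branches produced by the implicit function theorem all coincide with the globally well-defined quantity ``first positive zero of $y$'', and hence patch. That identification is immediate because all zeros of $y$ are simple, so no new zero can appear before $u_1$ under perturbation.
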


Given $(a,b,c)\in \cO$, we define the \emph{free boundary region} 
\begin{equation}\label{defi:W}
\cW :=\left\{(a,b,c)\in \cO : b\geq a,  \, \cC^2>\frac{(\cA-\cB)^2}{4\cA\cB}\right\},
\end{equation}
where $\cA,\cB,\cC$ are defined in \eqref{deABC}. We remark that if $(a,b,c)\in \cW$, then $a\geq 1$ and $c>1$. The second inequality in \eqref{defi:W} is equivalent to $z'(0)^2< y'(0)^2$ for the initial values in \eqref{pqabc}.

By Lemma \ref{lem:spherical}, the intersection angle of the spherical curvature line $\psi(u_0,v)$ with its supporting totally umbilic surface is $\pi/2$ if and only if $\hat\beta(u_0)=0$, 
i.e., $y(u_0)=z(u_0)$. Thus, $y=z$ detects the free boundary condition. The following result was proved in \cite[Prop. 4.3]{CFM}:

\begin{proposition}\label{prosis2}
Let $(a,b,c)\in \cW$. Then, there is a unique $\tau\in (0,u_1]$ such that $y(\tau)=z(\tau)$, and $y(u)>z(u)$ for every $u\in (0,\tau)$. 

Moreover, the map $(a,b,c)\in \cW \mapsto \tau(a,b,c)$ is real analytic.
\end{proposition}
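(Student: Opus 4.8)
The plan is to analyze the scalar function $\phi(u):=y(u)-z(u)$ along the solution $(y(u),z(u))$ of \eqref{system1} on the interval $[0,u_1]$ provided by Proposition \ref{prosis1}, and to show that $\phi$ has exactly one zero in $(0,u_1]$, located at some $\tau$, with $\phi>0$ on $(0,\tau)$. First I would record the behaviour of $\phi$ at the two endpoints. At $u=0$ we have $y(0)=z(0)=0$ by \eqref{yzz}, so $\phi(0)=0$, while $\phi'(0)=y'(0)-z'(0)$; using \eqref{pqabc} and the hypothesis $(a,b,c)\in\cW$ — precisely the inequality $\cC^2>(\cA-\cB)^2/(4\cA\cB)$, which the text notes is equivalent to $z'(0)^2<y'(0)^2$, together with $b\ge a$ forcing $y'(0)\ge 0\ge z'(0)-$type sign control — one gets $\phi'(0)>0$, so $\phi$ is positive just after $0$. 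At the other end, at $u=u_1$ we have $y(u_1)=0$ by Proposition \ref{prosis1}(1), so $\phi(u_1)=-z(u_1)$; by Proposition \ref{prosis1}(3), if $z\not\equiv 0$ then $z(u_1)\ne 0$, and I would pin down its sign (it should be positive in the relevant regime, so that $\phi(u_1)\le 0$), while if $z\equiv 0$ then $\phi(u_1)=y(u_1)=0$. In either case $\phi$ starts positive and is $\le 0$ at $u_1$, so a zero $\tau\in(0,u_1]$ exists by continuity.

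The heart of the argument is uniqueness: that $\phi$ cannot vanish twice in $(0,u_1]$, equivalently that once $\phi$ reaches $0$ it stays $\le 0$ (or at least cannot return to positive values before $u_1$). The natural device is to derive from \eqref{system1} a second-order ODE, or a first-order relation, satisfied by $\phi=y-z$ and exploit a sign/convexity or uniqueness-type argument. Subtracting the two equations of \eqref{system1} gives
\begin{equation}\label{phiode}
\phi'' = (\hat a - 1)y - \hat a z - 2(y^2-z^2)(y-z) = \hat a\,\phi - y - 2(y^2-z^2)\phi,
\end{equation}
i.e. $\phi'' = \bigl(\hat a - 2(y+z)(y-z)\bigr)\phi - y$, so $\phi$ solves a linear second-order ODE $\phi''=g(u)\phi - y(u)$ with $g(u)=\hat a - 2(y(u)^2-z(u)^2)$. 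On $(0,u_1)$ we have $y>0$ by Proposition \ref{prosis1}(2), so the inhomogeneous term $-y$ is strictly negative there; this says that at any point where $\phi=0$ we have $\phi''<0$, hence $\phi$ cannot have a local minimum with value $0$ inside $(0,u_1)$. Consequently, once $\phi$ descends to $0$ its derivative is negative and it continues to decrease through $0$, so it cannot return to $0$ before $u_1$; this gives uniqueness of the interior zero and the claim $\phi>0$ on $(0,\tau)$. (One must also rule out $\phi\equiv 0$ on a subinterval, which is immediate since $\phi''=-y<0$ there forces a contradiction, and handle the boundary point $u_1$ separately using the sign of $z(u_1)$ established above.)

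Finally, real analyticity of $(a,b,c)\mapsto\tau(a,b,c)$ on $\cW$ follows from the implicit function theorem applied to the real analytic function $F(a,b,c,u):=y(u;a,b,c)-z(u;a,b,c)$ — analytic in all variables jointly, since the solution of \eqref{system1} depends analytically on parameters and initial data, and the initial data \eqref{aabc}, \eqref{pqabc} are analytic in $(a,b,c)$. At $(a,b,c,\tau)$ we have $F=0$ and, by the argument above, $\partial F/\partial u = \phi'(\tau)<0\ne 0$ (the zero is transverse, precisely because $\phi''<0$ forces $\phi'(\tau)\le 0$ and $\phi'(\tau)=0$ would contradict $\phi>0$ immediately before $\tau$). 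Hence $\tau$ is locally the graph of a real analytic function of $(a,b,c)$, and uniqueness globalizes this to all of $\cW$.

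\textbf{Main obstacle.} The delicate point is controlling the sign of $z(u_1)$ (and, more broadly, the sign configuration of $y,z$ on $(0,u_1]$) well enough to guarantee $\phi(u_1)\le 0$, since Proposition \ref{prosis1} only gives $z(u_1)\ne 0$, not its sign; this likely requires tracking the phase-plane trajectory of \eqref{system1} under the specific initial conditions \eqref{pqabc}, using the Hamiltonian constants \eqref{def:h}–\eqref{def:k} and possibly the separated form \eqref{system2} to locate where the trajectory sits relative to the diagonal $y=z$. The ODE identity \eqref{phiode} is the clean part; extracting the endpoint sign from the integrable structure is where the real work lies.
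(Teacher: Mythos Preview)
The paper does not prove this proposition; it is quoted from \cite[Prop.~4.3]{CFM}. So let me assess your argument on its own.

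Your ``main obstacle'' dissolves immediately. From $b\ge a$ one has $\cB\ge\cA$, so \eqref{pqabc} gives $z'(0)\ge 0$. If $z'(0)>0$ then $z>0$ just after $0$, and Proposition~\ref{prosis1}(3) forces $z>0$ on all of $(0,u_1]$; if $z'(0)=0$ then $z\equiv 0$ by uniqueness for the coupled system \eqref{system1} (the pair $(\tilde y,0)$ with $\tilde y$ solving $\tilde y''=(\hat a-1)\tilde y-2\tilde y^3$ is a solution with the same initial data). Either way $\phi(u_1)=-z(u_1)\le 0$, and existence of a first zero $\tau\in(0,u_1]$ follows from $\phi'(0)>0$ and the intermediate value theorem, exactly as you outline.

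The real gap is your uniqueness step. From $\phi''=g(u)\phi - y$ you correctly get $\phi''<0$ at every zero of $\phi$ in $(0,u_1)$, hence no interior local minimum of $\phi$ with value $0$. But that does \emph{not} preclude multiple transversal crossings: after $\tau$ the function $\phi$ becomes negative, and nothing you wrote rules out $\phi$ rising back through $0$ with $\phi'>0$ and $\phi''<0$ there, which is perfectly consistent. A clean model is $\phi(u)=\sin u - cu$ for small $c>0$: at each positive zero $\phi'' = -\sin u = -cu<0$, yet $\phi$ has infinitely many zeros. To push your approach through you would need $\phi''<0$ at a negative interior minimum $u_*$, i.e.\ $g(u_*)\phi(u_*)<y(u_*)$; since $\phi(u_*)<0$ and (on the region $y<z$) $y^2-z^2<0$, this would require $\hat a\ge 0$, which fails in general on $\cW$ by \eqref{aabc}. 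The argument in \cite{CFM} almost certainly passes instead to the separated variables $(s,t)$ of \eqref{change}--\eqref{system2}, where (since $y,z\ge 0$ on $(0,u_1]$) the equation $y=z$ becomes $s+t=1$, and the explicit monotonicity/range properties of $s$ and $t$ recorded at the end of Section~\ref{sec:double} (and their analogues for $a>1$) control the zero set directly --- this is exactly the device used later in the proof of Lemma~\ref{gammalimite}.

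Your transversality and analyticity arguments are correct: $\phi'(\tau)=0$ together with $\phi''(\tau)=-y(\tau)<0$ (or, when $\tau=u_1$ and $z\equiv 0$, together with $y'(u_1)\neq 0$ by ODE uniqueness) would make $\tau$ a strict local maximum with value $0$, contradicting $\phi>0$ on $(0,\tau)$; hence $\phi'(\tau)<0$ and the implicit function theorem applies.
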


Also, by \cite[Rem. 4.4]{CFM}, in case $(a,b,c)\in \cO$ with $c>1$, $b\geq a$ and 
\begin{equation}\label{fbcon}
\cC^2\leq \frac{(\cA-\cB)^2}{4\cA\cB},
\end{equation}
it holds $y(u) < z(u)$ for $u > 0$ small enough.

\section{CMC surfaces with spherical curvature lines}\label{sec:romega}

\subsection{A special class of CMC surfaces with spherical curvature lines}

We now fix $(a,b,c)\in \cO$ as in \eqref{domain}, and let $\rho(u,v)$ denote the solution to \eqref{sinhg}-\eqref{omu} constructed in Section \ref{sec:solsg} from $(a,b,c)$, see Theorem \ref{th:otro}. 

We consider constants $H\geq 0$ and $\ep\in \{1,-1\}$ so that $\mu^2:=H^2+\ep>0$, and define $Q:=1/(8\mu) >0$. We also define $\omega(u,v):\R^2\flecha \R$ by
\begin{equation}\label{cambiometrica}
\omega := \rho - \log(2\mu).
\end{equation} 
By \eqref{sinhg}, $\omega$ satisfies the Gauss equation \eqref{gauss1} for $c_0=\ep$. Therefore, as explained in Section \ref{sec:prelim}, we obtain a unique (up to ambient isometry) immersion $\psi(u,v):\R^2\flecha \M^3(\ep)= \S^3 \text{ or } \H^3$ with constant mean curvature $H$, with first and second fundamental forms given by 
\begin{equation}\label{unodos}
I= \frac{e^{2\rho}}{4\mu^2} (du^2+dv^2), \hspace{0.5cm} II= \left(\frac{H e^{2\rho}}{4\mu^2}+\frac{1}{4\mu}\right) du^2 + \left(\frac{H e^{2\rho}}{4\mu^2}-\frac{1}{4\mu}  \right)  dv^2.
\end{equation} 

The Gauss-Weingarten formulas \eqref{gw} for $\psi(u,v)$ in $\M^3(\ep)\subset \R_{\ep}^4$ are here 
\begin{equation}\label{gw2}
\def\arraystretch{1.7}\begin{array}{l}
\psi_{uu}= \rho_u \psi_u - \rho_v \psi_v +\frac{1}{4\mu^2}(H e^{2\rho} +\mu) N -  \frac{\ep}{4\mu^2}  e^{2\rho} \psi \\
\psi_{uv}= \rho_v \psi_u + \rho_u \psi_v  \\
\psi_{vv}= -\rho_u \psi_u + \rho_v \psi_v +\frac{1}{4\mu^2}(H e^{2\rho} - \mu) N -  \frac{\ep}{4\mu^2}  e^{2\rho} \psi \\
N_{u}= -(H  + \mu e^{-2\rho}) \psi_u  \\
N_{v}= - (H  - \mu e^{-2\rho}) \psi_v  
\end{array}
\end{equation}

The principal curvatures $\kappa_1>\kappa_2$ of the immersion are 
\begin{equation}\label{princur}
\kappa_1 = H + \mu e^{-2\rho}, \hspace{0.5cm} \kappa_2= H - \mu e^{-2\rho},
\end{equation}
where $\kappa_2$ corresponds to the principal curvature associated to the $v$-curvature lines $u={\rm const}$. 

\begin{definition}\label{def:sabc}
Given $H\geq 0$ and $\ep\in \{-1,1\}$ with $H^2+\ep>0$, and $(a,b,c)\in \cO$, we will let $\Sigma=\Sigma(a,b,c)$ denote the immersed surface in $\M^3(\ep)$ with constant mean curvature $H$ constructed above from $\rho(u,v;a,b,c)$.
\end{definition}
Because of \eqref{omu} and \eqref{cambiometrica} we see that 
$$2\omega_u = \alfa(u) e^{\omega} +\beta(u) e^{-\omega}$$
holds for $\alfa,\beta:\R\to\R$ given by 
\begin{equation}\label{eq:hat}
\alfa=2\mu(y+z), \qquad \beta = \frac{y -z }{2\mu} ,
\end{equation}
where $y(u),z(u)$ are the functions in \eqref{omu}. Thus, by Lemma \ref{lem:spherical}, the surface $\Sigma=\Sigma(a,b,c)$ is foliated by spherical curvature lines. Specifically, for each $u\in\R$ there exists $m(u)\in\R^4_\ep\setminus\{\bf 0\}$ and $d(u)\in\R$ such that the  curvature line $v\mapsto \psi(u,v)$ lies in a totally umbilic surface (see \eqref{eq:esferas})
\begin{equation}\label{eum}
S[m(u),d(u)]\subset \M^3(\ep).
\end{equation}
Since $m(u)$ and $d(u)$ are defined up to a multiplicative factor, we can assume that $\langle m(u),m(u)\rangle \in\{0,-1,1\}$. So, for $\ep=1$ we will assume $m(u)\in\M^3(\ep)=\S^3$, whereas for $\ep=-1$ we will suppose $m(u)\in\M^3(\ep)=\H^3$ when $\esiz m(u),m(u)\esde<0$, i.e., whenever the surface $S[m(u),d(u)]$ is compact.

\begin{remark}
\emph{Note that we are assuming that the spherical curvature lines of $\Sigma$ are the ones associated to the smaller principal curvature $\kappa_2$ in \eqref{princur}.}
\end{remark}

\subsection{Study of the center map}
We study now the behaviour of $m(u)$ in \eqref{eum}. 

\begin{proposition}\label{pro:centros} 
The map $m(u):\R\flecha \R_{\ep}^4\setminus \{{\bf 0}\}$ lies in a $2$-dimensional subspace $\mathcal{P}\subset \R^4_\varepsilon$. In particular, if $\ep=1$, or if $\ep=-1$ and $S[m(u_0),d(u_0)]$ is compact for some $u_0 \in \R$, then $m(u_0)$ lies in the geodesic of $\M^3(\ep)$ given by $\mathcal{P}\cap \M^3(\ep)$. Moreover, $m(u)$ is analytic at $u = u_0$ and $m'(u_0)\neq 0$. 
\end{proposition}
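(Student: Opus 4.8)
The plan is to produce an explicit formula for $m(u)$ from the overdetermined condition, and then read off all three assertions (lying in a $2$-plane, analyticity, nonvanishing derivative) directly from that formula. Recall that in the proof of Lemma \ref{lem:spherical} we constructed, along each line $v\mapsto\psi(u_0,v)$, the constant vector $\tilde m = 4Q e^{-\omega}\psi_u - \beta N - (2Q\alpha + H\beta)\psi$, and that $\tilde m$ is a (nonzero) multiple of $m(u_0)$; here $\alpha,\beta$ are the functions in \eqref{eq:hat} and $Q=1/(8\mu)$. So the first step is to normalize: set
\begin{equation}\label{eq:mformula}
m(u) := \frac{1}{2\mu}\, e^{-\rho}\psi_u - \frac{y-z}{2\mu}\, N - \Bigl(\frac{y+z}{4\mu} + \frac{H(y-z)}{2\mu}\Bigr)\psi,
\end{equation}
where $y=y(u),z=z(u)$ and $\rho=\rho(0,v)$ evaluated along the curve; one checks using \eqref{gw2} and $\omega=\rho-\log(2\mu)$ that $m_v=0$, so \eqref{eq:mformula} is independent of $v$ and is the desired center. (A small computation with \eqref{expab}--\eqref{eq:m} pins down the normalization $\langle m(u),m(u)\rangle\in\{0,\pm1\}$ up to sign, but this is not needed for the proposition; we only need \emph{some} nonzero multiple.)

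Next I would differentiate \eqref{eq:mformula} in $u$ and substitute the Gauss--Weingarten formulas \eqref{gw2} for $\psi_{uu}$, $N_u$, together with $\rho'=y\cosh\rho+z\sinh\rho$ (that is \eqref{omu}, holding along the curve $v\mapsto(u,v)$ at the relevant $v$) and the ODE system \eqref{system1} for $y'',z''$. After cancellation the coefficients of $N$ and $\psi$ in $m'(u)$ should vanish identically, leaving $m'(u)$ proportional to $\psi_u$: concretely I expect $m'(u) = \lambda(u)\, e^{-\rho}\psi_u$ for an explicit real analytic function $\lambda(u)$ built from $y,z,y',z'$ and the constant $\hat a$. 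This is the heart of the computation and the step I expect to be the main obstacle: the bookkeeping in \eqref{gw2} together with \eqref{system1} is lengthy, and one must be careful that the apparent $v$-dependence through $\rho$ really drops out (it must, since $m$ itself is $v$-independent). Once $m'(u)=\lambda(u)e^{-\rho}\psi_u$ is established, the span of $m(u)$ and $m'(u)$ is contained in the span of $\{\psi, \psi_u, N\}$ at points of the curve $v\mapsto\psi(u,v)$; but more to the point, $m$ and $m'$ together with the second derivative lie in a fixed $2$-dimensional subspace $\mathcal P\subset\R^4_\ep$ because $m'\parallel \psi_u \parallel (m - (\text{multiple of }N) - (\text{multiple of }\psi))$ — I would instead argue directly: since $m'$ is everywhere proportional to $\psi_u$ and, from \eqref{eq:mformula}, $\psi_u = 2\mu e^{\rho}\bigl(m + \beta N + (\cdots)\psi\bigr)$ lies in $\mathrm{span}\{m,N,\psi\}$, the cleanest route is to observe that $m$ and $m'$ span a plane $\mathcal P$ and that $m''\in\mathcal P$ as well, using that $\psi_u' = \psi_{uu}\in\mathrm{span}\{\psi_u,\psi_v,N,\psi\}$ but the $\psi_v$-component is killed when we project onto the constancy direction — so a marginally slicker alternative is to use the known structure: $m\in\mathcal P$ with $\mathcal P$ the $\langle\cdot,\cdot\rangle$-orthogonal of the constant vectors furnished by the \emph{two} symmetry surfaces ${\bf S}$ and $\Omega_k$ of Section \ref{sec:romega}. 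I would pick whichever is shortest once the formula for $m'$ is in hand.

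Finally, the analyticity of $m$ at $u_0$ is immediate from \eqref{eq:mformula}: $y,z$ are analytic (they solve \eqref{system1}), $\psi$ and $N$ are analytic in $u$, and $e^{-\rho}\psi_u$ is analytic. The nonvanishing $m'(u_0)\neq 0$ follows from $m'(u)=\lambda(u)e^{-\rho}\psi_u$ together with $\psi_u\neq0$ (the immersion is regular) once we check $\lambda(u_0)\neq0$; I expect $\lambda$ to be, up to a nonzero constant, something like $(y-z)y' - (\text{symmetric partner})$ or a Hamiltonian-type quantity that cannot vanish identically and whose zeros are isolated — and in fact the statement is only claimed for the specific $u_0$ in the proposition (e.g. the one where $S[m(u_0),d(u_0)]$ is compact, or a generic base point), so it suffices to verify $\lambda(u_0)\neq0$ there, or to note that $\lambda\equiv0$ would force $m$ constant, contradicting that the spherical curvature lines lie in genuinely different umbilic surfaces (the radius/center varies, which is visible already from $X(v)$ and the fact that $a>1$ gives non-constant data, while in the $a=1$ rotational case one treats it by the explicit description of Section \ref{sec:double}). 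For the middle claim — that when $\ep=1$, or $\ep=-1$ with $S[m(u_0),d(u_0)]$ compact, $m(u_0)$ actually lies on the geodesic $\mathcal P\cap\M^3(\ep)$ — one only needs that in those cases the normalization can be taken with $\langle m(u_0),m(u_0)\rangle=1$ (for $\ep=1$) or $=-1$ with positive $x_4$-coordinate (for $\ep=-1$ compact), which is exactly the normalization convention fixed right before the proposition in \eqref{eum}; hence $m(u_0)\in\M^3(\ep)\cap\mathcal P$, a geodesic.
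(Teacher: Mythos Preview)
Your central computational claim is wrong, and this is what breaks the argument. You expect that after substituting \eqref{gw2} and \eqref{system1} into the $u$-derivative of \eqref{funcioncentro}, the $N$- and $\psi$-components of $\tilde m'(u)$ cancel, leaving $\tilde m'=\lambda(u)e^{-\rho}\psi_u$. They do not. A direct computation gives, for instance, an $N$-coefficient equal to $\frac{1}{4\mu^2}\bigl(He^{\rho}+\mu e^{-\rho}-2\mu(y'-z')\bigr)$ and a $\psi_v$-coefficient equal to $-e^{-\rho}\rho_v$, neither of which vanishes identically (they even depend on $v$, as they must, since $\tilde m'$ is $v$-independent but the moving frame $\{\psi,\psi_u,\psi_v,N\}$ is not). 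So $\tilde m'$ is genuinely four-component in this frame, and nothing in your scheme forces $m$ into a $2$-plane.

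The paper's key observation is one order higher: a longer but entirely mechanical computation using \eqref{gw2}, \eqref{omu} and \eqref{system1} yields
\[
\tilde m''(u)=\Bigl(\hat a - \tfrac{H+\mu}{2\mu} - 2y^2+2z^2\Bigr)\,\tilde m(u),
\]
a scalar linear second-order ODE for the $\R^4_\ep$-valued map $\tilde m$. This single identity immediately gives the $2$-plane property (set $\mathcal P:=\mathrm{span}\{\tilde m(0),\tilde m'(0)\}$ and observe that each coordinate of $\tilde m$ solves the same scalar ODE, so vanishing initial data for a component orthogonal to $\mathcal P$ forces that component to vanish identically). It also gives $m'(u_0)\neq 0$ by contradiction: if $m'(u_0)=0$ then $\tilde m'(u_0)\parallel\tilde m(u_0)$, and the ODE forces $\tilde m$ to stay on a line, contradicting the explicit check that $\tilde m(0),\tilde m'(0)$ are linearly independent (the paper computes $\langle\tilde m'(0),N(0,0)\rangle>0$ while $\langle\tilde m(0),N(0,0)\rangle=0$). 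Your proposed alternative via the symmetry surfaces $\Omega_k$ is circular: Proposition~\ref{pro:plaver}, where the $\Omega_k$ are introduced and $\Lambda^\perp=\mathcal P$ is proved, relies on the present proposition. Finally, for the analyticity of the \emph{normalized} $m(u)$ near $u_0$ you do need to divide $\tilde m$ by $\sqrt{\ep\langle\tilde m,\tilde m\rangle}$, which is exactly where the compactness hypothesis (timelike $\tilde m$) enters when $\ep=-1$; this step is not as optional as you suggest.
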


\begin{proof} 
Let us consider the function $\tilde m = \tilde m(u)$ defined in \eqref{emetil}. Since the conformal factor of the metric of $\Sigma$ is $e^\omega = e^\rho/2\mu$ and its Hopf differential is $Q=1/8\mu$, (see \eqref{unodos}), and taking into account equations \eqref{eq:hat}, \eqref{eq:m} and \eqref{expab}, we can write 
\begin{equation}\label{funcioncentro}
\tilde m=  e^{-\rho}\psi_u  - \frac{(y-z)}{2\mu} N - \frac{1}{2\mu}\left(\mu(y+z) + H(y-z)  \right)\psi
\end{equation}  
and it holds
\begin{equation}\label{mtildem}
    m =  2\mu|\hat N|\sin\theta\, \tilde m.
\end{equation}
So, it suffices to prove that $\tilde m(u)$ lies in a plane $\cP$ of $\R_{\ep}^4$. 

A long but straightforward computation using \eqref{omu}, \eqref{system1} and \eqref{gw2} shows that $\tilde{m}''(u)$ is proportional to $\tilde m(u)$. More specifically,  
\begin{equation}\label{tildem}
    \tilde m'' = \left(\hat a - \frac{H + \mu}{2\mu} - 2y^2 + 2z^2\right)\tilde m.
\end{equation}

We prove first that $\tilde m(0)$ and $\tilde m'(0)$ are linearly independent. 
This follows from a computation using \eqref{pqabc}, \eqref{omini} which shows that $\langle \tilde m'(0), N(0,0) \rangle = \frac{2H + 2a\mathcal{B} \mu + (a^2 - 1)c^2 \mu}{4ac\mu} > 0$, whereas $\langle \tilde m(0), N(0,0) \rangle = 0$ and $\tilde m(0)\neq 0$. 

Let $\mathcal{P}\subset \R_\varepsilon^4$  be the plane generated by $\tilde m(0)$ and $\tilde m'(0)$, and consider $w_1, w_2 \in \R_\varepsilon^4$ two linearly independent vectors which are orthogonal to $\mathcal{P}$. The functions $f_i(u) := \langle \tilde m(u), w_i\rangle$, $i = 1,2$ satisfy the differential equation
\begin{equation}
    f_i''(u) = \left(\hat a - \frac{H + \mu}{2\mu} - 2y^2 + 2z^2\right) f_i(u),
\end{equation}
as well as the initial conditions $f_i(0) = f_i'(0) =0$, which imply that $f_i(u) \equiv 0$. In particular, $\tilde m(u) \in \mathcal{P}$. Since $m(u)$ is just a scalar multiple of $\tilde m(u)$, we deduce that $m(u)$ also lies in that plane.

Now assume that either $\varepsilon = 1$, or $\varepsilon = -1$ and $S[m(u_0),d(u_0)]$ is compact for some $u_0 \in \R$. Let us see that $m(u)$ is analytic at $u = u_0$. By the normalization condition on $m$, $\langle m(u_0), m(u_0)\rangle = \varepsilon$ and so $m(u_0) \in \M^3(\varepsilon) \cap \mathcal{P}$. Moreover, if $\varepsilon = -1$ then $\tilde m$ is timelike at $u_0$, and so the same holds on a neighbourhood of $u_0$. Thus, we can write 
$$m(u) = \pm \frac{\tilde m(u)}{\sqrt{\varepsilon \langle \tilde m(u), \tilde m(u) \rangle}}$$
for $u$ near $u_0$. Since $\tilde m(u)$ is analytic for all $u \in \R$ (see \eqref{funcioncentro}),  this ensures the analyticity of $m(u)$ at $u=u_0$.

Finally, assume by contradiction that $m'(u_0) = 0$. This implies that $\tilde m'(u_0)$ is a scalar multiple of $\tilde m(u_0)$, so we can find a vector $w_3 \in \mathcal{P}$ orthogonal to both $\tilde m(u_0)$ and $\tilde m'(u_0)$. By the same arguments shown before, we deduce that the function $f_3(u) := \langle \tilde m(u), w_3\rangle$ must vanish identically. Consequently, $\tilde m(u)$ lies in a line of $\mathcal{P} \subset\R^4_\varepsilon$ and so $\tilde m'(u)$ and $\tilde m(u)$ are linearly dependent for all $u$. However, as we proved before, this is not the case when $u = 0$, so we reach a contradiction.
\end{proof}

\subsection{Study of the symmetries of \texorpdfstring{$\Sigma$}{}}

In order to have $\psi(u,v)$ uniquely defined (and not just up to ambient isometry), we will fix the following initial data for the moving frame $(\psi,\psi_u,\psi_v,N)$ at $(u,v)=(0,0)$:
\begin{equation}\label{idat}
\psi(0,0)={\bf e}_4, \hspace{0.2cm} \psi_u(0,0)= \frac{e^{\rho(0,0)}}{2\mu} {\bf e}_3, \hspace{0.2cm} \psi_v(0,0)= -\frac{e^{\rho(0,0)}}{2\mu} {\bf e}_2, \hspace{0.2cm} N(0,0)={\bf e}_1,
\end{equation}
where $({\bf e}_1,{\bf e}_2,{\bf e}_3,{\bf e}_4)$ is the canonical basis of $\R_{\ep}^4$.

\begin{lemma}\label{plaor}
If $\Phi$ denotes the symmetry in $\R_{\ep}^4$ with respect to $x_3=0$, then it holds 
\begin{equation}\label{simor}
\psi(-u,v)=\Phi(\psi(u,v)), \hspace{0.5cm} \forall (u,v)\in \R^2.
\end{equation}
Thus, $\Sigma$ is symmetric with respect to the totally geodesic surface ${\bf S}:= \M^3(\ep)\cap \{x_3=0\}$.
\end{lemma}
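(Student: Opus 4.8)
The plan is to exploit the uniqueness of the CMC immersion $\psi$ up to the choice of moving frame at $(0,0)$, together with the evenness $\rho(-u,v)=\rho(u,v)$ from \eqref{simro1}. First I would observe that the metric and second fundamental form in \eqref{unodos} depend on $u$ only through $\rho$, hence are invariant under $u\mapsto -u$. Therefore the reparametrization $\bar\psi(u,v):=\Phi(\psi(-u,v))$ is again a conformal CMC immersion into $\M^3(\ep)$ with the same first and second fundamental forms: the isometry $\Phi$ of $\R^4_\ep$ (which preserves $\M^3(\ep)$ since it fixes $x_4$) does not change intrinsic or extrinsic geometry, and the change $u\mapsto -u$ preserves $du^2+dv^2$ and flips $\psi_u$. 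By the uniqueness statement in Section \ref{sec:prelim}, it then suffices to check that $\bar\psi$ and $\psi$ have the same moving frame at $(u,v)=(0,0)$.

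So the second step is the frame comparison at the origin. From \eqref{idat} we have $\psi(0,0)={\bf e}_4$, $\psi_u(0,0)=\lambda\,{\bf e}_3$, $\psi_v(0,0)=-\lambda\,{\bf e}_2$, $N(0,0)={\bf e}_1$, where $\lambda=e^{\rho(0,0)}/2\mu$. The differential of $\Phi$ is $\mathrm{diag}(1,1,-1,1)$, so $d\Phi$ fixes ${\bf e}_1,{\bf e}_2,{\bf e}_4$ and sends ${\bf e}_3\mapsto -{\bf e}_3$. Now compute the frame of $\bar\psi$ at $(0,0)$: $\bar\psi(0,0)=\Phi(\psi(0,0))=\Phi({\bf e}_4)={\bf e}_4$; $\bar\psi_u(0,0)=-d\Phi(\psi_u(0,0))=-d\Phi(\lambda {\bf e}_3)=\lambda {\bf e}_3$; $\bar\psi_v(0,0)=d\Phi(\psi_v(0,0))=d\Phi(-\lambda {\bf e}_2)=-\lambda {\bf e}_2$; and the unit normal of $\bar\psi$ at the origin is $\pm d\Phi(N(0,0))=\pm{\bf e}_1$, where the sign is fixed to $+$ because the orientation convention (outward from the frame $(\bar\psi,\bar\psi_u,\bar\psi_v,\bar N)$) matches after the composition, since $\Phi$ is orientation-reversing on $\R^4_\ep$ but we also reversed $u$, so the net effect on the oriented frame is orientation-preserving. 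Hence the frame of $\bar\psi$ at $(0,0)$ equals that of $\psi$ at $(0,0)$, giving $\bar\psi\equiv\psi$, i.e.\ \eqref{simor}.

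I would then conclude: taking $v$ arbitrary and $u$ arbitrary, $\psi(-u,v)=\Phi(\psi(u,v))$ means the image $\Sigma=\psi(\R^2)$ is invariant under $\Phi$; restricted to $\M^3(\ep)$, $\Phi$ is the geodesic reflection across ${\bf S}=\M^3(\ep)\cap\{x_3=0\}$, a totally geodesic surface. Therefore $\Sigma$ is symmetric with respect to ${\bf S}$, and moreover the fixed-point set $\{u=0\}$ maps to $\Sigma\cap{\bf S}$, so this curve is a geodesic of $\Sigma$ — a fact that will be used later.

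The main obstacle, and the one point deserving care rather than a routine check, is the bookkeeping of orientations and of the sign of the unit normal of $\bar\psi$ at the origin: one must verify that $\Phi$ being orientation-reversing on the ambient $\R^4_\ep$ is exactly compensated by the orientation reversal induced by $u\mapsto -u$ on the parameter domain, so that the induced normal is $+{\bf e}_1$ and not $-{\bf e}_1$ (the latter would give a CMC $-H$ surface rather than $+H$, and uniqueness would not apply directly). Equivalently, one checks directly from the Gauss--Weingarten system \eqref{gw2} that the curve $u\mapsto(\psi(-u,0),-\psi_u(-u,0),\psi_v(-u,0),N(-u,0))$ composed with $\Phi$ solves the same ODE with the same initial data as the original frame along $v=0$; combined with uniqueness for \eqref{gw2} this yields \eqref{simor} on all of $\R^2$ after propagating in $v$.
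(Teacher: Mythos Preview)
Your proof is correct and follows essentially the same approach as the paper: invoke the fundamental theorem for surfaces in $\M^3(\ep)$ using the symmetry \eqref{simro1} of $\rho$, the expressions \eqref{unodos} for $I,II$, and the initial frame data \eqref{idat}. You have spelled out in detail (particularly the orientation bookkeeping for the normal) what the paper leaves implicit, but the argument is the same.
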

\begin{proof}
It is a direct consequence of the fundamental theorem for surfaces in $\M^3(\ep)$ and the expressions for $I,II$ in \eqref{unodos}, using \eqref{idat} and the symmetry condition \eqref{simro1} for $\rho(u,v)$. Note that, in particular, the curve $v\mapsto \psi(0,v)$ lies in $\{x_3=0\}$.
\end{proof}

The surface $\Sigma$ actually has infinitely many more \emph{symmetry planes} orthogonal to $\{x_3=0\}$, as detailed below.

\begin{proposition}\label{pro:plaver}
For each $k\in \Z$, let $P_k$ denote the $3$-dimensional subspace $P_k$ of $\R_{\ep}^4$ that is orthogonal to the vector $\nu_k :=\psi_v(0,k\sigma)$, where 
$\sigma>0$ is given by \eqref{desi}. Then the following properties hold:
\begin{enumerate}
\item
$P_k$ is orthogonal to $\{x_3=0\}$, and it is timelike if $\ep=-1$.
\item
If $\Psi_k$ denotes the symmetry of $\R_{\ep}^4$ with respect to $P_k$, it holds 
\begin{equation}\label{simver}
\psi(u,k\sigma-v)=\Psi_k(\psi(u,k\sigma+v)) \hspace{0.5cm} \forall (u,v)\in \R^2.
\end{equation}
In particular, the curve $u\mapsto \psi(u,k\sigma)$ lies in the totally geodesic surface of $\M^3(\ep)$ $$\Omega_k:=P_k \cap \M^3(\ep).$$
\item
If the $P_k$'s do not all coincide, then $\Lambda:={\rm span} \{ \nu_k  : k\in \Z\}$ is a $2$-dimensional subspace of $\R_{\ep}^4$. Moreover, the angle between $\nu_k$ and $\nu_{k+1}$ in $\Lambda$ does not depend on $k$.
\item
If the $P_k$'s do not all coincide, then $\cap_{k\in \Z} P_k = \Lambda^{\perp}=\mathcal{P}$, where $\mathcal{P}$ is the $2$-dimensional subspace of $\R_{\ep}^4$ defined in Proposition \ref{pro:centros}.
\end{enumerate}
\end{proposition}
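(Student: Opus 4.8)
The plan is to prove the four items of Proposition~\ref{pro:plaver} by exploiting the symmetry \eqref{simom} of $\rho$ about each point $v=k\sigma$, in exactly the same way that Lemma~\ref{plaor} exploited \eqref{simro1}. First I would observe that since $\rho_v(u,k\sigma)=0$ for all $u$ (a consequence of $X'(k\sigma)=0$ from Proposition~\ref{prop:exsi} and the discussion after it, together with the overdetermined structure), the curve $u\mapsto \psi(u,k\sigma)$ is a geodesic-type curve whose Frenet data is invariant under $v\mapsto 2k\sigma-v$. Concretely, applying the fundamental theorem for surfaces in $\M^3(\ep)$: the map $(u,v)\mapsto \Psi_k(\psi(u,2k\sigma-v))$ is a new immersion with the same first and second fundamental forms \eqref{unodos} (these depend only on $e^{2\rho}$, which is symmetric about $v=k\sigma$ by \eqref{simom}), and it shares the moving frame of $\psi$ at $(u,v)=(0,k\sigma)$ provided $\Psi_k$ is chosen as the reflection fixing $\psi(0,k\sigma)$, $\psi_u(0,k\sigma)$, $N(0,k\sigma)$ and reversing $\psi_v(0,k\sigma)=\nu_k$. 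Hence the two immersions coincide, which is \eqref{simver}; setting $v=0$ shows $u\mapsto\psi(u,k\sigma)$ is fixed by $\Psi_k$, hence lies in $\Omega_k=P_k\cap\M^3(\ep)$. For item~(1), I would note $\nu_k=\psi_v(0,k\sigma)$ is orthogonal to $\{x_3=0\}$ because the reflection $\Phi$ of Lemma~\ref{plaor} sends $\psi(u,v)$ to $\psi(-u,v)$, so it fixes the curve $v\mapsto\psi(0,v)$ and in particular $\nu_k$ must be $\Phi$-anti-invariant, i.e.\ proportional to ${\bf e}_3$; then $P_k={\bf e}_3^{\perp_{\langle\cdot,\cdot\rangle}}\text{-type hyperplane}$ contains ${\bf e}_3$ and so is orthogonal to $\{x_3=0\}$, and in the $\ep=-1$ case it contains ${\bf e}_4$ giving timelike signature.

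Items (3) and (4) are where the real content lies. For (3), the key is to show $\nu_k=\psi_v(0,k\sigma)$ always lies in the fixed $2$-plane $\cP$ of Proposition~\ref{pro:centros}, or rather in a fixed $2$-plane spanned by $\tilde m$-type data. I would argue as follows: by \eqref{simver}, consecutive reflections give that $\Psi_{k+1}\circ\Psi_k$ is an ambient isometry preserving $\psi$ (it is the ``screw motion'' $\psi(u,v)\mapsto\psi(u,v+2\sigma)$ composed appropriately), hence a rotation whose axis is a fixed geodesic; this forces all the $\nu_k$ to lie in a common $2$-plane $\Lambda$, and the rigidity of the composition forces the angle $\angle(\nu_k,\nu_{k+1})$ in $\Lambda$ to be independent of $k$ (it equals twice the ``rotation per period''). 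I expect this is cleanest phrased via the monodromy of the linear ODE \eqref{gw2} along $v$: the period map over $[k\sigma,(k+1)\sigma]$ is a fixed element of the isometry group (independent of $k$, by the $\sigma$-periodicity of $e^{2\rho}$ in $v$, cf.\ \eqref{simom}), and $\nu_{k+1}$ is obtained from $\nu_k$ by applying this fixed isometry composed with reflections, whence the constant-angle claim. For (4), I would use that $\cap_{k}P_k$ is the orthogonal complement of $\Lambda=\mathrm{span}\{\nu_k\}$, which is $2$-dimensional when the $P_k$ are not all equal; and then identify $\Lambda^\perp$ with $\cP$ by noting that $\cP=\mathrm{span}\{\tilde m(0),\tilde m'(0)\}$ where $\tilde m$ is built from $\psi_u, N,\psi$ (see \eqref{funcioncentro}), all of which at $(0,k\sigma)$ are $\nu_k$-orthogonal by the frame relations and \eqref{idat}; so $\cP\perp\nu_k$ for every $k$, giving $\cP\subseteq\Lambda^\perp$, and equality by dimension count.

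The main obstacle I anticipate is item~(3): proving the angle $\angle(\nu_k,\nu_{k+1})$ is truly $k$-independent, and that the $\nu_k$ genuinely span only a $2$-plane rather than filling out a $3$-space. The naive symmetry argument gives each reflection individually but one must carefully compose them and control the resulting isometry; the honest way is to show that the ``transport'' of the moving frame from $v=k\sigma$ to $v=(k+1)\sigma$ via the Gauss--Weingarten system \eqref{gw2} is governed by a matrix that, because $\rho(u,\cdot)$ (and hence all coefficients in \eqref{gw2}) is $2\sigma$-periodic and symmetric about each $k\sigma$ by \eqref{simro1} and \eqref{simom}, is a fixed rotation $R$ applied iteratively; then $\nu_{k}=R^{k}\nu_0$ up to the reflections, and since $R$ is an ambient rotation of $\M^3(\ep)$ its powers all lie in a common invariant $2$-plane through the fixed axis $\cP$, with constant rotation angle. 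I would also need to handle the degenerate possibility that $R$ is trivial or a half-turn separately — but that is precisely the ``$P_k$'s all coincide'' alternative excluded in the hypotheses of (3) and (4), so it causes no trouble. Once the ODE-monodromy picture is set up, items (1), (2), (4) follow quickly and only (3) requires the careful bookkeeping.
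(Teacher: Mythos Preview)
Your argument for item~(1) contains a genuine error. You write that since $\Phi$ fixes the curve $v\mapsto\psi(0,v)$, the vector $\nu_k=\psi_v(0,k\sigma)$ must be $\Phi$-\emph{anti}-invariant and hence proportional to ${\bf e}_3$. This is backwards: if $\Phi$ fixes the curve pointwise, it fixes its tangent vectors, so $\Phi(\nu_k)=\nu_k$, i.e.\ $\nu_k$ lies in $\{x_3=0\}$ and is \emph{orthogonal} to ${\bf e}_3$. From $\langle\nu_k,{\bf e}_3\rangle=0$ one then gets ${\bf e}_3\in P_k$, which is the correct route to ``$P_k$ orthogonal to $\{x_3=0\}$''. (Your sentence ``$P_k={\bf e}_3^{\perp}$-type hyperplane contains ${\bf e}_3$'' is already self-contradictory.) Likewise, your timelike argument via ``$P_k$ contains ${\bf e}_4$'' is unjustified for general $k$: the clean reason $P_k$ is timelike when $\ep=-1$ is simply that $\langle\nu_k,\nu_k\rangle=e^{2\omega(0,k\sigma)}>0$, so $\nu_k$ is spacelike and hence $\nu_k^\perp$ has Lorentzian signature.

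For item~(3) your monodromy/ODE-transport plan would work, but it is considerably heavier than what is needed. The paper's proof is a one-line consequence of item~(2): differentiating \eqref{simver} in $v$ and evaluating at $v=\sigma$ gives $-\nu_{k+1}=\Psi_k(\nu_{k-1})$. Since $\Psi_k$ is reflection across $\nu_k^\perp$, this immediately expresses $\nu_{k+1}$ as a linear combination of $\nu_{k-1}$ and $\nu_k$, so by induction $\Lambda=\mathrm{span}\{\nu_0,\nu_1\}$; and since $\Psi_k$ is an isometry with $\Psi_k(\nu_k)=-\nu_k$, one gets $\langle\nu_{k+1},\nu_k\rangle=\langle\nu_{k-1},\nu_k\rangle$ directly. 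No periodicity of the Gauss--Weingarten coefficients or invariant-axis argument is required. Your sketches for items~(2) and~(4) are essentially the paper's arguments; for~(4) the only nontrivial verification is $\langle\tilde m'(0),\nu_k\rangle=0$, which the paper obtains by computing $\langle\tilde m',\psi_v\rangle=-\tfrac{e^{\rho}}{4\mu^2}\rho_v$ from \eqref{funcioncentro} and \eqref{gw2} and using $\rho_v(0,k\sigma)=0$.
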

\begin{proof}
Recall that, by Lemma \ref{plaor}, the curve $v\mapsto \psi(0,v)$ lies in $\{x_3=0\}$. Thus, $\esiz \nu_k,{\bf e}_3\esde=0$, for all $k$. Since $\langle \nu_k,\nu_k\rangle=e^{2\omega(0,k\sigma)}>0$, $P_k$ is timelike when $\ep=-1$. This proves item (1). We note that $P_0=\{x_2=0\}$, from \eqref{idat}.

Item (2) follows from the fundamental theorem of surface theory in $\M^3(\ep)$, equation \eqref{unodos} and the symmetry condition \eqref{simom} for $\rho(u,v)$.

As regards item (3), we have from \eqref{simver} that $-\psi_v(0,k\sigma-v)=\Psi_k(\psi_v(0,k\sigma+v))$. From here we obtain that $-\nu_{k+1}=\Psi_k(\nu_{k-1})$. This easily implies that $\Lambda$ is $2$-dimensional (unless all the planes $P_k$ coincide, in which case it is $1$-dimensional), and that $\esiz \nu_{k+1},\nu_k\esde =\esiz \nu_k,\nu_{k-1}\esde$, from where the family $\{\nu_k : k\in \Z\}$ is equiangular.

Regarding item (4), we first note that $\cap_{k\in \Z} P_k = \Lambda^{\perp}$ is immediate from item (3). On the other hand, it follows from the proof of Proposition \ref{pro:centros} that $\mathcal{P}={\rm span}\{\tilde{m}(0),\tilde{m}'(0)\}$, where $\tilde{m}(u)$ is given by \eqref{funcioncentro}. By \eqref{funcioncentro} we have $\esiz \tilde{m}(0),\nu_k\esde =0$ for all $k\in \Z$. Differentiating $\tilde m$ with respect to $u$ in \eqref{funcioncentro} and using \eqref{unodos} and \eqref{gw2} we get 
%$$\esiz \tilde m'(u), \psi_v(u,v)\esde = e^{-\rho(u,v)}\langle \psi_{uu}(u,v),\psi_v(u,v)\rangle = -\frac{e^{\rho(u,v)}}{4\mu^2} \rho_v(u,v). $$
$$\esiz \tilde m', \psi_v\esde = e^{-\rho}\langle \psi_{uu},\psi_v\rangle = -\frac{e^{\rho}}{4\mu^2} \rho_v. $$
In particular, since $\rho_v(0,k\sigma)=0$ (see \eqref{simom}), then $\esiz \tilde{m}'(0),\nu_k\esde =0$. Thus, $\Lambda^{\perp}=\mathcal{P}$, as claimed. 
\end{proof}

\begin{definition}\label{omenos}
We let $\cO^{-}$ be the open set of $\cO$ (see \eqref{domain}) for which the condition 
\begin{equation}\label{space}
\esiz \nu_0,\nu_0\esde \esiz \nu_1,\nu_1\esde -\esiz \nu_0,\nu_1\esde^2 >0
\end{equation}
holds, where $\nu_0:=\psi_v(0,0)$ and $\nu_1:=\psi_v(0,\sigma)$.
\end{definition}

We note that if $\ep=1$, the condition \eqref{space} merely indicates that $\{\nu_0,\nu_1\}$ are linearly independent. This is equivalent to asking that the linear subspace $\Lambda\subset \R^4$ in Proposition \ref{pro:plaver} has dimension two, i.e. that the hyperplanes $P_k$ are not all coincident.

%{\color{blue} Alberto: propongo sustituir $\R_1^4$ por $\mathbb{L}^4$.}
If $\ep=-1$, the condition \eqref{space} tells additionally that $\Lambda$ is a spacelike subspace of $\mathbb{L}^4$, and so $\mathcal{P}\cap \H^3$ is a geodesic of $\H^3$, by item (4) of Proposition \ref{pro:plaver}.

\subsection{The period map}\label{sec:periodo}

Choose now $(a,b,c)\in \cO^-$, and let $\Sigma=\Sigma(a,b,c)$ be the immersion $\psi(u,v):\R^2\flecha \M^3(\ep)$ of Definition \ref{def:sabc}. Each principal curve $\psi(u_0,v)$ is spherical, and $\Sigma$ intersects orthogonally the totally geodesic slice $\M^3(\ep)\cap \{x_3=0\}$ of $\M^3(\ep)$ along $\Gamma(v):=\psi(0,v)$, see Lemma \ref{plaor}).
%{\color{blue} {\bf Isa:} sería más bien and the surface intersects orthogonally the totally geodesic slice $\M^3(\ep)\cap \{x_3=0\}$ of $\M^3(\ep)$ along $\Gamma(v):=\psi(0,v)$. Además, yo añadiría un (see Lemma \ref{plaor}).}  
We want to characterize next the periodicity of the curve $\Gamma(v)$, and to understand its rotation index and symmetry group when it is periodic. Note that the periodicity of $\Gamma(v)$ automatically implies that all the (spherical) $v$-curvature lines of $\Sigma$ are closed.

Since $(a,b,c)\in \cO^-$, the subspace $\Lambda$ in Proposition \ref{pro:plaver} is two-dimensional, and it is spacelike if $\ep =-1$; see the discussion after Definition \ref{omenos}. Moreover, $\Lambda^\perp = \mathcal{P}$. Thus, $\M^3(\ep)\cap \mathcal{P}$ is a geodesic of $\M^3(\ep)$. Note that ${\bf e}_3\in \mathcal{P}$, since $\esiz \nu_k,{\bf e}_3\esde = 0$ for all $k$. Also, $\mathcal{P} \subset \{x_2=0\}$, since $\nu_0$ is collinear with ${\bf e}_2$ by \eqref{idat}. In this way, after a linear isometry of $\R_{\ep}^4$ that fixes ${\bf e}_2, {\bf e}_3$, we can assume that 
\begin{equation}\label{lander}
\mathcal{P} =\{x_1=x_2=0\}.
\end{equation}
Note that this isometry changes the initial conditions for $\psi(0,0)$ and $N(0,0)$ in \eqref{idat}, but it does so in a real analytic way, since $\sigma=\sigma(a,b,c)$ is real analytic (see Proposition \ref{prop:exsi}).

\begin{remark}\label{newinicondi}From now on, we will use these new initial conditions for any surface $\Sigma(a,b,c)$ with $(a,b,c) \in \cO^-$.
\end{remark}

Let $\varphi$ be the stereographic projection of $\M^3(\ep)$ from $-{\bf e}_4$. So, if $\ep=1$, $\varphi$ maps $\S^3\setminus\{ -{\bf e}_4 \}$ into $\R^3$, while if $\ep=-1$, $\varphi$ maps $\H^3$ into the unit ball $\B^3\subset \R^3$.
Since $\Gamma$ lies in $\{x_3=0\}$, the curve $\gamma:=\varphi\circ \Gamma$ is then a planar curve in $\{z=0 \} \subset \R^3$, where $(x,y,z)$ denote the Euclidean coordinates of $\R^3$. 

\begin{definition}\label{def:period}
Using the above notation, we define the period map as
$$ \Theta: \cO^- \to \R, $$
\begin{equation}\label{def:per0}
\Theta(a,b,c):=\frac{1}{\pi}\int_0^{\sigma} \kappa_{\gamma} ||\gamma'|| dv,
\end{equation} 
where $\kappa_{\gamma}$ and $||\gamma'||$ denote the Euclidean curvature and the length of $\gamma$, respectively. Note that $\pi \Theta$ represents the variation of the (Euclidean) unit normal of $\gamma(v)$ along the interval $[0,\sigma]$.
\end{definition}

\begin{proposition}\label{antet}
The map $\Theta=\Theta(a,b,c)$ in \eqref{def:per0} is real analytic in $\cO^-$.
\end{proposition}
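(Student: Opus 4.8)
The goal is to show that $\Theta(a,b,c)$ is real analytic on $\cO^-$. The plan is to exhibit $\Theta$ as a composition of real analytic maps, by tracking how $\gamma(v)=\varphi\circ\Gamma(v)$ depends on $(a,b,c)$ at the level of the Gauss--Weingarten ODE system. First I would recall that $\Gamma(v)=\psi(0,v)$ is obtained by integrating the Gauss--Weingarten system \eqref{gw2} along $\{u=0\}$ with initial data \eqref{idat} (as modified by the real analytic isometry of Remark \ref{newinicondi}). The coefficients of that system involve only $\rho(0,v)$, $\rho_u(0,v)\equiv 0$ and $\rho_v(0,v)$; and $X(v)=e^{\rho(0,v)}$ solves the first-order ODE \eqref{eq:Xv0} with polynomial right-hand side $p(0,x)$ whose coefficients depend real analytically on $(a,b,c)$ via \eqref{def:pab}, \eqref{deABC}. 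Hence $v\mapsto(\rho(0,v),\rho_v(0,v))$ is the solution of an ODE with real analytic dependence on parameters, and so is the moving frame $(\psi,\psi_u,\psi_v,N)(0,v)$. Therefore the map $(a,b,c,v)\mapsto \Gamma(v)=\psi(0,v)\in\M^3(\ep)$ is real analytic jointly in all variables, by the standard theorem on analytic dependence of solutions of ODEs on parameters and initial conditions.

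Next I would transfer this to $\gamma=\varphi\circ\Gamma$. The stereographic projection $\varphi$ from $-{\bf e}_4$ is a real analytic diffeomorphism onto its image (the relevant portion of $\{z=0\}\subset\R^3$), so $\gamma(v)=\varphi(\Gamma(v))$ and all of its $v$-derivatives $\gamma'(v),\gamma''(v)$ depend real analytically on $(a,b,c,v)$. The integrand $\kappa_\gamma\|\gamma'\|$ in \eqref{def:per0} equals the planar expression $(x'y''-y'x'')/\|\gamma'\|^2\cdot\|\gamma'\| = (x'y''-y'x'')/\|\gamma'\|$, i.e. a real analytic function of $\gamma',\gamma''$ away from the zero set of $\|\gamma'\|$. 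The key point is that $\|\gamma'\|$ never vanishes: since $\psi$ is an immersion, $\|\Gamma'(v)\|^2 = e^{2\omega(0,v)} = e^{2\rho(0,v)}/(4\mu^2)>0$ for all $v$, and $\varphi$ is a diffeomorphism, so $\gamma'(v)\neq 0$ on all of $[0,\sigma]$; by continuity and compactness $\|\gamma'\|$ is bounded below by a positive constant locally uniformly in $(a,b,c)$. Hence $(a,b,c,v)\mapsto \kappa_\gamma\|\gamma'\|$ is real analytic on $\cO^-\times[0,\sigma]$.

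Finally, $\Theta(a,b,c)=\tfrac1\pi\int_0^{\sigma(a,b,c)}\kappa_\gamma\|\gamma'\|\,dv$ is an integral of a real analytic integrand over an interval whose endpoint $\sigma=\sigma(a,b,c)$ is itself real analytic on $\cO^-$ by Proposition \ref{prop:exsi}. Differentiating under the integral sign is legitimate (the integrand and all its partials are continuous on the relevant compact sets), and a parameter integral of a jointly real analytic function with real analytic limits of integration is again real analytic --- one can see this, for instance, by the substitution $v=\sigma(a,b,c)\,t$, which turns $\Theta$ into $\tfrac{\sigma}{\pi}\int_0^1 (\kappa_\gamma\|\gamma'\|)(a,b,c,\sigma t)\,dt$, an integral over the fixed interval $[0,1]$ of a function real analytic in $(a,b,c)$ for each $t$ and bounded with all derivatives uniformly in $t$, hence real analytic in $(a,b,c)$ by Morera/power-series arguments after complexifying the parameters. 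This gives the claim.

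\textbf{Main obstacle.} The only genuinely delicate point is the passage from ``real analytic for each fixed $v$'' to ``real analytic after integration in $v$'': one must either invoke a clean statement about parameter integrals of real analytic functions, or complexify $(a,b,c)$ to a neighborhood in $\C^3$, check that the ODE solutions extend holomorphically and that $\|\gamma'\|$ stays bounded away from $0$ there (so the integrand stays holomorphic), and then use that a locally uniform limit / integral of holomorphic functions is holomorphic. The normalization $v=\sigma t$ is what makes the domain of integration parameter-independent and lets this argument run cleanly; everything else is a routine invocation of analytic dependence for ODEs.
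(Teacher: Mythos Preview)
Your proposal is correct and follows essentially the same approach as the paper's proof, which is a one-sentence sketch citing exactly the three ingredients you spell out: the real analyticity of $\sigma(a,b,c)$ (Proposition \ref{prop:exsi}), the real analyticity of $\rho(u,v;a,b,c)$, and the analytic dependence of solutions of the Gauss--Weingarten system \eqref{gw2} on initial conditions. Your write-up simply fills in the details the paper leaves implicit, including the helpful substitution $v=\sigma t$ to handle the parameter-dependent upper limit.
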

\begin{proof}
It is an immediate consequence of the real analyticity of $\sigma=\sigma(a,b,c)$ (Proposition \ref{prop:exsi}) and $\rho=\rho(u,v;a,b,c)$, together with the analytic dependence of the Gauss-Weingarten system \eqref{gw2} with respect to initial conditions.
\end{proof}

We explain next the geometry of $\Sigma$ when its associated period is a rational number. We start by describing the geometry of the planar geodesic $\Gamma(v)=\psi(0,v)$.

\begin{proposition}\label{pro:peri}
Assume that $\Theta(a,b,c)=m/n\in \Q$, with $n\in \N\setminus\{0\}$ and $m/n$ irreducible. Then $\Gamma(v+2n\sigma)=\Gamma(v)$. In particular $\Gamma(v)$ is a closed curve. 

Moreover, $\Gamma(v)$ has rotation index $m\in \Z$ and, if $a>1$, a dihedral symmetry group $D_n$ with $n\geq 2$.
\end{proposition}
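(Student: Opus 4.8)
The plan is to translate the period map $\Theta(a,b,c)=m/n$ into a precise statement about how the planar curve $\gamma=\varphi\circ\Gamma$ (equivalently $\Gamma(v)=\psi(0,v)$) closes up, using the equiangular family of reflection hyperplanes $\{P_k\}$ from Proposition \ref{pro:plaver}. First I would observe that $\pi\Theta$ is by definition the total turning of the Euclidean unit normal of $\gamma$ over $[0,\sigma]$; since $\varphi$ is a conformal diffeomorphism and $\Gamma$ lies in a totally geodesic $\{x_3=0\}$ (which $\varphi$ sends to the plane $\{z=0\}$), this turning has intrinsic geometric meaning. The segment $\Gamma|_{[0,\sigma]}$ runs from the point $\Gamma(0)$, whose velocity $\nu_0=\psi_v(0,0)$ is perpendicular to the reflection hyperplane $P_0$, to the point $\Gamma(\sigma)$ with velocity $\nu_1=\psi_v(0,\sigma)$ perpendicular to $P_1$. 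The reflection symmetry \eqref{simver} about each $P_k$ means that $\Gamma$ restricted to $[k\sigma,(k+1)\sigma]$ is the $\Psi_k$-image of $\Gamma$ restricted to $[(k-1)\sigma,k\sigma]$ reversed, so $\Gamma$ is built by iterated reflections of the fundamental arc $\Gamma|_{[0,\sigma]}$ across the equiangular family of mirror planes through the axis $\mathcal P\cap\M^3(\ep)$. Because the planes $P_k$ are equiangular (item (3) of Proposition \ref{pro:plaver}), the angle between consecutive axes-normal directions $\nu_k,\nu_{k+1}$ is a fixed constant $\theta_0$, and the content of the period map is precisely that $\theta_0=\pi\,\Theta(a,b,c)=\pi m/n$ (modulo the sign/orientation bookkeeping that fixes whether the rotation index is $+m$ or $-m$ — here one gets $m$).

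Given that identification, the closing-up argument is elementary group theory of the dihedral group. After $2n$ applications of the basic reflection step, i.e.\ after advancing by $2n\sigma$ in $v$, one has composed $2n$ reflections about hyperplanes whose normals advance by the fixed angle $\theta_0=\pi m/n$; the composition of two consecutive reflections is a rotation of $\M^3(\ep)$ about the axis $\mathcal P\cap\M^3(\ep)$ by angle $2\theta_0 = 2\pi m/n$, so $n$ such double-reflections produce a rotation by $2\pi m \equiv 0$, i.e.\ the identity on the frame. By the uniqueness part of the fundamental theorem of surface theory (the moving frame $(\psi,\psi_u,\psi_v,N)$ at a point determines $\psi$), together with the $v$-periodicity $\rho(u,v+2\sigma)=\rho(u,\cdot)$-type periodicity of $\rho$ coming from \eqref{simom} applied at two consecutive values of $k$, this forces $\psi(u,v+2n\sigma)=\psi(u,v)$, and in particular $\Gamma(v+2n\sigma)=\Gamma(v)$. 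Irreducibility of $m/n$ guarantees $2n\sigma$ is the minimal period (no smaller multiple of $2\sigma$ gives a rotation that is the identity), so $\Gamma$ is a genuine closed curve whose Euclidean normal turns by exactly $2\pi m$ over one period — that is, rotation index $m$. The symmetry group statement for $a>1$ then follows by noting that when $a>1$ the fundamental arc $\Gamma|_{[0,\sigma]}$ is not itself contained in a single mirror plane (this is where $a>1$ matters: for $a=1$ the $v$-curves are circles and $X(v)$ is constant, so degeneracies occur), hence the $2n$ distinct mirror hyperplanes $P_0,\dots,P_{2n-1}$ actually give $2n$ distinct reflection symmetries of $\Gamma$, and the reflections plus the rotation by $2\pi/n$ they generate is exactly the dihedral group $D_n$, with $n\ge 2$ because $\Lambda$ is $2$-dimensional on $\cO^-$ (the $P_k$ are not all coincident, forcing $n\ge 2$ once $m/n$ is in lowest terms).

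The main obstacle I anticipate is the careful bookkeeping in the first paragraph: rigorously identifying the abstract ``total turning'' $\pi\Theta$ with the equiangular angle $\theta_0$ between the $\nu_k$'s, with the correct sign so that the rotation index comes out to $m$ (and not $-m$, $n-m$, etc.). This requires (i) checking that the stereographic projection $\varphi$, being conformal and orientation-preserving on $\{x_3=0\}\to\{z=0\}$, preserves the total turning of the normal along an arc, so that $\pi\Theta$ computed via $\kappa_\gamma\|\gamma'\|$ in $\R^3$ equals the geodesic turning of $\Gamma$ inside the totally geodesic surface ${\bf S}$; (ii) relating that geodesic turning over $[0,\sigma]$ to the angle $\angle(\nu_0,\nu_1)$ measured inside $\Lambda$, using that the tangent direction of $\Gamma$ at $k\sigma$ is $\nu_k$ and that $\Gamma$ is orthogonal to $\mathcal P\cap\M^3(\ep)$ at each such point (so the ``turning'' is literally the angular advance of $\nu_k$ around the axis); and (iii) combining with item (3) of Proposition \ref{pro:plaver}, which already gives equiangularity, to conclude the increment per step is exactly $\pi\Theta$. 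Once this angle-identification is nailed down, the closing-up and the dihedral structure are routine consequences of the reflection relations \eqref{simver}, the periodicity \eqref{simom}, and the fundamental theorem of surfaces, so I would present paragraph one in full detail and the rest more briskly.
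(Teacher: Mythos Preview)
Your overall strategy matches the paper's: project via $\varphi$ to a planar curve $\gamma$, use the equiangular reflection lines $L_k=\varphi(\Omega_k)\cap\{z=0\}$ through the origin, and compose two consecutive reflections to get a rotation by $2\vartheta$ about the origin, where $\vartheta$ is the angle between $L_k$ and $L_{k+1}$. The closing-up and rotation-index arguments are essentially correct. Two points need correction, however.

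First, the claim in your obstacle (i) that stereographic projection ``preserves the total turning of the normal along an arc'' is false: $\varphi$ is conformal but not an isometry, so $\int\kappa_\gamma\|\gamma'\|\,dv\neq\int\kappa_\Gamma\,ds$ in general (Gauss--Bonnet gives an area correction). Fortunately you do not need this. All you need is that $\gamma'(k\sigma)$ is orthogonal to $L_k$ (because $\varphi$ is conformal and $\Gamma'(k\sigma)=\nu_k\perp P_k$), so the Euclidean turning over $[0,\sigma]$ satisfies $\pi\Theta=2\pi l+\vartheta$ for some $l\in\Z$. This already gives $n\vartheta\in\pi\Z$, hence periodicity; and the rotation-index computation $\int_0^{2n\sigma}\kappa_\gamma\|\gamma'\|\,dv=2n\cdot\pi\Theta=2\pi m$ uses only the reflection symmetry of the integrand, not any comparison with $\kappa_\Gamma$.

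Second, and more seriously, your dihedral-symmetry argument is incomplete: you establish that $D_n$ \emph{acts} on $\Gamma$ but not that the symmetry group is no larger. Saying ``the fundamental arc is not contained in a single mirror plane'' does not exclude an additional reflection axis $L'$ distinct from all $L_k$. The paper closes this gap with a curvature argument: since $\Sigma$ meets ${\bf S}$ orthogonally, $\kappa_\Gamma(v)=\kappa_2(0,v)=H-\mu X(v)^{-2}$, and for $a>1$ the function $X(v)=e^{\rho(0,v)}$ has critical points \emph{only} at $v=k\sigma$. Because $\varphi$ sends constant-curvature curves to constant-curvature curves (hence preserves contact order with such curves), critical points of $\kappa_\gamma$ coincide with those of $\kappa_\Gamma$, so $\kappa_\gamma$ is critical only at $v=k\sigma$. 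But any extra reflection axis $L'$ would force $\gamma(v_0-v)=T'(\gamma(v_0+v))$ for some $v_0\notin\sigma\Z$, making $v_0$ a critical point of $\kappa_\gamma$ --- a contradiction. This is the step where $a>1$ genuinely enters, and it is missing from your outline. (Also, $D_n$ has $n$ reflections, not $2n$; the lines $L_k$ repeat with period $n$.)
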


\begin{proof}
Let us see first that $\gamma=\varphi\circ\Gamma$ satisfies $\gamma(v+2n\sigma)=\gamma(v)$, where as before $\varphi$ denotes the stereographic projection of $\M^3(\ep)$ from $-{\bf e}_4$. 
Consider the totally geodesic surfaces $\Omega_k=P_k\cap \M^3(\ep)$ (see Proposition \ref{pro:plaver}). Then, $\varphi$ maps $\Omega_k$ into vertical planes $\Pi_k$ containing the $z$-axis. Let $L_k:=\Pi_k\cap \{z=0\}$. Then $\{L_k:k\in \Z\}$ is a family of equiangular lines in $\{z=0\}\equiv\R^2$ passing through the origin. It follows from Proposition \ref{pro:plaver} and the fact that $\varphi$ is conformal that the angle between $L_k$ and $L_{k+1}$ is the angle betweeen $\gamma'(0)$ and $\gamma'(\sigma)$. We denote this angle by $\vartheta$. Note that $\gamma'(k\sigma)$ is orthogonal to $L_k$.
This implies that 
\begin{equation}\label{2teta}
\pi \Theta=2\pi l+ \vartheta
\end{equation}
for some $l\in \Z$. In particular, $n\vartheta\in \pi \Z$.

By \eqref{simver}, we have 
\begin{equation}\label{siga}
\gamma(k\sigma-v)=T_k(\gamma(k\sigma +v)),
\end{equation} where $T_k$ denotes the symmetry of $\R^2$ that fixes $L_k$. If $\cR$ denotes the rotation around the origin of angle $2\vartheta$, then we have by \eqref{siga} that $\gamma(v+2\sigma)=\cR(\gamma(v))$. From here and $n\vartheta\in \pi \Z$ we obtain $\Gamma(v+2n\sigma)=\Gamma(v)$.

Also from \eqref{siga} we obtain that $f:= ||\gamma'|| \kappa_{\gamma}$ satisfies $f(k\sigma-v)=f(k\sigma +v)$ for all $k\in \Z$. Observe that this implies that, for all $k\in \Z$, 
$$\Theta =\frac{1}{\pi}\int_{k\sigma}^{(k+1)\sigma} \kappa_{\gamma} ||\gamma'|| dv.$$ 
From here, $\Theta=m/n$ and the $2\sigma n$-periodicity of $\gamma(v)$, it follows that the rotation index of $\gamma(v)$ is equal to $m$.

Finally, we determine the symmetry group of $\Gamma(v)$ when $a>1$. In that case we know that $X(v)$ in \eqref{defx} only has critical values at the points of the form $k\sigma$, $k\in \Z$. Also, by \eqref{princur} and since $\psi(u,v)$ intersects ${\bf S}=\M^3(\ep)\cap\{x_3=0\}$ orthogonally along $\Gamma(v)$, we have $$\kappa_{\Gamma} (v)= \kappa_2(0,v)= H- \frac{\mu}{X(v)^2},$$ 
where $\kappa_{\Gamma}$ is the geodesic curvature of $\Gamma$ in ${\bf S}$. Since the stereographic projection $\varphi$ preserves the critical points of the geodesic curvature of regular curves (because it preserves curves of constant curvature, and hence the contact order with these curves), we deduce that $\kappa_{\gamma}(v)$ only has critical points at the values $v=k\sigma$, $k\in \Z$. %\textcolor{blue}{{\bf Isa:} Yo esto así a bote pronto no lo veo tan inmediato...}

In particular, $\gamma(v)$ has a (finite) dihedral symmetry group, as it is symmetric with respect to the reflections $T_1,\dots, T_n$. So, its isometry group is $D_{n'}$ for some $n'\geq n$, since $m/n$ is irreducible. If $n'>n$, there would exist some additional symmetry line $L'$ for $\gamma(v)$ different from all $L_k$. So, $\Gamma(v_0-v)=\Phi'(\Gamma(v_0+v))$ for some $v_0\not\in\{k\sigma : k\in \Z\}$, where $\Phi'$ is the symmetry with respect to $L'$. Thus, $\kappa_{\gamma}$ would have a critical point at $v_0$, what is a contradiction. Hence, the symmetry group of $\gamma(v)$ is $D_n$, and generated by the reflections $T_1,\dots, T_n$. Therefore, the symmetry group of $\Gamma(v)$ is isomoprhic to $D_n$, as claimed.

Finally, we show that $n\geq 2$. Indeed, if $n=1$, then all the $L_k$'s agree, and this contradicts that $(a,b,c)\in \cO^-$, since $\{\nu_0,\nu_1\}$ would be collinear.
\end{proof}

%We now turn our attention to the global geometry of $\Sigma$ in the case of a rational period. First of all, we have a
As an immediate consequence of Proposition \ref{pro:peri}, we have:
%{\color{blue}{\bf Isa:} este corolario son dos líneas, y no lo citamos más adelante, ¿por qué no lo metemos dentro de la proposición anterior?}

\begin{corollary}\label{cor:sigma}
Let $(a,b,c)\in \cO^-$ so that $\Theta(a,b,c)=m/n\in \Q$, where $n\in \N\setminus\{0\}$, with $m/n$ irreducible. Then, $\psi(u,v+2n\sigma)=\psi(u,v)$.
\end{corollary}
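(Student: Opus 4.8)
The plan is to derive Corollary~\ref{cor:sigma} from Proposition~\ref{pro:peri} by an application of the fundamental theorem of surface theory in $\M^3(\ep)$. Since we are given $\Theta(a,b,c)=m/n$ irreducible, Proposition~\ref{pro:peri} already tells us that the planar geodesic $\Gamma(v)=\psi(0,v)$ satisfies $\Gamma(v+2n\sigma)=\Gamma(v)$. I want to upgrade this statement about the single curve $\Gamma$ to the full periodicity $\psi(u,v+2n\sigma)=\psi(u,v)$ for all $(u,v)$.

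First I would recall that the solution $\rho(u,v)$ is $2\sigma$-periodic in $v$: this follows from the symmetry relations \eqref{simom}, which give $\rho(u,k\sigma+v)=\rho(u,k\sigma-v)$ for all $k\in\Z$; combining the cases $k=0$ and $k=1$ yields $\rho(u,v+2\sigma)=\rho(u,v)$, and in particular $\rho(u,v+2n\sigma)=\rho(u,v)$. Consequently, by \eqref{unodos}, the first and second fundamental forms $I$ and $II$ of $\psi$ are invariant under the translation $v\mapsto v+2n\sigma$. By the fundamental (Bonnet) theorem for surfaces in the space form $\M^3(\ep)$, the two immersions $\psi(u,v)$ and $\widetilde\psi(u,v):=\psi(u,v+2n\sigma)$ therefore differ by a unique orientation-preserving ambient isometry $F$ of $\M^3(\ep)$; that is, $\psi(u,v+2n\sigma)=F(\psi(u,v))$, and $F$ is determined by its action on the moving frame $(\psi,\psi_u,\psi_v,N)$ at any single point.

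Second I would pin down $F$ at the point $(u,v)=(0,0)$ using the information from Proposition~\ref{pro:peri}. Evaluating at $(0,0)$: we have $\psi(0,2n\sigma)=\Gamma(2n\sigma)=\Gamma(0)=\psi(0,0)$ by the periodicity of $\Gamma$, so $F$ fixes the point $\psi(0,0)$. Next, the tangent direction: the curve $\Gamma$ is $2n\sigma$-periodic, and along it the proof of Proposition~\ref{pro:peri} shows (via \eqref{2teta} and $n\vartheta\in\pi\Z$) that $\gamma'(v+2n\sigma)$ and $\gamma'(v)$ are positively proportional — the total turning of the unit normal of $\gamma$ over a period is $2m\pi$, an integer multiple of $2\pi$ — so $\psi_v(0,2n\sigma)$ is a positive multiple of $\psi_v(0,0)$, and in fact equal to it since $|\psi_v|=e^{\omega(0,v)}$ is itself $2n\sigma$-periodic (as $\rho$ is). Because $F$ is an orientation-preserving isometry fixing $\psi(0,0)$, fixing the unit tangent $\psi_v(0,0)/|\psi_v(0,0)|$, and because $\psi$ intersects ${\bf S}=\M^3(\ep)\cap\{x_3=0\}$ orthogonally along $\Gamma$ with $N(0,0)$ pointing in a direction determined by $I,II$, one checks that $F$ also fixes $\psi_u(0,0)$ and $N(0,0)$: indeed $F$ maps the orthonormal frame $(\psi_u,\psi_v,N)$ at $(0,0)$ to the orthonormal frame $(\psi_u,\psi_v,N)$ at $(0,2n\sigma)$, and all three of these vectors agree at the two parameters because the frame components depend only on $\rho$ and its derivatives (which are $2n\sigma$-periodic) together with the curve $\Gamma$ (which is $2n\sigma$-periodic with matching tangent). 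Hence $F$ fixes the full moving frame at one point, so $F=\mathrm{id}$ and $\psi(u,v+2n\sigma)=\psi(u,v)$.

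The main obstacle — and the step requiring the most care — is the verification that $F$ fixes the entire moving frame at $(0,0)$, not merely the point and tangent line of $\Gamma$. The subtle point is orientation and the sign of the normal: one must confirm that the periodicity of $\Gamma$ together with the $2n\sigma$-periodicity of $\rho$ genuinely forces $\psi_v(0,2n\sigma)=\psi_v(0,0)$ (with the correct sign, using that the rotation index $m$ of $\gamma$ is an \emph{integer} so the tangent returns to itself rather than to its reverse) and then that $\psi_u(0,2n\sigma)=\psi_u(0,0)$ and $N(0,2n\sigma)=N(0,0)$. This last identification is where one uses that $\Sigma$ meets ${\bf S}$ orthogonally along $\Gamma$ (Lemma~\ref{plaor}), so that $\psi_u(0,v)$ and $N(0,v)$ both lie in $\{x_3=0\}^{\perp}\oplus T_{\Gamma(v)}{\bf S}$ in a way completely determined by $\rho(0,v)$, $\rho_u(0,v)$ and the curve $\Gamma$, all of which are $2n\sigma$-periodic. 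Once the frame is matched at a single point, uniqueness in Bonnet's theorem finishes the argument immediately.
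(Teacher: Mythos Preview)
Your argument is correct, but it takes a longer route than the one the paper has in mind. The paper regards the corollary as immediate because the symmetries already established in Proposition~\ref{pro:plaver} give a \emph{global} relation, not just one along $\Gamma$: combining \eqref{simver} for $k=0$ and $k=1$ yields
\[
\psi(u,v+2\sigma)=\Psi_1\bigl(\psi(u,-v)\bigr)=\Psi_1\circ\Psi_0\bigl(\psi(u,v)\bigr)
\]
for all $(u,v)$. The composition $\Psi_1\circ\Psi_0$ is a linear rotation of $\R^4_\ep$ fixing the plane $\mathcal{P}=P_0\cap P_1$ and rotating by $2\vartheta$ in $\Lambda$; since the proof of Proposition~\ref{pro:peri} shows $n\vartheta\in\pi\Z$, one has $(\Psi_1\circ\Psi_0)^n={\rm id}$ and hence $\psi(u,v+2n\sigma)=\psi(u,v)$. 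No Bonnet theorem or frame-matching is needed.

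Your approach via the fundamental theorem is valid and self-contained, but the frame-matching step as written is slightly loose. You assert that $\psi_u(0,2n\sigma)=\psi_u(0,0)$ because ``the frame components depend only on $\rho$ and its derivatives \dots\ together with the curve $\Gamma$''; what is actually being used is that, by the reflectional symmetry of Lemma~\ref{plaor}, $\psi_u(0,v)$ lies in ${\rm span}({\bf e}_3)$ for every $v$, so by continuity and nonvanishing its sign is constant, and its length $e^{\rho(0,v)}/(2\mu)$ is periodic. Then $N(0,v)$ is forced by orientation once $\psi,\psi_u,\psi_v$ are matched. Spelling this out would complete your argument, but the paper's route avoids the issue altogether by working directly with the ambient isometries $\Psi_k$.
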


\subsection{Construction of CMC annuli}\label{sec:annu}

%{\color{blue} {\bf Isa:} aquí es donde yo comenzaría esta subsección}
Following the results in Section \ref{sec:periodo}, given $u_0>0$, we can define $\Sigma_0=\Sigma_0(a,b,c,u_0)$ as the restriction of $\psi(u,v)$ to $[-u_0,u_0]\times \R$. By Corollary \ref{cor:sigma}, if $\Theta(a,b,c)=m/n\in \Q$, we can view $\Sigma_0$ as a compact $H$-annulus in $\M^3(\ep)$ under the identification $(u,v+2n\sigma)\sim (u,v)$. With this, we have:
\begin{theorem}\label{th:nivelperiodo}
Let $(a,b,c)\in \cO^-$ so that $\Theta(a,b,c)=m/n\in \Q$, where $n\in \N\setminus\{0\}$, with $m/n$ irreducible. Then, for any $u_0>0$, the following properties hold for the annulus $\Sigma_0=\Sigma_0(a,b,c,u_0)$:
\begin{enumerate}
\item
$\Sigma_0$ is symmetric with respect to ${\bf S}=\M^3(\ep)\cap \{x_3=0\}$, and with respect to $n\geq 2$ totally geodesic surfaces $\Omega_1,\dots, \Omega_n$ of $\M^3(\ep)$ that intersect equiangularly along a geodesic $\cL$ of $\M^3(\ep)$ orthogonal to ${\bf S}$.
\item 
Along each boundary component $\parc \Sigma_0^i$, $i=1,2$, $\Sigma_0$ intersects at a constant angle $\theta$ a totally umbilic surface $\mathcal{Q}_i$ of $\M^3(\ep)$. Specifically, the intersection angle $\theta$ is the same at both components, and $\mathcal{Q}_1=\Phi (\mathcal{Q}_2)$, where $\Phi$ is the symmetry of $\M^3(\ep)$ with respect to ${\bf S}$. 
\item 
Assume $u_0=\tau$, where $\tau=\tau(a,b,c)$ is defined in Proposition \ref{prosis2}. Then $\theta=\pi/2$, i.e., $\parc \Sigma_0^i$ intersects $\mathcal{Q}_i$ orthogonally.
\item 
Assume that $m_3(u_0)=0$, where $m_3$ denotes the $x_3$-coordinate of the center map $m$ in \eqref{eq:m}. Then both boundary curves $\parc \Sigma_0^1,\parc \Sigma_0^2$ lie in the same totally umbilic $2$-sphere $\mathcal{Q}_1=\mathcal{Q}_2$ of $\M^3(\ep)$.% i.e. both boundary curves $\parc \Sigma_0^1,\parc \Sigma_0^2$ lie in the same totally umbilical surface. 
\item 
If $a>1$, the symmetry group of $\Sigma_0$ is isomorphic to $D_n\times \Z_2$, and generated by the symmetries in item (1). In particular, $\Sigma_0$ is not rotational.
%\item 
%If $a=1$, then $\Sigma_0$ is a compact piece of a $H$-nodoid in $\M^3(\ep)$ with neck curvature given by $H-\mu c^2$ and whose rotation axis is the geodesic $\cL$ described in item (1).
\end{enumerate}
\end{theorem}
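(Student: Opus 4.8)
The plan is to systematically verify each of the six assertions by invoking the symmetry results from Section \ref{sec:romega} together with the characterizations of the special functions $\tau$, $u_1$, and the center map $m(u)$ established in Sections \ref{sec:solsg}--\ref{sec:romega}. First, for item (1), I would combine Lemma \ref{plaor} with Proposition \ref{pro:plaver}: Lemma \ref{plaor} gives the symmetry of $\Sigma_0$ with respect to ${\bf S} = \M^3(\ep)\cap\{x_3=0\}$ (this passes to the quotient annulus since the symmetry fixes the $u$-range $[-u_0,u_0]$), while Proposition \ref{pro:plaver} produces the totally geodesic surfaces $\Omega_k = P_k\cap\M^3(\ep)$. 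Because $(a,b,c)\in\cO^-$, the subspace $\Lambda$ is $2$-dimensional (and spacelike if $\ep=-1$), so by item (3) of Proposition \ref{pro:plaver} the normals $\nu_k$ are equiangular; since $\Theta(a,b,c)=m/n$, Proposition \ref{pro:peri} tells us that exactly $n\geq 2$ of these surfaces are distinct (the angle $\vartheta$ between consecutive ones satisfies $n\vartheta\in\pi\Z$ with $m/n$ irreducible), and by item (4) of Proposition \ref{pro:plaver} they all contain $\mathcal{P}\cap\M^3(\ep) = \cL$, a geodesic; its orthogonality to ${\bf S}$ follows from ${\bf e}_3\in\mathcal{P}$.

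For item (2), the boundary components $\parc\Sigma_0^i$ are the curvature lines $v\mapsto\psi(\pm u_0,v)$, which by Lemma \ref{lem:spherical} and the discussion around \eqref{eq:hat} lie in totally umbilic surfaces $S[m(\pm u_0),d(\pm u_0)] =: \mathcal{Q}_i$, meeting them at the constant angle $\theta(\pm u_0)$ given by \eqref{expab}. The equality of angles at the two components and the relation $\mathcal{Q}_1 = \Phi(\mathcal{Q}_2)$ both follow from the symmetry \eqref{simor} of Lemma \ref{plaor}: $\Phi$ maps the curvature line at $u_0$ to the one at $-u_0$ and is an ambient isometry, hence preserves intersection angles and maps $\mathcal{Q}_2$ to $\mathcal{Q}_1$. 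Item (3) is then immediate from Proposition \ref{prosis2}: when $u_0=\tau$ we have $y(\tau)=z(\tau)$, so $\beta(\tau)=0$ by \eqref{eq:hat}, and Lemma \ref{lem:spherical} gives $\cos\theta=0$, i.e. orthogonal intersection. For item (4), the hypothesis $m_3(u_0)=0$ means $m(u_0)\in\{x_3=0\}$; but $\Phi$ fixes this hyperplane and maps $S[m(u_0),d(u_0)]$ to $S[\Phi(m(u_0)),d(u_0)] = S[m(u_0),d(u_0)]$, since $d(\pm u_0)$ coincide by the symmetry $\rho(-u,v)=\rho(u,v)$ in \eqref{simro1}; hence $\mathcal{Q}_1=\mathcal{Q}_2$, and this common surface is a $2$-sphere — one should check the sign condition $\esiz m(u_0),m(u_0)\esde < 0$ (equivalently $|\hat N|$-type nondegeneracy) holds in the relevant parameter regime, which is where the hypothesis $m_3(u_0)=0$ and the analysis near the rotational case enter.

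Finally, for item (5), I would argue that when $a>1$ the symmetry group of $\Sigma_0$ is generated by the reflection across ${\bf S}$ together with the reflections across $\Omega_1,\dots,\Omega_n$. The reflection across ${\bf S}$ commutes with each $\Omega_k$-reflection (their fixed loci intersect orthogonally along $\cL$ by item (1)), giving a subgroup isomorphic to $D_n\times\Z_2$. To see there are no further symmetries, I would pull back to the planar geodesic $\gamma = \varphi\circ\Gamma$ as in Proposition \ref{pro:peri}: the last part of that proof shows, using $a>1$ (so $X(v)$ has critical points only at $v=k\sigma$) and that stereographic projection preserves critical points of geodesic curvature, that the symmetry group of $\gamma$ is exactly $D_n$; any extra ambient symmetry of $\Sigma_0$ would restrict to ${\bf S}$ and hence to a symmetry of $\gamma$ beyond $D_n$, a contradiction. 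Non-rotationality is then a consequence, since a rotational annulus would have a continuous (circle) symmetry group rather than the finite group $D_n\times\Z_2$.

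\textbf{Main obstacle.} The bookkeeping in items (1)--(3) and (5) is a fairly direct assembly of the already-established symmetry propositions; the genuinely delicate point is item (4), namely controlling that the common totally umbilic surface $\mathcal{Q}_1=\mathcal{Q}_2$ is a \emph{sphere} (not a horosphere, pseudosphere, or a totally geodesic surface), which requires verifying the sign of $\esiz m(u_0),m(u_0)\esde$ and the compactness condition from \eqref{eq:esferas} in the parameter range under consideration. This is also the place where one must be careful that the center map $m(u_0)$ is genuinely defined and analytic there (as in Proposition \ref{pro:centros}), so that $m_3(u_0)=0$ is a meaningful condition cutting out a real-analytic subset of the parameter space — the very subset along which, in the later sections, the free boundary annuli will be produced.
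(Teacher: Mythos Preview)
Your approach is essentially the same as the paper's, and items (1)--(3) are handled correctly. Two points deserve sharpening.

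\textbf{Item (4): the sphere condition.} The obstacle you flag is resolved cleanly in the paper without any appeal to ``the analysis near the rotational case.'' The key is that the plane $\mathcal{P}$ of Proposition \ref{pro:centros} contains all the centers $m(u)$, and by item (4) of Proposition \ref{pro:plaver} together with the hypothesis $(a,b,c)\in\cO^-$, this plane is timelike when $\ep=-1$. Since ${\bf e}_3\in\mathcal{P}$ is spacelike and $\esiz m(u_0),{\bf e}_3\esde = m_3(u_0)=0$, the vector $m(u_0)$ lies in the orthogonal complement of ${\bf e}_3$ inside the $2$-dimensional timelike plane $\mathcal{P}$, and is therefore timelike. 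This gives $S[m(u_0),d(u_0)]$ compact, i.e.\ a $2$-sphere, directly from the structural hypotheses already in place---no extra parameter-regime check is needed.

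\textbf{Item (5): why a symmetry of $\Sigma_0$ preserves $\Gamma$.} You assert that any extra ambient symmetry of $\Sigma_0$ ``would restrict to ${\bf S}$'', but this requires justification. The paper's argument is that the points of $\Gamma$ are intrinsically characterized in $\Sigma_0$ as the midpoints of the geodesic arcs $\Omega_j\cap\Sigma_0$ joining the two boundary components; hence any ambient isometry preserving $\Sigma_0$ must send $\Gamma$ to itself. Its restriction to ${\bf S}$ is then a symmetry of $\Gamma$, which by Proposition \ref{pro:peri} lies in $D_n$. Combined with the $\Z_2$ generated by $\Phi$ (which swaps the boundary components), this pins down the full symmetry group as $D_n\times\Z_2$.
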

\begin{proof}
Item (1) is a direct consequence of Proposition \ref{pro:peri}. Item (2) follows from the symmetry of $\Sigma_0$ with respect to ${\bf S}$ and the fact that the boundary curves of $\Sigma_0$ correspond to the spherical curvature lines $\psi(\pm u_0,v)$.

Item (3) follows from the definition of $\tau$ in Proposition \ref{prosis2}, together with \eqref{eq:hat} and \eqref{expab}.

Regarding item (4), we first note that $S[m(u_0),d(u_0)]$ is one of $\mathcal{Q}_1,\mathcal{Q}_2$; here, we follow the notation of Lemma \ref{lem:spherical}. If $m_3(u_0) =0$, the center of this $\mathcal{Q}_i$ lies in $\{x_3=0\}$. Since $\mathcal{Q}_1=\Phi (\mathcal{Q}_2)$ by item (2), we have $\mathcal{Q}_1=\mathcal{Q}_2= S[m(u_0),d(u_0)]$. We show next that $S[m(u_0),d(u_0)]$ is an umbilic $2$-sphere if $\ep=-1$, a property equivalent to $m(u_0)$ being timelike by the discussion before Lemma \ref{lem:spherical}. Let $\mathcal{P}$ denote the plane where $m(u)$ lies, as specified in Proposition \ref{pro:centros}. Since $(a,b,c)\in \cO^-$, $\mathcal{P}$ is timelike, and by Proposition \ref{pro:plaver} it contains ${\bf e}_3$. Since $\esiz m(u_0),{\bf e}_3\esde =0$, then $m(u_0)$ is timelike, as desired. 

To prove item (5), let $\Phi'$ denote an isometry of $\M^3(\ep)$ that leaves $\Sigma_0$ invariant. Under this isometry, we must have $\Phi' (\Gamma)=\Gamma$, since the points of $\Gamma$ represent the middle points of the (intrinsic) geodesics $\Omega_j\cap \Sigma_0$ of $\Sigma_0$. Therefore, the restriction of $\Phi'$ to ${\bf S}$ is a symmetry of $\Gamma$, and hence a composition $\mathcal{T}$ of the symmetries $\mathcal{T}_j$ with respect to some $\Omega_j$, by Proposition \ref{pro:peri}. If $\Phi'$ takes each boundary component of $\Sigma_0$ to itself, we deduce then that $\Phi'=\mathcal{T}$. Otherwise $\Phi' =\Phi\circ \mathcal{T}$, where $\Phi$ is the symmetry with respect to ${\bf S}$ (note that $\Phi$ interchanges the boundary components of $\Sigma_0$). This proves item (5).
\end{proof}

Theorem \ref{th:nivelperiodo} motivates the next definition and consequence.
 
\begin{definition}\label{hrara}
For any $(a,b,c)\in \cW$, we define $\mh: \cW \to \R$ as the map $\mh(a,b,c) := m_3(\tau(a,b,c))$, where $m_3$ denotes the $x_3$-coordinate function of the center $m(u)$. 
\end{definition}

\begin{corollary}\label{cor:hrara}
Let $(a,b,c)\in \cO^-\cap \cW$ so that $\Theta(a,b,c)=m/n\in \Q$, where $n\in \N\setminus\{0\}$. Assume that $\mh (a,b,c)=0$. Then, choosing $u_0=\tau$, both boundary components of the annulus $\Sigma_0$ in Theorem \ref{th:nivelperiodo} intersect orthogonally the same umbilic 2-sphere $\mathcal{Q}$ of $\M^3(\ep)$.
\end{corollary}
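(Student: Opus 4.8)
The plan is to obtain Corollary~\ref{cor:hrara} as a direct combination of Theorem~\ref{th:nivelperiodo} and the definitions of the maps $\tau$ and $\mh$, applied to a point $(a,b,c)$ that lies in the intersection $\cO^-\cap\cW$. First I would observe that the hypothesis $(a,b,c)\in\cW$ guarantees that the real-analytic function $\tau=\tau(a,b,c)$ of Proposition~\ref{prosis2} is well-defined, so that the choice $u_0=\tau$ makes sense; moreover, by the remark following \eqref{defi:W}, any $(a,b,c)\in\cW$ satisfies $a\geq 1$ and $c>1$, which in particular places $u_0=\tau$ inside $(0,u_1]$ where the center map $m(u)$ is controlled.

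Next I would invoke Theorem~\ref{th:nivelperiodo} for this $(a,b,c)\in\cO^-$ with $\Theta(a,b,c)=m/n\in\Q$ and with the specific value $u_0=\tau$. Item~(3) of that theorem, which uses precisely the definition of $\tau$ together with \eqref{eq:hat} and \eqref{expab}, gives that both boundary curves $\parc\Sigma_0^i$ intersect their supporting totally umbilic surfaces $\cQ_i$ \emph{orthogonally}, i.e. $\theta=\pi/2$. Then I would feed in the hypothesis $\mh(a,b,c)=0$: by Definition~\ref{hrara} this says exactly that $m_3(\tau(a,b,c))=0$, i.e. the $x_3$-coordinate of the center $m(u_0)$ vanishes. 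This is precisely the hypothesis of item~(4) of Theorem~\ref{th:nivelperiodo}, whose conclusion is that $\cQ_1=\cQ_2$ and, moreover (using that $(a,b,c)\in\cO^-$, so that the plane $\cP$ of Proposition~\ref{pro:centros} is timelike and contains $\mathbf{e}_3$), that this common surface $\cQ:=\cQ_1=\cQ_2$ is an umbilic $2$-sphere — not a horosphere or pseudosphere — also in the case $\ep=-1$.

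Finally I would simply put these two conclusions together: $\cQ_1=\cQ_2=\cQ$ is a single umbilic $2$-sphere of $\M^3(\ep)$, and $\Sigma_0$ meets it orthogonally along both components of $\parc\Sigma_0$. That is exactly the assertion of the corollary, so the proof amounts to nothing more than this citation-and-combination argument; no new computation is required.

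I do not expect any genuine obstacle here, since the corollary is explicitly advertised as an immediate consequence of Theorem~\ref{th:nivelperiodo}. The only point requiring a moment of care is the compatibility of hypotheses: one must note that $\cO^-$ and $\cW$ are \emph{a priori} unrelated subsets of $\cO$, so the statement is vacuous unless their intersection is nonempty and the rational period condition $\Theta=m/n$ is imposed there; checking that this intersection is substantial (and locating rational levels of $\Theta$ inside it) is not done here but is deferred to the later sections on the rotational $a=1$ case and the period map. Within the present statement, assuming $(a,b,c)\in\cO^-\cap\cW$ with $\Theta(a,b,c)\in\Q$ as hypothesized, the argument above is complete.
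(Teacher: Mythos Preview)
Your proposal is correct and matches the paper's approach exactly: the corollary is stated without proof in the paper, being introduced as an immediate consequence of Theorem~\ref{th:nivelperiodo} and Definition~\ref{hrara}, and your argument combining items~(3) and~(4) of that theorem is precisely the intended one.
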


\section{Critical catenoids and nodoids in space forms}\label{sec:rotational}

In this section we introduce the family of rotational CMC surfaces in $\M^3(\varepsilon)=\S^3$ or $\H^3$ following do Carmo-Dacjzer \cite{CD}, and prove that each element within a subfamily of them has a compact piece that is an embedded free boundary rotational annulus in an adequate geodesic ball of $\M^3(\ep)$. This section can be treated independently of the rest of the paper; the proofs are postergated to an appendix.

Up to an isometry, any rotational immersion in $\M^3(1)=\S^3$ can be expressed as
\begin{equation}\label{eq:psiesferico}
\psi(\mathfrak{s},\theta) = (x (\mathfrak{s})\cos\theta,-x (\mathfrak{s})\sin\theta,\sqrt{1 - x (\mathfrak{s})^2}\sin(\phi(\mathfrak{s})),\sqrt{1 - x (\mathfrak{s})^2}\cos(\phi(\mathfrak{s}))).
\end{equation}
for some functions $x = x(\mathfrak{s})$, $\phi = \phi(\mathfrak{s})$. 

In the case of $\M^3(-1)=\H^3$, we will focus on rotational surfaces of {\em elliptic} type, that is, surfaces that are invariant by a compact, continuous 1-parameter subgroup of isometries of $\H^3$. Up to an isometry, any rotational surface of this type can be expressed as  
\begin{equation}\label{eq:psihiperbolico}
    \psi(\mathfrak{s},\theta) = (x(\mathfrak{s}) \cos \theta, -x(\mathfrak{s}) \sin \theta, \sqrt{x (\mathfrak{s})^2 + 1} \sinh(\phi(\mathfrak{s})),\sqrt{x (\mathfrak{s})^2 + 1} \cosh(\phi(\mathfrak{s})).
\end{equation}
If the immersion has CMC $H \geq 0$ and is not totally umbilic, and we choose $\mathfrak{s}$ as the arclength parameter of its profile curve, it can be shown that $x(\mathfrak{s})$ is an analytic function satisfying the following differential equation:
\begin{equation}\label{eq:hx}
        x'^2 = \frac{h(x)}{x^2}:= \frac{x^2 - \varepsilon x^4 - (Hx^2 - \delta)^2}{x^2},
    \end{equation}    
for some constant $\delta \neq 0$. In order for \eqref{eq:hx} to have solutions, it is necessary that the biquadratic polynomial $h(x)$ be non-negative for some $x \in \R$. We will treat the cases $\ep=1$ and $\ep=-1$ separately. 

%%%%%%%%%

\subsection{Rotational CMC surfaces in \texorpdfstring{$\S^3$}{}}
In this case, the polynomial $h(x)$ in \eqref{eq:hx} is non-negative for some $x \in \R$ if and only if 
    \begin{equation}\label{deltaS3}
        \delta \in \left[\frac{H - \mu}{2}, \frac{H + \mu}{2}\right]
    \end{equation}
where, as in the previous section, $\mu:=\sqrt{H^2+1}$. 

If $\delta = \frac{H \pm \mu}{2}$ then $h(x)$ is non-positive, with double roots at the values $x = \pm \sqrt{|\delta|/\mu}$. In this case, the only solutions $x(\mathfrak{s})$ of \eqref{eq:hx} are the constant ones, $x(\mathfrak{s}) \equiv \pm \sqrt{|\delta|/\mu}$ (up to an isometry, we can assume that $x(\mathfrak{s})\equiv \sqrt{|\delta|/\mu}$).  For $\delta \in ( \frac{H - \mu}{2}, \frac{H + \mu}{2})$, $\delta\neq 0$, $h(x)$ has four simple roots $\{-x_M,-x_m,x_m,x_M\}$, with $0 < x_m < x_M \leq 1$, and $h(x)\geq 0$ for all $x \in [-x_M,-x_m] \cup [x_m,x_M]$. In this case,  \eqref{eq:hx} has two types of analytic solutions: the constant ones, and the non-constant ones, which oscillate either on  the interval $[-x_M,-x_m]$ or on $[x_m,x_M]$. The only solutions that give rise to CMC immersions in $\S^3$ are the non-constant ones. Up to isometries, we can always assume that $x(0) = x_m$. 
  
\begin{proposition}[\cite{CD}]\label{pro:CMCS3}
    Let $\varepsilon = 1$, $H \geq 0$ and $\delta \neq 0$ such that \eqref{deltaS3} holds. If $\delta = \frac{H \pm \mu}{2}$,  let $x(\mathfrak{s}):= \sqrt{|\delta|/\mu}$, constant.
    % one of the roots of the polynomial $h(x)$ in \eqref{eq:hx}. 
    Otherwise, let $x(\mathfrak{s})$ be the unique non-constant solution of \eqref{eq:hx} with initial condition $x(0) = x_m$. %where $x_m$ is the smallest positive root of $h(x)$. 
    We define
    \begin{equation}\label{eq:phi}
        \phi(\mathfrak{s}):= \int_{0}^{\mathfrak{s}} \frac{\delta - Hx^2}{x(1 - x^2)}ds.
    \end{equation}
   Then, denoting $\S^1\equiv \R/(2\pi \Z)$, the immersion $\psi:\R\times \S^1\flecha \S^3$ given by \eqref{eq:psiesferico} defines a rotational surface in $\S^3$ with constant mean curvature $H\geq 0$ such that $\langle\psi_\mathfrak{s},\psi_\mathfrak{s}\rangle \equiv 1$. 
    The $\mathfrak{s}$-curves and $\theta$-curves are curvature lines, with respective associated principal curvatures
\begin{equation}\label{eq:kappa}
    \kappa_{\mathfrak{s}} = H + \delta/x^2, \; \; \; \; \kappa_\theta = H - \delta/x^2.
\end{equation}
Conversely, any rotational CMC surface in $\S^3$ must be an open piece of either one of these examples, or of a totally umbilical round sphere. 
\end{proposition}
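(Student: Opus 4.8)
The plan is to verify directly that the map $\psi$ defined by \eqref{eq:psiesferico}, \eqref{eq:phi} lands in $\S^3$, that it is a regular immersion with unit-speed profile curve, that it has constant mean curvature $H$, and that the coordinate curves are lines of curvature with the stated principal curvatures; and then to prove the converse by an ODE-uniqueness argument. First, $\langle \psi,\psi\rangle = x^2 + (1-x^2) = 1$ is immediate, so $\psi$ takes values in $\S^3$. Next I would compute $\psi_{\mathfrak{s}}$ and $\psi_{\theta}$. Using $x'^2 = h(x)/x^2 = (x^2 - x^4 - (Hx^2-\delta)^2)/x^2$ from \eqref{eq:hx} (with $\ep=1$), a direct calculation gives $\langle \psi_{\mathfrak{s}},\psi_{\mathfrak{s}}\rangle = x'^2 + \frac{(x x')^2}{1-x^2} + (1-x^2)\phi'^2$; substituting $\phi' = (\delta - Hx^2)/(x(1-x^2))$ and the expression for $x'^2$, the three terms collapse (the first two combine to $x'^2/(1-x^2) = (x^2 - x^4 - (Hx^2-\delta)^2)/(x^2(1-x^2))$, and adding $(\delta-Hx^2)^2/(x^2(1-x^2))$ yields exactly $1$), so $\langle\psi_{\mathfrak{s}},\psi_{\mathfrak{s}}\rangle \equiv 1$. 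One also checks $\langle\psi_{\mathfrak{s}},\psi_{\theta}\rangle = 0$, so the parametrization is orthogonal and by curvature lines.

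The second step is the curvature computation. With the inward unit normal $N$ (a vector orthogonal to $\psi,\psi_{\mathfrak{s}},\psi_{\theta}$ of unit length and tangent to $\S^3$, which one writes down explicitly as a combination of the same trigonometric building blocks), I compute the second fundamental form coefficients $\langle \psi_{\mathfrak{s}\mathfrak{s}}, N\rangle$ and $\langle \psi_{\theta\theta}, N\rangle$, using $x'' = h'(x)/(2x^2) - h(x)/x^3$ obtained by differentiating \eqref{eq:hx}. The computation should yield $\kappa_{\mathfrak{s}} = H + \delta/x^2$ and $\kappa_\theta = H - \delta/x^2$ as in \eqref{eq:kappa}, whence $\tfrac12(\kappa_{\mathfrak{s}}+\kappa_\theta) = H$, giving constant mean curvature. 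The degenerate case $\delta = (H\pm\mu)/2$, where $x \equiv \sqrt{|\delta|/\mu}$ is constant, must be handled separately but is easier: then $\phi$ is linear in $\mathfrak{s}$ and $\psi$ parametrizes a flat CMC torus (a product of circles), for which the principal curvatures are constant and sum to $2H$ by the same formulas. I should also record that the non-constant solution $x(\mathfrak{s})$ of \eqref{eq:hx} with $x(0)=x_m$ exists globally and is analytic: it oscillates between the consecutive simple roots $x_m < x_M$ of $h$, and analyticity at the turning points follows from the standard trick of using $x$ itself (or arclength) as parameter near the simple zeros of $h$, exactly as in do Carmo--Dacjzer \cite{CD}.

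For the converse, suppose $\Sigma$ is a rotational CMC surface in $\S^3$ that is not totally umbilic. After an ambient isometry its profile curve can be put in the form \eqref{eq:psiesferico}, and choosing arclength $\mathfrak{s}$ on the profile curve the CMC condition forces, via the structure equations of rotational surfaces, that $x(\mathfrak{s})$ satisfies \eqref{eq:hx} for some constant $\delta$ and that $\phi$ is given by \eqref{eq:phi} (this is the content of do Carmo--Dacjzer; I would cite it and indicate that $\delta\neq 0$ is exactly the non-umbilicity assumption, since $\delta=0$ forces $\kappa_{\mathfrak{s}} = \kappa_\theta = H$). If $\delta$ lies outside $[(H-\mu)/2,(H+\mu)/2]$ then $h(x)<0$ for all real $x$ and \eqref{eq:hx} has no solution; on the boundary of that interval the only solutions are the constant ones listed; and in the open interval the only solutions giving an immersion into $\S^3$ are the non-constant oscillating ones (the constant solutions $x\equiv \pm\sqrt{|\delta|/\mu}$ there do not satisfy \eqref{eq:hx} unless $h$ vanishes, i.e. unless $\delta$ is an endpoint). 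Up to reparametrizing and applying an isometry we may normalize $x(0) = x_m$, and then ODE uniqueness identifies $\Sigma$ with an open piece of the example constructed above.

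The main obstacle is the second step: carrying out the second fundamental form computation cleanly enough to recognize $\kappa_{\mathfrak{s}} = H+\delta/x^2$, $\kappa_\theta = H - \delta/x^2$ without drowning in trigonometric identities — the key simplification is that every partial derivative of $\psi$ is again a linear combination, with coefficients polynomial in $x, x', \phi'$, of the four mutually orthogonal frame vectors built from $(\cos\theta,\sin\theta)$ and $(\sin\phi,\cos\phi)$, so all inner products reduce to the single scalar relation \eqref{eq:hx} and its derivative. Everything else is bookkeeping or a direct appeal to \cite{CD}.
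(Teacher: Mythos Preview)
The paper does not give its own proof of this proposition: it is stated with the attribution \cite{CD} and followed immediately by Definition~\ref{def:rotacionalesS3}, so there is nothing in the paper to compare your argument against. Your direct-verification approach is correct and is essentially the computation that underlies the do Carmo--Dajczer result: the check that $\langle\psi_{\mathfrak{s}},\psi_{\mathfrak{s}}\rangle=1$ is exactly as you describe, and the second fundamental form calculation does indeed collapse, using \eqref{eq:hx} and its $\mathfrak{s}$-derivative, to the principal curvatures \eqref{eq:kappa}.

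One small correction in your converse discussion: in the open interval for $\delta$, the constant solutions of \eqref{eq:hx} are $x\equiv x_m$ and $x\equiv x_M$ (the simple roots of $h$), not $x\equiv\sqrt{|\delta|/\mu}$; the latter value is a root of $h$ only at the endpoints $\delta=(H\pm\mu)/2$. The constant solutions $x\equiv x_m, x_M$ \emph{do} satisfy \eqref{eq:hx}, but the resulting immersion is a flat torus whose mean curvature is determined by the radius $x_m$ (or $x_M$) alone and generically differs from the prescribed $H$; that is the reason the paper (and \cite{CD}) discard them, and it is worth stating this reason precisely rather than the one you gave. This does not affect the soundness of your overall plan.
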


\begin{definition}[Spherical nodoids, unduloids and catenoids]\label{def:rotacionalesS3}
    Let $\ep=1$. For any $H\geq 0$ and $\delta\neq 0$ such that \eqref{deltaS3} holds, let $\mathcal{S}= \mathcal{S}(\varepsilon,H,\delta)$ be the rotational CMC surface in $\S^3$ of Proposition \ref{pro:CMCS3}.  
 \begin{itemize}
     \item If $H>0$ and $0<\delta <(H + \mu)/2$  we will say that $\mathcal{S}$ is a \emph{spherical nodoid}. 
     \item If $H>0$ and $(H-\mu)/2 < \delta < 0$ we will say that $\mathcal{S}$ is a \emph{spherical unduloid}.  
     \item If $H = 0$ we will say that $\mathcal{S}$ is a  \emph{spherical catenoid}. Since the change $\delta\mapsto -\delta$ just gives a reparameterization of $\mathcal{S}$, we can assume in this case that $\delta>0$. 
    \item If $\delta =(H \pm \mu)/2$, the surface covers a flat torus. For $H=0$, it is a Clifford torus.
 \end{itemize}   
\end{definition}

%%%%%%%%%
 \subsection{Rotational CMC surfaces in \texorpdfstring{$\H^3$}{} of elliptic type} 
We recall that, in order for \eqref{eq:hx} to have solutions, it is necessary that $h(x)$ be non-negative for some $x \in \R$. This will happen in any of the following situations:
\begin{enumerate}
    \item $H < 1$,
    \item $H = 1$ and $\delta > -\frac{1}{2}$,
    \item $H > 1$ and $\delta \geq \frac{\mu-H}{2}$, where $\mu:=\sqrt{H^2-1}$.
\end{enumerate}

In the first two cases, $h(x)$ only has two roots $-x_m < 0 < x_m$, and $h(x)\geq 0$ for all $x \in (-\infty,-x_m]\cup [x_m,\infty)$. In the third case, $h(x)$ has four roots $-x_M\leq -x_m < 0 < x_m \leq x_M$, and $x_m=x_M$ if and only if $\delta = \frac{\mu-H}{2}$. In this situation, $h(x) \geq 0$ on $[-x_M,-x_m] \cup [x_m, x_M]$. 

If $H > 1$ and $\delta = \frac{\mu-H}{2}$, then $x_m=x_M=\sqrt{\frac{H - \mu}{2}}$ and the only analytic solutions of \eqref{eq:hx} are the constants $x = \pm x_m$. The resulting surfaces are flat hyperbolic cylinders in $\H^3$.

In the rest of cases, \eqref{eq:hx} has two types of analytic solutions: the constants given by the roots of $h(x)$, and non-constant solutions. The only ones that give rise to actual CMC immersions are those which are not constant. 
More specifically, if $H > 1$, then $x(\mathfrak{s}):\R\flecha \R$ oscillates on either $[-x_M,-x_m]$ or $[x_m, x_M]$. If $H \leq 1$, however, $x(\mathfrak{s}):\R\flecha \R$ is unbounded, taking values on either $(-\infty,-x_m]$ or $[x_m,\infty)$.

In all three cases, up to isometries in $\H^3$, we can always assume that $x(0) = x_m$, where $x_m$ is the smallest positive root of $h(x)$. We have then:

\begin{proposition}[\cite{CD}]\label{pro:CMCH3}
    Let $\varepsilon = -1$, $H  \geq 0$, $\delta \neq 0$. If $H > 1$ and $\delta = \frac{\mu-H}{2}$, let $x(\mathfrak{s})\equiv x_m$. Otherwise, let $x(\mathfrak{s})$ be the unique nonconstant solution of \eqref{eq:hx} with initial condition $x(0) = x_m$. We define
    \begin{equation}\label{eq:phihiperbolico}
        \phi(\mathfrak{s}):= \int_0^\mathfrak{s}\frac{\delta - Hx^2}{x(x^2 + 1)}ds.
    \end{equation}
    Under these conditions, the immersion $\psi:\R\times \S^1\flecha \H^3$ in \eqref{eq:psihiperbolico} is a rotational surface in $\H^3$ with constant mean curvature $H \geq 0$ and $\langle\psi_\mathfrak{s},\psi_\mathfrak{s}\rangle \equiv 1$, with principal curvatures given by \eqref{eq:kappa}. 

 Conversely, any rotational CMC surface of elliptic type in $\H^3$ must be an open piece of either one of these examples, or of a totally umbilical surface of $\H^3$.     
\end{proposition}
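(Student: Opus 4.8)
The statement is the hyperbolic counterpart of Proposition \ref{pro:CMCS3}, and the plan is to run, in the hyperboloid model $\H^3\subset\mathbb{L}^4$, the same computation as for $\S^3$, following do Carmo--Dacjzer \cite{CD}. First I would normalize the elliptic one-parameter group so that it acts on the $(x_1,x_2)$-plane; its orbit space is the totally geodesic $\H^2=\H^3\cap\{x_2=0\}$, and since $h(0)=-\delta^2<0$ the profile curve stays in $\{x_1>0\}$, hence can be written as $\mathfrak{s}\mapsto(x(\mathfrak{s}),0,\sqrt{x^2+1}\sinh\phi(\mathfrak{s}),\sqrt{x^2+1}\cosh\phi(\mathfrak{s}))$, i.e.\ exactly as \eqref{eq:psihiperbolico} at $\theta=0$. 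A direct computation shows that the induced metric of \eqref{eq:psihiperbolico} is diagonal, $I=\bigl(\tfrac{x'^2}{x^2+1}+(x^2+1)\phi'^2\bigr)d\mathfrak{s}^2+x^2\,d\theta^2$, so that choosing $\mathfrak{s}$ to be the arclength parameter of the profile amounts to $\tfrac{x'^2}{x^2+1}+(x^2+1)\phi'^2=1$ and makes $\langle\psi_\mathfrak{s},\psi_\mathfrak{s}\rangle\equiv 1$. Since the reflection $\theta\mapsto-\theta$ is an isometry of $\H^3$ fixing the surface, it forces $II$ to be diagonal as well; hence the $\mathfrak{s}$- and $\theta$-curves are curvature lines.

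Next I would compute the unit normal $N$ --- at profile points it is the unit normal of the profile curve inside $\H^2$, transported along the orbits --- and the two principal curvatures: $\kappa_\mathfrak{s}$ is the geodesic curvature of the profile in $\H^2$ and $\kappa_\theta$ is the normal curvature of the orbit circles, both elementary functions of $x,x',\phi,\phi'$. Imposing $\kappa_\mathfrak{s}+\kappa_\theta=2H$ gives a second-order ODE for the profile which possesses the first integral $\delta:=x^2(\kappa_\mathfrak{s}-H)=\mathrm{const}$ (the rotational flux); this yields at once \eqref{eq:kappa}, i.e.\ $\kappa_\mathfrak{s}=H+\delta/x^2$ and $\kappa_\theta=H-\delta/x^2$. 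Reading $\kappa_\theta=H-\delta/x^2$ as an equation for $\phi'$ produces \eqref{eq:phihiperbolico}, and substituting this back into the unit-speed relation $\tfrac{x'^2}{x^2+1}+(x^2+1)\phi'^2=1$ produces exactly \eqref{eq:hx}. Conversely, starting from the functions $x,\phi$ defined by \eqref{eq:hx} and \eqref{eq:phihiperbolico} as in the statement and running this computation backwards shows that \eqref{eq:psihiperbolico} has constant mean curvature $H$ with the asserted principal curvatures.

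It then remains to verify that \eqref{eq:psihiperbolico} is a genuinely well-defined, regular, analytic immersion on all of $\R\times\S^1$. Closure in $\theta$ is automatic, and regularity amounts to $I$ being positive definite, which holds since $x\geq x_m>0$ and the $d\mathfrak{s}^2$-coefficient equals $1$. In the constant case ($H>1$, $\delta=(\mu-H)/2$, $x\equiv x_m=x_M$) the surface is the flat hyperbolic cylinder and there is nothing further to check. In the non-constant case $x(\mathfrak{s})$ has turning points precisely at the \emph{simple} zeros $x_m$ (and $x_M$, when $H>1$) of $h$, and I expect the main obstacle to be showing that the solution of \eqref{eq:hx} prolongs across these turning points as a globally defined analytic function with the immersion remaining regular and analytic there. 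This is handled in the standard way: near a simple zero $x_0\in\{x_m,x_M\}$ one has $h(x)=h'(x_0)(x-x_0)+O((x-x_0)^2)$ with $h'(x_0)\neq 0$, so separation of variables gives $x-x_0$ as an analytic function of $(\mathfrak{s}-\mathfrak{s}_0)^2$ with non-zero leading coefficient; equivalently $x(\mathfrak{s})$ extends by even reflection through $\mathfrak{s}_0$. Moreover $\delta-Hx_0^2\neq 0$ at such a point (otherwise $h(x_0)=x_0^2(1+x_0^2)>0$, contradicting that $x_0$ is a root), so $\phi'(\mathfrak{s}_0)\neq 0$ and the profile curve --- hence $\psi$ --- stays a regular analytic immersion across $\mathfrak{s}_0$. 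Iterating, $x(\mathfrak{s})$ oscillates monotonically between consecutive turning points when $H>1$ and is monotone and unbounded when $H\leq 1$, exactly as claimed before the proposition.

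Finally, for the converse, I would take an arbitrary rotational CMC surface of elliptic type in $\H^3$ and normalize its axis as above, so its profile curve can be written in the form \eqref{eq:psihiperbolico}, parametrized by arclength. If it is totally umbilic, it is one of the standard rotational totally umbilic surfaces of $\H^3$. Otherwise the computation of the second paragraph applies verbatim: $\delta:=x^2(\kappa_\mathfrak{s}-H)$ is a non-zero constant, $x$ solves \eqref{eq:hx}, and $\phi$ is given by \eqref{eq:phihiperbolico} up to an additive constant, which is removable by an ambient isometry fixing the axis; using the freedom to translate $\mathfrak{s}$ so that $x(0)=x_m$, uniqueness of solutions of \eqref{eq:hx} identifies the surface with an open piece of one of the examples constructed above. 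The same scheme proves Proposition \ref{pro:CMCS3}, replacing $\H^2$ by $\S^2$ and $\sinh,\cosh$ by the circular functions.
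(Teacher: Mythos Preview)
The paper does not prove this proposition: it is stated with attribution to do Carmo--Dacjzer \cite{CD} and no proof is supplied (the next item is Definition \ref{def:rotacionalesH3}). Your proposal is therefore not comparable to any argument in the paper, but it is a correct and complete sketch of the classical computation: the metric formula $\langle\psi_\mathfrak{s},\psi_\mathfrak{s}\rangle=\frac{x'^2}{x^2+1}+(x^2+1)\phi'^2$ is right, the symmetry argument for diagonality of $II$ is standard, the first integral $\delta=x^2(\kappa_\mathfrak{s}-H)$ is exactly the rotational flux, and your treatment of analytic continuation through the simple turning points of $h$ (including the observation that $\delta-Hx_0^2\neq 0$ there because otherwise $h(x_0)=x_0^2+x_0^4>0$) is the correct way to secure global regularity of the immersion. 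The converse paragraph is also fine. In short, you have reproduced the do Carmo--Dacjzer argument that the paper is citing.
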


\begin{definition}\label{def:rotacionalesH3}
    We will say that the immersion $\psi$ in Proposition \ref{pro:CMCH3} is a \emph{hyperbolic nodoid} (resp. \emph{unduloid}) if $H > 1$ and $\delta > 0$ (resp. $\frac{\mu - H}{2} <\delta < 0$), and denote it by $\mathcal{S}=\mathcal{S}(\ep,H,\delta)$. 
    \end{definition}

%%%%%%%%%%%%%%%%%%%%%%%%%%%%%%%%%%%%%%%%%%%%%%%%%%%%%%%%%% 

\subsection{Existence of free boundary nodoids and catenoids}

Our goal in this section is to show that the nodoids and catenoids given in Propositions \ref{pro:CMCS3}, \ref{pro:CMCH3} for $\delta>0$ (see also  Definitions \ref{def:rotacionalesS3} and \ref{def:rotacionalesH3}) are free boundary in a certain ball of $\M^3(\varepsilon)$. The geodesic balls of $\M^3(\ep)$ will be described as
\begin{equation}\label{eq:B}
B[m,d] := \{x \in \M^3(\varepsilon) \; : \; \langle x, m \rangle \geq d\}.
\end{equation}
Here $m\in \M^3(\ep)$ is the \emph{center} of the ball, while $d\in \R$ satisfies
$|d| < 1$ when $\varepsilon = 1$ and $d < -1$ when $\varepsilon = -1$.

The fact that $x(0) = x_m$ and $\phi(0) = 0$ along with \eqref{eq:hx}, \eqref{eq:phi}, \eqref{eq:phihiperbolico} imply that the function $x(\mathfrak{s})$ is symmetric, while $\phi(\mathfrak{s})$ is antisymmetric. A geometric consequence of this is that the immersions $\psi(\mathfrak{s},\theta)$ given in \eqref{eq:psiesferico}, \eqref{eq:psihiperbolico} are symmetric with respect to the totally geodesic surface ${\bf S} = \{x_3 = 0\} \cap \M^3(\varepsilon)$. Moreover, the rotation axis of these examples is given by the geodesic $\mathcal{L}:= \{x_1 = x_2 = 0\} \cap \M^3(\varepsilon)$.  We note that the balls $B[{\bf e}_4,d]$ centered at the point ${\bf e}_4\in\M^3(\ep)$ are also symmetric with respect to ${\bf S}$ and invariant under rotations with axis $\mathcal{L}$.

Given $\mathfrak{s}_0 > 0$, we define $\mathcal{S}_0$ as the compact annulus $\psi([-\mathfrak{s}_0,\mathfrak{s}_0]\times \S^1) \subset \M^3(\varepsilon)$, where we have identified the points $(\mathfrak{s},\theta + 2\pi) \sim (\mathfrak{s},\theta)$ in the obvious way. Consider the profile curve of $\mathcal{S}_0$,
\begin{equation}\label{pro:s}
\mathfrak{s} \mapsto \psi(\mathfrak{s},0) = (x(\mathfrak{s}),0,x_3(\mathfrak{s}),x_4(\mathfrak{s})).
\end{equation}
We are interested in studying whether $\mathcal{S}_0$ is free boundary in a certain ball. By the symmetries of $\mathcal{S}_0$, it can be shown that its two boundary components are contained in the totally umbilical sphere $$S[{\bf e}_4,\varepsilon x_4(\mathfrak{s}_0)]=\parc B[{\bf e}_4,\varepsilon x_4(\mathfrak{s}_0)] \subset \M^3(\ep).$$
So, if $\mathcal{S}_0$ happened to be free boundary in some geodesic ball of $\M^3(\ep)$, it would necessarily be in $B[{\bf e}_4,\varepsilon x_4(\mathfrak{s}_0)]$.

%%%%%%%%%%%%%%%%%%%%%%%%%%%%%%%%%%%%%%%%%%%%%%%%%%%%%%%%%%%%%%%%%%%%%%%%%%%%%%%%%%%%%%%%%%%%%%%%%%%%%%%%%%%

\begin{proposition}\label{pro:hatp}
Let $\varepsilon \in \{-1,1\}$, $H \geq 0$, $\delta > 0$. If $\varepsilon = 1$, assume further that $\delta < \frac{H + \mu}{2}$. For any $\mathfrak{s}$, denote by $\Gamma_{\mathfrak{s}}$ the geodesic in $\M^3(\ep)$ with initial conditions $\psi(\mathfrak{s},0)$ and $\psi_{\mathfrak{s}}(\mathfrak{s},0)$. 
\begin{enumerate}
    \item There exists $\tilde{\mathfrak{s}}=\tilde{\mathfrak{s}}(H,\delta)>0$ such that the compact annulus $\mathcal{S}_0:= \psi([-\tilde{\mathfrak{s}},\tilde{\mathfrak{s}}]\times \S^1)$ is embedded and free boundary in the ball $B:= B[{\bf e}_4,\varepsilon x_4(\tilde{\mathfrak{s}})] \subset \M^3(\varepsilon)$, with $x_4(\tilde{\mathfrak{s}}) > 0$. Moreover, the principal curvature associated to the profile curve is strictly decreasing on a certain interval $[0,\tilde{\mathfrak{s}} + \epsilon)$, $\epsilon > 0$.
    \item  The map $(H,\delta)\mapsto \tilde{\mathfrak{s}}(H,\delta)$ is analytic. Moreover, if $\varepsilon = 1$, $\tilde{\mathfrak{s}}$ extends continuously to the boundary curve $\delta= \frac{H + \mu}{2}$ by  \begin{equation}\label{eq:SHd}
       \tilde{\mathfrak{s}} \left(H,\frac{H + \mu}{2}\right)=\frac{\pi}{2\sqrt{2\mu(H + \mu)}}. 
    \end{equation}
   
    \item On a neighbourhood $\mathcal{I}\subset \R$ of $\tilde{\mathfrak{s}}=\tilde{\mathfrak{s}}(H,\delta)$, the rotational axis $\mathcal{L}$ of $\mathcal{S}_0$ and the geodesic $\Gamma_{\mathfrak{s}}$ meet at a unique point $\hat p(\mathfrak{s})$ with $\hat p_4(\mathfrak{s}) > 0$. Moreover, the function $\hat p: \mathcal{I} \to \mathcal{L}$ is analytic with  $\hat{p_3}(\tilde{\mathfrak{s}}) = 0$ and $\hat{p_3}'(\tilde{\mathfrak{s}}) > 0$ (here $\hat p_3(\mathfrak{s}),\hat p_4(\mathfrak{s})$ denote the third and fourth coordinates of $\hat p(\mathfrak{s})$ respectively).
\end{enumerate}

\end{proposition}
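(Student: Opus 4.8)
The plan is to analyze the profile curve $\mathfrak{s}\mapsto\psi(\mathfrak{s},0)=(x(\mathfrak{s}),0,x_3(\mathfrak{s}),x_4(\mathfrak{s}))$ and its geodesic normal data directly from the ODE \eqref{eq:hx} and the integral formulas \eqref{eq:phi}, \eqref{eq:phihiperbolico}. For item (1), I would first set up the relevant geometry: by the symmetries already noted, $x(\mathfrak{s})$ is even and $\phi(\mathfrak{s})$ is odd, so $\psi(\mathfrak{s},\theta)$ and $\psi(-\mathfrak{s},\theta)$ are related by the reflection $\Phi$ in ${\bf S}=\{x_3=0\}\cap\M^3(\ep)$, and both boundary circles $\psi(\pm\mathfrak{s}_0,\cdot)$ lie in $S[{\bf e}_4,\ep x_4(\mathfrak{s}_0)]=\parc B[{\bf e}_4,\ep x_4(\mathfrak{s}_0)]$. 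The free boundary condition then reduces to a single scalar equation: the inner conormal of $\mathcal{S}_0$ along $\psi(\mathfrak{s}_0,\cdot)$ must be parallel to the outward normal of the ball, equivalently $\langle\psi_{\mathfrak{s}}(\mathfrak{s}_0,0),{\bf e}_4\rangle=0$. A computation from \eqref{eq:psiesferico}/\eqref{eq:psihiperbolico} shows $\langle\psi_{\mathfrak{s}},{\bf e}_4\rangle$ equals (up to a positive factor and the case $\ep=\pm1$) an expression of the form $x'(\mathfrak{s})\,x_4(\mathfrak{s})/(\text{something positive})$ minus a $\phi'$ term; carefully combined with \eqref{eq:phi} this becomes a clean function $G(\mathfrak{s})$ whose zero I must locate. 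The natural strategy is to track $G$ on a half-period of $x(\mathfrak{s})$: at $\mathfrak{s}=0$, $x'=0$ and one checks $G(0)>0$ (or $<0$), while at the first critical point $\mathfrak{s}^*$ of $x$ (where $x=x_M$, $x'=0$ again) $G$ has the opposite sign, using $\delta>0$ and, when $\ep=1$, $\delta<(H+\mu)/2$ which guarantees $x_M<1$ so the fourth-coordinate denominators stay positive. By the intermediate value theorem there is a zero $\tilde{\mathfrak{s}}\in(0,\mathfrak{s}^*)$; uniqueness on the relevant range and the sign of $G'$ at $\tilde{\mathfrak{s}}$ follow from monotonicity of $x$ on $(0,\mathfrak{s}^*)$ and an explicit sign computation. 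Embeddedness of $\mathcal{S}_0=\psi([-\tilde{\mathfrak{s}},\tilde{\mathfrak{s}}]\times\S^1)$ holds because $x(\mathfrak{s})$ is strictly monotone there (so the profile curve is a graph over the $x$-axis avoiding self-intersections), $x_4>0$, and $\phi$ varies by less than $2\pi$ on this interval; containment in $B$ follows from $x_4(\mathfrak{s})\geq x_4(\tilde{\mathfrak{s}})$ on the interval, which one reads off from $x_4'$ having a sign controlled by $x'$ and $\phi'$. The strict monotonicity of the profile principal curvature $\kappa_{\mathfrak{s}}=H+\delta/x^2$ on $[0,\tilde{\mathfrak{s}}+\epsilon)$ is immediate from $\delta>0$ and $x'>0$ on $(0,\mathfrak{s}^*)\supset[0,\tilde{\mathfrak{s}}+\epsilon)$.

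For item (2), analyticity of $(H,\delta)\mapsto\tilde{\mathfrak{s}}(H,\delta)$ is an application of the analytic implicit function theorem to $G(\mathfrak{s};H,\delta)=0$, using that $G$ depends analytically on parameters (the roots $x_m,x_M$ of $h$ are analytic in $(H,\delta)$ away from $\delta=(H+\mu)/2$, and $x(\mathfrak{s})$, $\phi(\mathfrak{s})$ inherit analyticity from analytic dependence of the ODE on parameters) and that $\partial_{\mathfrak{s}}G(\tilde{\mathfrak{s}})\neq 0$ from item (1). For the continuous extension to the degenerate boundary $\delta=(H+\mu)/2$ in the case $\ep=1$: there $h(x)$ has a double root at $x=x_m=x_M=\sqrt{\delta/\mu}$, the solution $x(\mathfrak{s})$ degenerates to the constant $x_m$, and $\mathcal{S}_0$ degenerates to a flat torus. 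One should take the limit of the explicit equation $G=0$: as $\delta\uparrow(H+\mu)/2$, near the double root $h(x)\approx\mu^{-2}(H+\mu)\cdot(\text{quadratic vanishing at }x_m)$, so the local behaviour of $x(\mathfrak{s})$ is governed by a harmonic-oscillator-type equation with frequency $\sqrt{2\mu(H+\mu)}$; plugging this asymptotic into $G=0$ and solving yields $\tilde{\mathfrak{s}}=\pi/(2\sqrt{2\mu(H+\mu)})$, i.e. a quarter of the oscillation period. I would make this rigorous by rescaling $x=x_m+\sqrt{(H+\mu)/2 - \delta}\,\,\xi$ and passing to the limit in the ODE and in the normalized equation $G=0$.

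For item (3), the geodesic $\Gamma_{\mathfrak{s}}$ through $\psi(\mathfrak{s},0)$ with velocity $\psi_{\mathfrak{s}}(\mathfrak{s},0)$ lies in the totally geodesic $\{x_2=0\}\cap\M^3(\ep)$ (a copy of $\M^2(\ep)$), since both $\psi(\mathfrak{s},0)$ and $\psi_{\mathfrak{s}}(\mathfrak{s},0)$ have vanishing second coordinate; the rotation axis $\mathcal{L}=\{x_1=x_2=0\}\cap\M^3(\ep)$ is a geodesic of the same $\M^2(\ep)$. Two distinct geodesics in $\M^2(\ep)$ meet in at most one point ($\S^2$ needs an argument that they are not antipodal; $\H^2$ is immediate), and they do meet provided $\Gamma_{\mathfrak{s}}$ is not parallel to $\mathcal{L}$, which fails only when $x'(\mathfrak{s})=0$ — not an issue for $\mathfrak{s}$ near $\tilde{\mathfrak{s}}\in(0,\mathfrak{s}^*)$. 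So $\hat p(\mathfrak{s})$ is well-defined on a neighbourhood $\mathcal{I}$ of $\tilde{\mathfrak{s}}$; writing $\hat p(\mathfrak{s})=(0,0,\hat p_3(\mathfrak{s}),\hat p_4(\mathfrak{s}))$ as the intersection of an explicit parametrized geodesic with $\{x_1=0\}$, $\hat p$ is analytic and $\hat p_4(\mathfrak{s})>0$ on a possibly smaller interval by continuity from $\hat p_4(\tilde{\mathfrak{s}})>0$ (which I check: at $\mathfrak{s}=\tilde{\mathfrak{s}}$, by the free boundary property $\Gamma_{\tilde{\mathfrak{s}}}$ passes through ${\bf e}_4$-side of the ball). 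The key identities $\hat p_3(\tilde{\mathfrak{s}})=0$ and $\hat p_3'(\tilde{\mathfrak{s}})>0$: the first holds because at $\mathfrak{s}=\tilde{\mathfrak{s}}$ the free boundary condition $\langle\psi_{\mathfrak{s}},{\bf e}_4\rangle=0$ forces $\Gamma_{\tilde{\mathfrak{s}}}$ to be orthogonal to $\parc B[{\bf e}_4,\cdot]$, hence to pass through the center ${\bf e}_4$; since ${\bf e}_4\in\mathcal{L}$ and ${\bf e}_4$ has third coordinate $0$, the intersection point $\hat p(\tilde{\mathfrak{s}})$ is ${\bf e}_4$ itself, giving $\hat p_3(\tilde{\mathfrak{s}})=0$. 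For $\hat p_3'(\tilde{\mathfrak{s}})>0$ I would differentiate the defining relation for $\hat p_3$ at $\mathfrak{s}=\tilde{\mathfrak{s}}$, reducing it to the sign of the derivative at $\tilde{\mathfrak{s}}$ of the relevant geometric quantity — concretely, to $\partial_{\mathfrak{s}}G(\tilde{\mathfrak{s}})$, whose sign was already pinned down in item (1) — together with the sign of $x_3(\tilde{\mathfrak{s}})$ or $x'(\tilde{\mathfrak{s}})>0$.

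\emph{Main obstacle.} The crux is the sign analysis of the single scalar free-boundary function $G(\mathfrak{s})$ (equivalently $\langle\psi_{\mathfrak{s}}(\mathfrak{s},0),{\bf e}_4\rangle$): showing it changes sign exactly once on the half-period $(0,\mathfrak{s}^*)$, with the correct sign of its derivative at the zero. This requires genuinely using the structure of the quartic $h(x)$ and the precise form of the $\phi$-integral, uniformly in $(H,\delta)$ (and in the two cases $\ep=\pm1$), and it is also what forces the hypothesis $\delta<(H+\mu)/2$ when $\ep=1$. The degenerate limit in item (2) is a secondary technical point requiring a careful rescaling argument, but it is routine once the non-degenerate equation $G=0$ is understood. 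All remaining assertions — analyticity, the $\M^2(\ep)$-reduction for $\Gamma_{\mathfrak{s}}$, embeddedness and ball containment, and the identities for $\hat p$ — follow by standard ODE dependence-on-parameters, elementary spherical/hyperbolic geometry, and differentiation of the explicit intersection formulas, once item (1) is in hand. I would defer all of these computations to the appendix as the paper indicates.
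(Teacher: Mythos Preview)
Your architecture matches the paper's: reduce free boundary to a scalar equation on the profile curve, locate a zero by the intermediate value theorem, then use the implicit function theorem for item (2) and an explicit $\M^2(\ep)$-computation for $\hat p$ in item (3). There is one slip and one genuine gap. The slip: ``conormal parallel to the ball normal'' is \emph{not} equivalent to $\langle\psi_{\mathfrak{s}},{\bf e}_4\rangle=0$ (that gives $x_4'=0$, a different condition). The correct scalar is $\langle N,{\bf e}_4\rangle=0$, which in the profile variables reads $F(\mathfrak{s}):=x_3(\mathfrak{s})\,x'(\mathfrak{s})-x(\mathfrak{s})\,x_3'(\mathfrak{s})=0$; this is exactly the function the paper tracks, and also the one that makes item (3) clean (via the Klein projection $(x,x_3,x_4)\mapsto(x/x_4,x_3/x_4)$, $\hat p_3=0$ becomes literally $F=0$, and $\hat p_3'(\tilde{\mathfrak{s}})>0$ reduces to $F'(\tilde{\mathfrak{s}})>0$).

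The gap is your sign-change step. You propose to check $F$ at $\mathfrak{s}=0$ and at the half-period $\mathfrak{s}^*$ where $x=x_M$, $x'=0$. But then $F(\mathfrak{s}^*)=-x_M\,x_3'(\mathfrak{s}^*)$ has sign governed by $\cos\phi(\mathfrak{s}^*)\,\phi'(\mathfrak{s}^*)$, and neither factor has an evident sign in general. The paper needs a five-case split. In the hardest case $\ep=1$, $\delta>H$, one must first prove $\phi(\mathfrak{s}_2)>\pi/2$; the paper does this by a contour-integral residue computation (write $\phi(\mathfrak{s}_2)$ as an integral in $w=x^2$, extend the integrand meromorphically, and obtain $2\phi(\mathfrak{s}_2)=\pi+M$ with $M>0$), and only then locates an $\mathfrak{s}_0$ with $F(\mathfrak{s}_0)>0$ at $\phi=\pi/2$. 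For $\ep=1$, $\delta\leq H$, intermediate points are chosen differently (at $\phi'=0$, or where $\phi=\pi/2$). For $\ep=-1$ with $H\leq 1$ your scheme fails outright because $x(\mathfrak{s})$ is unbounded and there is no half-period; the paper takes $\mathfrak{s}_0$ with $x(\mathfrak{s}_0)=\sqrt{\delta/H}$ when $H>0$ and a $\mathfrak{s}\to\infty$ limit when $H=0$. So the heart of item (1) is this case analysis together with the residue inequality, not a one-line IVT between $0$ and $\mathfrak{s}^*$. For item (2), your rescaling would work, but the paper shortcuts by using the explicit flat-torus formulas $x\equiv\sqrt{(H+\mu)/(2\mu)}$, $x_3(\mathfrak{s})=\sqrt{(\mu-H)/(2\mu)}\sin(\sqrt{2\mu(H+\mu)}\,\mathfrak{s})$ at $\delta=(H+\mu)/2$ and reading $\tilde{\mathfrak{s}}$ directly from $F=0$.
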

\begin{proof}
    See Appendix \ref{sec:appendix}.
\end{proof}

\begin{remark}\label{tori}
    The first item of the previous Proposition omits the limit case $\delta = \frac{H + \mu}{2}$, which corresponds to rotational flat tori. However, in this case the functions $x(\mathfrak{s})$, $x_3(\mathfrak{s})$ can be computed explicitly: 
\begin{equation}\label{toro}
    x(\mathfrak{s}) \equiv \sqrt{\frac{\mu + H}{2\mu}}, \; \; \; \; x_3(\mathfrak{s}) = \sqrt{\frac{\mu - H}{2\mu}}\sin\left(\sqrt{2\mu(\mu + H)}\mathfrak{s}\right).
    \end{equation}
    With this, one can check that the embedded annulus $\psi([-\tilde{\mathfrak{s}},\tilde{\mathfrak{s}}]\times \S^1)$, where $\tilde{\mathfrak{s}} = \frac{\pi}{2\sqrt{2\mu(\mu + H)}}$, is free boundary in $B[{\bf e}_4,0]$.
\end{remark}

\section{Delaunay surfaces via double roots}\label{sec:double}

We now consider the case $a=1$ in our discussion of Section \ref{sec:romega}, i.e., we choose $(1,b,c)\in \cO$ and let $\Sigma=\Sigma(1,b,c)$ be the CMC surface in $\M^3(\ep)$ of Definition \ref{def:sabc}. This means that $p(0,x)$ in \eqref{def:pab} has a double root at $x=1/c$. As explained after \eqref{eq:Xv0}, in this case we have $e^{\rho(0,v)}=1/c$. Therefore, $\rho_v(0,v)\equiv 0$ and by uniqueness of the solution to the Cauchy problem for \eqref{sinhg} we have that $\rho=\rho(u)$, i.e., $\rho_v \equiv 0$. It follows then by \eqref{unodos} that the coefficients of $I,II$ for $\Sigma$ only depend on $u$, and so $\Sigma$ is invariant under a $1$-parameter group of ambient isometries of $\M^3(\ep)$. Moreover, each curve $v\mapsto \psi(u,v)$ is an orbit of such $1$-parameter group. 

In our situation, $\Gamma(v):=\psi(0,v)$ is such an orbit, which actually lies in the totally geodesic surface ${\bf S}:=\M^3(\ep)\cap \{x_3=0\}$ of $\M^3(\ep)$, see Lemma \ref{plaor}. Since $\Sigma$ intersects ${\bf S}$ orthogonally along $\Gamma(v)$, the geodesic curvature of $\Gamma(v)$ as a curve in ${\bf S}$ is given by the principal curvature $\kappa_2(0,v)$, which by \eqref{princur} is constant, and equal to $H-\mu c^2$.

Therefore, in the case $\ep=1$, $\Sigma$ is a rotational CMC surface in $\S^3$. The $u$-curves (resp. $v$-curves) of $\Sigma$ correspond to the $\mathfrak{s}$-curves (resp $\theta$-curves) in the parametrization $\psi(\mathfrak{s},\theta)$ of rotational surfaces of Section \ref{sec:rotational}. In this way, $\psi(u,0)$ is a profile curve of $\Sigma$.

The principal curvature $\kappa_1(u)$ associated to this profile curve $\psi(u,0)$ is always positive, by \eqref{princur}. Thus, if $H\neq 0$, $\Sigma$ is the universal cover of either a flat torus or a \emph{spherical nodoid} of $\S^3$. If $H=0$, $\Sigma$ covers a spherical catenoid or a Clifford torus. See Section \ref{sec:rotational}.

\begin{remark}\label{rem:tori}
If $a=c=1$, then $y'(0)=0$ by \eqref{pqabc}, and so $y(u)\equiv 0$. This implies by \eqref{omini} and \eqref{omu} that $\rho(u)\equiv 0$. Thus, $a=c=1$ corresponds to the case where $\Sigma$ covers a flat CMC torus in $\S^3$.
\end{remark}

In the case $\ep=-1$, we have that $\Sigma$ is a \emph{generalized} rotational surface in $\H^3$, i.e., it is invariant by either hyperbolic, elliptic or parabolic rotations in $\H^3$. We will be interested in the elliptic case, i.e. the case where the orbits of these rotations are (compact) circles. This happens if and only if the geodesic curvature of the orbits is greater than $1$ in absolute value, that is, if and only if
\begin{equation}\label{conde}
(H-\mu c^2)^2 >1.
\end{equation}
Thus, if \eqref{conde} holds for our choice of $(H,c)$, then $\Sigma$ will be a Delaunay surface in $\H^3$ with constant mean curvature $H>1$. Again, since $\kappa_1$ is positive by \eqref{princur} and describes the geodesic curvature of the profile curve of $\Sigma$, we deduce that $\Sigma$ is a \emph{hyperbolic nodoid}. See Section \ref{sec:rotational}. An equivalent form of \eqref{conde} is 
\begin{equation}\label{conde2}
H<\frac{\mu}{2}\left(c^2+\frac{1}{c^2}\right),
\end{equation}
where we have used that $\mu^2=H^2-1$ if $\ep=-1$.
This motivates the following definition.

\begin{definition}\label{der}
We let $\cR\subset \R^3$ be open subset of $\cO\cap \{a=1\}$ given by $$\cR:=\{(1,b,c)\in \cO : (H-\mu c^2)^2+\ep >0\}.$$
\end{definition}

Note that $\cR$ is just $\cO\cap \{a=1\}$ if $\ep=1$, and that the inequality defining $\cR$ is \eqref{conde} if $\ep=-1$. We thus have from the discussion above:

\begin{proposition}\label{pro:nod}
Assume that $(1,b,c)\in \cR$. % and let $H\geq 0$ and $\ep=\pm 1$ so that both $H>1$ and \eqref{conde} hold if $\ep=-1$. 
Then, $\Sigma=\Sigma(1,b,c)$ is the universal cover of the (spherical or hyperbolic) nodoid ($H\neq 0$) or catenoid ($H=0$) in $\M^3(\ep)$ with neck curvature given by $\kappa=H-\mu c^2$.
\end{proposition}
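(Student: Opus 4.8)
The plan is to unwind the definitions accumulated in Sections \ref{sec:solsg}--\ref{sec:double} and assemble them into a single identification statement. First I would recall that, by hypothesis $(1,b,c)\in\cR$, the surface $\Sigma=\Sigma(1,b,c)$ has $\rho=\rho(u)$ depending only on $u$ (as established at the start of Section \ref{sec:double}, since the double root of $p(0,x)$ at $x=1/c$ forces $e^{\rho(0,v)}\equiv 1/c$ and hence $\rho_v\equiv 0$ by Cauchy uniqueness for \eqref{sinhg}). Consequently the coefficients of $I,II$ in \eqref{unodos} depend only on $u$, so $\Sigma$ is invariant under a one-parameter group of ambient isometries of $\M^3(\ep)$ and each $v$-curve is an orbit. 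The orbit $\Gamma(v)=\psi(0,v)$ lies in ${\bf S}=\M^3(\ep)\cap\{x_3=0\}$ (Lemma \ref{plaor}), $\Sigma$ meets ${\bf S}$ orthogonally along it, and its geodesic curvature in ${\bf S}$ equals the principal curvature $\kappa_2(0,v)=H-\mu e^{-2\rho(0,0)}=H-\mu c^2$ by \eqref{princur}, using $e^{\rho(0,0)}=1/(ac)=1/c$ from \eqref{omini} with $a=1$.

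Next I would split into the two cases $\ep=1$ and $\ep=-1$. When $\ep=1$, the isometry group is compact so $\Sigma$ is a genuine rotational CMC surface in $\S^3$; identifying the $u$-curves with the $\mathfrak{s}$-curves and the $v$-curves with the $\theta$-curves of the parametrization \eqref{eq:psiesferico} of Section \ref{sec:rotational}, the profile curve is $\psi(u,0)$, and its principal curvature $\kappa_1(u)=H+\mu e^{-2\rho(u)}>0$ is always positive by \eqref{princur}. By the converse part of Proposition \ref{pro:CMCS3}, $\Sigma$ must be (the universal cover of) an open piece of one of the rotational examples or of a totally umbilic sphere; positivity of $\kappa_1$ together with non-umbilicity (valid for $c\neq 1$; the case $c=1$ is the flat torus of Remark \ref{rem:tori}) rules out the umbilic sphere and pins down the profile curve to the nodoid type if $H\neq 0$ (spherical nodoid, Definition \ref{def:rotacionalesS3}) or the catenoid type if $H=0$, exactly as argued in the text preceding Remark \ref{rem:tori}. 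When $\ep=-1$, $\Sigma$ is a priori only a generalized rotational surface (hyperbolic, parabolic or elliptic type); here the role of the hypothesis $(1,b,c)\in\cR$ is to guarantee, via the inequality $(H-\mu c^2)^2>1$ defining $\cR$ in Definition \ref{der} (equivalently \eqref{conde}), that the orbits have geodesic curvature $>1$ in absolute value, hence are compact circles, so $\Sigma$ is of elliptic type. Then I would invoke the converse part of Proposition \ref{pro:CMCH3} and again use $\kappa_1>0$ to conclude that $\Sigma$ is the universal cover of a hyperbolic nodoid (Definition \ref{def:rotacionalesH3}) when $H>1$.

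Finally, in both cases I would record that the neck curvature $\kappa$, i.e.\ the geodesic curvature of the orbit lying in ${\bf S}$ (the neck circle $\Gamma(v)$), equals $\kappa_2(0,0)=H-\mu c^2$ by the computation above, matching the value asserted in the proposition; one also checks this is consistent with the parameter $\delta$ of Section \ref{sec:rotational} via $\kappa_\theta=H-\delta/x_m^2$, so that the statement is simply a dictionary between the $(1,b,c)$-parametrization and the $(\ep,H,\delta)$-parametrization. I do not expect any genuine obstacle: the argument is essentially a bookkeeping exercise collating Lemma \ref{plaor}, formula \eqref{princur}, the discussion opening Section \ref{sec:double}, and the classification Propositions \ref{pro:CMCS3}, \ref{pro:CMCH3}. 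The one point requiring a little care is the $\H^3$ case, where one must explicitly verify that the defining inequality of $\cR$ is precisely the ellipticity condition for the rotational orbits (so that Proposition \ref{pro:CMCH3} applies and the surface is not of parabolic or hyperbolic type), and that the degenerate subcase $a=c=1$ — a flat torus by Remark \ref{rem:tori} — is correctly absorbed into the statement as the limiting $\kappa=H-\mu$ case; the parameter $b$ plays no role here, as it only selects a choice of supporting totally umbilic surfaces among the infinitely many containing a given circle.
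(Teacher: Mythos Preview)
Your proposal is correct and follows essentially the same approach as the paper: the proposition is stated in the paper as an immediate consequence of the discussion opening Section \ref{sec:double}, and your argument reproduces that discussion step by step (the $\rho=\rho(u)$ reduction, the computation of the orbit curvature $H-\mu c^2$, the case split $\ep=\pm 1$, and the identification via positivity of $\kappa_1$). Your attention to the borderline $c=1$ flat-torus subcase is slightly more explicit than the paper's, which simply states the proposition after the discussion without singling that case out.
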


\begin{remark}\label{rem:ejea1}
If $(1,b,c)\in \cR\cap \cO^-$, then the rotation axis of $\Sigma(1,b,c)$ is the geodesic $\cL$ described in item (1) of Theorem \ref{th:nivelperiodo}. This follows from the fact that, in this case, $\Gamma(v)=\psi(0,v)$ is a (compact) circle, and any of the symmetries in that item (1) must leave the center of $\Gamma(v)$ fixed, i.e. the center lies in the intersection of the symmetry planes $\Omega_k$. Since both the rotation axis and $\Omega_k$ are orthogonal to ${\bf S}$, we conclude from there that the rotation axis agrees with $\cL=\cap_{k\in \Z} \Omega_k$.
\end{remark}

If follows from the above construction that the parameter $c$ determines uniquely the immersion $\psi(u,v)$ that defines $\Sigma(1,b,c)$. On the other hand, the role of the parameter $b$ is to determine the initial values \eqref{ivasis2} of system \eqref{system1} via \eqref{aabc}, \eqref{pqabc}. More conceptually, since $\psi(u,v)$ is rotational, each curve $v\mapsto \psi(u,v)$ is a circle that can be seen as contained in infinitely many $2$-dimensional totally umbilic surfaces $S[m(u),d(u)]$ of $\M^3(\ep)$. The choice of $b$ in $\Sigma(1,b,c)$ determines the values of $m(u), d(u)$ in this description.

We will next describe the behavior of the solutions to system \eqref{system1} associated to our solution $\rho(u,v)$ in our current case $a=1$. So, we
consider the initial conditions \eqref{yzz}, \eqref{aabc}, \eqref{pqabc} for system \eqref{system1}, that give together with \eqref{omini} the solution $\rho(u,v)$ to \eqref{sinhg} starting from $(1,b,c)\in \cO^-$ constructed in Theorem \ref{th:otro}.

The Hamiltonian constants $h,k$ in \eqref{def:h} and \eqref{def:k} can be expressed in terms of $(b,c)$ using $a=1$, \eqref{aabc} and \eqref{pqabc}. This lets us write $q(x)$ in \eqref{def:q} in terms of $(b,c)$ as 

\begin{equation}\label{qfactor}
q(x)=-(x-r_1)^2(x-r_3),
\end{equation}
where 
\begin{equation}\label{roots12}
r_1=-\frac{(b - 1)^2}{4b} \leq 0
\end{equation}
and
\begin{equation}\label{root3}
r_3= \frac{1}{4}\left(c + \frac{1}{c}\right)^2 \geq 1.
\end{equation}

Let $(s(\landa),t(\landa))$ be the solution to \eqref{system2} obtained after the change of coordinates \eqref{change} from our initial solution $(y(u),z(u))$ to \eqref{system1}. We list the following properties, that were obtained in \cite{CFM}, and that will be used later on:

\begin{enumerate}
\item[i)]
$(s(\landa),t(\landa))$ is defined for all $\landa \in \R$, and up to a translation in the $\landa$ parameter it satisfies the initial conditions $s(0)=1$, $t(0)=0$.
\item[ii)]
$s(\landa)$ takes values in $[1,r_3]$, while $t(\landa)$ takes values in $[r_1,0]$. Moreover, $s(\landa)\equiv 1$ if and only if $r_3=1$ and $t(\landa)\equiv 0$ if and only if $r_1=0$.
\item[iii)]
$s(\landa)$ is $2l$-periodic, where
\begin{equation*}
l=\int_1^{r_3} \frac{dx}{\sqrt{x (x-1) q(x)}}<\8.
\end{equation*}
In this way, $s(2l)=s(0)=1$ and $s(l)=r_3$. 
\item[iv)]
If $r_1<0$, then $t(\landa)$ is strictly decreasing, with $t(\landa)\to r_1$ as $\landa\to \8$. 
\end{enumerate}

\section{The period map for nodoids and catenoids}\label{sec:perrot}

In this section we will assume, as in Section \ref{sec:double}, that $a=1$, and keep the same notations. Therefore, $\rho=\rho(u)$, and $\Sigma$ is the rotational example of Proposition \ref{pro:nod}. In particular $\kappa_2 (0,v)= H - \mu c^2$. 

As explained in Section \ref{sec:double}, if $\ep=1$, $\Gamma(v)$ is a circle in $\S^3\cap \{x_3=0\}$. Also, if $\ep=-1$, $\Gamma(v)$ is a curve in $\H^3\cap \{x_3=0\}\equiv \H^2$ of constant curvature $H-\mu c^2$. This curve will be a (compact) circle if and only if \eqref{conde} holds, i.e., if and only if $(1,b,c)\in \cR$ (see Definition \ref{der}). We also recall that the set $\mathcal{O}^-$ and the period map $\Theta$ were introduced in Definition \ref{omenos} and Definition \ref{def:period} respectively. 

We prove next:

\begin{theorem}\label{perota}
Let $(1,b,c)\in \cR\cap \cO^-$. Then, 
\begin{equation}\label{eq:perota}
\Theta(1,b,c)=\frac{-  \sqrt{(H-c^2 \mu)^2+\ep}}{c\mu  \sqrt{\left(\frac{1}{c}+b c\right)\left(\frac{1}{c}+\frac{c}{b}\right)}}.
\end{equation}
\end{theorem}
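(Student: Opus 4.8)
Since $a=1$, we are in the rotational setting of Section \ref{sec:double}: $\rho=\rho(u)$ depends only on $u$, the curve $\Gamma(v)=\psi(0,v)$ is an orbit circle of the $1$-parameter isometry group, and $e^{\rho(0,v)}\equiv 1/c$, so $X(v)\equiv 1/c$ in the notation of \eqref{defx}. By Proposition \ref{prop:exsi}, $\sigma=\sigma(1,b,c)=2\pi c/\sqrt{1+(b+1/b)c^2+c^4}$. The strategy is to compute $\Theta(1,b,c)$ directly from Definition \ref{def:period} by evaluating the integrand $\kappa_\gamma\,\|\gamma'\|$ for the planar curve $\gamma=\varphi\circ\Gamma$, where $\varphi$ is stereographic projection from $-{\bf e}_4$. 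The key simplification is that, because $\Sigma$ is rotational, $\Gamma$ is a circle lying in ${\bf S}=\M^3(\ep)\cap\{x_3=0\}$, of constant geodesic curvature $\kappa_\Gamma=\kappa_2(0,v)=H-\mu c^2$ (by \eqref{princur}). Hence $\gamma$ is a circle of constant Euclidean curvature in the plane $\{z=0\}\subset\R^3$, so the integrand $\kappa_\gamma\|\gamma'\|$ is a \emph{constant}, and $\Theta(1,b,c)=\frac{\sigma}{\pi}\,\kappa_\gamma\|\gamma'\|$, reducing everything to identifying that constant.

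\textbf{Key steps.} First I would parametrize $\Gamma(v)=\psi(0,v)$ explicitly. Using the rotational normal form \eqref{eq:psiesferico}–\eqref{eq:psihiperbolico} with profile value $x(0)=x_0$ corresponding to our surface, the orbit $\Gamma$ is $\{(x_0\cos\vartheta,-x_0\sin\vartheta,0,\sqrt{1-\ep x_0^2}\,)\}$ (unified sign via $1-\ep x_0^2$, reading $\sqrt{x_0^2+1}$ when $\ep=-1$), and one has $x_0^2=|\delta|/\mu$-type relations from Section \ref{sec:rotational}; concretely, matching principal curvatures, $\kappa_\Gamma=\kappa_\theta=H-\delta/x_0^2=H-\mu c^2$, which pins down $x_0$ in terms of $c$ (essentially $x_0^2 = $ something like $\frac{1}{\mu}\cdot\frac{(\ldots)}{(\ldots)}$ coming from $h(x_0)$'s geometry; this needs Section~\ref{sec:rotational} bookkeeping but is routine). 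Alternatively, and more cleanly, I can avoid the rotational normal form and instead extract $x_0$ directly: the radius of the circle $\Gamma$ in ${\bf S}$ is determined by its constant geodesic curvature $H-\mu c^2$ together with the ambient geometry of ${\bf S}\cong\M^2(\ep)$. Second, I would compute the stereographic image: $\varphi$ maps $\Gamma$ to a Euclidean circle $\gamma$ in $\{z=0\}$, and since $\varphi$ is conformal with known conformal factor, the product $\kappa_\gamma\|\gamma'\|$ transforms in a controlled way — in fact, for a geodesic circle of geodesic curvature $\kappa_\Gamma$ and intrinsic "radius" in $\M^2(\ep)$, the total turning $\int\kappa_\gamma\|\gamma'\|\,dv$ over one period $2\sigma$ of $\Gamma$ is $2\pi$ times the rotation index. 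Third — and this is the actual computation — I would compute $\frac{1}{\pi}\int_0^\sigma\kappa_\gamma\|\gamma'\|\,dv$ by relating the $v$-speed of $\Gamma$ (which is $\|\Gamma'(v)\| = e^{\omega(0,v)} = e^{\rho(0,v)}/(2\mu) = 1/(2\mu c)$, constant) to the arclength of $\gamma$, and tracking the curvature factor through $\varphi$. The period $\sigma$ already encodes how much of the circle $\Gamma$ is traversed in $v\in[0,\sigma]$.

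\textbf{Main obstacle.} The genuine work is bookkeeping the stereographic projection's effect on curvature and making sure the sign of $\Theta$ comes out negative as asserted — the sign $-m$ appears in Theorem \ref{th:main1}(3), and here it must emerge from the orientation conventions (the spherical curvature lines are those of the \emph{smaller} principal curvature $\kappa_2$, and $N(0,0)={\bf e}_1$ etc. from \eqref{idat}). I expect the cleanest route is: $\Theta(1,b,c) = \frac{\sigma}{\pi}\cdot(\text{signed turning density of }\gamma)$, where the signed turning density is $\kappa_\Gamma$ divided by the ambient-to-stereographic conformal-factor correction, evaluated along the orbit circle. Plugging $\sigma = 2\pi c/\sqrt{1+(b+1/b)c^2+c^4}$, $\kappa_\Gamma = H-\mu c^2$, and simplifying $\sqrt{1+(b+1/b)c^2+c^4} = c\sqrt{(\tfrac1c+bc)(\tfrac1c+\tfrac cb)}$ (a direct algebraic check: expand $(\tfrac1c+bc)(\tfrac1c+\tfrac cb) = \tfrac1{c^2}+\tfrac1{b}+b+c^2$, multiply by $c^2$), together with the identity that the ambient-geometry correction contributes the factor $\sqrt{(H-c^2\mu)^2+\ep}/(c\mu\,|H-c^2\mu|)$ against the naive $\kappa_\Gamma$, yields precisely \eqref{eq:perota}. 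Verifying that last correction factor — i.e. that the Euclidean curvature of the stereographic image of a geodesic circle of curvature $\kappa_\Gamma$ in $\M^2(\ep)$, weighted by arclength, produces exactly $\sqrt{\kappa_\Gamma^2+\ep}$ rather than $\kappa_\Gamma$ — is where I would spend the most care; it is ultimately the statement that total geodesic curvature is a Möbius/conformal invariant only up to the turning contributed by the Gauss curvature, and for a circle this discrepancy is captured by the half-angle subtended, giving the $\sqrt{\kappa_\Gamma^2+\ep}$ factor via the Gauss–Bonnet relation $\int\kappa_{g}\,ds + \ep\,\mathrm{Area} = 2\pi$ applied to the disk bounded by $\Gamma$ in ${\bf S}$. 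A consistency check at $\ep=1$, $H=0$: then $\Sigma$ is a spherical catenoid and \eqref{eq:perota} should reduce to the known period of its waist circle — I would verify this limit explicitly as a sanity check.
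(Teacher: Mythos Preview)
Your overall setup is right and does lead to a proof, but it diverges from the paper's at the key computation, and your description of that step is muddled.

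The paper's route is entirely explicit: it writes the stereographic conformal factor $\varrho=4/(1+\ep(x^2+y^2))^2$, uses the standard conformal-change formula $\kappa_\gamma=\frac{\langle\nabla\sqrt\varrho,{\bf n}\rangle}{\sqrt\varrho}+\sqrt\varrho\,\kappa_\Gamma$ together with $\|\gamma'\|=X/(2\mu\sqrt\varrho)$ to solve for the Euclidean radius $r$ of the circle $\gamma$, obtaining $r=\ep\big(H-c^2\mu+\sqrt{(H-c^2\mu)^2+\ep}\big)$. Then $\kappa_\gamma=-1/r$ and $\|\gamma'\|=(1+\ep r^2)/(4c\mu)$ are plugged into \eqref{def:per0}, and the result drops out after substituting $\sigma$ from \eqref{exsi}. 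The sign comes from observing that $\kappa_\Gamma=H-\mu c^2<0$ forces $\gamma$ to be negatively oriented.

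Your alternative avoids computing $r$ and is in principle cleaner, but you have not stated the key identity cleanly. The point is simply this: since $\gamma$ is a Euclidean circle, $\int_0^\sigma\kappa_\gamma\|\gamma'\|\,dv$ is just the signed angle swept by $\gamma$ over $[0,\sigma]$, hence $\Theta=\pm 2\sigma/T_\Gamma$ where $T_\Gamma$ is the $v$-period of the full circle $\Gamma$. Now $\Gamma$ has constant speed $e^{\omega(0,v)}=1/(2\mu c)$ in ${\bf S}\cong\M^2(\ep)$, and a circle of geodesic curvature $\kappa_\Gamma$ in $\M^2(\ep)$ has circumference $2\pi/\sqrt{\kappa_\Gamma^2+\ep}$ (this is the elementary fact you are calling ``Gauss--Bonnet''; no integral formula is needed, just $\kappa=\cot\rho$ or $\coth\rho$ for a geodesic circle of radius $\rho$). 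Hence $T_\Gamma=4\pi\mu c/\sqrt{\kappa_\Gamma^2+\ep}$, and substituting $\sigma$ and your algebraic identity $1+(b+1/b)c^2+c^4=c^2(\tfrac1c+bc)(\tfrac1c+\tfrac cb)$ gives \eqref{eq:perota} directly. Your ``correction factor'' $\sqrt{(H-c^2\mu)^2+\ep}/(c\mu|H-c^2\mu|)$ is miswritten and the detour through the do Carmo--Dajczer normal form to find $x_0$ is unnecessary. The sign still requires the orientation argument the paper gives.
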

\begin{proof}
Consider the planar curve $\gamma(v)=\varphi(\Gamma(v))$ defined above \eqref{def:per0}. The metric on ${\bf S}\setminus \{-{\bf e}_4\}$ can be written via the inverse stereographic map $\varphi^{-1}$ as 
\begin{equation}\label{cofes}
\varrho (dx^2+dy^2), \hspace{0.5cm} \varrho:=\frac{4}{(1+\ep (x^2+y^2))^2},
\end{equation} 
where $(x,y)$ are Euclidean coordinates in $\R^2$. From here and \eqref{unodos}, we have 
\begin{equation}\label{metper}
||\gamma'|| = \frac{X}{2\mu \sqrt{\varrho}},
\end{equation}
where we have used our usual notation $X(v)=e^{\rho(0,v)}$. On the other hand, by standard formulas of conformally related Riemannian metrics, the geodesic curvature $\kappa_{\Gamma}$ of $\Gamma(v)$ in {\bf S}, i.e., the geodesic curvature of $\gamma(v)$ with respect to the metric \eqref{cofes}, is related to the Euclidean geodesic curvature $\kappa_{\gamma}$ of $\gamma$ by 
\begin{equation}\label{forcurv}
\kappa_{\gamma}=\frac{\esiz \nabla \sqrt{\varrho},{\bf n}\esde}{\sqrt{\varrho}}+ \sqrt{\varrho} \kappa_{\Gamma},
\end{equation} 
where ${\bf n}$ is the unit normal of $\gamma$ in $\R^2$, and both $\nabla, \esiz,\esde$ are Euclidean. Recall that $\kappa_{\Gamma}= \kappa_2(0,v)= H -\mu c^2<0$. Since $\Gamma(v)$ is a horizontal circle, its stereographic projection $\gamma(v)$ parametrizes a circle of a certain radius $r>0$ in the plane (or in the unit disk of $\R^2$, if $\ep=-1$), which is negatively oriented since $\kappa_{\Gamma}<0$. Along $\gamma(v)$ we have 
\begin{equation}\label{alga}
\sqrt{\varrho}=\frac{2}{1+\ep r^2}, \hspace{0.5cm} \nabla \sqrt{\varrho} = \frac{-4\ep}{(1+\ep r^2)^2} \gamma(v),
\end{equation}
and ${\bf n}= \frac{1}{r} \gamma(v)$. Then, it follows from \eqref{forcurv}, \eqref{alga} and $\kappa_{\gamma}=-1/r$ that $$r =\ep\left(H-c^2 \mu \pm \sqrt{(H-c^2\mu)^2+\ep}\right).$$ Here, we should recall that \eqref{conde} holds when $\ep=-1$, since $(1,b,c)\in \cR$. Moreover, taking into account that $r>0$ if $\ep=1$ and $r\in (0,1)$ if $\ep =-1$, we deduce that, actually,
\begin{equation}\label{radiosin}
r =\ep\left(H-c^2 \mu + \sqrt{(H-c^2\mu)^2+\ep}\right).
\end{equation}

We now compute the value of $\Theta=\Theta(1,b,c)$ using \eqref{def:per0}. First note that, by \eqref{metper}, \eqref{alga} and $X(v)=1/c$, we have $$||\gamma'||=\frac{1+\ep r^2}{4c\mu}.$$ Thus using that $\kappa_{\gamma}=-1/r$ and \eqref{exsi}, we have from \eqref{def:per0}
$$\Theta(1,b,c)=\frac{-(1+\ep r^2)}{2c \mu r  \sqrt{\left(\frac{1}{c}+b c\right)\left(\frac{1}{c}+\frac{c}{b}\right)}}.$$ Using \eqref{radiosin} in this expression, we obtain \eqref{eq:perota}.
\end{proof}

\begin{remark}\label{grafocab}
    For any $p_0=(1,b,c) \in \cR\cap \cO^-$, $c > 1$, a straightforward computation from \eqref{eq:perota} shows that $\Theta_c(p_0) \neq 0$. By the implicit function theorem and the analyticity of $\Theta(a,b,c)$ (Proposition \ref{antet}), it follows that the level set $\Theta(a,b,c)=\Theta_0$, where $\Theta_0:=\Theta(p_0)$ can be locally expressed around $p_0$ as a graph $c = c^{\Theta_0}(a,b)$, where $c$ is real analytic with respect to $a,b$. 
    
   Similarly, if $p_0 \in \cR \cap \cO^-$, $b > 1$, an analogous computation shows that $\Theta_b(p_0) \neq 0$, and so in this case we can express locally the level set $\Theta(a,b,c) = \Theta(p_0)$ as an analytic graph 
   $b = b^{\Theta_0}(a,c)$.
\end{remark}

We now consider $(r_1,r_3)$ given by \eqref{roots12}, \eqref{root3}. The map $(b,c)\mapsto (r_1,r_3)$ is a homemorphism from $\{(b,c): b\geq 1, c\geq 1\}$ onto $\{(r_1,r_3): r_1\leq 0, r_3\geq 1\}$, and so we can view $\Theta(1,b,c)$ as a map $\Theta(r_1,r_3)$. Using \eqref{roots12}, \eqref{root3} and $\mu^2=H^2+\ep$, the expression for $\Theta=\Theta(r_1,r_3)$ in \eqref{eq:perota} simplifies to
\begin{equation}\label{tepe}
\Theta^2 = \frac{\mu (2r_3-1)-H}{2\mu (r_3-r_1)}.
\end{equation}
This can be rewritten alternatively as 
\begin{equation}\label{curvanivel}
    r_3= \frac{\Theta^2}{\Theta^2-1} r_1 + \frac{H + \mu}{2\mu (1-\Theta^2)}
\end{equation}
For any fixed $\Theta=\Theta_0$, this equation represents the line with slope $\Theta_0^2/(\Theta_0^2-1)$ that passes through the point
\begin{equation}\label{def:p0}
p_0:=\left(\frac{H+\mu}{2\mu},\frac{H+\mu}{2\mu}\right).
\end{equation}
As an immediate consequence of  \eqref{eq:perota} and \eqref{tepe}, we have:
\begin{corollary}\label{cor:perota}
$\Theta(1,b,c)\in (-1,0)$ for any $(1,b,c)\in \cR\cap \cO^-$.
\end{corollary}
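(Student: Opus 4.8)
The plan is to deduce both inequalities directly from the two explicit formulas \eqref{eq:perota} and \eqref{tepe} for the period map in the case $a=1$; essentially no further geometry is needed beyond what Theorem \ref{perota} and the passage preceding \eqref{tepe} already provide.

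First I would establish $\Theta(1,b,c)<0$ from \eqref{eq:perota}. The numerator $-\sqrt{(H-c^2\mu)^2+\ep}$ is \emph{strictly} negative: its radicand equals $(H-\mu c^2)^2+\ep$, which is positive precisely because $(1,b,c)\in\cR$ (Definition \ref{der}). The denominator $c\mu\sqrt{\left(\tfrac1c+bc\right)\left(\tfrac1c+\tfrac cb\right)}$ is strictly positive, since $c\geq 1$, $\mu>0$, and both factors $\tfrac1c+bc$ and $\tfrac1c+\tfrac cb$ are positive because $b,c\geq 1$. Hence $\Theta(1,b,c)<0$, and in particular $\Theta(1,b,c)\neq 0$.

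Next I would prove $\Theta(1,b,c)>-1$, equivalently $\Theta^2<1$, using \eqref{tepe}; this is legitimate since $(b,c)\mapsto(r_1,r_3)$ is the homeomorphism onto $\{r_1\leq 0,\ r_3\geq 1\}$ recalled just before \eqref{tepe}, with $r_1$ and $r_3$ given by \eqref{roots12} and \eqref{root3}. A short computation gives
\[
1-\Theta^2=\frac{2\mu(r_3-r_1)-\mu(2r_3-1)+H}{2\mu(r_3-r_1)}=\frac{\mu+H-2\mu r_1}{2\mu(r_3-r_1)}.
\]
The numerator is positive because $\mu+H>0$ and $-2\mu r_1\geq 0$ (recall $r_1=-(b-1)^2/(4b)\leq 0$), and the denominator is positive because $\mu>0$ and $r_3-r_1\geq 1>0$. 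Thus $\Theta^2<1$, which together with the first step yields $\Theta(1,b,c)\in(-1,0)$.

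I do not expect a genuine obstacle here: the argument is a sign check on the two closed-form expressions. The one point to be careful about is that the radicand in \eqref{eq:perota} must be strictly positive — not merely nonnegative — so that $\Theta$ is bounded away from $0$; this is exactly the inequality built into the definition of $\cR$, so restricting to $\cR\cap\cO^-$ (as the statement does) is what makes the conclusion strict on both sides.
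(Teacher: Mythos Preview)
Your proof is correct and is exactly the approach the paper has in mind: the paper simply states that the corollary is ``an immediate consequence of \eqref{eq:perota} and \eqref{tepe}'', and you have spelled out precisely those sign checks. Your computation $1-\Theta^2=(\mu+H-2\mu r_1)/(2\mu(r_3-r_1))$ is the clean way to read off $\Theta^2<1$ from \eqref{tepe}.
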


We next show that in Theorem \ref{perota} we can simply assume $(1,b,c)\in \cR$.
\begin{proposition}\label{rco}
$\mathcal{R}\subset \cO^-$.
\end{proposition}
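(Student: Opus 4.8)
The goal is to show $\mathcal{R}\subset \mathcal{O}^-$, i.e. that for every $(1,b,c)\in\mathcal{R}$ the nondegeneracy condition \eqref{space} holds, where $\nu_0=\psi_v(0,0)$ and $\nu_1=\psi_v(0,\sigma)$. The plan is to use the rotational structure of $\Sigma(1,b,c)$, already established in Section \ref{sec:double}, and to compute $\nu_0,\nu_1$ explicitly. Since $\rho=\rho(u)$ in this case, the curve $\Gamma(v)=\psi(0,v)$ is, by Proposition \ref{pro:nod}, a round circle (when $\ep=1$) or a curve of constant curvature $H-\mu c^2$ (when $\ep=-1$) contained in the totally geodesic slice ${\bf S}=\M^3(\ep)\cap\{x_3=0\}$; condition \eqref{conde}, which holds because $(1,b,c)\in\mathcal{R}$, guarantees that in the hyperbolic case this curve is a compact circle too. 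So in all cases $\Gamma$ is a closed, positively-curved planar curve, and $\nu_0,\nu_1$ are values of its (rescaled) tangent vector at the parameter points $0$ and $\sigma$.

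First I would parametrize $\Gamma(v)$ explicitly. Because $\Gamma$ is an orbit of a one-parameter isometry group of $\M^3(\ep)$ lying in the two-plane $\{x_1,x_2\}$ (recall $\psi(0,0)={\bf e}_4$, $\psi_v(0,0)\parallel {\bf e}_2$, $\psi_u(0,0)\parallel {\bf e}_3$), we can write $\Gamma(v)=(\text{circle in the }x_1x_2\text{-plane})\times(\text{fixed }x_3=0,\ x_4)$, with angular speed a constant $\lambda=\lambda(b,c)>0$ determined by $e^{\rho(0)}=1/c$ and \eqref{unodos}; concretely $\|\Gamma'(v)\|=e^{\rho(0)}/(2\mu)=1/(2\mu c)$ is constant. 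Then $\nu_k=\psi_v(0,k\sigma)$ is obtained by rotating $\nu_0$ through the angle $\lambda k\sigma$ inside the (spacelike, since it sits in the $x_1x_2$-plane) plane $\Lambda$. Hence
\[
\esiz \nu_0,\nu_0\esde\,\esiz\nu_1,\nu_1\esde-\esiz\nu_0,\nu_1\esde^2=\|\nu_0\|^4\bigl(1-\cos^2(\lambda\sigma)\bigr)=\|\nu_0\|^4\sin^2(\lambda\sigma),
\]
which is strictly positive \emph{unless} $\lambda\sigma\in\pi\Z$. So the statement reduces to checking that $\lambda\sigma\notin\pi\Z$.

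Next I would identify $\lambda\sigma$ with a quantity already computed. The angle $\vartheta$ between $\gamma'(0)$ and $\gamma'(\sigma)$ in Proposition \ref{pro:peri} is exactly $\lambda\sigma$ (the stereographic projection $\varphi$ is conformal and maps $\Gamma$ to a round circle, along which angular displacement in arclength parameter is linear), and by \eqref{2teta} we have $\pi\Theta=2\pi l+\vartheta$. Thus $\lambda\sigma\equiv\pi\Theta\pmod{2\pi}$, so $\sin^2(\lambda\sigma)=\sin^2(\pi\Theta)$, and \eqref{space} holds iff $\Theta(1,b,c)\notin\Z$. But Corollary \ref{cor:perota} states precisely that $\Theta(1,b,c)\in(-1,0)$ for every $(1,b,c)\in\mathcal{R}\cap\mathcal{O}^-$ --- and one checks the formula \eqref{eq:perota}, or equivalently \eqref{tepe}, is valid and yields a value in $(-1,0)$ directly from $(1,b,c)\in\mathcal{R}$ without presupposing membership in $\mathcal{O}^-$, since its derivation only used that $\Gamma(v)$ is a circle, guaranteed by condition \eqref{conde}. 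Hence $\Theta(1,b,c)\in(-1,0)$, in particular $\Theta(1,b,c)\notin\Z$, so $\sin^2(\pi\Theta)>0$ and \eqref{space} holds. Therefore $(1,b,c)\in\mathcal{O}^-$, proving $\mathcal{R}\subset\mathcal{O}^-$.

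\textbf{Main obstacle.} The delicate point is the logical circularity to avoid: Corollary \ref{cor:perota} and Theorem \ref{perota} are stated for $(1,b,c)\in\mathcal{R}\cap\mathcal{O}^-$, so I cannot simply quote them to prove $(1,b,c)\in\mathcal{O}^-$. The fix is to observe that the computation of $\Theta$ in the proof of Theorem \ref{perota} --- where $\Gamma(v)$ is shown to be a circle of radius \eqref{radiosin} and $\|\gamma'\|$, $\kappa_\gamma$ are computed --- uses only that $\rho=\rho(u)$ (equivalently $a=1$) and condition \eqref{conde} (equivalently $(1,b,c)\in\mathcal{R}$), and never the hypothesis $(1,b,c)\in\mathcal{O}^-$. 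So one argues directly: for $(1,b,c)\in\mathcal{R}$ the curve $\Gamma$ is a compact circle, formula \eqref{eq:perota} holds, its value lies in $(-1,0)$ by \eqref{tepe} and $\mu(2r_3-1)-H>0$ (valid since $r_3\ge 1$ and $\mu\ge H$ when $\ep=1$, while $r_3-r_1>0$ always), hence $\sin^2(\pi\Theta)>0$, which is \eqref{space}; therefore $(1,b,c)\in\mathcal{O}^-$. A secondary, purely bookkeeping point is to justify carefully that $\esiz\nu_k,\nu_k\esde=\|\nu_0\|^2$ and that the Gram determinant of $\{\nu_0,\nu_1\}$ equals $\|\nu_0\|^4\sin^2(\lambda\sigma)$ even in the Lorentzian case $\ep=-1$ --- this is fine because $\nu_0,\nu_1$ span a spacelike $2$-plane (both lie in the Euclidean $x_1x_2$-coordinate plane of $\R_{-1}^4=\mathbb{L}^4$), on which $\esiz\,,\esde$ restricts to the standard positive-definite inner product.
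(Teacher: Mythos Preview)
Your approach is essentially the same as the paper's: exploit the rotational structure of $\Sigma(1,b,c)$, show that the angular displacement of the tangent to $\Gamma(v)$ over $[0,\sigma]$ is not a multiple of $\pi$ by identifying it with $\pi$ times the right-hand side of \eqref{eq:perota} (which lies in $(-1,0)$), and conclude both linear independence of $\nu_0,\nu_1$ and, when $\ep=-1$, spacelikeness of $\Lambda$. You also correctly identify and resolve the circularity issue; the paper does the same, introducing an auxiliary quantity $\hat\theta$ defined by the right-hand side of \eqref{def:per0} without assuming $(1,b,c)\in\cO^-$, and observing that the computations in Theorem~\ref{perota} use only that $\Gamma$ is a compact circle (i.e., $(1,b,c)\in\mathcal{R}$).

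There is, however, a concrete error: your claim that $\nu_0,\nu_1$ lie in the Euclidean $x_1x_2$-coordinate plane is false under the initial conditions \eqref{idat}. With those conditions $\psi(0,0)={\bf e}_4$, and the center of $\Gamma$ in ${\bf S}$ lies along the geodesic from ${\bf e}_4$ in the direction $N(0,0)={\bf e}_1$; hence the rotation axis is \emph{not} $\{x_1=x_2=0\}$, and $\nu_1=\psi_v(0,\sigma)$ generically has a nonzero $x_4$-component. (Your parametrization $\Gamma(v)=(\text{circle in }x_1x_2)\times(\text{fixed }x_4)$ is already inconsistent with $\Gamma(0)={\bf e}_4$.) The paper fixes this by first applying a linear isometry of $\R^4_\ep$ fixing ${\bf e}_2,{\bf e}_3$ that moves the rotation axis to $\M^3(\ep)\cap\{x_1=x_2=0\}$; after this, $\Lambda\subset{\rm span}\{{\bf e}_1,{\bf e}_2\}$ and your computation goes through verbatim. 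Alternatively you can argue abstractly: tangent vectors to orbits of the rotation lie in the $2$-plane orthogonal to the axis plane, and since the axis plane (containing a geodesic of $\H^3$) is timelike when $\ep=-1$, its orthogonal complement is spacelike. With either correction, the rest of your argument is valid.
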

\begin{proof}
Let $(1,b,c)\in \cR$. Then, $\Sigma(1,b,c)$ is a rotational surface in $\M^3(\ep)$ whose rotation axis is orthogonal to ${\bf e}_2$. 
After a linear isometry of $\R_{\ep}^4$ that fixes ${\bf e}_2, {\bf e}_3$, we can assume that this rotation axis is $\M^3(\ep)\cap \{x_1=x_2=0\}$. We now consider, as we did in Section \ref{sec:periodo}, the stereographic projection $\varphi$  of $\M^3(\ep)$ from $-{\bf e}_4$ to $\R^3$, and the planar curve $\gamma(v):=\varphi (\Gamma(v))$. 
Define next the number 
$$\hat{\theta}:=\frac{1}{\pi}\int_0^{\sigma} \kappa_{\gamma} ||\gamma'|| dv,
$$ just as in \eqref{def:per0}. We use here the new notation $\hat{\theta}$, since $\Theta$ in \eqref{def:per0} was only defined on $\cO^-$; nonetheless, the right hand side of \eqref{def:per0} makes sense in our case. Moreover, the computations in Theorem \ref{perota} show that $\hat{\theta}$ is also given by the right hand side of \eqref{eq:perota}.

In order to show that $(1,b,c)\in \cO^-$ we must prove first of all that $\{\nu_0,\nu_1\}$ are linearly independent. If $\nu_1$ were proportional to $\nu_0$, then $\hat{\theta}$ would be an integer, since $\pi \hat{\theta}$ measures the variation of the unit normal of $\gamma(v)$ along $[0,\sigma]$, and the unit normals at $v=0$ and $v=\sigma$ are collinear (since $\nu_0,\nu_1$ are). But on the other hand, the computations in Corollary \ref{cor:perota} using that $\hat{\theta}$ is given by \eqref{eq:perota} show that $\hat{\theta}\in (-1,0)$.

This shows that $(1,b,c)\in \cO^-$ if $\ep=1$. In the case $\ep=-1$, we also need to check that the plane $\Lambda:={\rm span}\{\nu_0,\nu_1\}$ is spacelike.  This condition holds because $\Lambda$ is orthogonal to the rotation axis $\M^3(\ep)\cap \{x_1=x_2=0\}$ of $\Sigma(1,b,c)$. 
\end{proof}

\section{Detecting critical nodoids and catenoids}\label{detectcritical}

Throughout this section we will consider points of the form $(1,b,c)\in \cR\cap \cW$, and let $\Sigma=\Sigma(1,b,c)$ be the associated rotational $H$-surface in $\M^3(\ep)$, see Proposition \ref{pro:nod}. Using \eqref{roots12}, \eqref{root3} we can actually view $\Sigma=\Sigma(r_1,r_3)$ as depending on $r_1,r_3$. We can then reparametrize the parameter domain $\cR\cap \cW$ in terms of $(r_1,r_3)$ as 
\begin{equation}\label{ogra}
\cR\cap \cW \equiv \hat{\cW} =\left\{(r_1,r_3) {\in \R^2} : r_1\leq 0, r_3 > {\rm max} \left\{\frac{(r_1-1)^2}{1-2r_1},-\ep \frac{H+\mu}{2\mu}\right\} \right\}.
\end{equation}
This follows directly from a computation using \eqref{roots12}, \eqref{root3}, the definition of $\cW$
 in \eqref{defi:W}, Proposition \ref{rco} and \eqref{conde2}. Note that if $\ep=1$, the maximum in \eqref{ogra} is always the first quantity (the other one being negative in that case); see Figure \ref{fig:hW}.

\begin{figure}[h]
  \centering
  \includegraphics[width=0.45\textwidth]{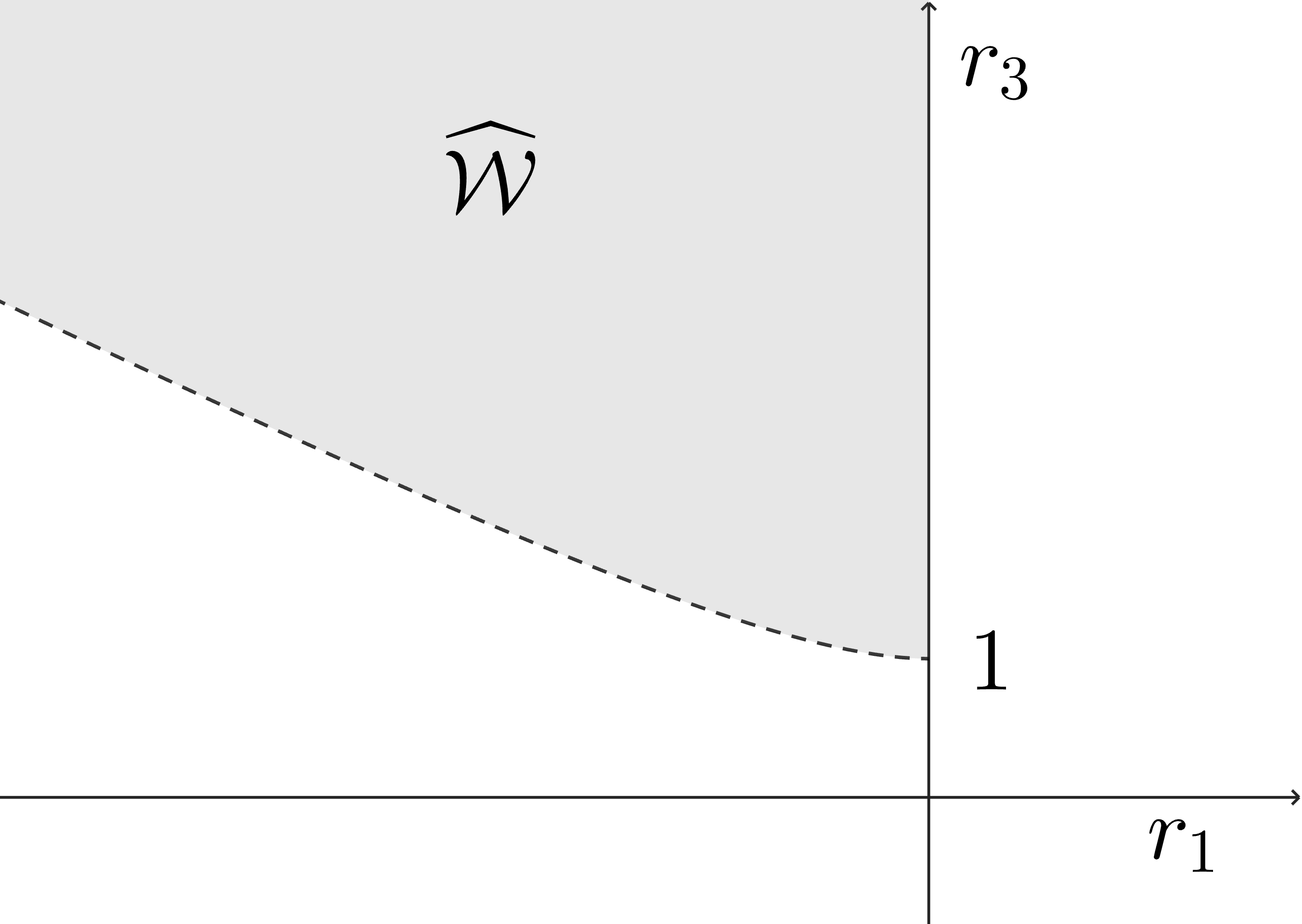}
  \hfill
  \includegraphics[width=0.45\textwidth]{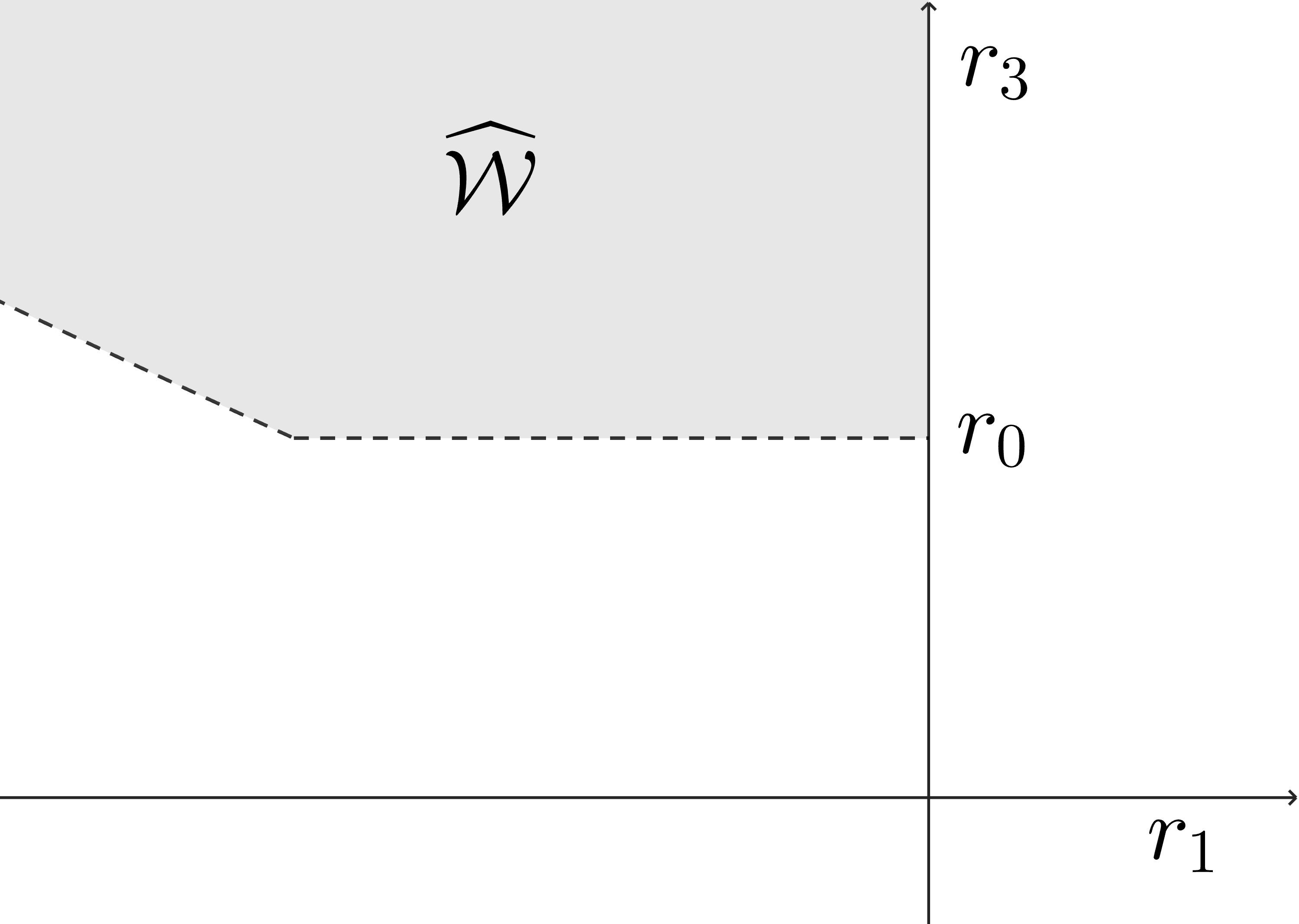}
  \caption{The parameter domain $\hat{\mathcal{W}}$ in \eqref{ogra} for the cases $\varepsilon = 1$ (left) and $\varepsilon = -1$ (right). In the right picture, the value $r_0$ stands for $r_0 = \frac{H + \mu}{2\mu}$.}\label{fig:hW}
\end{figure}

Similarly, we can write the functions $\tau(1,b,c)$ in Proposition \ref{prosis2} and $\mathfrak{h}(1,b,c)$ in Definition \ref{hrara} as $\tau(r_1,r_3)$ and $\mathfrak{h}(r_1,r_3)$, respectively. Both of them are naturally defined in this way on $\hat{\cW}$, and are real analytic.  

By Proposition \ref{pro:nod}, $\Sigma(r_1,r_3)$ corresponds to one of the catenoids or nodoids $\psi(\mathfrak{s},\theta)$ in $\M^3(\ep)$ studied in Section \ref{sec:rotational}, whose profile curves were parametrized by arc-length, that is, $\|\psi_{\mathfrak{s}}\|^2 \equiv 1$. If $\delta>0$ denotes the parameter in Propositions \ref{pro:CMCS3} and \ref{pro:CMCH3} that defines $\psi(\mathfrak{s},\theta)$, then it follows that the map $r_3\mapsto \delta(r_3)$ is real analytic and injective; here we recall that $r_3$ determines $\Sigma(r_1,r_3)$ uniquely, as explained after Remark \ref{rem:ejea1}.  We also emphasize that if $\ep=1$, the value $r_3 = 1$ corresponds to the case where $\Sigma(r_1,1)$ covers a flat torus in $\S^3$; see Remark \ref{rem:tori}. Therefore, it follows from Remark \ref{tori} that $\delta(r_3)$ can be continuously extended to $r_3=1$, with value $\delta(1)=\frac{H + \mu}{2}$.
Besides, since $\psi(\mathfrak{s},\theta)$ is a parametrization by curvature lines of $\Sigma$, it is easy to see that the parameter $u$ of $\Sigma$ is linked to the arclength parameter $\mathfrak{s}$ by $\mathfrak{s}=\mathfrak{s}(u)$, with
\begin{equation}\label{cambisu}
\mathfrak{s}'(u)= \frac{e^{\rho(u)}}{2\mu}, \hspace{0.5cm}\mathfrak{s}(0)=0.
\end{equation}
Here, the fact that $\mathfrak{s}=0$ corresponds to $u=0$ comes from the fact that, in both parametrizations, the value $0$ gives a point where the geodesic curvature of  the profile curve has a maximum. In this way, the change of parameters $\mathfrak{s}=\mathfrak{s}(u)$ is real analytic also with respect to $r_3$, and is independent from $r_1$.

This leads to the following definition.
\begin{definition}\label{tildeu}
We define $\tilde u (r_3)$ as the value $\tilde u (r_3):= u(\tilde{\mathfrak{s}};r_3)$, where $\tilde{\mathfrak{s}}$ is defined in Proposition 
\ref{pro:hatp}.  
 By construction, the map $r_3\mapsto \tilde u(r_3)$ is real analytic.
\end{definition}

\begin{remark}\label{exput}
Assume that $\Theta(r_1,r_3)=m/n\in \Q\cap (-1,0)$, with $n\in \N$ and $m/n$ irreducible. Choose $u_0:=\tilde u(r_3)$, and let $\Sigma_0:=\Sigma_0(1,b,c,u_0)\equiv \Sigma_0(r_1,r_3,u_0)$ be the compact $H$-annulus in $\M^3(\ep)$ of Theorem \ref{th:nivelperiodo}. Then, by the definition of $\tilde u(r_3)$ and the previous discussion, we deduce that $\Sigma_0$ covers a critical catenoid or nodoid $\cN$ in $\M^3(\ep)$. The value of $r_3$ determines the specific catenoid or nodoid (since it determines its neck curvature). Contrastingly, $r_1$ can be regarded as a free parameter in this description.

Let us be more specific about this covering property. Since the central planar geodesic $\Gamma:={\bf S}\cap \Sigma_0$ of $\Sigma_0$ has rotation index $m\in \Z^-$ (by Proposition \ref{pro:peri}), we observe that $\Sigma_0$ is a finite $(-m)$-cover of $\cN$. In particular, if $\Theta(r_1,r_2)=-1/n$ for some $n\geq 2$, then $\Sigma_0$ is an embedding, since it is a trival covering of the embedded compact annulus $\cN \subset \M^3(\ep)$.
\end{remark}
\begin{proposition}\label{pro:hp3}
    Following the above notations, let $(r_1^0,r_3^0) \in \hat \cW$, and assume that $\tilde u (r_3^0)=\tau(r_1^0,r_3^0)$. Then, $\mh(r_1^0,r_3^0) = 0$.% and $\mh$ changes sign around $(r_1,r_3)$.
    
\end{proposition}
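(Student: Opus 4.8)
The plan is to compare the two rotational free boundary surfaces at play: on one hand, the surface $\Sigma_0(r_1^0,r_3^0,\tilde u(r_3^0))$ coming from our $a=1$ construction with the choice $u_0=\tilde u(r_3^0)$; on the other hand, the genuinely rotational critical catenoid/nodoid $\mathcal{S}_0 = \psi([-\tilde{\mathfrak{s}},\tilde{\mathfrak{s}}]\times \mathbb{S}^1)$ of Proposition \ref{pro:hatp}. By Remark \ref{exput} (and the very definition of $\tilde u(r_3)$ via $\tilde{\mathfrak{s}}$ in Definition \ref{tildeu}), these two surfaces \emph{coincide}, up to the finite covering induced by the rotation index; in particular they have the same profile curve $\mathfrak{s}\mapsto \psi(\mathfrak{s},0) = (x(\mathfrak{s}),0,x_3(\mathfrak{s}),x_4(\mathfrak{s}))$, and the spherical curvature lines $v\mapsto \psi(u,v)$ of $\Sigma_0$ are exactly the rotational orbits (the $\theta$-curves) of $\mathcal{S}_0$. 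So the totally umbilic surface $S[m(u),d(u)]$ supporting the curvature line $\psi(u,\cdot)$ is one of the rotationally invariant umbilic spheres containing that orbit, and its center $m(u)$ lies on the rotation axis $\mathcal{L} = \{x_1=x_2=0\}\cap \mathbb{M}^3(\ep)$ --- this is consistent with Proposition \ref{pro:centros}, since in the $a=1$ case the plane $\mathcal{P}$ is precisely the one spanned by $\mathcal{L}$.

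The second ingredient is Theorem \ref{th:nivelperiodo}(3)--(4): the hypothesis $\tilde u(r_3^0)=\tau(r_1^0,r_3^0)$ says that, with $u_0 = \tilde u(r_3^0)$, the boundary curves of $\Sigma_0$ meet their supporting umbilic surfaces $\mathcal{Q}_1,\mathcal{Q}_2$ \emph{orthogonally} (this is the content of item (3), via Joachimsthal/Lemma \ref{lem:spherical}). On the other hand, $\mathcal{S}_0$ is free boundary in the ball $B[\mathbf{e}_4,\ep x_4(\tilde{\mathfrak{s}})]$ by Proposition \ref{pro:hatp}(1), so its two boundary components lie in the single umbilic sphere $S[\mathbf{e}_4,\ep x_4(\tilde{\mathfrak{s}})]$ and meet it orthogonally. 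Matching these two descriptions of the same boundary curve: the umbilic surface $\mathcal{Q}_i = S[m(u_0),d(u_0)]$ that $\Sigma_0$ meets orthogonally along $\partial\Sigma_0^i$ must be this same sphere $S[\mathbf{e}_4,\ep x_4(\tilde{\mathfrak{s}})]$, because a curvature line of a CMC surface determines uniquely the umbilic surface it meets at a prescribed angle (here $\pi/2$) --- two distinct umbilic spheres through the same circle meet the surface at two different angles. Hence $m(u_0)$ is collinear with $\mathbf{e}_4$; in particular its $x_3$-coordinate vanishes, i.e. $m_3(\tau(r_1^0,r_3^0)) = 0$, which is exactly $\mathfrak{h}(r_1^0,r_3^0) = 0$ by Definition \ref{hrara}.

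Alternatively, and perhaps more robustly, one can argue intrinsically without invoking the uniqueness-of-angle remark: the center $m(u_0)$ lies on the rotation axis $\mathcal{L}$ by the discussion above, and the center of $S[\mathbf{e}_4,\ep x_4(\tilde{\mathfrak{s}})]$ is $\mathbf{e}_4$, which also lies on $\mathcal{L}$ (since $\mathbf{e}_4 \in \{x_1=x_2=0\}$). Now $\mathcal{L}\cap \{x_3=0\} = \{\mathbf{e}_4\}$ (the geodesic $\mathcal{L}$ meets the totally geodesic surface $\mathbf{S}$ in the single point $\mathbf{e}_4$, as both pass through $\mathbf{e}_4$ and $\mathcal{L}\perp \mathbf{S}$). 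The point $\mathbf{e}_4$ is fixed by the symmetry $\Phi$ with respect to $\mathbf{S}$, and by Theorem \ref{th:nivelperiodo}(2) we have $\mathcal{Q}_1 = \Phi(\mathcal{Q}_2)$; combined with the free boundary property of $\mathcal{S}_0$ forcing $\mathcal{Q}_1 = \mathcal{Q}_2$, we get that $\mathcal{Q}_1$ is $\Phi$-invariant, hence its center lies in $\mathbf{S} = \{x_3=0\}$. Since that center is also on $\mathcal{L}$, it must be $\mathbf{e}_4$, and so $m_3(\tau(r_1^0,r_3^0)) = (\mathbf{e}_4)_3 = 0$. The main obstacle I anticipate is being careful that the identification $\tilde u(r_3^0) \leftrightarrow \tilde{\mathfrak{s}}$ really does line up the curvature-line parametrization of $\Sigma_0$ with the arclength parametrization of $\mathcal{S}_0$ \emph{at the correct point} (both measured from the maximum of the profile-curve geodesic curvature, as arranged in \eqref{cambisu}), so that "the boundary of $\Sigma_0$" and "the boundary of $\mathcal{S}_0$" are literally the same curve rather than merely similar-looking circles; once that matching is nailed down, everything else is bookkeeping with the symmetries already established.
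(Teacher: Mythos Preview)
Your first argument is correct and genuinely different from the paper's. The crux is the uniqueness claim ``two distinct umbilic spheres through the same circle meet the surface at two different angles''; this is true but, since in the rotational case the boundary circle lies in a whole $1$-parameter family of umbilic surfaces, it deserves a one-line justification: from \eqref{expab} the angle $\theta$ determines $\beta$, and the overdetermined equation \eqref{overdet} at $u=u_0$ --- which in the rotational case $\omega=\omega(u)$ is a single linear relation in $(\alpha,\beta)$ --- then determines $\alpha$, hence the sphere via \eqref{emetil}. Two small imprecisions: your appeal to Remark \ref{exput} brings in a rationality hypothesis on $\Theta$ that is neither assumed nor needed here (the identification of $\Sigma$ with the rotational example and the matching $u_0\leftrightarrow\tilde{\mathfrak{s}}$ go through regardless); and your ``alternative'' paragraph is not more robust --- the step ``the free boundary property of $\mathcal{S}_0$ forcing $\mathcal{Q}_1=\mathcal{Q}_2$'' is exactly the uniqueness-of-orthogonal-sphere statement in disguise, so it is not an independent route.

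The paper proceeds differently. It shows that both $m(\tau)$ and the point $\hat p(\mathfrak{s}(\tau))\in\mathcal{L}$ of Proposition \ref{pro:hatp} lie in the line $\mathcal{P}\cap\mathcal{P}'$, where $\mathcal{P}'=\mathrm{span}\{\psi(\tau,0),\psi_u(\tau,0)\}$, and then argues they coincide in $\M^3(\ep)$; this yields the identity $m(\tau)=\hat p(\mathfrak{s}(\tau))$ (equation \eqref{peeme}) for \emph{all} $(r_1,r_3)$ in a neighborhood of $(r_1^0,r_3^0)$, not just at the single point where $\tau=\tilde u$. That neighborhood identity is what the paper actually needs: in the proof of Theorem \ref{th:upsilon} it transfers the sign change of $\hat p_3(\mathfrak{s}(\tau))$ along $\Upsilon$ directly to a sign change of $\mh$. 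Your approach, by contrast, only establishes $\mh=0$ at the one point and gives no handle on nearby values, so you would need a separate argument later to recover the sign change.
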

\begin{proof}
Take $(r_1,r_3) \in \hat \cW$. By Remark \ref{rem:ejea1}, Proposition \ref{rco} and Proposition \ref{pro:plaver}, we know that the rotation axis $\cL$ of $\Sigma(r_1,r_3)$ is contained in the 2-plane $\cP$ of $\R_{\ep}^4$ where $\{m(u):u\in \R\}$ is contained. Also, by continuity of the functions $\tilde u, \tau$, we can define $\hat{p}(\mathfrak{s}(\tau))$ for all $(r_1,r_3)$ close enough to $(r_1^0,r_3^0)$, where $\hat p$ is defined in Proposition \ref{pro:hatp}. By construction, $\hat p(\mathfrak{s})$ lies in $\cL$. In particular, both $m(\tau)$ and $\hat p (\mathfrak{s}(\tau))$ lie in $\cP$.

Let now $\cP'$ denote the $2$-plane of $\R_{\ep}^4$ generated by $\psi(\tau,0)$ and $\psi_u(\tau,0)$, where $\tau=\tau(r_1,r_3)$. By construction, $\hat p(\mathfrak{s}(\tau))\in \cP'$. And on the other hand, it follows from the definition of $\tau$ and \eqref{eq:m} that $m(\tau)\in \cP'$. Note also that $\cP\neq \cP'$, since $\psi(\tau,0)\not\in \cP$, due to the fact that catenoids and nodoids in $\M^3(\ep)$ do not touch their rotation axis, see Section \ref{sec:rotational}. Thus, $m(\tau)$ and $\hat p(\mathfrak{s}(\tau))$ are collinear, since they both lie in the line $\ell:=\cP\cap \cP'$ of $\R_{\ep}^4$. Depending on the value $\varepsilon$, we deduce the following:

    If $\varepsilon = 1$, then $\ell \cap \S^3$ consists of the two antipodal points $\{m(\tau)$, $-m(\tau)\}$. So, changing $m(u)$ by $-m(u)$ if necessary (recall that $m(u)$ was defined up to a multiplicative factor, see the discussion after \eqref{eum}), we deduce from $\hat p(\mathfrak{s}(\tau)) \in \S^3$ that 
\begin{equation}\label{peeme}
\hat p(\mathfrak{s}(\tau)) = m(\tau).
\end{equation}
    Similarly, if $\varepsilon = -1$, then $\ell$ must be timelike, since it contains the point $\hat p(\mathfrak{s}(\tau)) \in \H^3$. In particular, $m(\tau) \in \ell$ is also timelike, and so $m(\tau) \in \H^3$, see again the discussion after \eqref{eum}. The intersection $\ell \cap \H^3$ consists of a single point, and so \eqref{peeme} holds again.

 To prove Proposition \ref{pro:hp3}, assume finally that $(r_1,r_3) = (r_1^0,r_3^0)$. By \eqref{peeme},  Proposition \ref{pro:hatp} and Definition \ref{hrara}, $$\mh(r_1^0,r_3^0) = \hat p_3(\mathfrak{s} (\tau)) = \hat p_3(\mathfrak{s} (\tilde u)) = \hat p_3(\tilde{\mathfrak{s}}) = 0,$$ what completes the proof.
\end{proof}

The following result allows to find roots of $\mh(r_1,r_3)$ along curves in $\hat\cW$ joining two of the boundary curves of $\parc \hat\cW$.

\begin{theorem}\label{th:upsilon}
Let $\Upsilon:[0,1]\to \R^2$, $\Upsilon(r) = (r_1(r),r_3(r))$, be an analytic curve satisfying:
\begin{enumerate}
\item 
$\Upsilon(r)\in \hat{\cW}$ for every $r\in [0,1)$.
\item
$\Upsilon(0)=(0,r_3(0))$.
\item
$\Upsilon(1)= (\overline r_1, \overline r_3)\notin \hat\cW$, with
\begin{equation}\label{upsilon1}
    \overline{r}_3 = \frac{(\overline{r}_1 - 1)^2}{1 - 2\overline{r}_1}, \hspace{0.5cm} \overline{r}_3 > -\ep \frac{H+\mu}{2\mu}.
\end{equation}
\end{enumerate}
Then there exists some value $r^*\in (0,1)$ such that $\mh(\Upsilon(r^*)) = 0$. Moreover, $\mh(r):= \mh(\Upsilon(r))$ changes sign at this point; specifically, $\mh(r) > 0$ for $r \in (r^* - \epsilon, r^*)$ and $\mh(r) < 0$ for $r \in (r^*,r^* + \epsilon)$, for some $\epsilon > 0$.
\end{theorem}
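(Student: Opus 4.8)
The strategy is a connectedness / intermediate-value argument on the analytic curve $\Upsilon$, combined with the sign information on $\mh$ that comes from the boundary behaviour of $\hat\cW$. First I would analyze the two endpoints. At $r=0$ we have $r_1(0)=0$, so $t(\landa)\equiv 0$ (by property (ii) in Section \ref{sec:double}), which forces $z(u)\equiv 0$; hence $z(\tau)=0=y(\tau)$ gives $\tau=u_1$, and along this rotational example the spherical curvature line $\psi(\tau,v)$ is a geodesic circle of $\M^3(\ep)\cap\{x_3=0\}$. The center of the corresponding totally umbilic sphere lies on $\cL\cap{\bf S}$, so its $x_3$-coordinate is positive (not zero), i.e.\ $\mh(\Upsilon(0))>0$; here I would use the explicit description of the nodoid/catenoid and of ${\bf S}$, together with $x_4(\tilde{\mathfrak s})>0$ from Proposition \ref{pro:hatp}. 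For the sign at the other end one argues that as $r\to 1^-$ the curve $\Upsilon$ approaches the boundary piece $\overline r_3=(\overline r_1-1)^2/(1-2\overline r_1)$, which by \eqref{defi:W} is exactly the locus where $z'(0)^2=y'(0)^2$; by \cite[Rem.\ 4.4]{CFM} (quoted after Proposition \ref{prosis2}), crossing this boundary is where $y(u)<z(u)$ for small $u>0$, so $\tau\to 0$ as $r\to 1$. Then $m(\tau)\to m(0)$, and by the normalization and Lemma \ref{plaor} one computes $m_3(0)<0$ (the curvature line $\psi(0,v)$ lies in $\{x_3=0\}$ but its umbilic sphere, in the degenerating configuration, has center with negative third coordinate — this needs to be extracted from \eqref{eq:m} and \eqref{idat}). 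Thus $\mh(\Upsilon(r))<0$ for $r$ close to $1$.

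Once the two opposite signs at the endpoints are established, the existence of $r^*\in(0,1)$ with $\mh(\Upsilon(r^*))=0$ is immediate from continuity of $\mh$ on $\hat\cW$ (real analyticity, noted right after \eqref{ogra}) and continuity of $\Upsilon$ on $[0,1]$, together with a limiting argument at $r=1$ since $\Upsilon(1)\notin\hat\cW$: one takes $r^*$ to be, say, the infimum of the set where $\mh\circ\Upsilon<0$, which lies in $(0,1)$. The sign-change statement ($\mh>0$ just to the left of $r^*$, $\mh<0$ just to the right) then follows because $\mh\circ\Upsilon$ is real analytic on $[0,1)$: an analytic function vanishing at $r^*$ either changes sign there or keeps a fixed sign on a punctured neighbourhood; I would rule out the latter. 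To do so, I would invoke Proposition \ref{pro:hp3}: at a zero $r^*$ of $\mh\circ\Upsilon$ one has $\tilde u(r_3(r^*))=\tau(\Upsilon(r^*))$, and then use the explicit derivative information in Proposition \ref{pro:hatp}(3) — namely $\hat p_3(\tilde{\mathfrak s})=0$ and $\hat p_3{}'(\tilde{\mathfrak s})>0$ — to show that $\mh$ has nonzero derivative in the direction that matters, so the zero is simple and the sign genuinely flips. More precisely, since $\mh(r_1,r_3)=\hat p_3(\mathfrak s(\tau(r_1,r_3)))$ near such a point (this identification is what the proof of Proposition \ref{pro:hp3} really establishes via \eqref{peeme}), and $\mathfrak s\mapsto\hat p_3$ has positive derivative at $\tilde{\mathfrak s}$ while $\tilde u$ is the value of $u$ at $\tilde{\mathfrak s}$, the variation of $\mh$ along $\Upsilon$ at $r^*$ is governed by the sign of $\tau(\Upsilon(r))-\tilde u(r_3(r))$, which is positive then negative (or vice versa) — and one fixes the orientation using that $\mh(\Upsilon(0))>0$.

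The main obstacle I expect is pinning down the signs of $\mh$ at the two endpoints rigorously, especially at $r=1$. The endpoint $r=0$ is relatively clean because the surface is a genuine rotational free-boundary-type example with $z\equiv 0$ and one can use the geometry in Section \ref{sec:rotational} directly. But at $r=1$ the point $\Upsilon(1)$ lies outside $\hat\cW$, $\tau$ degenerates to $0$, and $\mh$ a priori need not extend continuously; the correct reading is that $\mh\circ\Upsilon$ extends continuously to $r=1$ with value $m_3(0)$, and the delicate part is to verify both this continuous extension and the strict inequality $m_3(0)<0$, using \eqref{eq:m}, the (rotated) initial conditions, and the fact that in the limiting configuration the umbilic sphere supporting $\psi(0,v)$ is the one that makes $\beta=0$ impossible — i.e.\ one is genuinely in the regime $y<z$ near $u=0$, which by \eqref{expab} forces a specific sign of $\cos\theta$ and hence of the center's $x_3$-component. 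An alternative, possibly cleaner route for the $r=1$ end is to avoid the limit entirely: show that for $r$ slightly less than $1$ one already has $\tau(\Upsilon(r))$ so small that $\tilde u(r_3(r))>\tau(\Upsilon(r))$, and then transfer this inequality into a sign of $\mh$ using the monotonicity of $\hat p_3$ from Proposition \ref{pro:hatp}(3) together with the fact that $\hat p_3$ vanishes precisely at $\tilde{\mathfrak s}$ and is negative for $\mathfrak s<\tilde{\mathfrak s}$ (which itself follows from $\hat p_3{}'(\tilde{\mathfrak s})>0$ and that $\hat p_3$ has no earlier zero — a claim that should be read off from the appendix proof of Proposition \ref{pro:hatp}).
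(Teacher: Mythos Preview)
Your endpoint arguments for $\mh\circ\Upsilon$ are both incorrect, and this is not a technicality but the heart of the matter.

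At $r=0$ you say the center of the umbilic sphere lies on $\cL\cap{\bf S}$ and therefore has positive $x_3$-coordinate. But $\cL=\{x_1=x_2=0\}\cap\M^3(\ep)$ and ${\bf S}=\{x_3=0\}\cap\M^3(\ep)$, so every point of $\cL\cap{\bf S}$ has $x_3=0$. You also claim $\psi(\tau,v)=\psi(u_1,v)$ is a circle of $\M^3(\ep)\cap\{x_3=0\}$, which is false: only the curve $\psi(0,v)$ lies in $\{x_3=0\}$ (Lemma \ref{plaor}). At the other end, your claim $m_3(0)<0$ is wrong as well: since $y(0)=z(0)=0$ one has $\alpha(0)=\beta(0)=0$, so by \eqref{eq:m} (or \eqref{funcioncentro}) the center $m(0)$ is proportional to $\psi_u(0,0)\propto{\bf e}_3$, hence $m_3(0)=1$ (this is exactly what is used in the proof of Lemma \ref{lem:uestrella}). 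So if your continuous extension argument worked, it would give $\mh\to 1>0$ as $r\to 1^-$, and no intermediate-value argument on $\mh$ is available.

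The paper avoids this by never computing $\mh$ at the endpoints. Instead it introduces the auxiliary function $f(r)=\tau(\Upsilon(r))-\tilde u(r_3(r))$ and shows $f(0)>0$ (because $\tau(0)=u_1$ and a monotonicity argument on the principal curvature $\kappa_1$ forces $\tilde u<u_1$) and $f(r)\to -\tilde u(\bar r_3)<0$ as $r\to 1^-$ (because $\tau\to 0$ on the boundary piece \eqref{upsilon1}). One then takes a sign-change zero $r^*$ of $f$; Proposition \ref{pro:hp3} gives $\mh(r^*)=0$, and the identification $m(\tau)=\hat p(\mathfrak{s}(\tau))$ from \eqref{peeme}, together with $\hat p_3{}'(\tilde{\mathfrak s})>0$, transfers the sign of $f$ near $r^*$ to the sign of $\mh$. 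The key point you are missing is that \eqref{peeme} is only established for $(r_1,r_3)$ close to a point where $\tau=\tilde u$, so it cannot be used at the endpoints; this is why the intermediate-value argument must be run on $f$, not on $\mh$. Your ``alternative route'' at the end is essentially the correct idea for the $r\to 1$ limit of $f$, but you should promote it to the main argument, carry out the analogous step at $r=0$ (showing $\tau>\tilde u$ there), and then only invoke the $\mh\leftrightarrow f$ correspondence locally at $r^*$. Also note that you invoke Proposition \ref{pro:hp3} in the wrong direction: it says $\tilde u=\tau\Rightarrow\mh=0$, not the converse.
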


\begin{figure}[h]
  \centering
  \includegraphics[width=0.5\textwidth]{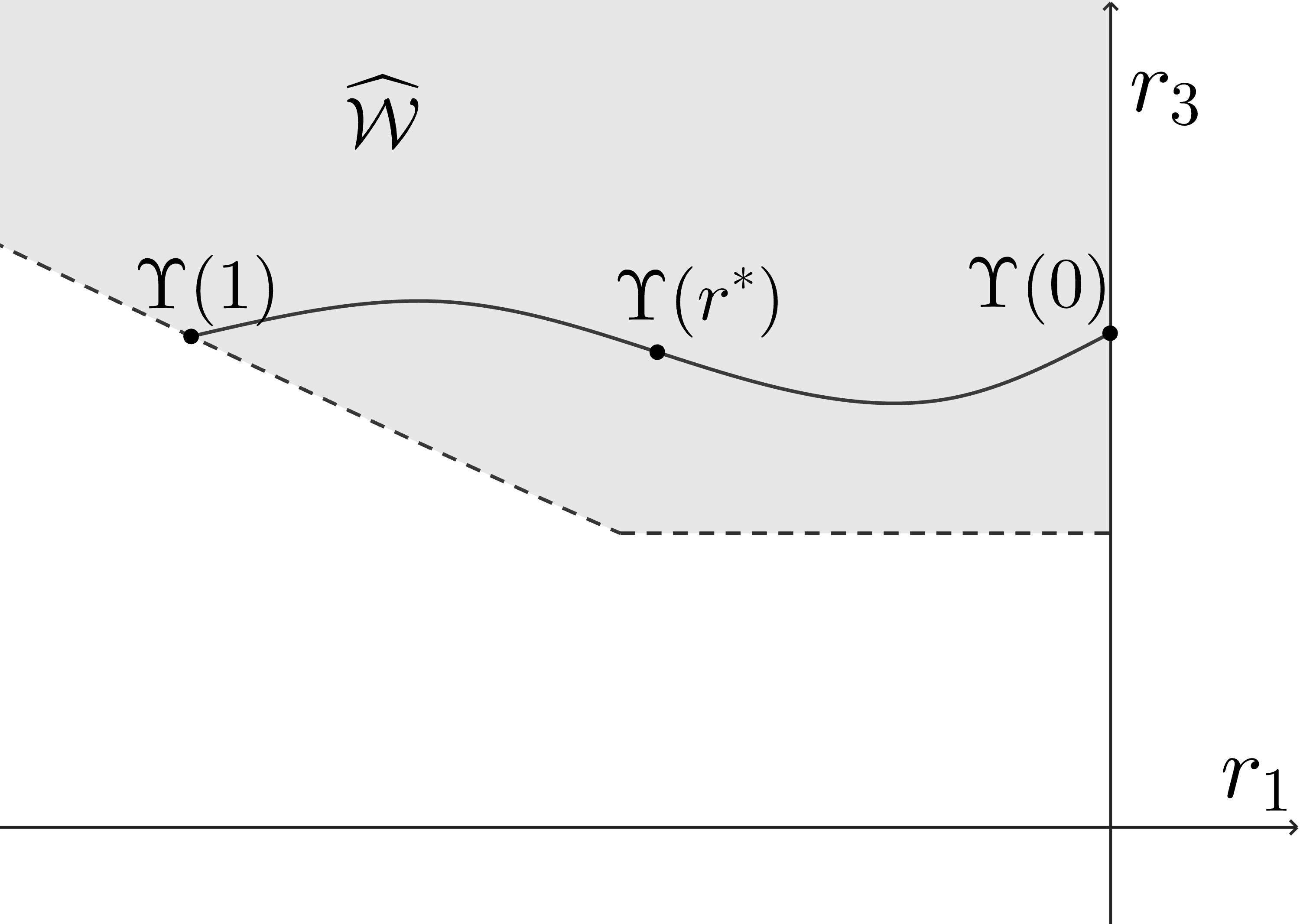}
  \caption{Example of a curve $\Upsilon(r)$ in the conditions of Theorem \ref{th:upsilon}.}
\end{figure}

\begin{proof}
Let us consider the analytic function $f:[0,1) \to \R$ defined as
$$ f(r) := \tau(\Upsilon(r))  - \tilde u (r_3(r)) \equiv \tau(r) - \tilde u(r_3(r)).$$
Our goal will be to prove that $f$ vanishes at some point $r^*\in (0,1)$ and it changes sign. From that, we will conclude that $\mh(r)$ shows the same behaviour at such point. 

We show first that $f(0) > 0$. Since $r_1=0$ at $\Upsilon(0)$, we see from the comments at the end of Section \ref{sec:double} that the solution $(s(\lambda),t(\lambda))$ of the system \eqref{system2} associated to the parameters $(0,r_3(0))$ satisfies $t(\landa) \equiv 0$, while $s(\lambda)$ oscillates between $s = 1$ and $s = r_3(0)$. According to \eqref{change}, $z(u) \equiv 0$ while $y(u)$ is not constant. By definition, $\tau$ is the first value for which $y(\tau) = z(\tau)$ (see Proposition \ref{prosis2}), so in this case $\tau$ is the first positive root of $y(u)$, that is, $\tau(0,r_3(0)) = u_1(0,r_3(0))$; see Proposition \ref{prosis1} for the definition of $u_1$. We will now show that $\tilde u(r_3(0)) < u_1(0,r_3(0))$. The curvature of the profile curve $u \mapsto \psi(u,0)$ is $\kappa_1(u,0) = H + \mu e^{-2\rho(u,0)}$, by \eqref{princur}. By \eqref{omu} and $z(u)\equiv 0$, this curvature is strictly decreasing on the interval $u \in [0,u_1]$, and it actually has a local minimum at $u=u_1$. On the other hand, by Proposition \ref{pro:hatp} the principal curvature $\kappa_1$ is strictly decreasing on an interval $[0, u_0)$ containing $\tilde u$. So, we must have $\tilde u(r_3(0)) < u_1(0,r_3(0))=\tau(0,r_3(0))$. As a consequence, $f(0) > 0$.

We will show next that $\lim_{r \to 1^-}f(r) < 0$. To start, note that the function $\tilde u(r_3(r))$ is positive by construction. Also, $\tilde u(r_3(r))$ can be defined at $r=1$ by the inequality in \eqref{upsilon1}. So, $\lim_{r \to 1^-}\tilde u(r_3(r)) = \tilde u(\overline{r}_3) > 0$.
Now we claim that $\lim_{r \to 1^-} \tau(r) = 0$. Assume by contradiction that there exists a sequence $(r_n)_n \to 1 $ of values in $(0,1)$ such that $\lim \tau(r_n) =: L_0 > 0$. Let $(y_n(u),z_n(u))$ denote the solutions $(y(u),z(u))$ of \eqref{system1} associated to the parameters $\Upsilon(r_n)$, and $(y_0(u),z_0(u))$ be the corresponding solution for the limit case $\Upsilon(1)$. By definition of $\tau$, we have that $y_n(u) > z_n(u)$ for all $u \in (0, \tau(r_n))$. By continuity of solutions of ODEs with respect to initial conditions and parameters, we deduce that $y_0(u) \geq z_0(u)$ for all $u \in [0,L_0]$. This leads to a contradiction, since the equality in \eqref{upsilon1} implies that $y_0(u) < z_0(u)$ for any $u > 0$ small enough; see the discussion after Proposition \ref{prosis2}.

Therefore, the analytic function $f(r)$ changes sign on the interval $[0,1)$. More specifically, there exists a value $r^* \in (0,1)$ with $f(r^*)=0$, and so that $f(r) > 0$ for $r \in (r^* - \epsilon, r^*)$ and $f(r) < 0$ for $r \in (r^*, r^* + \epsilon)$, for $\epsilon>0$ small enough.

Let us now study the function $\mh(r)$. We know that $\tau(r^*) = \tilde u(r_3(r^*))$, so by Proposition \ref{pro:hp3}, $\mh(r^*) = 0$. In order to prove that $\mh(r)$ changes sign at $r^*$, consider $r\in (r^*-\epsilon,r^*)$ for some $\epsilon > 0$ small enough. Then, at $\Upsilon(r)$ we have $\tau>\tilde u$, since $f(r)>0$. Thus, $\mathfrak{s}(\tau)>\mathfrak{s}(\tilde u)=\tilde{\mathfrak{s}}$ and by Proposition \ref{pro:hatp} we have $\hat{p}_3 (\mathfrak{s}(\tau))>\hat{p}_3(\tilde{\mathfrak{s}})=0$, since $\hat p(\mathfrak{s})$ is strictly increasing near $\tilde{\mathfrak{s}}$. Similarily, $\hat{p}_3 (\mathfrak{s}(\tau))<0$ at $\Upsilon(r)$ if $r \in (r^*, r^* + \epsilon)$. Now, using \eqref{peeme}, this implies that $m_3(\tau)$ changes sign at $r=r^*$ along $\Upsilon(r)$. This proves that $\mathfrak{h}(r)$ changes sign at $r^*$, see Definition \ref{hrara}.
\end{proof}

We will next prove a further result that, in the spherical case $\ep=1$, will allow us to find roots of $\mathfrak{h}(r_1,r_3)$ along curves in $\hat\cW$ that start from the \emph{vertex} $(r_1,r_3)=(0,1)$.

\begin{lemma}\label{gammalimite}
    Let $\varepsilon = 1$. Given $l > 0$, consider the half-line given by 
    \begin{equation}\label{halfla}
    L(r):= (r_1(r),r_3(r)) =  (-r,l r + 1)
    \end{equation}
    for all $r > 0$. Then, there exists $\lim_{r \to 0^+} \tau(L(r)) - \tilde u(r_3(r)) > 0$. 
    In particular, $\tau(L(r)) > \tilde u(r_3(r))$ for any $r > 0$ small enough.
\end{lemma}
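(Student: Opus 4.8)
The plan is to analyze the behavior of the two competing quantities $\tau(L(r))$ and $\tilde u(r_3(r))$ as $r\to 0^+$, and show that the first one converges to a strictly larger value than the second. The key observation is that the limit $r\to 0^+$ corresponds to $r_1\to 0$ and $r_3\to 1$, which by Remark \ref{rem:tori} (and the identification in Definition \ref{der} and the discussion after \eqref{ogra}) is the case where $\Sigma(1,b,c)$ degenerates to a flat CMC torus in $\S^3$. So I first compute the limiting value of $\tilde u(r_3(r))$: since $r_3\to 1$ and, by the discussion after Remark \ref{rem:tori} together with Remark \ref{tori}, the function $\delta(r_3)$ extends continuously to $r_3=1$ with $\delta(1)=\frac{H+\mu}{2}$, we get from \eqref{eq:SHd} that $\tilde{\mathfrak{s}}(r_3(r))\to \tilde{\mathfrak{s}}(H,\frac{H+\mu}{2})=\frac{\pi}{2\sqrt{2\mu(H+\mu)}}$. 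Using the explicit form \eqref{toro} of the flat torus (where $x(\mathfrak{s})\equiv\sqrt{(\mu+H)/(2\mu)}$ is constant, so $e^{\rho}$ is constant and \eqref{cambisu} says $\mathfrak{s}$ and $u$ differ by a constant factor), this yields an explicit positive limiting value $\tilde u(1):=\lim_{r\to 0^+}\tilde u(r_3(r))$.

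Next I compute the limit of $\tau(L(r))$. Along $L(r)$ we have $r_1=-r<0$ and $r_3=lr+1>1$, so by the comments at the end of Section \ref{sec:double} the associated solution $(s(\lambda),t(\lambda))$ of \eqref{system2} has $t(\lambda)$ strictly decreasing (since $r_1<0$) and $s(\lambda)$ oscillating on $[1,r_3]$, which shrinks to the single point $\{1\}$ as $r\to 0^+$; meanwhile $r_1\to 0$ forces $t(\lambda)\to 0$ as well. Translating back through the change of variables \eqref{change}, both $y(u)$ and $z(u)$ converge uniformly on compact sets (by continuous dependence of ODE solutions on parameters and initial conditions) to the solution $(y_0(u),z_0(u))$ of \eqref{system1} associated to the limiting parameters $(r_1,r_3)=(0,1)$; by Remark \ref{rem:tori} this is $(y_0,z_0)\equiv(0,0)$. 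Since $\tau(L(r))$ is, by Proposition \ref{prosis2}, the first positive value where $y=z$, and $y_0-z_0\equiv 0$, I cannot read off the limit directly from the limit solution. Instead I should rescale: setting $\varepsilon(r)$ to be the natural small parameter governing the amplitude of $(y,z)$ near $(0,0)$ (extracted from $r_1,r_3-1$), I look at the rescaled functions $(y/\varepsilon,z/\varepsilon)$, which satisfy the linearization of \eqref{system1} at the origin. That linearized system has constant coefficients, and its solutions are explicit trigonometric/hyperbolic functions with frequencies determined by $\hat a$ evaluated at $(a,b,c)=(1,b,c)$ with $c\to 1$, i.e. by \eqref{aabc}. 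The first zero of the rescaled $y-z$ is then a zero of an explicit function, giving the limiting value $\tau(0,1):=\lim_{r\to 0^+}\tau(L(r))$.

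Finally I must check the strict inequality $\tau(0,1) > \tilde u(1)$. Both sides are now explicit expressions in $H,\mu$ (and possibly the limiting value of $b$, which can be absorbed or shown to not matter since along $L(r)$ the $b$-coordinate is determined); I expect the comparison to reduce to a clean inequality such as comparing $\frac{\pi}{2\sqrt{2\mu(H+\mu)}}$ against the first positive zero of a sine with a determined frequency. I expect the main obstacle to be step two: making rigorous the rescaling argument for $\tau$ near the degenerate flat-torus limit, since $\tau$ is defined as a first-hitting time of the diagonal $y=z$ and the naive limit of the trajectory is a fixed point, so one genuinely needs the linearized dynamics and a uniform estimate (via a Gronwall-type argument, or via the monotonicity statements in Proposition \ref{prosis1} and Proposition \ref{prosis2}) to control the approach rate and conclude that the first zero of $y-z$ converges to the first zero of its linear part. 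Once that limit is pinned down, the inequality itself and the upgrade to "$\tau(L(r))>\tilde u(r_3(r))$ for $r$ small" are immediate from continuity.
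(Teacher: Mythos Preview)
Your plan is correct in outline and leads to the same numerical conclusion as the paper, namely $\lim_{r\to 0^+}\tau(L(r))=\pi$ and $\tilde u(1)=\frac{\pi}{\sqrt2}\sqrt{\mu/(H+\mu)}\le \pi/\sqrt2<\pi$, but your route to the $\tau$-limit differs from the paper's and has a scaling subtlety you do not make explicit.

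\textbf{Where you agree with the paper.} The computation of $\lim \tilde u(r_3(r))$ is exactly as in the paper: $r_3\to 1$ gives $\delta\to\frac{H+\mu}{2}$, hence $\tilde{\mathfrak{s}}\to\frac{\pi}{2\sqrt{2\mu(H+\mu)}}$ by \eqref{eq:SHd}, and since $\rho\equiv 0$ on the limiting flat torus (Remark~\ref{rem:tori}), \eqref{cambisu} gives $\mathfrak{s}=u/(2\mu)$ and so $\tilde u(1)=\frac{\pi}{\sqrt2}\sqrt{\mu/(H+\mu)}$.

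\textbf{Where you differ.} For $\lim\tau$, the paper does \emph{not} linearize \eqref{system1} in $(y,z)$. Instead it passes to the variables $(s,t)$ of \eqref{change}, \eqref{system2} (which satisfy $s\in[1,r_3]$, $t\in[r_1,0]$), rescales to $S=(s-1)/(r_3-1)$, $T=t/r_1\in[0,1]$, and observes that the rescaled system depends \emph{analytically} on $r$ with explicit solution $S(\lambda;0)=\tfrac12(1-\cos\lambda)$, $T(\lambda;0)\equiv0$ at $r=0$. The condition $y=z$ becomes $s+t=1$, i.e.\ $lS=T$, whose first root is $\hat\tau(r)\to 2\pi$; via \eqref{changecor} this yields $\tau\to\pi$. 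The first-hitting-time issue you flag is handled by a $\limsup/\liminf$ argument together with $F''(0;0)=l/2>0$ to rule out $\hat\tau\to 0$.

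\textbf{The subtlety in your approach.} Along $L(r)$ one has $\mathcal{B}=1+2r$, $\mathcal{C}^2=lr$, so by \eqref{pqabc} $y'(0)=(1+r)\sqrt{lr}\sim\sqrt{lr}$ while $z'(0)=r\sqrt{lr+1}\sim r$: the two components scale \emph{differently}. Thus a single rescaling $(y/\varepsilon,z/\varepsilon)$ with $\varepsilon=\sqrt r$ sends $y/\varepsilon\to\sqrt{l}\sin u$ (from the linearized equation $y''=-y$, using $\hat a=(l-2)r\to 0$) but $z/\varepsilon\to 0$. This still gives the correct answer $\tau\to\pi$ (first positive zero of $\sqrt{l}\sin u$), but your text suggests both rescaled components are nontrivial explicit functions, which is not the case. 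You should state this two-scale structure explicitly; after that, ruling out escape of the first zero to $0$ reduces to $(y-z)'(0)=(1+r)\sqrt{lr}-r\sqrt{lr+1}>0$ for small $r$, paralleling the paper's $F''(0;0)>0$ argument. The paper's $(s,t)$-rescaling buys you analytic dependence on $r$ and avoids this asymmetry, which is why it is somewhat cleaner.
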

\begin{proof}
Note that the expression in this directional limit makes sense, since $L(r) \in \hat \cW $ for $r > 0$ small enough. 

To start, we prove that $\tilde u(r_3)$ can be extended to $r_3=1$. Recall first of all that $\delta(r_3)$ converges to $\frac{H + \mu}{2}$ as $r_3\to 1$. By Proposition \ref{pro:hatp}, the limit of $\tilde{\mathfrak{s}}(H,\delta)$ at $\delta=\frac{H + \mu}{2}$ is given by \eqref{eq:SHd}.
On the other hand, if $r_3=1$, then $c=1$ and we are in the conditions of Remark \ref{rem:tori}. So, $\rho(u)\equiv 0$, and from \eqref{cambisu} we have $\mathfrak{s}(u)=u/(2\mu)$ in that situation. Therefore, there exists $$\tilde u(1):=\lim_{r_3\to 1^+}\tilde u (r_3)= 2\mu \, \tilde{\mathfrak{s}}\left(H,\frac{H + \mu}{2}\right)= \frac{\pi}{\sqrt{2}}\sqrt{\frac{\mu}{H + \mu}}\leq \frac{\pi}{\sqrt{2}}.$$
In particular, from \eqref{halfla}, there exists $\tilde u(r_3(0))<\pi$. We will now prove that $\lim_{r \to 0^+} \tau(L(r)) = \pi$, and so the Lemma follows.

Let $r > 0$ so that $L(r) \in \hat \cW$, and define $\hat \tau(r) := \lambda(\tau(L(r)))$, where $\lambda = \lambda(u)$ is the parameter used in the differential system \eqref{system2}, in which we recall that the polynomial $q(x)$ is given by \eqref{qfactor}. Equivalently, $\hat \tau(r)$ can be defined as the first positive value for which $s(\hat \tau(r); r) + t(\hat \tau(r);r) = 1$, where by $s(\lambda;r)$, $t(\lambda;r)$ we denote the solutions of \eqref{system2} for the parameters $(r_1(r),r_3(r))$.  
Now, we define the functions
\begin{equation}\label{cambioST}
    S(\lambda; r) := \frac{s - 1}{r_3(r) - 1} = \frac{s - 1}{l r}, \; \; \; \; T(\lambda;r) := \frac{t}{r_1(r)} = -\frac{t}{r}.
\end{equation}
In this way, $\hat\tau(r)$ corresponds with the first value $\landa>0$ for which 
\begin{equation}\label{efelan}
F(\lambda;r):= l S(\lambda;r) - T(\lambda;r)
\end{equation}
vanishes. From \eqref{system2} and \eqref{qfactor}, we deduce that $S,T$ are solutions of the differential system
\begin{equation}\label{ST}
\left\{\def\arraystretch{1.6} \begin{array}{lll} S'(\landa)^2 & = & S (1-S) (1+r l S)(1+r + r l S)^2, \hspace{0.5cm} (S\geq 0), \\ T'(\landa)^2 & = & r T(1-T)^2 (1+r T) (1+l r + r T), \hspace{0.6cm} (T\geq 0),\end{array} \right.
\end{equation}
with the initial conditions 
\begin{equation}\label{STini}
S(0;r)=T(0;r)=0.
\end{equation}
More specifically, since $r>0$, we have that $s(\landa;r)$ and $t(\landa;r)$ are not constant, see the comments at the end of Section \ref{sec:double}. Thus, from \eqref{cambioST}, we see that $S(\landa;r)$, $T(\landa;r)$ are the unique non-constants solutions to \eqref{ST}-\eqref{STini}. By differentiation of \eqref{ST} and using \eqref{STini}, we obtain $2S''(0;r)=(1+r)>0$ and $2T''(0;r)=r(1+l r)>0$.

The system \eqref{ST} depends analytically on the parameter $r$, and for the limit $r = 0$ case we can solve it directly. By the previous observations, we deduce that $S(\landa;r)$ and $T(\landa;r)$ converge analytically as $r\to 0$ to 
\begin{equation}\label{ST0}
        S(\lambda;0) = \frac{1 -\cos(\lambda)}{2}, \; \; \;  T(\lambda;0) \equiv 0. 
    \end{equation}
Let us prove that $\lim_{r \to 0^+} \hat \tau(r) = 2\pi$.
First, we show that $\limsup \hat \tau(r) \leq 2\pi$. Let $\lambda_1(r):= \lambda(u_1(L(r)))$, see Proposition \ref{prosis1} for the definition of $u_1$. Then, by Proposition \ref{prosis1} and \eqref{change}, \eqref{cambioST}, $\lambda_1(r)$ is the first positive value for which $S(\lambda;r)$ vanishes. According to \eqref{ST}, this quantity is equal to
$$\lambda_1(r) = 2\int_0^1 \frac{dS}{\sqrt{S(1-S)( l r S + 1) (l rS  + 1 + r)^2}},$$
which depends analytically on $r$, and $\lambda_1(0) = 2\pi$. Notice, on the other hand, that $\hat \tau (r) \leq  \lambda_1(r)$ for all $r > 0$, according to Proposition \ref{prosis2}. Consequently, $\limsup \hat \tau(r) \leq 2\pi$.

We will now prove that $\delta := \liminf_{r \to 0^+} \hat \tau(r) \geq 2\pi$. First, note that $F(0;r) = F'(0;r) = 0$ by \eqref{ST}, \eqref{STini}. 

By definition of $\delta$ and $F(\landa;r)$, there exists a sequence $(\lambda_n,r_n)$ such that $\lambda_n \to \delta$, $r_n \to 0$ and $F(\lambda_n;r_n) = 0$ for all $n$, with $F(\landa;r_n)\neq 0$ for all $\landa\in (0,\landa_n)$. By continuity of $F$, it follows that $F(\delta;0) = 0$. By \eqref{ST0} and \eqref{efelan}, we see that $\delta$ must be either $0$ or $2\pi$ (recall that $\limsup \hat \tau(r) \leq 2\pi$). Assume by contradiction that $\delta = 0$. Since $F(0;r_n) = F'(0;r_n) = F(\lambda_n;r_n) = 0$ for all $r_n$, we deduce the existence of a sequence $\lambda^*_n \in (0,\lambda_n)$ such that $F''(\lambda^*_n;r_n) = 0$ for all $n$. Since $\delta=0$ by hypothesis, we have $\lambda^*_n \to 0$, and so $F''(0;0) = 0$ by continuity. However, a direct computation using \eqref{efelan}, \eqref{ST0} shows that $F''(0;0) = \frac{l}{2} > 0$, a contradiction. In conclusion, $\liminf_{r \to 0^+} \hat \tau(r) = \lim_{r \to 0^+} \hat \tau(r) = 2\pi$.

Finally, to compute $\lim_{r \to 0^+} \tau(L(r))$, we just make use of the change of variables $u = u(\lambda)$ given by \eqref{changecor} and the fact that $s(\lambda;r), t(\lambda;r)$ converge uniformly to the constants $1$ and $0$ respectively; see the comments at the end of Section \ref{sec:double}. This yields
$$\lim_{r \to 0^+} \tau(L(r)) = \lim_{r \to 0^+} \int_0^{\hat \tau(r)}\frac{s(\lambda;r) - t(\lambda;r)}{2}d\lambda = \int_0^{\hat \tau(0)}\frac{1 - 0}{2}d\lambda = \pi.$$
This completes the proof of Lemma \ref{gammalimite}.
\end{proof}

\section{Free boundary minimal and CMC annuli}\label{sec:proofmain}

In this section we will prove Theorems \ref{th:main1} and \ref{th:main2}. The general idea behind these results is to study the rational level sets of the Period map (see Definition \ref{def:period}), $\Theta(a,b,c) = \Theta_0 \in (-1,0) \cap \Q$. Suppose that for some parameters $(a,b,c)$ in this level set we have that $\mh(a,b,c) = 0$ (see Definition \ref{hrara}). By Theorem \ref{th:nivelperiodo}, the associated surface $\Sigma_0$ is an annulus, and by Corollary \ref{cor:hrara}, $\Sigma_0$ intersects orthogonally a certain totally umbilic sphere $\mathcal{Q}$ of $\M^3(\varepsilon)$. Under certain conditions, we will be able to prove that $\Sigma_0$ is actually contained in the geodesic ball ${\bf B}$ whose boundary is $\mathcal{Q}$, that is, $\Sigma_0$ is {\em free boundary} in ${\bf B}$. The embeddedness of $\Sigma_0$ will also be studied. 

The results obtained in Section \ref{detectcritical} can be applied to find roots of $\mh(a,b,c)$ when $a = 1$. In that case, the corresponding compact annuli $\Sigma_0(1,b,c,u_0)$ are pieces of nodoids or catenoids: see Section \ref{sec:rotational}. Our goal now is to find new zeros of $\mh$ with $a > 1$, which will correspond with non-rotational CMC annuli.

\subsection{Proof of Theorem \ref{th:main1}}
The proof will consist of two steps. On the first one, we will show that for any $H \geq 0$, $\varepsilon = \pm 1$ such that $H^2 + \varepsilon > 0$, there are countably many values $\Theta_0 \in (-1,0) \cap \Q$ such that the function $\mh(a,b,c)$ vanishes along an analytic curve $\cC(\eta) \subset \mathcal{W}$ on the level set given by $\Theta(a,b,c) = \Theta_0$. Then, on the second step, we will show that the family of surfaces $\Sigma_0 = \Sigma_0(\eta)$ associated to the curve $\cC(\eta)$ constitute examples of free boundary $H$-annuli in $\M^3(\varepsilon)$.

For the {\bf first step}, we distinguish two cases, depending on the sign of the quantity $8H^2 - \varepsilon$. 

\textbf{First step, case $8H^2 - \varepsilon > 0$.} Let $\mathcal{J}$ be the interval 
\begin{equation}\label{eq:intervalo}
\mathcal{J} =
\begin{cases}
    \left(-\frac{1}{\sqrt{3}},0\right), & \text{if } \varepsilon = -1, \\ \\
    \left(-\frac{1}{\sqrt{3}},-\sqrt{\frac{\mu - H}{2\mu}}\right), & \text{if } \varepsilon = 1,
\end{cases}
\end{equation}
where we recall that $\mu:=\sqrt{H^2+\ep}$. Note that $\mathcal{J}$ is indeed an interval when $\ep=1$, due to $8H^2-\ep>0$. 
Given some $\Theta_0 \in \mathcal{J} \cap \Q $, consider the level set of the Period map given by $\Theta(a,b,c) = \Theta_0$. On the plane $\{a = 1\}$, this level set can be expressed as \eqref{curvanivel} in terms of $(r_1, r_3)$ given by \eqref{roots12}, \eqref{root3}, i.e., as the line
\begin{equation}\label{upsilonr}
    \Upsilon(r) := (r_1(r),r_3(r)) = \left(-r,\frac{\Theta_0^2}{1 - \Theta_0^2} r + \frac{H + \mu}{2\mu (1-\Theta_0^2)}\right).
\end{equation}
We now show that a segment of $\Upsilon(r)$ satisfies the properties listed in the statement of Theorem \ref{th:upsilon}.

First, it is clear from \eqref{upsilonr} and $\Theta_0\in \mathcal{J}$ that $\Upsilon(0)=(0,r_3(0))\in \hat \cW$, i.e. the second condition of Theorem \ref{th:upsilon} holds.
Also, $\Upsilon (r)\in \hat \cW$ for $r\geq 0$ small enough; see Figure \ref{fig:firstcase}.

\begin{figure}[h]
  \centering
  \includegraphics[width=0.45\textwidth]{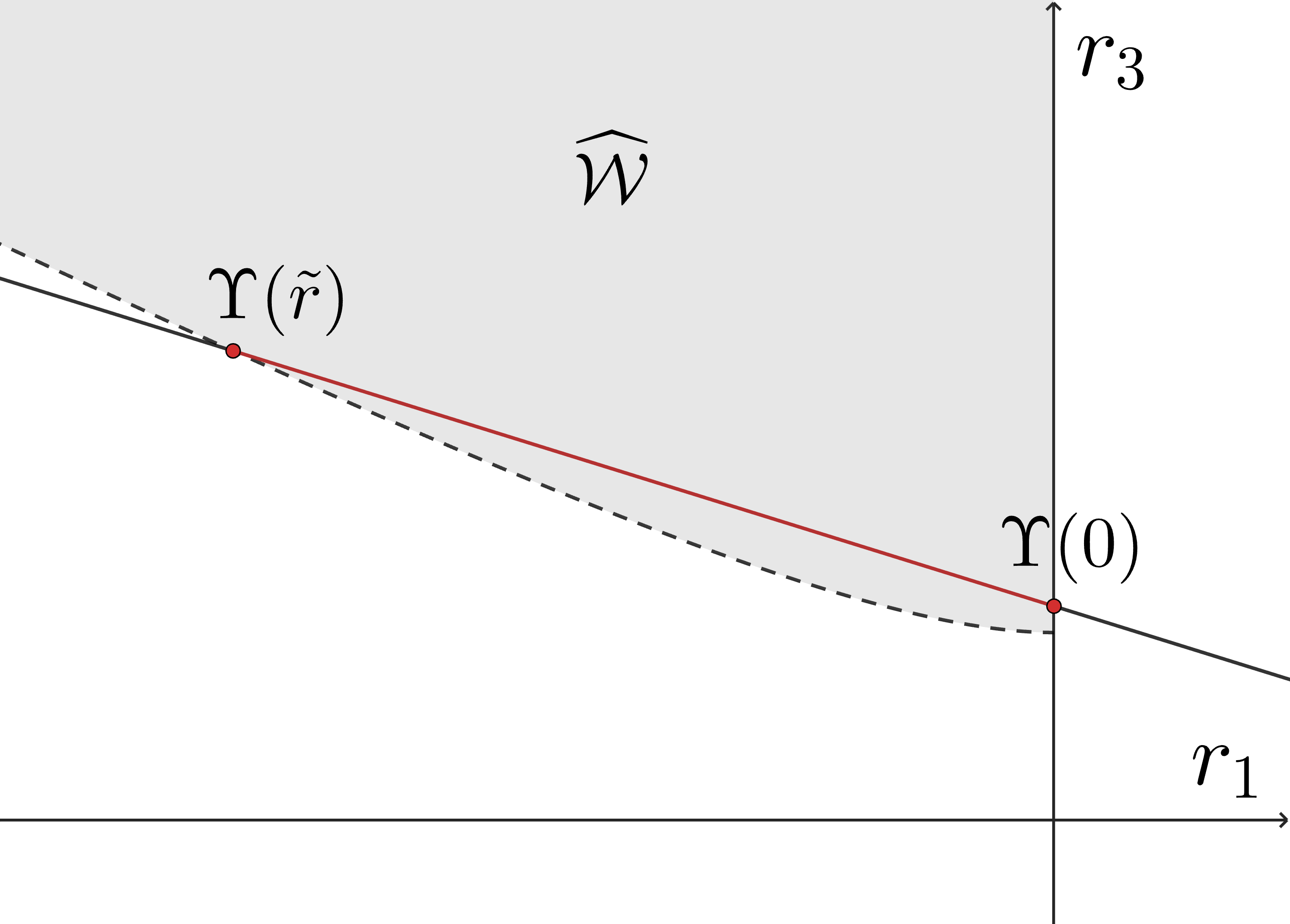}
  \hfill
  \includegraphics[width=0.45\textwidth]{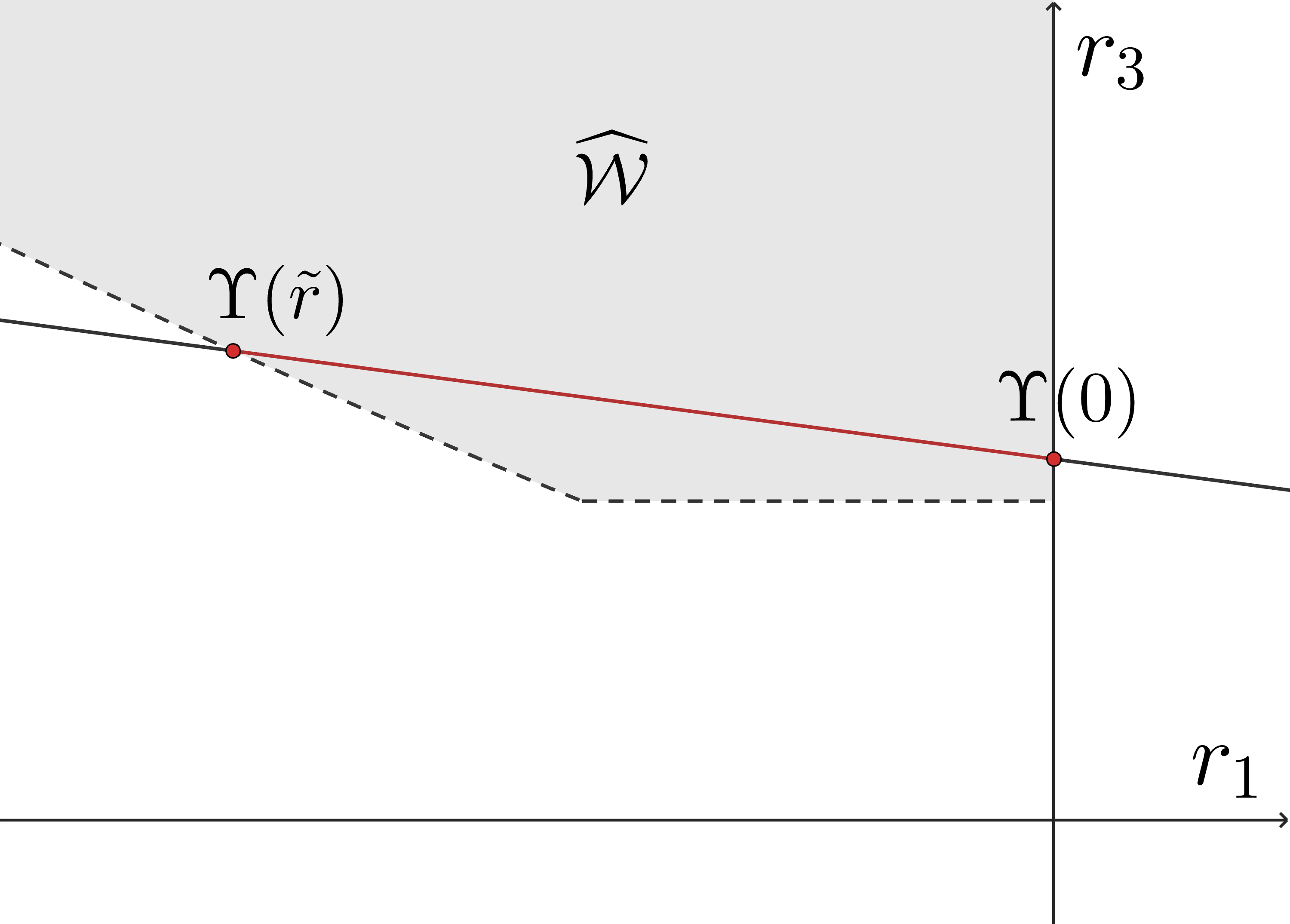}
  \caption{Level lines $\Upsilon$ of the period map on the parameter domain $(r_1,r_3)$ for the cases $\varepsilon = 1$ (left) and $\varepsilon = -1$ (right). If $8H^2 - \varepsilon > 0$, there are infinitely many level lines for which a segment of $\Upsilon$ satisfies the hypotheses of Theorem \ref{th:upsilon}.} 
  \label{fig:firstcase}
\end{figure}

On the other hand, $\Upsilon(r) \not \in \hat \cW$ for $r$ big enough; indeed, consider the inequality for $r_3$ in \eqref{ogra}. It is clear that $r_3(r)>-\ep \frac{H+\mu}{2\mu}$ for any $r\geq 0$, since $\Upsilon(r)$ has negative slope. However, we can prove that $$r_3(r)< \mathcal{G}(r):= \frac{(r_1(r)-1)^2}{1-2r_1(r)}$$ for $r$ large enough. This follows directly from $\Theta_0^2 <1/3$ (since $\Theta_0\in \mathcal{J}$) and the inequalities 
$$0<r_3'(r)=\frac{\Theta_0^2}{1-\Theta_0^2}<\frac{1}{2}=\lim_{r\to \8} \mathcal{G}'(r). $$
As a consequence, there is a first value $\tilde r > 0$ such that $\Upsilon(\tilde r) \not \in \hat \cW$, for which the equality in \eqref{upsilon1} is satisfied. Thus, the segment $\Upsilon(r):[0,\tilde{r}]\flecha \R^2$ is in the conditions of Theorem \ref{th:upsilon}, after the linear change $r\mapsto r/\tilde{r}$.

Thus, by Theorem \ref{th:upsilon}, there exists $r^*>0$ such that $\Upsilon (r^*)\in \hat \cW$, with $\mh(\Upsilon(r^*)) = 0$ and so that $\mh(\Upsilon(r))$ changes sign at $r^*$. Now, let $(1,b^*,c^*)$ be the point related to $(r_1(r^*),r_3(r^*))$ by the change \eqref{roots12}, \eqref{root3}. Consider the function
\begin{equation}\label{ghc}
    g(a,b) := \mh(a,b,c^{\Theta_0}(a,b)),
\end{equation}
defined on a neighborhood of $(1,b^*)\in \R^2$, where $c^{\Theta_0}(a,b)$ is the analytic map defined in Remark \ref{grafocab}, which parametrizes the level set $\Theta(a,b,c) = \Theta_0$ in a neighbourhood of $(1,b^*,c^*)\in \R^3$. We know by our previous discussion that $g(1,b^*) = 0$ and, moreover, that $g$ changes sign at $b^*$, meaning that $g(1,b) < 0$ (resp. $g(1,b)> 0$) for $b \in (b^* - \epsilon, b^*)$ (resp. $b \in (b^*, b^* + \epsilon)$), for $\epsilon>0$ small enough. Since $g(a,b)$ is analytic, there exists a real analytic curve $\zeta(\eta) := (a(\eta), b(\eta))$, $\eta\in [0,\delta)$ for $\delta$ small, satisfying $g(\zeta(\eta)) \equiv 0$ and $\zeta(0) = (1,b^*)$. Moreover, as $b \mapsto g(1,b)$ changes sign, then $a(\eta) > 1$ for all $\eta > 0$. We define then the curve
\begin{equation}\label{eq:cC}
    \cC(\eta):= (a(\eta),b(\eta),c^{\Theta_0}(\zeta(\eta))),
\end{equation}
which by definition is contained in the level set $\Theta(a,b,c) = \Theta_0$, and satisfies $\mathfrak{h}(\cC(\mu)) \equiv 0$. Observe that we obtain a different curve $\cC(\eta)$ for every $\Theta_0$ in the countable set $\mathcal{J} \cap \Q$. This concludes the proof of the {\bf first step} in the case $8H^2 - \varepsilon > 0$.

\textbf{First step, case $8H^2 - \varepsilon \leq 0$.} This implies in particular that $\varepsilon = 1$, and so $\mu > H$. Now, take any $l > 0$ such that $l < \frac{\mu - H}{\mu + H}$, and consider the line $L(r) := (-r, l r + 1)$. Also, for any 
\begin{equation}\label{deti}
\Theta_0 \in \left(-\sqrt{\frac{\mu - H}{2\mu}},-\sqrt{\frac{l}{1 + l}}\right)
\end{equation}
we consider the level line of the period map $\Upsilon = \Upsilon(r;\Theta_0)$ in \eqref{upsilonr}. The conditions on $\Theta_0$ in \eqref{deti} imply that $\Upsilon$ and $L$ meet at a point $L(\overline r) = \Upsilon(\overline r)$, $\overline r = \overline r(\Theta_0) > 0$; see Figure \ref{fig:LU}. The value $L(\overline r)$ depends analytically on $\Theta_0$, and for the limit case $\Theta_0 = -\sqrt{\frac{\mu - H}{2\mu}}$ it holds $\overline r = 0$, that is, $L$ and $\Upsilon$ meet at $(r_1,r_3) = (0,1)$. 

Note also that, by Lemma \ref{gammalimite}, the inequality $\tau(L(r)) > \tilde u(lr+1)$ holds for all $r > 0$ sufficiently close to zero. In particular, there exists a smaller interval $$\mathcal{J}: = \left(-\sqrt{\frac{\mu - H}{2\mu}},\tilde \Theta\right)$$ such that, for all $\Theta_0 \in \mathcal{J}$:
\begin{enumerate}
    \item The point $L(\overline r)$ belongs to $\hat \cW$, and so $\tau(L(\overline{r}))$ is well defined.
    \item The inequality $\tau(L(\overline r)) > \tilde u (l\overline r +1)$ is satisfied.
\end{enumerate}

We now fix some $\Theta_0 \in \mathcal{J} \cap \Q$ and define $\tau(r):= \tau(\Upsilon(r))$, $\mh(r) := \mh(\Upsilon(r))$. We will prove that there exists a value $r^* \in (0, \overline r)$ where $\tau(r^*) = \tilde u(r_3(r^*))$, with $r_3(r)$ as in \eqref{upsilonr}, and so $\mh(r^*) = 0$. 

By hypothesis, we know that $\Upsilon(\overline r)=  L(\overline r) \in \hat \cW$, and in fact $\tau (\overline r) > \tilde u(r_3(\overline r))$, by item (2) above. Since $r_3(r)$ is strictly increasing with $r_3(0)<1$ and $r_3 (\overline r) > 1$, we can define $r_b := r_3^{-1}(1) \in (0,\overline{r})$. %Observe that $r_b < \overline r$, since $r_3(r)$ is strictly increasing and $r_3 (\overline r) > 1$. 
Moreover, $\Upsilon(r_b) = (-r_b, 1)$ does not lie in $\hat \cW$, since the inequality for $r_3$ in \eqref{ogra} does not hold at that point. 
\begin{figure}[h]
  \centering
\includegraphics[width=0.46\textwidth]{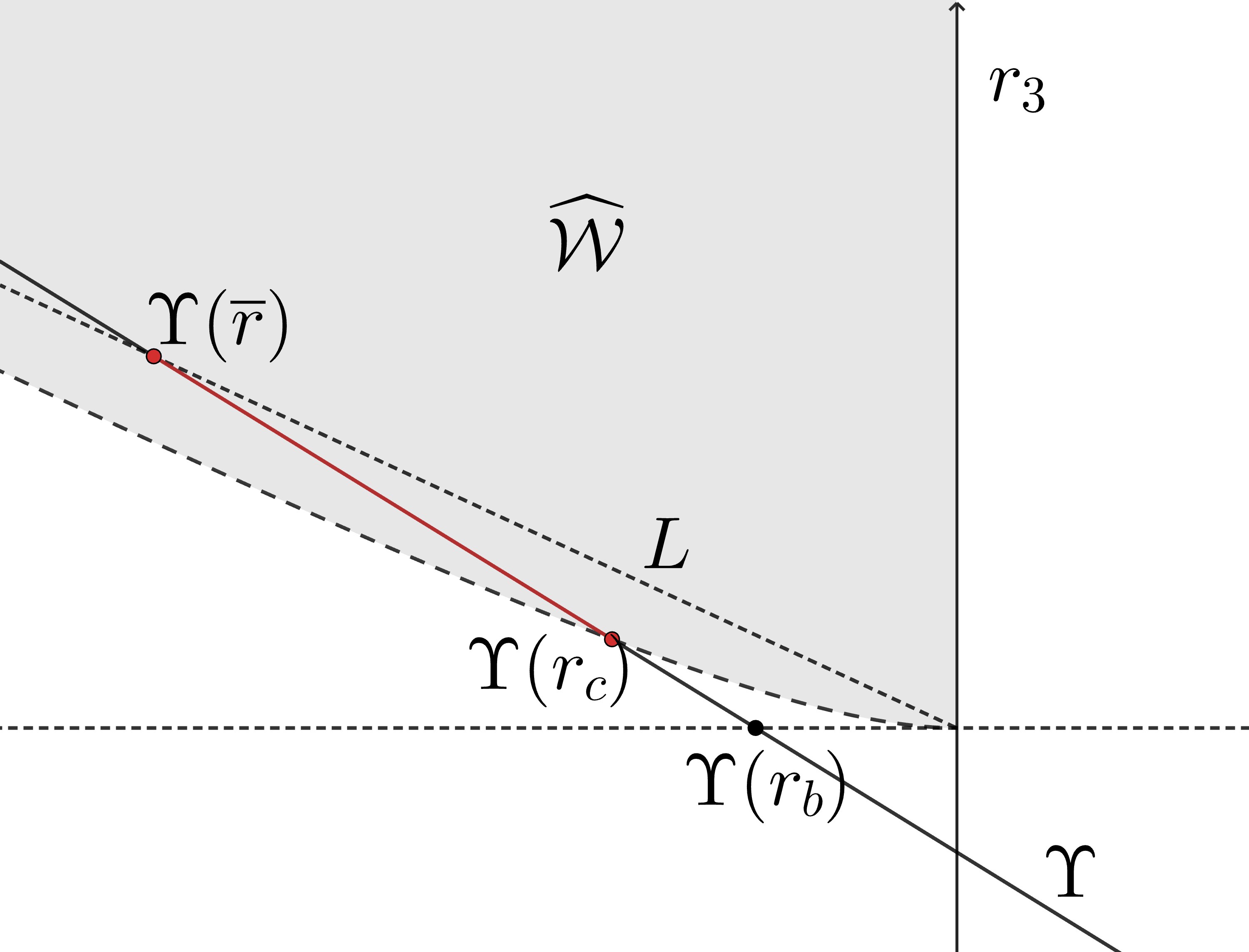}
  \caption{The level line $\Upsilon$ and the line $L$ meet at a point $\Upsilon(\overline r) \in \hat{\mathcal{W}}$.}\label{fig:LU}
\end{figure}

Consequently, there exists a certain interval $(r_c , \overline r] \subsetneq (r_b,\overline r]$ such that $\Upsilon(r) \in \hat\cW$ for all $r \in (r_c , \overline r]$, and $\Upsilon(r_c) \in \partial \hat\cW$; see Figure \ref{fig:LU}. By a similar argument to the one in the proof of Theorem \ref{th:upsilon}, it is possible to check that $\lim_{r \to r_c^+} \tau(r) = 0$. On the other hand, $\tilde u(r_3(r))$ is a positive function defined at $r = r_c$, so 
$$\lim_{r \to r_c^+} (\tau(r) - \tilde u(r_3(r))) = - \tilde u(r_3(r_c)) < 0.$$
In particular, there exists some $r^* \in (r_c, \overline r)$ where $\tau(r^*) = \tilde u(r_3(r^*))$. In fact, since $\tilde u$ and $\tau$ are analytic and they do not coincide, we can take $r^*$ so that the function $f(r):= \tau(r) - \tilde u (u_3(r))$ changes sign at $r = r^*$, being negative (resp. positive) for any $r < r^*$ (resp. $r > r^*$) close enough to $r^*$.

From the definition of $r^*$ and Proposition \ref{pro:hp3} it follows that $\mh(r^*) = 0$. In fact, since $f(r)$ changes sign at $r^*$, arguing as in the last part of the proof of Theorem \ref{th:upsilon}, we deduce that %the monotonicity of $u \mapsto \hat p_3(\mathfrak{s}(u))$ implies that 
$\mh(r)$ also changes sign. Now, let $(1,b^*,c^*) \in \cO^-$ be the point associated with $(r_1(r^*),r_3(r^*))$ by \eqref{roots12}, \eqref{root3}. We deduce that the analytic function $b \mapsto g(1,b)$, with $g(a,b)$ given by \eqref{ghc}, changes sign at $b = b^*$. Consequently, as explained in the proof of the case $8H^2-\ep>0$ above, there exists an analytic curve $\zeta(\eta) = (a(\eta),b(\eta))$ such that $g(\zeta(\eta)) \equiv 0$, $\zeta(0) = (1,b^*)$ and $a(\eta) > 1$ for all $\eta > 0$. We deduce that the curve $\cC(\eta)$ defined as in \eqref{eq:cC}, contained in the level set $\Theta(a,b,c)= \Theta_0$, satisfies $\mathfrak{h}(\cC(\mu)) \equiv 0$. We remark that we obtain (at least) one such curve for every $\Theta_0$ in the countable set $\mathcal{J} \cap \Q$. This completes the {\bf first step of the proof}.

To sum up: so far, we have proved that for any values $H \geq 0$, $\varepsilon = \pm 1$ with $H^2 + \varepsilon > 0$, there is a countable number of curves $\cC_q := \cC_q(\eta):[0,\delta(q))\to \cW\cap \cO^-$, each of them contained in the level set of the Period map $\Theta(a,b,c) = q \in \mathcal{J} \cap \Q \subset (-1,0) \cap \Q$, with the property that $\mh$ vanishes identically along $\cC_q(\eta)$. Let us now define $\A_q = \A_q(\eta)$ as the compact annulus $\Sigma_0=\Sigma_0(a,b,c,\tau(a,b,c))$ of Theorem \ref{th:nivelperiodo} associated to the point $(a,b,c)=\cC_q(\eta)$. Our goal is to prove that $\A_q(\eta)$ satisfies each of the properties listed in Theorem \ref{th:main1}.

Items (2) and (5) of Theorem \ref{th:main1} are a direct consequence of Theorem \ref{th:nivelperiodo} and the fact that $a(\eta) > 1$ for all $\eta > 0$. Similarly, item (3) follows from Proposition \ref{pro:peri} and item (4) from Proposition \ref{pro:nod} and Remark \ref{exput}. Item (6) holds by construction, as any surface $\Sigma = \Sigma(a,b,c)$ has constant mean curvature $H$ and is foliated by spherical curvature lines. Let us now prove item (1). Since $\mh$ vanishes along the curve $\cC_q(\eta)$, we deduce by Corollary \ref{cor:hrara} that the annuli $\A_q(\eta)$ meet orthogonally a certain totally umbilic 2-sphere $\mathcal{Q} = \mathcal{Q}(q,\eta)$ of $\mathbb{M}^3(\ep)$ along their boundary. Let us denote by ${\bf B}(q,\eta)$ the geodesic ball of $\mathbb{M}^3(\ep)$ whose boundary is the sphere $\mathcal{Q}(q,\eta)$; in the case $\ep=1$ there are two such balls, and we choose the one for which $\psi(0,0) \in {\bf B}(q,\eta)$. For $\eta = 0$, we know that $\tau(\cC_q(0)) = \tilde u(\cC_q(0))$, so the compact nodoid or catenoid $\A_q(0)$ is one of the examples constructed in item 1 of Proposition \ref{pro:hatp}. In particular, this rotational annulus is contained in ${\bf B}(q,0)$. By real analyticity, we conclude that the (non-rotational) annuli $\A_q(\eta)$, $\eta \in (0,\delta_0(q))$, will also be contained in their respective balls ${\bf B}(q,\eta)$, at least for some $\delta_0(q) > 0$ small enough. This completes the proof of Theorem \ref{th:main1}.

\subsection{Proof of Theorem \ref{th:main2}}\label{sec:prueba2} The key idea is to study the level sets of the form $\Theta(a,b,c) = -1/n$, where $n \in \N$, $n \geq 2$. Suppose that either $\varepsilon = -1$ or $H \geq \frac{1}{\sqrt{3}}$. Then, the inequality
\begin{equation}\label{eq:periodon}
    \frac{\mu - H}{2\mu} \leq \frac{1}{n^2}
\end{equation}
is satisfied for some $n \geq 2$. More specifically, if $\varepsilon = 1$ and $H \geq \frac{1}{\sqrt{3}}$, then it holds for a finite set of natural numbers, while if $\varepsilon = -1$ it is true for all $n$. We will split our analysis into two cases, depending on whether the inequality \eqref{eq:periodon} is strict or not.

Suppose first that \eqref{eq:periodon} is strict for some $n \geq 2$. This means that we are in the case $8H^2-\ep>0$ detailed in the proof of Theorem \ref{th:main1}, and that $q_n:= -1/n$ lies in the open interval $\mathcal{J}$ defined in \eqref{eq:intervalo}. By the proof of Theorem \ref{th:main1}, we deduce that there is a curve $\cC_{q_n}(\eta):[0,\delta(n)) \to \cO^-$, contained in the level set $\Theta (a,b,c)= q_n$, such that the associated annuli $\A_{q_n}(\eta)$ are free boundary in a geodesic ball ${\bf B}(q_n,\eta)$ of $\M^3(\ep)$. It just remains to check the embeddedness of these examples, which will be studied later.

Suppose now that \eqref{eq:periodon} holds for the equality case. In that situation, we cannot use Theorem \ref{th:main1} directly. By the equality in \eqref{eq:periodon}, and using \eqref{upsilonr}, the level curve $\Theta(1,b,c) = q_n:=-1/n$ is expressed in terms of $(r_1,r_3)$ as
    $$\Upsilon(r) := (r_1(r),r_3(r)) =  \left(-r,\frac{r}{n^2 - 1} + 1\right).$$
This is the equation of a line $L(r):=\Upsilon(r)$ that is in the conditions of Lemma \ref{gammalimite}, so for any $r > 0$ small enough, we have $\Upsilon(r) \in \hat\cW$ and $\tau(\Upsilon(r)) > \tilde u(r_3(r))$. However, it is possible to check that $\Upsilon(r) \not \in \hat \cW$ for $r > 0$ large enough, by a similar argument to the first step of the proof of Theorem \ref{th:main1}. Also following the proof of that first step, we can deduce that there exists a point $r^*$ where $\tilde u(r_3(r^*)) = \tau(\Upsilon(r^*))$, and so $\mh(\Upsilon(r^*)) = 0$. In fact, we can show that there is a real analytic curve $\cC_n(\eta)$ such that $\mh$ vanishes identically along $\cC_n$, and from this we obtain a 1-parameter family of free boundary annuli $\A_n(\eta)$ that satisfies the properties stated in Theorem \ref{th:main1}. 

In conclusion, for any $n \geq 2$ such that \eqref{eq:periodon} holds, there exists a 1-parameter family of immersed, free boundary annuli $\A_{q_n}(\eta)$. It just remains to prove that the annuli $\A_{q_n}(\eta)$,  $\eta \in [0,\delta(n))$, are embedded for some $\delta(n) > 0$ small enough.

For every $\eta \in [0,\delta(n))$, let us denote by $\psi_\eta(u,v)$ our usual parametrization by curvature lines of the compact annulus $\A_{q_n}(\eta)$. We know by Corollary \ref{cor:sigma} that $\psi_\eta(u,v) = \psi_\eta(u,v + 2n \sigma)$, where $\sigma$ depends analytically on $\eta$. Identifying $(u,v) \sim (u,v + 2n\sigma)$ as usual (see Section \ref{sec:annu}), we can view $\psi_\eta$ as a parametrization $\psi_\eta:[-\tau(\eta),\tau(\eta)] \times \S^1 \to \A_{q_n}(\eta)$ of $\A_{q_n}(\eta)$. Observe that for $\eta = 0$, the annulus $\A_{q_n}(0)$ is an embedding, since it is a trivial covering of a critical catenoid or nodoid $\cN$ embedded in $\M^3(\ep)$; see Remark \ref{exput}. Thus, $\psi_0$ is injective.

Now, by the real analyticity of the family of compact annuli $\A_{q_n}(\eta)$, we deduce that for all $\eta$ sufficiently close to zero, the parametrizations $\psi_\eta$ are also injective, and so the annuli $\A_{q_n}(\eta)$ are embedded. This completes the proof.
%\section{Embeddeded free boundary CMC annuli}

\section{Embedded capillary minimal and CMC annuli in \texorpdfstring{$\S^3$}{}}\label{sec:capillary}

In Theorem \ref{th:main2}, we constructed embedded examples of non-rotational, free boundary CMC annuli in geodesic balls of $\H^3$ (for $H>1$) and $\S^3$ (for $H\geq 1/\sqrt{3}$). In this section we will show that if we relax the free boundary condition to \emph{capillarity}, then there exist embedded non-rotational capillary CMC annuli in $\S^3$ for any $H\geq 0$.

Recall that a compact  surface $\Sigma$ in $\M^3(\ep)$ is called a \emph{capillary} surface in a geodesic ball ${\bf B}\subset \M^3(\ep)$ if $\Sigma\subset {\bf B}$ intersects $\parc {\bf B}$ at a constant angle along $\parc \Sigma$. We prove next:%A variant of the previous arguments lets us construct non-rotational embedded capillary minimal annuli in $\S^3$. More generally, we have

\begin{theorem}\label{th:main3}
For any $H\geq 0$ and any $n\geq 2$ there exists a real analytic $2$-parameter family of embedded capillary annuli $\A_n(a,\eta)$ with constant mean curvature $H$ in a geodesic ball ${\bf B}={\bf B}(n,a,\eta)$ of $\S^3$, with a prismatic symmetry group of order $4n$.
\end{theorem}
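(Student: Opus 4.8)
The plan is to follow the same strategy as in the proof of Theorems \ref{th:main1} and \ref{th:main2}, but now relaxing the constraint $u_0 = \tau(a,b,c)$ (which forced the free boundary condition $\theta = \pi/2$ on $\partial \Sigma_0$), and instead simply requiring the two boundary curves of $\Sigma_0(a,b,c,u_0)$ to lie in a common totally umbilic $2$-sphere $\mathcal{Q}$ of $\S^3$. By Theorem \ref{th:nivelperiodo}(2), both boundary components automatically meet $\mathcal{Q}_1 = \Phi(\mathcal{Q}_2)$ at the same constant angle $\theta$, which is the definition of capillarity once we know $\mathcal{Q}_1 = \mathcal{Q}_2$. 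By Theorem \ref{th:nivelperiodo}(4), the condition $\mathcal{Q}_1 = \mathcal{Q}_2$ is precisely $m_3(u_0) = 0$, where $m_3$ is the $x_3$-coordinate of the center map. So, fixing $\varepsilon = 1$, the two equations to solve simultaneously in the parameters $(a,b,c,u_0)$ are: $\Theta(a,b,c) = -1/n$ (to get a compact annulus with prismatic symmetry of order $4n$ when $a>1$, by Theorem \ref{th:nivelperiodo}(5) and Proposition \ref{pro:peri}), and $m_3(u_0) = 0$.

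First I would work in the rotational case $a = 1$, where $\Sigma(1,b,c)$ is a spherical nodoid or catenoid by Proposition \ref{pro:nod}. Here Theorem \ref{perota} gives the explicit period $\Theta(1,b,c)$, and by Corollary \ref{cor:perota} (and \eqref{tepe}) one can always solve $\Theta(1,b,c) = -1/n$ along a line in the $(r_1,r_3)$-plane through the point $p_0$ in \eqref{def:p0}, for any $n\geq 2$ and any $H\geq 0$. Along this line, I need a point where the "$m_3 = 0$" condition holds for some $u_0 > 0$. For the rotational examples, the center map $m(u)$ lies in the plane $\mathcal{P}$ containing the rotation axis $\mathcal{L}$, and the geodesic $\Gamma_{\mathfrak{s}}$ of Proposition \ref{pro:hatp} meets $\mathcal{L}$ at $\hat p(\mathfrak{s})$; as in the proof of Proposition \ref{pro:hp3}, $m(u_0)$ and $\hat p(\mathfrak{s}(u_0))$ are collinear, hence (after a sign normalization) equal, so $m_3(u_0) = \hat p_3(\mathfrak{s}(u_0))$. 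Since $\hat p_3(\tilde{\mathfrak{s}}) = 0$ with $\hat p_3'(\tilde{\mathfrak{s}}) > 0$ by Proposition \ref{pro:hatp}(3), choosing $u_0 = \tilde u(r_3)$ gives $m_3(u_0) = 0$ for \emph{every} $r_3$ on the level line — the rotational family already provides a one-parameter family of critical (free boundary) examples, which are in particular capillary. This is where the extra parameter over Theorem \ref{th:main2} comes from: we no longer need the additional equation $\tilde u(r_3) = \tau$, so instead of isolated points we get a whole curve $r_1 \mapsto (r_1, r_3(r_1))$ of rotational capillary examples.

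Next I would bifurcate to $a > 1$. Fix a point $(1, b^*, c^*)$ on the level line $\Theta = -1/n$ with $u_0^* := \tilde u(r_3^*) > 0$ and $m_3(u_0^*) = 0$, chosen so that moreover $\partial_{u_0} m_3(u_0^*) = \hat p_3'(\tilde{\mathfrak{s}})\,\mathfrak{s}'(u_0^*) \neq 0$ (true by Proposition \ref{pro:hatp}(3) and \eqref{cambisu}). Define, on a neighborhood of $(1, b^*, u_0^*)$, the analytic function $G(a,b,u_0) := m_3(u_0; a, b, c^{-1/n}(a,b))$ using the analytic graph $c = c^{-1/n}(a,b)$ of the level set $\Theta = -1/n$ from Remark \ref{grafocab}. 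Then $G(1,b^*,u_0^*) = 0$ and $\partial_{u_0} G(1,b^*,u_0^*) \neq 0$, so by the implicit function theorem $G = 0$ defines an analytic graph $u_0 = u_0(a,b)$ near $(1,b^*)$. This yields, for each $a$ close to $1$ (and $b$ near $b^*$), a compact annulus $\Sigma_0$ with $\Theta = -1/n$ and both boundary curves in a common umbilic $2$-sphere $\mathcal{Q} = \partial \mathbf{B}$; hence a capillary $H$-annulus in $\mathbf{B}$ provided $\Sigma_0 \subset \mathbf{B}$, which holds by continuity from the rotational case $a = 1$ (where the piece of nodoid/catenoid is embedded in $\mathbf{B}$ by Proposition \ref{pro:hatp}(1)) for $a$ sufficiently close to $1$. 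Embeddedness of $\A_n(a,\eta)$ for small $\eta$ (i.e.\ $a$ close to $1$) follows, as in the proof of Theorem \ref{th:main2}, from the injectivity of the parametrization of the embedded rotational example at $a=1$ and real analyticity. When $a > 1$, the symmetry group is prismatic of order $4n$ by Theorem \ref{th:nivelperiodo}(5). Relabeling $(a, b(a))$ as $(a, \eta)$ — where $\eta$ parametrizes the free choice of $b$ along the level line, as in \eqref{eq:cC} — we obtain the asserted $2$-parameter family.

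The main obstacle I anticipate is making rigorous the claim that $\Sigma_0 \subset \mathbf{B}$ is preserved under the bifurcation, and the precise identification of the two balls bounded by $\mathcal{Q}$ in $\S^3$. Unlike the case $\varepsilon = 1$ in Theorem \ref{th:main1}, where containment followed cleanly from analytic continuation of an embedded rotational example lying strictly inside its ball, here one must track that: (i) the umbilic surface $\mathcal{Q}(a,b)$ varies analytically and bounds a well-defined compact ball $\mathbf{B}(a,b)$ with $\psi(0,0) \in \mathbf{B}(a,b)$; (ii) the whole annulus $\Sigma_0$, not just its boundary, stays on the correct side. Both are consequences of continuity plus the strict containment at $a = 1$ (an open condition), but this needs the rotational analysis of Section \ref{sec:rotational} — specifically Proposition \ref{pro:hatp}(1) — to guarantee strict interior containment of the critical rotational annulus, and a short compactness argument to upgrade "boundary in $\mathbf{B}$" plus "close to a surface contained in $\mathbf{B}$" to "contained in $\mathbf{B}$". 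A secondary subtlety is ensuring one can choose $(1,b^*,c^*)$ with $u_0^* > 0$: this is automatic since $\tilde u(r_3) > 0$ by Definition \ref{tildeu} and Proposition \ref{pro:hatp}(1), and it holds for all $r_3$ on the level line, giving the freedom needed for the extra parameter.
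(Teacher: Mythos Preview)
Your overall strategy is right, but there is a genuine gap in the core step. You claim that for the rotational case $a=1$, the choice $u_0 = \tilde u(r_3)$ gives $m_3(u_0)=0$ at \emph{every} point on the level line $\Theta(1,b,c)=-1/n$, arguing ``as in the proof of Proposition \ref{pro:hp3}, $m(u_0)$ and $\hat p(\mathfrak{s}(u_0))$ are collinear''. This collinearity does \emph{not} follow for general $u_0$. In the proof of Proposition \ref{pro:hp3} both $m(\tau)$ and $\hat p(\mathfrak{s}(\tau))$ are shown to lie in the $2$-plane $\cP' = \mathrm{span}\{\psi(\tau,0),\psi_u(\tau,0)\}$; for $m(\tau)$ this uses \eqref{eq:m} together with $\cos\theta(\tau)=0$ (equivalently $y(\tau)=z(\tau)$), which kills the $N$-component. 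For $u_0\neq \tau$ the vector $m(u_0)$ has a nontrivial $N(u_0,0)$-component by \eqref{eq:m}, so $m(u_0)\notin \cP'$ and the argument breaks down. Since $m(u)$ depends on $b$ (equivalently $r_1$) while $\tilde u(r_3)$ does not, there is no reason for $m_3(\tilde u(r_3))=0$ to hold at an arbitrary point of the level line.

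This matters because the main new content of Theorem \ref{th:main3} over Theorem \ref{th:main2} is precisely the range $0\le H < 1/\sqrt{3}$ (more generally, the $n$ for which \eqref{eq:periodon} fails), and there your seed point has to be produced differently. The paper splits into two cases. When \eqref{eq:periodon} holds, it locates the \emph{specific} point $(1,b^*,c^*)$ where $\tilde u(r_3^*)=\tau(r_1^*,r_3^*)$ (as in the proof of Theorem \ref{th:main2}), so that Proposition \ref{pro:hp3} legitimately yields $m_3(\tau)=0$; then Lemma \ref{lem:uestrella} (which proves $m_3'(u_0)\neq 0$ via the second-order ODE \eqref{tildem} for $\tilde m$) gives the analytic graph $u_0=u^*(a,b,c)$, and one parametrizes by $(a,b)$. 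When \eqref{eq:periodon} fails, the level line $\Theta=-1/n$ meets $r_3=1$ (i.e.\ $c=1$), where $\Sigma(1,b^\alpha,1)$ covers a flat CMC torus; using the explicit profile \eqref{profitoro} one shows that $\langle m(u),\psi_u(u,0)\rangle$ never vanishes, hence $m_3$ must change sign on $(0,\overline u)$, producing a seed $u_0$ with $m_3(u_0)=0$. Your proposal does not provide a substitute for this second case, and patching the first case still requires restricting to the distinguished point where $\tilde u=\tau$, not an arbitrary one.
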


%{\color{blue} \textbf{Alberto:} referenciar el teorema, si se escribe en la sección 1.}
The rough idea behind this result is as follows: consider the level set of the period $\Theta(a,b,c) = -1/n$, and suppose that for some $(a,b,c)\in \cO^-$ with $a>1$ in this level set there exists a value $u^* > 0$ such that $m_3(u^*) = 0$, that is, the third coordinate of the center function vanishes. According to items (2), (4) and (5) of Theorem \ref{th:nivelperiodo}, the compact annulus $\Sigma_0 (a,b,c,u^*)$ intersects along $\parc \Sigma_0$ at a constant angle a totally umbilic sphere $\mathcal{Q}$ of $\S^3$, and $\Sigma_0$ has a prismatic symmetry group of order $4n$. Consequently, it suffices to check that the annuli $\Sigma_0$ are embedded and contained in  a geodesic ball ${\bf B}$ of $\S^3$ whose boundary is $\mathcal{Q}$. 

We will make use of the following lemma:
\begin{lemma}\label{lem:uestrella}
    Suppose that for some $(a_0,b_0,c_0) \in \cO^-$ there exists $u_0 > 0$ such that $m_3(u_0) = 0$. Then, there exists a neighbourhood $\mathcal{V}$ of $(a_0,b_0,c_0)$ and an analytic function $u^* = u^*(a,b,c): \mathcal{V} \cap \cO^- \to \R$ such that $u^*(a_0,b_0,c_0) = u_0$ and $m_3(u^*(a,b,c)) \equiv 0$.
\end{lemma}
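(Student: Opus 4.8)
The plan is to prove Lemma \ref{lem:uestrella} as a straightforward application of the implicit function theorem, once we verify that the derivative of $m_3(u)$ with respect to $u$ does not vanish at $u_0$. First I would set up the framework: the center map $m(u)$ and its unnormalized version $\tilde m(u)$ in \eqref{funcioncentro} depend real-analytically jointly on $(u,a,b,c)$, because $\rho(u,v;a,b,c)$ and the solution $(y(u),z(u))$ of \eqref{system1} depend analytically on initial conditions and parameters, and these in turn depend analytically on $(a,b,c)$ via \eqref{aabc}, \eqref{pqabc}, \eqref{omini}; moreover the initial conditions used for $\psi$ (Remark \ref{newinicondi}) vary analytically since $\sigma(a,b,c)$ is analytic by Proposition \ref{prop:exsi}. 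Since $(a_0,b_0,c_0)\in\cO^-$, by Proposition \ref{pro:centros} the map $m(u)$ is analytic in $u$ near $u_0$ (and in the hyperbolic case $\tilde m$ is timelike there, so the normalization $m=\pm\tilde m/\sqrt{\ep\langle\tilde m,\tilde m\rangle}$ is analytic in all variables on a neighbourhood of $(u_0,a_0,b_0,c_0)$).

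The key step is to show $\partial_u m_3(u_0)=m_3'(u_0)\neq 0$. By Proposition \ref{pro:centros}, $m'(u_0)\neq 0$; I would combine this with the fact that, by Proposition \ref{pro:plaver}(4) and the discussion after Definition \ref{omenos}, the $2$-plane $\cP$ containing $m(u)$ satisfies $\cP=\Lambda^\perp$ and contains ${\bf e}_3$ (since $\langle\nu_k,{\bf e}_3\rangle=0$ for all $k$). Thus $m(u)\in\cP$ with $\langle m(u),{\bf e}_3\rangle=m_3(u)$ being one of the two coordinates on the $2$-plane $\cP$; the other coordinate direction, call it ${\bf f}$, is a fixed unit vector in $\cP$ orthogonal to ${\bf e}_3$. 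If $m_3'(u_0)=0$, then $m'(u_0)$ is proportional to ${\bf f}$; I must rule this out. The cleanest way is: since $m_3(u_0)=0$, we have $m(u_0)=\pm {\bf f}$ (after normalizing), i.e. $m(u_0)$ itself is proportional to ${\bf f}$. But then $m(u_0)$ and $m'(u_0)$ would both be proportional to ${\bf f}$, hence linearly dependent, forcing (by the ODE \eqref{tildem} and the argument in the proof of Proposition \ref{pro:centros}) $\tilde m(u)$ — and hence $m(u)$ — to lie on a line for all $u$; this contradicts the linear independence of $\tilde m(0)$ and $\tilde m'(0)$ established there. Therefore $m_3'(u_0)\neq 0$.

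With $m_3'(u_0)\neq 0$ in hand, I would apply the implicit function theorem to the analytic function $G(u,a,b,c):=m_3(u)$ defined near $(u_0,a_0,b_0,c_0)$. Since $G(u_0,a_0,b_0,c_0)=0$ and $\partial_u G(u_0,a_0,b_0,c_0)=m_3'(u_0)\neq 0$, there is a neighbourhood $\mathcal{V}$ of $(a_0,b_0,c_0)$ and a unique real-analytic function $u^*=u^*(a,b,c):\mathcal{V}\to\R$ with $u^*(a_0,b_0,c_0)=u_0$ and $G(u^*(a,b,c),a,b,c)\equiv 0$, i.e. $m_3(u^*(a,b,c))\equiv 0$. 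Shrinking $\mathcal{V}$ if necessary so that $\mathcal{V}\cap\cO^-$ is where all the analyticity statements hold (in particular so that in the case $\ep=-1$ the relevant $\tilde m$ stays timelike, guaranteeing $S[m(u^*),d(u^*)]$ compact and the normalization valid), completes the proof.

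The main obstacle I anticipate is the non-degeneracy argument $m_3'(u_0)\neq 0$: one has to be careful that $m(u)$ is only defined up to sign/scale, so "$m_3(u_0)=0$ and $m'(u_0)\neq 0$" does not immediately give $m_3'(u_0)\neq 0$ without using the planarity constraint $m(u)\in\cP$ together with ${\bf e}_3\in\cP$. The geometric input that makes it work is precisely that $m_3$ is a genuine linear coordinate on the $2$-plane $\cP$, so that $m(u)$ tracing a real-analytic non-constant curve in $\cP$ through a point on the ${\bf e}_3^{\perp}$-axis of $\cP$ must have nonzero $m_3$-derivative there — equivalently, it cannot be tangent to that axis, which is ruled out by the line-degeneracy argument from Proposition \ref{pro:centros}. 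Everything else is bookkeeping about analytic dependence on parameters, which is already granted by the results quoted above.
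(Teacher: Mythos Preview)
Your proposal is correct and follows essentially the same approach as the paper: both reduce to the implicit function theorem after establishing $m_3'(u_0)\neq 0$, and both obtain this non-degeneracy by showing that its failure would force $\tilde m$ (via the second-order ODE \eqref{tildem}) to lie on a line, contradicting the linear independence of $\tilde m(0),\tilde m'(0)$. The paper's version is slightly more direct: rather than passing through the statement $m'(u_0)\neq 0$ and a linear-dependence argument in $\cP$, it simply uses the scalar relation \eqref{mtildem} to convert $m_3(u_0)=m_3'(u_0)=0$ into $\tilde m_3(u_0)=\tilde m_3'(u_0)=0$, applies the ODE \eqref{tildem} to the single coordinate $\tilde m_3$ to get $\tilde m_3\equiv 0$, and then reads off the contradiction from the explicit value $m_3(0)=1$ coming from \eqref{eq:m} and the initial conditions.
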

\begin{proof}
The Lemma is a direct consequence of the implicit function theorem if we prove that $m_3'(u_0) \neq 0$. 
Assume by contradiction that $m_3'(u_0) = 0$. Since $m_3(u_0) = 0$, we deduce that the function $\tilde m_3(u)$ in \eqref{funcioncentro} satisfies $\tilde m_3(u_0)=\tilde m_3'(u_0)=0$, due to \eqref{mtildem}. Moreover, $\tilde m_3$ satisfies the differential equation \eqref{tildem}, and so we conclude that $\tilde m_3(u)$, and hence, $m_3(u)$, vanish identically. This is a contradiction, as \eqref{eq:m} and the initial conditions \eqref{idat} imply that $m_3(0) = 1$.
\end{proof}

\subsection{Proof of Theorem \ref{th:main3}} Let $n \geq 2$ and $\ep=1$. We will distinguish two cases, depending on whether or not \eqref{eq:periodon} holds. 

Assume first that \eqref{eq:periodon} holds, and consider the level set $\Theta(a,b,c) = -1/n=: \Theta_0$. Following the proof of Theorem \ref{th:main2} in Section \ref{sec:prueba2}, we know that there is a point $(1,b^*,c^*)$ in that level set such that $\mh(1,b^*,c^*) = m_3(\tau(1,b^*,c^*)) = 0$. By applying Lemma \ref{lem:uestrella} with $u_0 = \tau(1,b^*,c^*)$, we deduce the existence of a function $u^*$ defined on a neighbourhood $\mathcal{V}$ of $(1,b^*,c^*)$ such that $m_3(u^*(a,b,c))$ vanishes identically. Now, consider the analytic function $$(a,b) \mapsto u^*(a,b):= u^*(a,b,c^{\Theta_0}(a,b)),$$
where $c^{\Theta_0}(a,b)$ is the analytic map in Remark \ref{grafocab}. Let $\A_n(a,b) \equiv \psi([-u^*,u^*] \times \S^1)$ be the compact annulus associated to the parameters $(a,b,c^{\Theta_0}(a,b))$, where we identify the points $(u,v) \sim (u, v + 2n\sigma)$ as in Remark \ref{cor:sigma}. By construction, any of these annuli meets a totally umbilic sphere $\mathcal{Q}(n,a,b)$ of $\S^3$ with constant angle along its boundary $\parc \A_n(a,b)$, so in order to prove Theorem \ref{th:main3} we just need to check that the annuli $\A_n(a,b)$ are embedded and contained in a geodesic ball of $\S^3$ %{\color{blue}$\to$ no sería $\M^3(\ep)$?} 
bounded by their corresponding sphere $\mathcal{Q}$. Notice that in Theorem \ref{th:main2} we already proved this for the annulus $\A_n(1,b^*)$, so by real analyticity, there is an open neighbourhood $G \subset \R^2$ of $(1,b^*)$ such that the annuli $\A_n(a,b)$ are also embedded and contained in a geodesic ball ${\bf B}(n,a,b)$ of $\S^3$ bounded by $\mathcal{Q}(n,a,b)$ for all $(a,b) \in G$. We also recall that there are two such geodesic balls in $\S^3$; we make the same choice for it that we did when proving Theorem \ref{th:main2}.

Take next some $n \in \N$, $n\geq 2$, such that \eqref{eq:periodon} does not hold, %{\color{blue} (which in particular implies that $\ep=1$)}, 
and consider the level curve $\Theta(1,b,c)=\Theta_0:=-1/n$. In the $(r_1,r_3)$-coordinates, this  curve is given by $\Upsilon(r)$ in \eqref{upsilonr}, and so it meets the horizontal line ${r_3 = 1}$ at a certain point $\Upsilon(r^{\alfa}) = (-r^{\alfa}, 1)$, where $r^{\alfa} > 0$. Let us see that $(1,b^{\alfa},c^{\alfa})\equiv \Upsilon(r^{\alfa})$ is in the conditions of Lemma \ref{lem:uestrella}.

Let $\Sigma$ denote the rotational $H$-surface associated to $(r_1,r_3)=\Upsilon(r^{\alfa})\equiv (1,b^{\alfa},c^{\alfa})$; note that $c^{\alfa}=1$ since $r_3=1$, see \eqref{root3}. Thus, by Remark \ref{rem:tori}, $\Sigma$ covers a flat CMC torus in $\S^3$. Since $\Theta=-1/n$, we can consider for any $u_0>0$ the compact immersed $H$-annulus $\Sigma_0=\Sigma_0(1,b^{\alfa},1,u_0)$ as defined in Section \ref{sec:romega}, after the identification $(u,v)\sim (u,v+2n\sigma)$. Under this identification, the $v$-curves $\psi(u_0,v):\S^1\flecha \S^3$ are injective parametrizations of circles.

We find next an explicit parametrization for the profile curve $\psi(u,0)$ of $\Sigma_0$, using the expresion of the flat $H$-torus in $\S^3$ given in Remark \ref{tori} in terms of the parameters $(\mathfrak{s},\theta)$. To start, note that it holds $e^{\rho(u)}\equiv 1$ for $\Sigma_0$ due to $c=1$, and so the reparametrizacion $u = u(\mathfrak{s})$ in \eqref{cambisu} is just $u = 2\mu \mathfrak{s}$. In addition, the rotation axis of the flat torus in Remark \ref{tori} is $\S^3\cap \{x_1=x_2=0\}$, which agrees with the geodesic of $\S^3$ that contains the centers $m(u)$ of $\Sigma$; see Remark \ref{rem:ejea1}. Thus, by \eqref{toro}, we see that the profile curve of $\Sigma$ is 
\begin{equation}\label{profitoro}
\psi(u,0) = \left(\sqrt{\frac{\mu + H}{2\mu}}, 0, \sqrt{\frac{\mu - H}{2\mu}}\sin\left(\sqrt{\frac{\mu + H}{2\mu}}u\right), \sqrt{\frac{\mu - H}{2\mu}}\cos\left(\sqrt{\frac{\mu + H}{2\mu}}u\right)\right).
\end{equation}

Let $u_0\in (0,\overline u)$, where $\overline u := \sqrt{\frac{2\mu}{\mu + H}}\pi$. From the expression of the profile curve $\psi(u,0)$ and the previous discussion, it follows that the parametrization $\psi:[-u_0,u_0]\times \S^1 \to \S^3$ of $\Sigma_0$ is injective, where we identify $(u,v) \sim (u, v + 2n \sigma)$ as usual. Moreover, $\Sigma_0$ is contained in the ball $B[{\bf e}_4,x_4(u_0)]$.

We now claim that there exists some $u_0 \in (0, \overline u)$ such that $m_3(u_0) = 0$. To prove this, note first that $f(u):= \langle m(u), \psi_u(u,0) \rangle$ never vanishes; indeed, by \eqref{eq:m}, it holds $f(u) = \frac{e^\rho}{2\mu}|\hat N | \sin \theta$. But now, we have $|\hat N| > 0$ by construction, and $\sin\theta$ cannot be zero by \eqref{expab}, as $\beta$ is bounded since the functions $(s,t)$ in \eqref{system2} are; see also \eqref{qfactor}. Therefore, $f(u)$ does not change sign. On the other hand, observe that, by \eqref{profitoro},
\begin{align*}
    f(0) &= \langle m(0), \psi_u(0,0)\rangle = \frac{m_3(0)}{2\mu},\\ 
    f(\overline u) &= \langle m(\overline u), \psi_u(\overline u,0)\rangle = -\frac{m_3(\overline u)}{2\mu}.
\end{align*}
Since the sign of $f(u)$ is constant in $(0,\overline u)$, we deduce that there is some $u_0 \in (0,\overline u)$ for which $m_3(u_0) = 0$. So, by Lemma \ref{lem:uestrella},  there exists a function $u^*(a,b,c)$ defined on a neighbourhood of $(1,b^\alpha,1)$, with $u^*(1,b^{\alfa},1) = u_0$ and $m_3(u^*(a,b,c)) \equiv 0$.

We consider the analytic function $u^*(a,c):= u^*(a,b^{\Theta_0}(a,c),c)$ and define for any $(a,c)$ in a neighborhood of $(1,1)$ the compact $H$-annuli 
$$\A_n(a,c) :=\Sigma_0(a,b^{\Theta_0}(a,c),c,u^*(a,c)),$$
where $b^{\Theta_0}(a,c)$ is the analytic map in Remark \ref{grafocab}.  By construction, $\A_n(a,c)$ meets a totally umbilic sphere $\mathcal{Q}(n,a,c)$ with constant angle along $\parc \A_n(a,c)$, according to Theorem \ref{th:nivelperiodo}. The annulus $\A_n(1,1)$ is equal to $\Sigma_0(1,b^{\alfa},1, u_0)$, which by our previous discussion, is embedded and contained in a geodesic ball of $\S^3$ bounded by $\mathcal{Q}(n,1,1)$. Consequently, there is a neighbourhood of $(1,1)$ such that the annuli $\A_n(a,c)$ are embedded capillary CMC annuli in a geodesic ball of $\S^3$ bounded by $\mathcal{Q}(n,a,c)$. This completes the proof of Theorem \ref{th:main3}.

\appendix

\section{Appendix: Proof of Proposition \ref{pro:hatp}}\label{sec:appendix}

In this Appendix we will prove separately the three items of Proposition \ref{pro:hatp}.

\subsection{Proof of item 1 of Proposition \ref{pro:hatp}}\label{ap:tildes}
By the symmetries of $\mathcal{S}_0$, if this annulus were free boundary, it should necessarily be in the ball $B[{\bf e}_4,\varepsilon x_4(\tilde{\mathfrak{s}})]$. In such case, $\mathcal{S}_0$ and $S[{\bf e}_4,\varepsilon x_4(\tilde{\mathfrak{s}})]$ would meet orthogonally along $\parc \mathcal{S}_0$. This happens if and only if the geodesic $\Gamma_{\tilde{\mathfrak{s}}}$ passes through the point ${\bf e}_4$, i.e., the center of the ball. As we will see in Appendix \ref{ap:geometriaF}, this property holds for the first positive root $\tilde{\mathfrak{s}}$ of the function
\begin{equation}\label{eq:F}
F(\mathfrak{s}):= x_3(\mathfrak{s}) x'(\mathfrak{s}) - x_3'(\mathfrak{s}) x(\mathfrak{s}),
\end{equation}
where $x, x_3$ are defined in \eqref{pro:s}. Consequently, our goal will be to show the existence of such $\tilde{\mathfrak{s}} = \tilde{\mathfrak{s}} (H,\delta)$. We will deal with several cases depending on the values $\varepsilon\in \{-1,1\}$, $H\geq 0$, $\delta>0$, but the general strategy is to prove that $F(0) < 0$ and that there is some $\mathfrak{s}_0 > 0$ with $F(\mathfrak{s}_0) > 0$.

We first consider the case where $\varepsilon = 1$ and $\delta < \frac{H + \mu}{2}$, or $\varepsilon = -1$ and $H > 1$. Then, we showed in Section \ref{sec:rotational} that the function $x(\mathfrak{s})$ in \eqref{eq:hx} takes values on the interval $[x_m,x_M]$, where $x_m < x_M$ are the two positive roots of $h(x)$; we recall here our assumption that $x(0)=x_m>0$. In this situation, we define $\mathfrak{s}_2 > 0$ as the first positive value for which $x(\mathfrak{s}_2) = x_M$. Consequently, $x(\mathfrak{s})$ will be strictly increasing for all $\mathfrak{s} \in [0,\mathfrak{s}_2]$. We remark that if $x \geq 0$, then $h(x) \geq 0$ if and only if $x \in [x_m,x_M]$. Additionally, if $\varepsilon = -1$, $H \leq 1$, then $x(\mathfrak{s})$ is unbounded, taking values on $[x_m,\infty)$, where $x_m$ is the unique positive root of $h(x)$. The function $x(\mathfrak{s})$ will be strictly increasing for all $\mathfrak{s} \geq 0$. In this case, if $x \geq 0$, then $h(x) \geq 0$ if and only if $x \geq x_m$. 

Let us now show that $F(0) < 0$. By construction, $x_3(0) = 0$, $x(0) = x_m > 0$ and $x'(0) = 0$, so it remains to prove that $x_3'(0) > 0$. By \eqref{eq:psiesferico}, \eqref{eq:phi} for $\ep=1$ and \eqref{eq:psihiperbolico}, \eqref{eq:phihiperbolico} for $\ep=-1$, this is equivalent to the fact that $\delta - H x_m^2 > 0$, that is, $x_0 := \sqrt{\delta/H} > x_m$. Assume by contradiction that $x_0 \leq x_m$. This implies that $h(x_0) \leq 0$, as $h(x)\leq 0$ in $[0,x_m]$. Now, a direct computation shows that $h(x_0) = x_0^2 - \varepsilon x_0^4$. This quantity is clearly positive when $\varepsilon = -1$ and also when $\varepsilon = 1$, since $x_0 \leq x_m < 1$. We reach a contradiction, what proves that $F(0)<0$.

We are going to show next that there is some $\mathfrak{s}_0 >0$ such that $F(\mathfrak{s}_0) > 0$, and so the existence of $\tilde{\mathfrak{s}}$ follows. We split our analysis into five different scenarios: the first three concern the spherical case ($\varepsilon = 1$), while the fourth and fifth cover the hyperbolic case ($\varepsilon = -1$).

\textbf{Case 1:} $\varepsilon = 1$, $\delta\in (0,H)$.

In this case, $x_0 = \sqrt{\delta/H}$ satisfies $x_m < x_0 < x_M$, so the function $\mathfrak{s} \mapsto \delta - Hx(\mathfrak{s})^2$, and consequently $\phi'(\mathfrak{s})$, changes sign on the interval $(0,\mathfrak{s}_2)$. Let $\mathfrak{s}_1 \in (0,\mathfrak{s}_2)$ be the value for which $\phi'(\mathfrak{s}_1) = 0$, so that $\phi(\mathfrak{s})$ is increasing on $[0,\mathfrak{s}_1]$. If $\phi(\mathfrak{s}_1) > \pi/2$, then we define $\mathfrak{s}_0 < \mathfrak{s}_1$ as the value for which $\phi(\mathfrak{s}_0) = \pi/2$. Otherwise, let $\mathfrak{s}_0 = \mathfrak{s}_1$. In any of the cases, we see that $x(\mathfrak{s}_0), x'(\mathfrak{s}_0) > 0$, $x_3'(\mathfrak{s}_0) < 0$ and $x_3(\mathfrak{s}_0) \geq 0$, so necessarily $F(\mathfrak{s}_0) > 0$. We note that the function $x_4(\mathfrak{s})$ defined in \eqref{pro:s} is strictly decreasing and positive for all $\mathfrak{s} \in [0,\mathfrak{s}_0)$.

\textbf{Case 2:} $\varepsilon = 1$, $\delta = H$.

A direct computation using \eqref{eq:hx} shows that $x_M = 1$. This implies that $x_3(0) = x_3(\mathfrak{s}_2) = 0$, and since $x_3'(0) > 0$, there exists a first $\mathfrak{s}_0 \in (0,\mathfrak{s}_2)$ such that $x_3'(\mathfrak{s}_0) = 0$. $x_3(\mathfrak{s})$ is increasing on $[0,\mathfrak{s}_0]$, so in particular $x_3(\mathfrak{s}_0) > 0$, and hence $F(\mathfrak{s}_0) > 0$. Note also that the function $\phi(\mathfrak{s})$, which must be increasing on $[0,\mathfrak{s}_0]$, satisfies $\phi(\mathfrak{s}_0) < \frac{\pi}{2}$: otherwise, there would be an intermediate value $\overline{\mathfrak{s}}$ such that $\phi(\overline{\mathfrak{s}}) = \frac{\pi}{2}$, and so $x_3'(\overline{\mathfrak{s}}) < 0$ at that point, reaching a contradiction. A consequence of this fact is that $x_4(\mathfrak{s})$ is decreasing and positive for all $\mathfrak{s} \in [0,\mathfrak{s}_0)$.

\textbf{Case 3:} $\varepsilon = 1$, $\delta > H$. 

In this case, $\delta - Hx^2 > H(1 - x^2) \geq 0$, so $\phi(\mathfrak{s})$ is increasing for all $\mathfrak{s} \in \R$. If we manage to prove that $\phi(\mathfrak{s}_2) > \pi/2$, then there exists a unique $\mathfrak{s}_0 \in (0,\mathfrak{s}_2)$ such that $\phi(\mathfrak{s}_0) = \pi/2$. In particular, $x_3(\mathfrak{s}_0) > 0$, $x_3'(\mathfrak{s}_0) < 0$, and so $F(\mathfrak{s}_0) > 0$. Additionally, we deduce that $x_4(\mathfrak{s})$ is decreasing and positive for all $\mathfrak{s} \in [0,\mathfrak{s}_0)$.

Consider the change of variables $w(\mathfrak{s}) = x(\mathfrak{s})^2$ on the integral that describes $\phi(\mathfrak{s}_2)$. Defining $w_m = w(0) = x_m^2$, $w_M = w(\mathfrak{s}_2) = x_M^2$, we get using \eqref{eq:hx} that
$$\phi(\mathfrak{s}_2) = \int_{w_m}^{w_M} \frac{\delta - Hw}{2\sqrt{w}(1 - w)\sqrt{w - w^2 - (Hw - \delta)^2}}dw.$$
We analyze this integral via residues. Note that 
\begin{equation}\label{resii}
z - z^2 - (Hz - \delta)^2=-(H^2+1)(z-w_m)(z-w_M)
\end{equation}
by definition of $w_m,w_M$, and consider from there the meromorphic function
$$f(z):= i\frac{\delta - Hz}{2\sqrt{H^2 + 1}(1 - z)\sqrt{z}\sqrt{z - w_m}\sqrt{z - w_M}}$$
\begin{figure} [hbt!]
 \begin{center}
\includegraphics[width=0.43\textwidth]{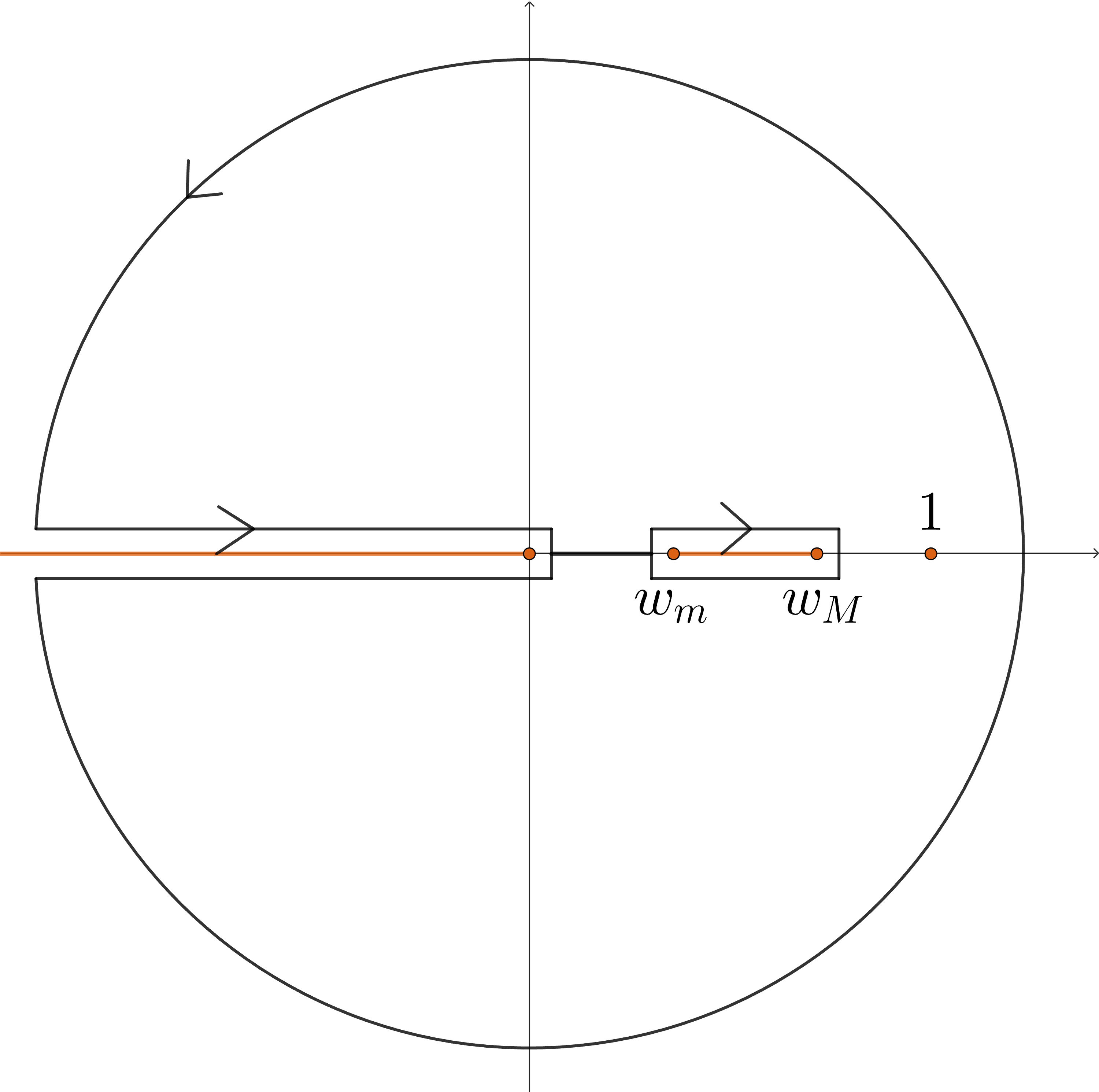}
\caption{Integration path $C_n$ for $f(z)$.} 
\label{fig:recinto}
\end{center}
 \end{figure}
defined on $\C \setminus ((-\infty,0] \cup [w_m,w_M])$ and with a pole at $z = 1$. We will integrate along a sequence of closed paths $C_n$ shown in Figure \ref{fig:recinto}.
Each such path $C_n$ can be divided into five pieces: a circle arc $C_n^{(1)}$ of radius $n$, a curve $C_n^{(2)}$ enclosing the segment $[w_m,w_M]$, another curve $C_n^{(3)}$ around the interval $(-\infty,0]$ with the same boundary points as $C_n^{(1)}$, and a pair of segments $S_n^{(1)}, S_n^{(2)}$ which connect $C_n^{(2)}$, $C_n^{(3)}$. The segments $S_n^{(1)}, S_n^{(2)}$ coincide but have opposite orientations. We take $C_n^{(2)}$ and $C_n^{(3)}$ so that they converge to the intervals $[w_m,w_M]$ and $(-\infty,0]$ respectively as $n$ grows to infinity.

By the residue theorem, and using \eqref{resii}, %the integral along $C_n$ is equal to $2\pi i$ times the residue at $z = 1$, that is
$$\int_{C_n} f(z) dz = 2\pi i \, \text{Res}(f,1) =  \pi.$$
%On the other hand, the path $C_n$ can be divided into five pieces: a circle arc $C_n^{(1)}$ of radius $n$, a curve $C_n^{(2)}$ enclosing the segment $[w_m,w_M]$, another curve $C_n^{(3)}$ around the interval $(-\infty,0]$ {\color{blue} and a pair of segments $S_n^{(1)}, S_n^{(2)}$ which connect $C_n^{(2)}$, $C_n^{(3)}$. The segments $S_n^{(1)}, S_n^{(2)}$ coincide but have opposite directions}. 
A careful analysis of $f(z)$ shows that
$$\lim_{n \to \infty} \int_{C_n^{(1)}} f(z)dz = 0,$$
$$\lim_{n \to \infty} \int_{C_n^{(2)}} f(z)dz = 2 \int_{w_m}^{w_M} i\frac{\delta - Hw}{2i\sqrt{H^2 + 1}(1 - w)\sqrt{w}\sqrt{w - w_m}\sqrt{w_M - w}}dw = 2 \phi(\mathfrak{s}_2),$$
$$\lim_{n \to \infty}\int_{C_n^{(3)}} f(z)dz =  2 \int_{-\infty}^{0} i\frac{\delta - Hw}{2i^3\sqrt{H^2 + 1}(1 - w)\sqrt{-w}\sqrt{w_m-w}\sqrt{w_M - w}}dw = - M,$$
for some positive constant $M > 0$. On the other hand, since the segments $S_n^{(1)}$ and $S_n^{(2)}$ have opposite orientations, we deduce that 
$$\int_{S_n^{(1)}}f(z)dz = -\int_{S_n^{(2)}}f(z)dz,$$
and so we end up with
$$\pi = 2 \phi(\mathfrak{s}_2)  - M < 2 \phi(\mathfrak{s}_2),$$
as we wanted to prove. From this, the existence of $\tilde {\mathfrak{s}}$ is immediate. It is also possible to deduce that $x_4(\mathfrak{s})$ is decreasing and positive for all $\mathfrak{s} \in [0,\mathfrak{s}_0)$.

\textbf{Case 4:} $\varepsilon = -1$, $H > 0$.

    In this case, we define $\mathfrak{s}_0$ as the first value for which $x(\mathfrak{s}_0) = x_0$, where $x_0 = \sqrt{\delta/H}$. Let us prove that $\mathfrak{s}_0$ exists. First, if $H > 1$, then $x(\mathfrak{s})$ oscillates between $x_m$ and $x_M$, and the fact that $h(x_0) > 0$ implies that $x_0 \in (x_m,x_M)$, and so $\mathfrak{s}_0$ exists indeed. In the case $H \leq 1$, it holds $x_0 \geq x_m$. Since for $\mathfrak{s}\geq 0$ the function $x(\mathfrak{s})$ is increasing and satisfies $\lim_{\mathfrak{s} \to \infty} x(\mathfrak{s}) = \infty$, we see again that $\mathfrak{s}_0$ exists. Now, notice that $\phi(\mathfrak{s})$ is an increasing function on the interval $[0,\mathfrak{s}_0]$, and $\phi'(\mathfrak{s}_0) = 0$. In particular, $\phi(\mathfrak{s}_0) > 0$. A direct computation using \eqref{eq:psihiperbolico} shows that
    $$F(\mathfrak{s}_0) = x'(\mathfrak{s}_0)\frac{\sinh(\phi(\mathfrak{s}_0))}{\sqrt{x(\mathfrak{s}_0)^2 + 1}} > 0,$$ as we wanted to prove. It is also immediate to prove that $\varepsilon x_4(\mathfrak{s}) = - x_4(\mathfrak{s})$ is decreasing for all $\mathfrak{s} \in [0,\mathfrak{s}_0]$.

\textbf{Case 5:} $\varepsilon = -1$, $H = 0$.

In this case, $\phi(\mathfrak{s})$ is an increasing function satisfying $\lim_{\mathfrak{s} \to \infty} \phi(\mathfrak{s})  =: \phi_M < \infty$ (the integral in \eqref{eq:phihiperbolico} is convergent). On the other hand, $x(\mathfrak{s})$ increasing and unbounded, and satisfies \eqref{eq:hx}. We deduce that 
$$\lim_{\mathfrak{s} \to \infty}  F(\mathfrak{s}) = \lim_{\mathfrak{s} \to \infty}\left(\frac{x'\sinh\phi 
}{\sqrt{x^2 + 1}} - \delta\frac{\cosh\phi}{\sqrt{x^2 + 1}}\right) = \sinh(\phi_M) > 0,$$
In particular, it follows that $F(\mathfrak{s})$ vanishes for some $\tilde{\mathfrak{s}} > 0$, as we wanted to prove. It also follows that $- x_4(\mathfrak{s})$ is decreasing for all $\mathfrak{s} \in [0,\tilde{\mathfrak{s}}]$.

In any of the considered cases, we deduce the existence of a first value $\tilde{\mathfrak{s}} > 0$ such that $F(\tilde{\mathfrak{s}}) = 0$.  As commented before, this means that $\mathcal{S}_0$ meets orthogonally the boundary sphere $S[{\bf e}_4,\ep x_4(\tilde{\mathfrak{s}})]$ (see Appendix \ref{ap:geometriaF}). 

Moreover, we also find that $x(\mathfrak{s}),x'(\mathfrak{s}), x_3(\mathfrak{s}) > 0$, $F(\mathfrak{s}) < 0$ for all $\mathfrak{s} \in (0,\tilde{\mathfrak{s}})$. In addition, $x'(\tilde{\mathfrak{s}}) > 0$, which shows by \eqref{eq:kappa} that the principal curvature $\kappa_{\mathfrak{s}}(\mathfrak{s})$ associated to the profile curve must be strictly decreasing on $[0,\tilde{\mathfrak{s}} + \epsilon)$ for some $\epsilon > 0$. Another consequence of these inequalities is that $x_3'(\mathfrak{s}) > 0$ on $(0,\tilde{\mathfrak{s}})$. Since the function $x_3(\mathfrak{s})$ is odd, we obtain that $x_3(\mathfrak{s})$ is injective on $[-\tilde{\mathfrak{s}},\tilde{\mathfrak{s}}]$. As a result, the annulus $\mathcal{S}_0$ must be embedded.

Finally, we will show that $\mathcal{S}_0$ is free boundary in $B$. It suffices to check that $\mathcal{S}_0$ is contained in that ball. This is a consequence of the already proven monotonicity of $\varepsilon x_4(\mathfrak{s})$, since
$$\langle {\bf e}_4, \psi(\mathfrak{s},\theta) \rangle = \varepsilon x_4(\mathfrak{s}) \geq \varepsilon x_4(\tilde{\mathfrak{s}}),$$
 for all $\mathfrak{s} \in [0,\tilde{\mathfrak{s}}]$, which shows that $\mathcal{S}_0 \subset B[{\bf e}_4,\varepsilon x_4(\tilde{\mathfrak{s}})]$.

\subsection{Proof of item 2 of Proposition \ref{pro:hatp}}\label{ap:sanalitico}
We will first prove the analiticity of $\tilde{\mathfrak{s}}$. This fact is a consequence of the implicit function theorem applied on the function $F = F(\mathfrak{s};H,\delta)$ defined in \eqref{eq:F}. We remark that this function not only depends analytically on $\mathfrak{s}$, but also on the parameters $H,\delta$ which define the functions $x$ and $x_3$. If we differentiate $F$ with respect to $\mathfrak{s}$ at $\mathfrak{s} = \tilde{\mathfrak{s}}$, we obtain using $F(\tilde{\mathfrak{s}}) = 0$ that
    \begin{equation}\label{Fp}
        F' = -xx'\left(\frac{x_3''}{x'} - \frac{x_3'x''}{x'^2}\right) = -xx'\left(\frac{x_3'}{x'}\right)'.
    \end{equation}
Since $x(\tilde{\mathfrak{s}}), x'(\tilde{\mathfrak{s}}) > 0$, we just need to check that $\left(\frac{x_3'}{x'}\right)' \neq 0$ at $\tilde{\mathfrak{s}}$ to deduce analyticity. If $\varepsilon = 1$, then by \eqref{eq:hx}, \eqref{eq:phi} and the fact that $F(\tilde{\mathfrak{s}}) = 0$, we deduce after a long but direct computation that
    \begin{equation}\label{x3px}
        \left(\frac{x_3'}{x'}\right)'\Big|_{\mathfrak{s} = \tilde{\mathfrak{s}}} = -\frac{\cos(\phi) \sqrt{1 - x^2} (\delta + H x^2)}{x^2 x'^2} < 0,
    \end{equation}
where the final quantity is negative since $\delta > 0$, $H \geq 0$ and $0< \phi(\tilde{\mathfrak{s}}) < \pi/2$ (see Appendix \ref{ap:tildes}). Similarly, if $\varepsilon = -1$, then by \eqref{eq:hx}, \eqref{eq:phihiperbolico}, we obtain at $\mathfrak{s}=\tilde{\mathfrak{s}}$ that
    \begin{equation}\label{x3pxhiperbolico}
       \left(\frac{x_3'}{x'}\right)'\Big|_{\mathfrak{s} = \tilde{\mathfrak{s}}} = -\frac{\cosh(\phi) \sqrt{x^2  + 1} (\delta + H x^2)}{x^2 x'^2} < 0. 
    \end{equation}
    In any of the cases, we deduce that $\tilde{\mathfrak{s}} = \tilde{\mathfrak{s}}(H,\delta)$ is analytic.

We will now prove that the extension of $\tilde{\mathfrak{s}}(H,\delta)$ along the boundary curve $\delta = \frac{H + \mu}{2}$ given by \eqref{eq:SHd} is continuous. We consider the function $F(\mathfrak{s};H,\delta): \Omega \to \R$ in \eqref{eq:F} defined on the set
$$\Omega:= \left\{(\mathfrak{s},H,\delta)\; : \; \mathfrak{s} \in \R, \; H\geq 0, \; 0 < \delta \leq \frac{H + \mu}{2}\right\}.$$

We know that $F$ is continuous on $\Omega$ since $x$, $x_3$ and their derivatives with respect to $\mathfrak{s}$ depend continuously on the parameters $H, \delta$. We now claim that $F(\mathfrak{s};H,\delta) < 0$ for all $0\leq \mathfrak{s} < \tilde{\mathfrak{s}}$, and that $\mathfrak{s} \mapsto F(\mathfrak{s};H,\delta)$ changes sign at $\mathfrak{s} = \tilde{\mathfrak{s}}$. This is true when $\delta < \frac{H + \mu}{2}$, according to the results in Appendix \ref{ap:tildes} and the fact that $F'(\tilde{\mathfrak{s}}) > 0$; see \eqref{Fp}, \eqref{x3px}. In the remaining case $\delta = \frac{H + \mu}{2}$, it is possible to compute explicitly the functions $x(\mathfrak{s})$, $x_3(\mathfrak{s})$, which are given by \eqref{toro}. This allows us to prove the claims on $F$ in this situation, where we are defining $\tilde{s}\left(H, \frac{H + \mu}{2}\right)$ as in \eqref{eq:SHd}.

We now fix some $H_0 \geq 0$, and denote $\mu_0:=\sqrt{H_0^2+1}$. We need to prove that there exists
\begin{equation}\label{eq:limites}
    \lim_{(H,\delta) \to \left(H_0, \frac{H_0 + \mu_0}{2}\right)} \tilde{\mathfrak{s}}(H,\delta) = 
   \frac{\pi}{2\sqrt{2\mu_0(H_0 + \mu_0)}}=: \tilde{\mathfrak{s}}_0.
\end{equation}

Now, let $\mathfrak{s}_I \leq \mathfrak{s}_S$ denote respectively the limits inferior and superior of the left hand-side of \eqref{eq:limites}. We need to prove that these limits coincide and are equal to $\tilde{\mathfrak{s}}_0$. First, we will prove that $\mathfrak{s}_S \leq \tilde{\mathfrak{s}}_0$. By the continuity of $F$, it follows that $F(\mathfrak{s};H_0,\frac{H_0 + \mu_0}{2}) \leq 0$ for all $\mathfrak{s} \in [0,\mathfrak{s}_S]$. Moreover, since $F(\mathfrak{s};H_0, \frac{H_0 + \mu_0}{2})> 0$ for any $\mathfrak{s} > \tilde{\mathfrak{s}}_0$ sufficiently close to $\tilde{\mathfrak{s}}_0$, we deduce that $\mathfrak{s}_S \leq \tilde{\mathfrak{s}}_0$. 

Now, we will prove that $\tilde{\mathfrak{s}}_0\leq \mathfrak{s}_I$. Notice first that $\mathfrak{s}_I \geq 0$ since $\tilde{\mathfrak{s}}(H,\delta)$ is positive for all $\delta < \frac{H + \mu}{2}$. Moreover, the fact that $F(\tilde{\mathfrak{s}}(H,\delta);H, \delta) \equiv  0$ implies by continuity that $F(\mathfrak{s}_I;H_0, \frac{H_0 + \mu_0}{2}) = 0$. However, $\tilde{\mathfrak{s}}_0$ is the first positive root of $\mathfrak{s} \mapsto F(\mathfrak{s};H_0,\frac{H_0 + \mu_0}{2})$, so necessarily $\tilde{\mathfrak{s}}_0 \leq \mathfrak{s}_I$. As a consequence, $\mathfrak{s}_I = \mathfrak{s}_S = \tilde{\mathfrak{s}}_0$, and \eqref{eq:limites} holds.

\subsection{Proof of item 3 in Proposition \ref{pro:hatp}}
\label{ap:geometriaF}

We will deal with the spherical and hyperbolic cases separately. Assume first that $\varepsilon = 1$ and let $\S^2 := \{x_2 = 0 \} \cap \S^3$. Consider the totally geodesic projection $P:\S^2 \cap \{x_4 > 0\} \to \R^2$,
\begin{equation}\label{projection}
(\overline x, \overline y) = P(x,0,x_3,x_4) = \left(\frac{x}{x_4},\frac{x_3}{x_4}\right).
\end{equation}

    The image by $P$ of the rotation axis $\mathcal{L}$ is the line $\{\overline x = 0\}$. Recall next the notation in \eqref{pro:s} and the definition of $\Gamma_{\mathfrak{s}_0} \subset \M^2(\varepsilon)$ as the geodesic passing through $\psi(\mathfrak{s}_0,0)$ with tangent vector $\psi_\mathfrak{s}(\mathfrak{s}_0,0)$. For any $\mathfrak{s}_0$ such that $x_4(\mathfrak{s}_0) > 0$, the image of $\Gamma_{\mathfrak{s}_0}$ by $P$ is exactly the tangent line of the curve $\mathfrak{s} \mapsto P(\psi(\mathfrak{s},0))$ at $\mathfrak{s} = \mathfrak{s}_0$, since $P$ is totally geodesic. This tangent line intersects the axis $\{\overline x = 0\}$ exactly at the point
    \begin{equation}\label{overliney}
        \overline y_0(\mathfrak{s}_0) = \frac{x_3(\mathfrak{s}_0)}{x_4(\mathfrak{s}_0)} - \frac{x(\mathfrak{s}_0)}{x_4(\mathfrak{s}_0)}\frac{\left(\frac{x_3}{x_4}\right)'|_{\mathfrak{s}_0}}{\left(\frac{x}{x_4}\right)'|_{\mathfrak{s}_0}}.
    \end{equation}
    A direct computation shows that, at $\mathfrak{s}_0$,
    \begin{equation}\label{overlineyF}
        \overline y_0(x'x_4 - x x_4') = x_3 x' - x x_3' = F(\mathfrak{s}_0).
    \end{equation}
    Observe that $x_4(\tilde{\mathfrak{s}}) > 0$, so $\psi(\tilde{\mathfrak{s}},0)$ lies in the definition domain of $P$. We will prove later on that the inequality
    \begin{equation}\label{ineqx4}
        x'x_4 - xx_4' > 0
    \end{equation}
    holds at $\mathfrak{s}_0=\tilde{\mathfrak{s}}$, but let us assume it for now. An immediate consequence of this is that $\overline{y}_0(\tilde{\mathfrak{s}}) = 0$. Geometrically, this implies that the geodesic $\Gamma_{\tilde{\mathfrak{s}}}$ passes through $P^{-1}(0,0) = (0,0,0,1) ={\bf e}_4$. This proves a claim made in Appendix \ref{ap:tildes} for the spherical case. 
    
    Since \eqref{ineqx4} holds at $\tilde{\mathfrak{s}}$, there is a neighbourhood $\mathcal{I}$ of this point where the inequality is also satisfied. For all $\mathfrak{s}_0 \in \mathcal{I}$, we can express the function $\hat p(\mathfrak{s}_0)$ as
    $$\hat p(\mathfrak{s}_0) = P^{-1}(0,\overline{y}_0(\mathfrak{s}_0)).$$
    This shows that $\hat p$ is analytic, as $\overline{y}_0$ depends analytically on $\mathfrak{s}_0$. Moreover, a direct computation using \eqref{overlineyF} and the fact that $F'(\tilde{\mathfrak{s}}) > 0$ (see \eqref{Fp}, \eqref{x3px}) shows that $\hat p_3'(\tilde{\mathfrak{s}}) > 0$. This completes the proof of item 3 of Proposition \ref{pro:hatp} when $\varepsilon = 1$.

    We will now deal with the hyperbolic case. Let $\varepsilon = -1$, $\H^2 := \{x_2 = 0 \} \cap \H^3$ and consider the totally geodesic projection $P:\H^2 \to \D$ given by \eqref{projection}, where $\D$ is the unit disk of $\R^2$. Just as before, the image of $\Gamma_{\mathfrak{s}_0}$ by $P$ is the tangent line of the curve $\mathfrak{s} \mapsto P(\psi(\mathfrak{s},0))$ at $\mathfrak{s} = \mathfrak{s}_0$, which meets the axis $\{\overline x = 0\}$ at a point $\overline y_0$ given by \eqref{overliney}.  If we assume again that \eqref{ineqx4} holds at $\tilde{\mathfrak{s}}$, then we obtain that the geodesic $\Gamma_{\tilde{\mathfrak{s}}}$ passes through $P^{-1}(0,0) = {\bf e}_4$. In particular, this proves the claim made in Appendix \ref{ap:tildes} for the hyperbolic case. From this point, the proof of item 3 of Proposition \ref{pro:hatp} for the hyperbolic case is entirely analogous to spherical one, so we omit it.

    It just remains to prove that \eqref{ineqx4} holds at $\tilde{\mathfrak{s}}$. Let $\chi := \frac{x'(\tilde{\mathfrak{s}})}{x(\tilde{\mathfrak{s}})} - \frac{x_4'(\tilde{\mathfrak{s}})}{x_4(\tilde{\mathfrak{s}})}$. Differentiating
    $$\langle\psi(\mathfrak{s},0),\psi(\mathfrak{s},0)\rangle = x^2 + x_3^2 + \varepsilon x_4^2 = \varepsilon$$ 
    with respect to $\mathfrak{s}$, we obtain at $\tilde{\mathfrak{s}}$ that
    $$0 = x x' + x_3 x_3' + \varepsilon x_4 x_4' = \frac{x'}{x}\left(x^2 + x_3^2 + \varepsilon x_4^2\right) - \varepsilon\chi x_4^2  = \varepsilon \left(\frac{x'}{x} - \chi x_4^2\right),$$
    where we have used the fact that $F(\tilde{\mathfrak{s}}) = x_3x' - x_3'x =  0$. Since $x$ is a positive function and $x'(\tilde{\mathfrak{s}}) > 0$ by Proposition \ref{pro:hatp}, we deduce that $\chi > 0$, and consequently \eqref{ineqx4} holds at $\tilde{\mathfrak{s}}$.

\bibliographystyle{amsalpha}

\vskip 0.2cm

\noindent Alberto Cerezo

\noindent Departamento de Geometría y Topología \\ Universidad de Granada (Spain) \\ Departamento de Matemática Aplicada I \\ Universidad de Sevilla (Spain)

\noindent  e-mail: {\tt cerezocid@ugr.es}

\vskip 0.2cm

\noindent Isabel Fernández

\noindent Departamento de Matemática Aplicada I,\\ Instituto de Matemáticas IMUS \\ Universidad de Sevilla (Spain).

\noindent  e-mail: {\tt isafer@us.es}

\vskip 0.2cm

\noindent Pablo Mira

\noindent Departamento de Matemática Aplicada y Estadística,\\ Universidad Politécnica de Cartagena (Spain).

\noindent  e-mail: {\tt pablo.mira@upct.es}

\vskip 0.4cm

\noindent This research has been financially supported by Project PID2020-118137GB-I00 funded by MCIN/AEI /10.13039/501100011033, and CARM, Programa Regional de Fomento de la Investigación, Fundación Séneca-Agencia de Ciencia y Tecnología Región de Murcia, reference 21937/PI/22%, and Junta de Andalucia grants no. A-FQM-139-UGR18 and P18-FR-4049

\end{document}